\theoremstyle{definition}
\newtheorem{thm}{Theorem}[section]
\newtheorem{cor}[thm]{Corollary}
\newtheorem{lem}[thm]{Lemma}
\newtheorem{rem}[thm]{Remark}
\newtheorem{prop}[thm]{Proposition}
\newtheorem{defn}[thm]{Definition}
\newtheorem{example}[thm]{Example}
\newtheorem*{thm*}{Theorem}
\numberwithin{equation}{section}
\def\R{{\mathbbm R}}
\def\Z{{\mathbbm Z}}
\def\F{{\mathbbm F}}
\def\1{{\mathbbm{1}}}
\newcommand{\HOM}{{\rm HOM}}
\renewcommand{\to}{\rightarrow}
\def\dif{\partial}
\def\lra{{\longrightarrow}}
\def\dmod{{\mathrm{\mbox{-}mod}}}   %% finitely-generated modules
\def\Id{\mathrm{Id}}
\def\mc{\mathcal}
\def\mf{\mathfrak}
\def\shuffle{\,\raise 1pt\hbox{$\scriptscriptstyle\cup{\mskip
               -4mu}\cup$}\,}
\def\udmod{{\mathrm{-\underline{mod}}}}   %% finitely-generated modules
\newcommand{\refequal}[1]{\xy {\ar@{=}^{#1}
(-1,0)*{};(1,0)*{}};
\endxy}
\newcommand{\mH}{\mathrm{H}} %cohomology
\newcommand{\mHH}{\mathrm{HH}}
\newcommand{\mHHH}{\mathrm{HHH}}
\newcommand{\mtHHH}{\widehat{\mathrm{HHH}}}
\newcommand{\pH}{p\mathrm{H}}
\newcommand{\pHH}{p\mathrm{HH}}
\newcommand{\pHHH}{p\mathrm{HHH}}
\newcommand{\ptHHH}{p\widehat{\mathrm{HHH}}}
\newcommand{\pC}{pC}
\newcommand{\pT}{pT}
\title{On some $p$-differential graded link homologies}
\author{You Qi and Joshua Sussan}
\date{\today}
\begin{document}
%
% ==============================================================================

\maketitle

\begin{abstract}
We show that the triply graded Khovanov-Rozansky homology of knots and links over a field of positive odd characteristic $p$ descends to an invariant in the homotopy category finite-dimensional $p$-complexes.

A $p$-extended differential on the triply graded homology discovered by Cautis is compatible with the $p$-DG structure.  As a consequence we get a categorification of the Jones polynomial evaluated at a $2p$th root of unity.
\end{abstract}

\setcounter{tocdepth}{2} \tableofcontents

\section{Introduction}
\subsection{Background}
The Jones polynomial is a quantum invariant of oriented links which may be defined using the natural two-dimensional representation of quantum $\mathfrak{sl}_2$.  Coloring the components of a link with other representations of this quantum group leads to a definition of the colored Jones polynomial.  Witten constructed an invariant of $3$-dimensional manifolds in a physical setting coming from Chern-Simons theory \cite{Witten} with a fixed level.  If the $3$-manifold is defined as surgery on a link $L$, Reshetikhin and Turaev \cite{RT} reconstructed Witten's invariant by summing over colored Jones polynomials of $L$.  In order for this summation to be finite, it is important that the colored Jones polynomials are evaluated at a root of unity, the order of which is determined by the level of Witten's theory.  The Witten-Reshetikhin-Turaev (WRT)  $3$-manifold invariant fits into the framework of a $(2+1)$-dimensional topological quantum field theories (TQFT).

Crane and Frenkel \cite{CF} initiated the categorification program with the aim of lifting the 
$(2+1)$-d WRT-TQFT to a $(3+1)$-d TQFT.  The first major success in this program was Khovanov's 
categorification of the Jones polynomial \cite{KhJones}.  Khovanov homology is a bigraded homology 
theory of links whose graded Euler characteristic is the Jones polynomial.  Since this discovery, 
there have been many other categorifications of the Jones polynomial as well as their quantum
$\mathfrak{sl}_n$ generalizations.  One such construction was Khovanov and Rozansky's 
categorification of the $\mathfrak{sl}_n$ and HOMFLYPT polynomials using matrix factorizations \cite{KR1, KR2}.  The HOMFLYPT homology 
theory is triply graded and the graded Euler characteristic recovers the two-variable HOMFLYPT 
polynomial.  Khovanov later recast this construction in the language of Soergel bimodules \cite{KR3} 
building upon earlier work of Rouquier \cite{RouBraid} who gave a categorical construction of the 
braid group.  
This was later reproved by Rouquier in \cite{Roulink}.
In these constructions, one represents a link as the closure of a braid.  To the braid, one associates
a complex of Soergel bimodules.  Taking Hochschild homology of each term yields a complex of
bigraded vector spaces.  Taking homology of this complex results in a triply graded theory.

The categorification of quantum groups and their associated link invariants at generic values of the quantum parameter has been the focus of a lot of research since Crane and Frenkel's work.
The first approach towards categorically specializing the quantum parameter to a root of unity was due to Khovanov \cite{Hopforoots} and later expanded upon in \cite{QYHopf}.
In this setup, one should consider algebraic structures over a field of characteristic $p$ and search for a derivation $\partial$ such that $\partial^p=0$.  This enhances the algebraic structure to a module category over the tensor category of graded modules over a particular Hopf algebra $H=\Bbbk[\dif]/(\dif^p)$.
Taking an appropriate homotopy or derived category gives rise to an action of the stable category
$H\udmod$.  Khovanov \cite{Hopforoots} showed that the Grothendieck group of this stable category is isomorphic to the cyclotomic ring for the prime $p$, thus categorifying a structure at a prime root of unity.

The first successful implementation of this idea was the categorification of the upper half of the small quantum group for $\mathfrak{sl}_2$ in \cite{KQ} by endowing the nilHecke algebra with a $p$-DG structure.  It is an interesting open question how to import $p$-DG theory into the construction of Khovanov homology.  A clearer path towards categorifying link invariants at roots unity was described in \cite{KQ} where a $p$-differential was defined on Webster's algebras \cite{Webcombined}.  One step in this direction was a categorification of the Burau representation of the braid group at a prime root of unity which used a very special $p$-DG Webster algebra \cite{QiSussan}. We also refer the reader to \cite{QiSussan2} for a survey in this direction.

\subsection{Methodology}
In this paper we propose a construction of a $p$-DG version of HOMFLYPT homology and its categorical specializations.  We closely mimic the work of Khovanov and Rozansky in \cite{KRWitt} where an action of the Witt algebra on HOMFLYPT homology is constructed, and adapt their framework in the $p$-DG setting. In particular, the action of one of their Witt algebra generators (denoted $L_1$ in  \cite{KRWitt}) corresponds to the $p$-differential $\dif$ considered in this work.

Let $R=\Bbbk[x_1,\ldots,x_n]$ be the polynomial algebra generated by elements of degree two.
The category of regular Soergel bimodules for $\mathfrak{gl}_n$ is the idempotent completion of the subcategory of $(R,R)$-bimodules generated by the so-called Bott-Samelson bimodules.
The Hopf algebra $H$ acts on the polynomial algebra $R$ determined by $\dif(x_i)=x_i^2$.  By the Leibniz rule, $H$ also acts on tensor products of Bott-Samelson bimodules.  We may then form the category $(R,R) \# H$ of such $p$-DG bimodules.  We show that in an appropriate homotopy category there is a categorical braid group action, extending the result of Rouquier \cite{RouBraid}.  For the proof, we follow the exposition of the braid group action in \cite{KRWitt} very closely.  Using a certain $p$-extension functor, we then obtain a braid group action on a relative $p$-homotopy category.  It follows that, to any braid group element $\beta$, there is a $p$-chain complex of $H$-equivariant Soergel bimodules $pT_\beta$ associated to $\beta$ that is well defined up to $p$-homotopy equivalences.

We  next turn our attention to extracting link invariants by taking various versions of Hochschild homology. A $p$-analogue of the usual Hochschild homology $\pHH_\bullet$, which goes back to the work of Mayer \cite{Mayer1, Mayer2}, is utilized. In the $p$-extended setting, we need to collapse the Hochschild and topological gradings into a single grading because of the Markov II invariance constraint. Thus the construction yields just a doubly graded categorification of the HOMFLYPT polynomial where the $a$ variable is now specialized to a prime root of unity.

Let $\zeta_C:=\sum_{i=1}^n x_i^2\frac{\dif}{\dif x_i}\in \mHH^1(R)$ be a Hochschild cohomology element of degree two, regarded as a derivation on $R$.
The cap product of $\zeta_C$ with an element in Hochschild homology yields a differential
$d_C \colon \mHH_i(M) \rightarrow \mHH_{i-1}(M)$
of $q$-degree $2$ and $a$-degree $-1$.
This differential gives rise to, via $p$-extension, a $p$-differential $\dif_C$ action on the $p$-Hochschild homology groups of any $H$-equivariant Soergel bimodule.  For a braid $\beta$, one may form a total $p$-differential $\dif_T:=\dif_t+\dif_C+\dif_q$ combining the topological differential $\dif_t$ coming from the Rouquier complex with the derivation actions arising from $\dif_C$ and $H$. The total differential acts on
$\pHH_\bullet(pT_\beta)$ and gives rise to an invariant upon taking homology.

\begin{thm*}
Let $L$ be a link presented as the closure of a braid $\beta$.
The slash homology of  $\pHH_\bullet(pT_\beta)$ with respect to $\dif_T$ is a finite-dimensional framed link invariant whose Euler characteristic is the Jones polynomial evaluated at a prime root of unity.
\end{thm*}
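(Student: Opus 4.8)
The plan is to verify, in order, that the slash homology of $(\pHH_\bullet(\pT_\beta),\dif_T)$ is (i) a well-defined function of $\beta$ up to $p$-homotopy, (ii) invariant under both Markov moves, up to an overall degree shift -- which is exactly why one obtains only a \emph{framed} invariant -- (iii) finite-dimensional, and (iv) has the stated Euler characteristic. For (i): by the categorical braid group action constructed above, $\pT_\beta$ depends on $\beta\in B_n$ only up to $p$-homotopy equivalence of $p$-complexes of $H$-equivariant Soergel bimodules, so in particular the braid relations impose nothing; the functor $\pHH_\bullet$ carries $p$-homotopy equivalences to $p$-homotopy equivalences of $p$-complexes; each summand of $\dif_T=\dif_t+\dif_C+\dif_q$ is natural (respectively the Rouquier differential, the cap product with $\zeta_C$, and the internal $H$-action), hence is transported along these equivalences; and slash homology is a $p$-homotopy invariant. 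Thus the assignment sending $\beta$ to the slash homology of $(\pHH_\bullet(\pT_\beta),\dif_T)$ is well-defined on $B_n$, and the first Markov move (conjugation $\beta\mapsto\alpha\beta\alpha^{-1}$) induces an isomorphism by the cyclic symmetry $\pHH_\bullet(\pT_\alpha\otimes_R\pT_\beta)\cong\pHH_\bullet(\pT_\beta\otimes_R\pT_\alpha)$, which one checks intertwines all three parts of $\dif_T$ as in \cite{KR3,KRWitt}, now in the $H$-equivariant setting.

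The main obstacle is Markov~II. Let $\iota\colon B_n\hookrightarrow B_{n+1}$ add a trivial strand. One must construct, for both signs, a $p$-homotopy equivalence relating $(\pHH_\bullet(\pT_{\iota(\beta)\sigma_n^{\pm1}}),\dif_T)$ to $(\pHH_\bullet(\pT_\beta),\dif_T)$ after an overall shift in the $q$-grading and in the collapsed (Hochschild $+$ topological) grading. The classical inputs are the two-term Rouquier complex of $\sigma_n^{\pm1}$ and the standard ``Markov move lemma'' comparing $\mHH_\bullet(M\otimes_R B_n)$ with $\mHH_\bullet(M)$ while tracking the $a$-degree; following \cite{KRWitt} closely, one checks that the bimodule maps involved are $H$-equivariant, that the Gaussian-elimination reductions are genuine $p$-homotopy equivalences of $p$-complexes, and that they commute with $\dif_C$ and $\dif_q$ up to $p$-homotopy. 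The delicate point -- and the reason the Hochschild and topological gradings have to be collapsed into one -- is that only for the collapsed grading does the Markov~II comparison emerge as a clean degree shift compatible with $\dif_C$, after which one checks that the positive and negative stabilizations yield the same slash homology up to this framing shift.

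Finally, for finite-dimensionality, filter $\pHH_\bullet(\pT_\beta)$ by homological degree (bounded, $\beta$ having finitely many crossings) and then by Hochschild degree (bounded by $n$); on the associated graded, $\pHH_\bullet$ of a Bott--Samelson bimodule is, after forgetting the $p$-structure, a finitely generated free module over a polynomial subring of $R$ tensored with an exterior algebra, on which $\dif_C$ acts as the Koszul differential of the regular sequence $(x_1^2,\dots)$, with finite-dimensional homology, and $\dif_q$ acts as the standard $p$-differential attached to $\dif(x_i)=x_i^2$, whose slash homology on a polynomial ring is one-dimensional (a direct computation, as in the $p$-DG calculus of \cite{KQ}); since the $q$-grading is bounded below and the cyclotomic truncation makes each graded piece finite-dimensional, the resulting bounded spectral sequence forces the slash homology of $(\pHH_\bullet(\pT_\beta),\dif_T)$ to be finite-dimensional. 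For the Euler characteristic, Khovanov's theorem \cite{Hopforoots} identifies the Grothendieck group of finite-dimensional $p$-complexes with the cyclotomic ring $\mathcal{O}_p$; the class of a $p$-complex is additive along $p$-homotopy equivalences and short exact sequences, so it is read off from the underlying graded dimension reduced modulo the cyclotomic ideal, with the collapsed grading contributing signs. Since the graded dimensions entering this computation agree with those of the classical triply graded homology of $L$ (as established above), the signed sum is the HOMFLYPT polynomial of $L$ with the $a$-variable eliminated by $\dif_C$ and the grading collapse, that is, the Jones polynomial $V_L(q)$; reducing $q$ modulo the cyclotomic ideal then yields $V_L$ evaluated at a prime root of unity, up to the writhe-dependent framing unit. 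Matching conventions with \cite{KhJones,KR3} completes the argument; the real work sits in the Markov~II step and in the $p$-DG polynomial computation underlying finiteness.
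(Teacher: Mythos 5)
Your outline reproduces the paper's architecture (braid invariance, Markov I via the trace property of $\pHH_\bullet$, Markov II via an explicit stabilization analysis, finiteness from the $p$-DG structure on $R$, Euler characteristic via the cyclotomic Grothendieck ring), but two genuine gaps remain.

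First, Markov II --- which you rightly call the main obstacle --- is left as a promissory note, and the one concrete claim you make about its outcome is wrong. You assert that stabilization changes the answer by ``an overall degree shift'' and that this is why the invariant is only framed. In fact the degree shifts are entirely absorbed by the normalizations built into the elementary braiding complexes and the $q^{-n}$ prefactor; what actually survives is a twist of the $H_q$-module structure by the linear polynomial $\pm 2x_n$: one obtains $\mH^/_\bullet(\pHH_\bullet(M),\dif_T)^{\pm 2x_n}$, because the surviving piece $Y_1$ (resp.\ $pY_1$) of $C_1'\otimes_{(\Bbbk[x_{n+1}],\Bbbk[x_{n+1}])}T_n$ is quasi-isomorphic to $R_n^{\pm 2x_n}$ rather than to $R_n$. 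This twist genuinely changes the slash homology ($\Bbbk[x]$ and $\Bbbk[x]^{2x}$ have different slash homologies by Lemma \ref{lem-pol-mod}), so it cannot be treated as a shift; it is precisely the framing anomaly that the framing factor $R^{\mathtt{f}_\beta}$ is later introduced to cancel. Executing the step requires the short exact sequence $0\to Y_1\to C_1'\otimes T_n\to Y_2\to 0$ (and its $\dif_C$-decorated, $p$-extended version), the verification that $Y_2$ contributes nothing because it reduces to the cone of an identity map, and the observation that $\dif_C$ preserves this filtration for Hochschild-degree reasons; none of this appears in your proposal, and the stated reason that the $a$ and $t$ gradings must be collapsed (that $[1]^t_\dif\neq t$ for $p>2$, so no half-integer normalization reconciles the two stabilizations in a bigraded theory) is also missing.

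Second, you treat braid invariance as automatic, but the braid relations only hold in the \emph{relative} homotopy category $\mc{C}_{\dif_q}(R,R,d_0)$: the contractions used in the Reidemeister II and III arguments are homotopy equivalences whose homotopies need not intertwine the $H_q$-action. One must therefore know that the Hochschild-type functor being applied descends to this relative category --- this is exactly why the paper uses relative $p$-Hochschild homology rather than the honest $p$-DG derived trace. Without that, your step ``$\pHH_\bullet$ carries $p$-homotopy equivalences to $p$-homotopy equivalences'' does not apply to the equivalences the braid relations actually produce. The finiteness and Euler-characteristic arguments are essentially the paper's (finiteness rests on the filtration of Soergel bimodules by twisted rank-one modules $R^f$, each quasi-isomorphic to a finite-dimensional quotient by an ideal $(x_i^{p+1-b_i})$; the Euler characteristic is Cautis's bigraded Poincar\'e polynomial with $t=-1$ and $q$ reduced into $\mathbb{O}_p$), though your Koszul-differential description of $\dif_C$ ignores the $H_q$-twists that determine which pieces survive.
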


The link invariant using the action of the usual differential $d_C$ on Hochschild homology (ignoring the action of $H$)
was first constructed by Cautis \cite{Cautisremarks}, and further considered in other contexts by Robert-Wagner \cite{RW} and Queffelec-Rose-Sartori \cite{QRS}. The latter authors showed that it categorifies the Jones polynomial for a generic value of the quantum parameter and
is distinct from Khovanov homology.  These works actually utilized a degree $2N$ differential and categorified the link invariant arising from quantum $\mathfrak{sl}_N$.  We restrict to the case $N=2$ due to the fact that $\dif_q$ and $\dif_C$ do not commute for arbitrary values of $N$.
One may view this work as a combination of the results of \cite{KRWitt} with \cite{Cautisremarks, QRS, RW}.  

It is a natural problem to extend our result to categorify the colored Jones and $\mf{sl}_n$
polynomials evaluated at a prime root of unity.  The first technical obstacle to overcome in that
setting, is the construction of Koszul resolutions of the algebra of symmetric functions in the
presence of a $p$-differential. We plan to explore these questions in follow-up works.

\subsection{Outline}
We now summarize the contents of each section.

In Section \ref{sechopf} we review some constructions known in $p$-DG theory and develop some new
ones such as the $p$-extension functor, the totalization functor, the relative $p$-homotopy category,
and (relative) $p$-Hochschild homology. 

A review of Soergel bimodules is given in Section \ref{secbraidrelations}, where a $p$-categorical
braid group action is constructed.  Many of the techniques in this section parallel methods used in
\cite{KRWitt}.

Section \ref{sechomflypt} contains the construction of the categorification of the HOMFLYPT
polynomial at a root of unity.  The main technical result in this section is invariance under the
second Markov move.  The proof builds upon the techniques in \cite{RW} which in turn used ideas from \cite{Roulink} adapted to the $H$-equivariant and hopfological setting.

A categorification of the Jones polynomial at a prime root of unity is developed in Section
\ref{sl2section}.  We revisit the proof of the second Markov move given in the previous section but
now accounting for the extra differential $\dif_C$.  Here, again, we build upon ideas from \cite{Cautisremarks, RW, QRS}.

We conclude in Section \ref{sl2section} with the calculation of the homology theories developed in this work for $(2,n)$ torus links.  In particular, we exhibit nontrivial $p$-complexes as $p$-homologies of these links.

\subsection{Acknowledgements.}
The authors would like to thank Sabin Cautis, Mikhail Khovanov, and Louis-Hadrien Robert for helpful conversations.

Y.Q. is partially supported by the NSF grant DMS-1947532. J.S. is partially supported by the NSF grant DMS-1807161 and PSC CUNY Award 63047-00 51.

\section{Hopfological constructions} \label{sechopf}
 In this section, we recall some basic hopfological algebraic facts introduced in \cite{Hopforoots, QYHopf}. We also develop the necessary constructions of $p$-analogues of  classical Hochschild homology in the hopfological setting.

There will be several (super) differentials utilized in this section. We reserved the normal $d$ for the super differential ($d^2=0$), and the symbol $\dif$ to denote a $p$-differential ($\dif^p=0$) over a field of finite characteristic $p>0$. Various differentials will also be labeled with different subscripts to indicate their different meanings.

\subsection{Some  exact functors}
Let $A$ be an algebra over the ground field $\Bbbk$ of characteristic $p>0$. We equip $A$ with the trivial ($p$-)differential graded structure by declaring that $d_0\equiv 0$, $\dif_0\equiv 0$ and $A$ sits in degree zero. In this subsection, we study a functor relating the usual homotopy category $\mc{C}(A,d_0)$ of $A$ with its $p$-DG homotopy category $\mc{C}(A,\dif_0)$.

To do this, recall that a chain complex of $A$-modules consists of a collection of $A$-modules and homomorphisms $d_M:M_i\lra M_{i-1}$ called boundary maps
\[
\xymatrix{
\cdots \ar[r]^-{d_{M}}  & M_{i+1} \ar[r]^-{d_{M}} & M_{i} \ar[r]^-{d_{M}} & M_{i-1} \ar[r]^-{d_{M}} & M_{i-2} \ar[r]^-{d_{M}} & \cdots
} ,
\]
satisfying $d_{M}^2=0$ for all $i\in \Z$. A \emph{null-homotopic map} is a sequence of $A$-module maps $f_i:M_i\lra N_i$, $i\in \Z$, of $A$-modules, as depicted in the diagram below,
\begin{equation*}
 \xymatrix{
 \cdots\ar[r]^{d_{M}} & M_{i+1} \ar[dl]|-{h_{i+1}} \ar[r]^{d_{M}} \ar[d]|-{f_{i+1}} & M_{i} \ar[dl]|-{h_{i}} \ar[r]^{d_{M}} \ar[d]|-{f_{i}} & M_{i-1} \ar[dl]|-{h_{i-1}} \ar[r]^{d_{M}} \ar[d]|-{f_{i-1}} & M_{i-2}\ar[r]^{d_{M}} \ar[dl]|-{h_{i-2}} \ar[d]|-{f_{i-2}} & \cdots \ar[dl]|-{h_{i-3}}\\
 \cdots \ar[r]_{d_{N}} & N_{i+1} \ar[r]_{d_{N}} & N_{i} \ar[r]_{d_{N}} & N_{i-1} \ar[r]_{d_{N}} & N_{i-2}\ar[r]_{d_{N}} & \cdots
 }   
\end{equation*}
which satisfy $f_{i}=d_{N}\circ h_i+h_{i-1}\circ d_M$ for all $i\in \Z$. The homotopy category $\mc{C}(A,d_0)$, by construction, is the quotient of the category of chain complexes over $A$ by the ideal of null-homotopic morphisms. 

For ease of notation, we will use bullet points to stand for a general ($p$)-chain complex index in what follows.
Similarly, a $p$-chain complex of $A$-modules consists of a collection of $A$-modules and homomorphisms $\dif_M :M_i \lra M_{i-1}$ called $p$-boundary maps
\[
\xymatrix{
\cdots \ar[r]^-{\dif_{M}}  & M_{i+1} \ar[r]^-{\dif_{M}} & M_{i} \ar[r]^-{\dif_{M}} & M_{i-1} \ar[r]^-{\dif_{M}} & M_{i-2} \ar[r]^-{\dif_{M}} & \cdots
} ,
\]
satisfying $\dif_M^p \equiv 0$. A $p$-chain complex can also be regarded as a graded module over the tensor product algebra $A\otimes H_0$, where $H_0=\Bbbk[\dif_0]/(\dif_0^p)$ is a graded Hopf algebra where $\mathrm{deg}(\dif_0)=-1$ and $\mathrm{deg}(A)=0$.

We introduce some special notation for some specific indecomposable $p$-chain complexes over $\Bbbk$, by setting 
\begin{equation}
    U_i:=H_0/(\dif_0^{i+1}) ,\quad \quad 0\leq i \leq p-1.
\end{equation}
In particular, $U_i$ has dimension $i+1$. We will also use these modules with degree shifted up by an integer $n\in \Z$, which we denote by $U_i\{a\}$. Then $U_i\{a\}$ is concentrated in degrees $a, a-1,\dots, a-i$: 
\[
\xymatrix{
\overset{a}{\Bbbk} \ar@{=}[r] & \overset{a-1}{\Bbbk} \ar@{=}[r] & \cdots \ar@{=}[r] & \overset{a-i}{\Bbbk} 
}
\]

A map of $p$-complexes $f \colon M_\bullet \lra N_\bullet$ of $A$-modules is said to be \emph{null-homotopic} if there exists 
$$h: M_\bullet\lra N_{\bullet+p-1}$$
such that
\begin{equation}\label{eqn-p-null-homotopy}
    f = \sum_{i=0}^{p-1} \dif^{i}_N \circ h \circ \dif^{p-1-i}_M.
\end{equation}

The $p$-homotopy category, $\mc{C}(A,\dif_0)$, is then the quotient of $p$-chain complexes of $A$-modules by the ideal of null-homotopic morphisms. It is a triangulated category, whose homological shift functor $[1]_\dif$ is defined by
\begin{equation}\label{eqn-p-shift}
    M[1]_\dif:=M\otimes U_{p-2}\{p-1\}
\end{equation}
for any $p$-complex of $A$-modules. The inverse functor $[-1]_\dif$ is given by
\begin{equation}\label{eqn-p-shift-inverse}
    M[-1]_\dif:=M\otimes U_{p-2}\{-1\},
\end{equation}
which is a consequence of the fact that $U_{p-2}\otimes U_{p-2}$ decomposes into a direct sum of $\Bbbk\{2-p\}$ and copies of free $H_0$-modules.

\paragraph{Slash homology.} As an analogue of the usual homology functor, we have the notion of \emph{slash homology groups} \cite{KQ} of a $p$-complex. To recall its definition, let us set $A=\Bbbk$. For each $0\le k \le p-2$ form
the graded vector space
$$ \mH^{/k}(M) = \dfrac{\mathrm{Ker}(\dif^{k+1}_M)}{\mathrm{Im} (\dif^{p-k-1}_M)+\mathrm{Ker} (\dif^{k}_M)}.$$
The original $\Z$-grading on $M$ gives a decomposition
$$ \mH_{\bullet}^{/k}(M) = \bigoplus_{i\in \Z} \mH_{ i }^{/k}(M) .$$
The differential $\dif_M$ induces a map,
also denoted $\dif_M$, which takes  $\mH_{i}^{/ k}(U)$ to $\mH_{
i-1}^{/k-1}(U)$. Define the \emph{slash homology} of $M$ as
\begin{equation}
 \mH_{\bullet }^{/}(M) =  \bigoplus_{k=0}^{p-2} \mH_{\bullet }^{/k}(M).
\end{equation}
Also let
$$\mH_{i}^/(M) :=
\bigoplus_{k=0}^{p-2}\mH^{/k}_{i}(M).$$
We have the decompositions
\begin{equation}
\mH_{\bullet}^/(M) = \bigoplus_{i\in \Z} \mH_{i}^/(M)  = \bigoplus_{k=0}^{p-2}
\mH_{i}^{/k}(M)=\bigoplus_{i\in \Z}\bigoplus_{k=0}^{p-2} \mH^{/k}_{ i}(M).
\end{equation}
$\mH^/_{\bullet}(M)$ is a bigraded $\Bbbk$-vector space, equipped with an operator
$\dif_M$ of bidegree $(-1,-1)$, $\dif_M:\mH^{/k}_{i}\lra \mH^{/k-1}_{i-1}$.

Forgetting the $k$-grading gives us a graded vector space
 $\mH_{\bullet}^/(M)$ with differential $\dif_M$, which we can view as a graded $H_0$-module.
$\mH^/_{\bullet}(M)$ is isomorphic to $M$ in the homotopy category of $p$-complexes $\mc{C}(\Bbbk,\dif_0)$, and
we can decompose
\begin{equation}\label{eqn-decomp-of-M-into-cohomology-and-free}
M\cong \mH^/_{\bullet}(M)\oplus P(M)
\end{equation}
in the abelian category of $H$-modules,
where $P(M)$ is a maximal projective direct summand of $M$. In particular, we have
\begin{equation}
    \mH^/_\bullet(U_i) = 
    \begin{cases}
    U_i , & i=0,\dots, p-2, \\
    0 & i=p-1
    \end{cases}
\end{equation}
The slash homology group $\mH^/_{\bullet}(M)$, viewed
as an $H_0$-module, does not contain any direct summand isomorphic to a free $H_0$-module. 

The assignment $M\mapsto \mH^{/}_\bullet(M)$ is functorial in $M$ and can be viewed as a functor
$H_0\dmod \lra \mc{C}(\Bbbk,\dif_0)$ or as a functor $\mc{C}(\Bbbk,\dif_0) \lra \mc{C}(\Bbbk,\dif_0)$. The latter functor is then isomorphic to the identity functor.

As in the usual homological algebra case, we say a morphism $f:M\lra N$ of $p$-complexes of $A$-modules is a \emph{quasi-isomorphism} if, upon taking slash homology, $f$ induces an isomorphism $f^/: \mH^/_\bullet(M) \cong \mH^/_\bullet(N)$. The class of quasi-isomorphisms constitutes a localizing class in $\mc{C}(A,\dif_0)$ (\cite[Proposition 4]{Hopforoots}).

\begin{defn}\label{def-p-derived-category-homology-version}
The \emph{$p$-derived category} $\mc{D}(A,\dif_0)$ is the localization of $\mc{C}(A,\dif_0)$ at the class of quasi-isomorphisms.
\end{defn}

Alternatively, $\mc{D}(A,\dif_0)$ is the \emph{Verdier quotient} of $\mc{C}(A,\dif_0)$ by the class of \emph{acyclic $p$-complexes}, i.e., those $p$-complexes of $A$-modules annihilated by the slash-homology functor.

\paragraph{$p$-Extension.} We now define the \emph{$p$-extension functor}
\begin{equation}\label{eqn-p-extension}
   \mc{P}: \mc{C}(A,d_0)\lra \mc{C}(A ,\dif_0) 
\end{equation}
as follows. Given a chain complex of $A$-modules, we repeat every term sitting in odd homological degrees $(p-1)$ times. More explicitly, for a given complex
\[
\xymatrix{
\cdots \ar[r]^-{d_{2k+2}}  & M_{2k+1} \ar[r]^-{d_{2k+1}} & M_{2k} \ar[r]^-{d_{2k}} & M_{2k-1} \ar[r]^-{d_{2k-1}} & M_{2k-2} \ar[r]^-{d_{2k-2}} & \cdots
} ,
\]
the $p$-extended complex looks like
\[
\xymatrix{\cdots \ar[r]^-{d_{2k+2}} & M_{2k+1} \ar@{=}[r]& \cdots \ar@{=}[r] & M_{2k+1}\ar[r]^-{d_{2k+1}} \ar[r] &
M_{2k}\ar `r[rd] `_l `[llld] _-{d_{2k}} `[d] [lld]
& \\
& & M_{2k-1}\ar@{=}[r]&\cdots \ar@{=}[r]& M_{2k-1} \ar[r]^-{d_{2k-1}}& M_{2k-2} \ar[r]^{d_{2k-2}}& \cdots}
\ .
\]
Likewise, for a chain map
\begin{equation*}
 \xymatrix{
 \cdots\ar[r]^-{d_{2k+3}} & M_{2k+2}  \ar[r]^{d_{2k+2}} \ar[d]|-{f_{2k+2}} & M_{2k+1}  \ar[r]^{d_{2k+1}} \ar[d]|-{f_{2k+1}} & M_{2k}  \ar[r]^{d_{2k}} \ar[d]|-{f_{2k}} &  \cdots\\
 \cdots \ar[r]_-{d_{2k+3}} & N_{2k+2} \ar[r]_{d_{2k+2}} & N_{2k+1} \ar[r]_{d_{2k+1}} & N_{2k} \ar[r]_{d_{2k}}  & \cdots
 }   
\end{equation*}
the obtained morphism of $p$-complexes of $A$-modules is given by
\[
\xymatrix{
 \cdots\ar[r]^-{d_{2k+3}} & M_{2k+2}  \ar[r]^{d_{2k+2}} \ar[d]|-{f_{2k+2}} & M_{2k+1}  \ar[d]|-{f_{2k+1}} \ar@{=}[r] & \cdots \ar@{=}[r] &  M_{2k+1}   \ar[r]^{d_{2k+1}} \ar[d]|-{f_{2k+1}} & M_{2k}  \ar[r]^{d_{2k}} \ar[d]|-{f_{2k}} &  \cdots\\
 \cdots \ar[r]_-{d_{2k+3}} & N_{2k+2} \ar[r]_{d_{2k+2}} & N_{2k+1} \ar@{=}[r] & \cdots \ar@{=}[r] & N_{2k+1}  \ar[r]_{d_{2k+1}} & N_{2k} \ar[r]_{d_{2k}}  & \cdots
 }   
\]
This is clearly a functor from the abelian category of chain complexes over $A$ into the category of $p$-DG modules over $(A,\dif_0)$, ($p$-complexes of $A$-modules). Denote this functor by $\widehat{\mc{P}}$.

\begin{lem}
The functor $\widehat{\mc{P}}$ preserves the ideal of null-homotopic morphisms.
\end{lem}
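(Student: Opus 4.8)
The plan is to show that $\widehat{\mc{P}}$ sends a null-homotopy to a $p$-null-homotopy, by writing down an explicit homotopy on the $p$-extended complex built out of the original contracting homotopy $h$. Recall that $\widehat{\mc{P}}$ repeats each module in odd homological degree $p-1$ times (connected by identity maps), and leaves the modules in even degree alone. So a typical stretch of the $p$-extended complex near $M_{2k}$ reads
\[
\cdots \xrightarrow{d_{2k+2}} M_{2k+1}\xrightarrow{=}\cdots\xrightarrow{=}M_{2k+1}\xrightarrow{d_{2k+1}}M_{2k}\xrightarrow{=}\cdots
\]
wait, that's not quite the picture; the even terms are \emph{not} repeated, so one must be careful: the $p$-boundary $\dif_M$ is the identity on the repeated copies of an odd-degree module and equals $d$ crossing from the last copy of an odd module to the adjacent even module, or from an even module into the first copy of the next odd module down. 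The key combinatorial fact is that $\dif_M^p$ is zero on the nose (a string of $p$ identities is impossible since each block of identities has length only $p-1$, and two genuine $d$-maps in a row compose to zero by $d^2=0$), which is exactly why $\widehat{\mc{P}}$ lands in $p$-complexes.

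First I would fix a chain map $f$ that is null-homotopic via $h$, so $f = d_N h + h d_M$ degreewise, and define the candidate $p$-homotopy $\widehat{h}\colon \widehat{\mc{P}}(M)_\bullet \to \widehat{\mc{P}}(N)_{\bullet+p-1}$. The natural choice is: on a copy of an odd-degree module $M_{2k+1}$, let $\widehat h$ be $h_{2k+1}\colon M_{2k+1}\to N_{2k+1}$ followed by the appropriate identity maps to land $p-1$ steps up; on an even-degree module $M_{2k}$, let $\widehat h$ be $h_{2k}$, again shifted by $p-1$ via identities. (One has to pin down exactly which copy of the target odd module one lands in, dictated by the bidegree bookkeeping, but there is a unique consistent choice.) Then I would compute $\sum_{i=0}^{p-1}\dif_N^{i}\,\widehat h\,\dif_M^{p-1-i}$ term by term, distinguishing whether the source lies in an even-degree slot, the top odd-degree copy, an interior odd-degree copy, or the bottom odd-degree copy. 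In each case most of the $p-1$ boundary maps that get applied are identities, so the sum telescopes and collapses to a single surviving term, and that term is $d_N h + h d_M$ applied in the right spot, i.e. exactly the component of $\widehat{\mc{P}}(f)$. For the interior copies of an odd module, where $\widehat{\mc{P}}(f)$ acts by a single identity map, the claim is that the homotopy sum likewise produces that identity: here the two pieces $d_N\widehat h$ and $\widehat h d_M$ each contribute an identity-composed-with-$h$ term coming from opposite ends, and the relation $f = d_N h + h d_M$ at the original index is what makes them add up correctly.

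The main obstacle I anticipate is purely bookkeeping: getting the degree shifts and the choice of "which repeated copy" exactly right so that the telescoping sums land where $\widehat{\mc{P}}(f)$ actually is, and checking that no sign issues intervene (there are none, since we are in the strictly $p$-nilpotent setting with no super signs on the $\dif_0$-action at this stage, the ground algebra $A$ sitting in degree zero with trivial differential). A cleaner, more conceptual route that sidesteps the casework: observe that $\widehat{\mc{P}}$ is visibly a functor of additive categories, and a null-homotopic map $f$ factors as $M \xrightarrow{(\id,\,d_M)} M\oplus M[\pm1] \xrightarrow{(\text{something})} N$ — more precisely, $f$ being null-homotopic means $f$ factors through the cone of $\id_M$ (equivalently through a contractible complex), and $\widehat{\mc{P}}$ carries contractible complexes to contractible $p$-complexes because $\widehat{\mc{P}}$ of the cone on $\id_{M}$ is built from blocks $U_{p-1}\otimes M_i$ which are free $H_0$-modules hence null-homotopic. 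So I would: (1) recall/verify that $f$ null-homotopic iff $f$ factors through a contractible complex; (2) show $\widehat{\mc{P}}(\text{contractible})$ is a direct sum of free $H_0$-modules tensored with $A$-modules, hence contractible in $\mc{C}(A,\dif_0)$; (3) conclude $\widehat{\mc{P}}(f)$ factors through a contractible $p$-complex, so is $p$-null-homotopic. Step (2) is where the structure of $\widehat{\mc{P}}$ — doubling odd terms to make length-$p$ strings of the form $\Bbbk\xrightarrow{=}\cdots\xrightarrow{=}\Bbbk$ out of an identity morphism — does the real work, and it is exactly the identity $U_{p-1} = H_0$ (a free rank-one $H_0$-module) that makes it go. I would present the explicit-homotopy argument as the main proof since it is self-contained, and remark on the conceptual reformulation.
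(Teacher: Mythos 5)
Your main argument is the same in spirit as the paper's: extend the contracting homotopy $h$ to a map $\widehat{h}\colon \widehat{\mc{P}}(M)_\bullet\to\widehat{\mc{P}}(N)_{\bullet+p-1}$ and verify the identity $\widehat{\mc{P}}(f)=\sum_i\dif_N^{i}\,\widehat h\,\dif_M^{p-1-i}$ term by term. However, your definition of $\widehat h$ is not the ``unique consistent choice'' you are hoping for; as written it does not typecheck. If a copy of $M_{2k+1}$ sits in degree $kp+j$ with $2\le j\le p-1$, then $p-1$ steps up is degree $kp+j+p-1=(k+1)p+(j-1)$, which is a repeated copy of $N_{2k+3}$; it cannot be reached from $N_{2k+1}$ or $N_{2k+2}$ by identity maps, so there is no way to place $h_{2k+1}$ there. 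The correct (and degree-wise forced) choice, which is the one the paper makes, is: $\widehat h=h_{2k}$ on each unrepeated $M_{2k}$, landing in the top (leftmost) copy of $N_{2k+1}$; $\widehat h=h_{2k+1}$ on the bottom (rightmost) copy of $M_{2k+1}$ \emph{only}, landing in the unrepeated $N_{2k+2}$; and $\widehat h=0$ on all other repeated copies. With that choice the telescoping you describe does occur: at each source exactly two summands survive, equal to $d_Nh$ and $hd_M$ at the original index, and their sum is the relevant component of $\widehat{\mc{P}}(f)$. Note also that on the interior copies that component is $f_{2k+1}$, not ``a single identity map'': the identities are the \emph{boundary} maps between repeated copies, whereas the chain map $\widehat{\mc{P}}(f)$ is $f_{2k+1}$ on every copy. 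Your closing appeal to $f=d_Nh+hd_M$ shows you are computing toward the right target, but the phrasing conflates the two.

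Your conceptual alternative is sound and genuinely different from the paper's argument: $f$ is null-homotopic iff it factors through $\mathrm{Cone}(\id_M)$, which splits as a direct sum of two-term complexes $M_i\xrightarrow{\ =\ }M_i$; applying $\widehat{\mc{P}}$ to each such piece yields a length-$p$ string of identities, i.e.\ $M_i\otimes U_{p-1}=M_i\otimes H_0$, which is free over $H_0$ and hence contractible, so $\widehat{\mc{P}}(f)$ factors through a contractible $p$-complex. This route avoids all index bookkeeping, at the cost of invoking the hopfological fact that morphisms of $p$-complexes factoring through objects of the form $(\mbox{-})\otimes H_0$ are exactly the null-homotopic ones.
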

\begin{proof}
It suffices to show that $\widehat{\mc{P}}$ sends null-homotopic morphisms in $\mc{C}(A,d_0)$ to null-homotopic morphisms in $\mc{C}(A,\dif_0)$. Suppose $f=dh+hd$ is a null-homotopic morphism in $\mc{C}(A,d_0)$. We first extend $h: M_\bullet\lra N_{\bullet+1}$ to a map 
$$\hat{\mc{P}}(h): \hat{\mc{P}}(M)_\bullet\lra \hat{\mc{P}}(N)_{\bullet+p-1}.$$
On unrepeated terms, $\hat{\mc{P}}(h)$ sends $M_{2k}$ to the copy of $N_{2k+1}$ sitting as the leftmost term in the repeated $N_{2k+1}$'s, while on the repeated terms, it only sends the rightmost $M_{2k+1}$ to the unrepeated $N_{2k+2}$ and acts by zero on the other repeated $M_{2k+1}$'s. Schematically, this has the effect as in the diagram below:
\[
\xymatrix{
 \cdots\ar@{=}[r] & M_{2k+3} \ar[r]^-{d_{2k+3}} & M_{2k+2}  \ar[r]^{d_{2k+2}} & M_{2k+1}  \ar[dlll]|-{0}  \ar@{=}[r] & \cdots \ar@{=}[r] &  M_{2k+1}   \ar[r]^{d_{2k+1}} \ar[dlll]|-{h_{2k+1}} & M_{2k} \ar[r]^{d_{2k}} \ar[dlll]|-{h_{2k}} &  M_{2k-1} \ar[dlll]|-{0} \ar@{=}[r] &\cdots\\
\cdots \ar@{=}[r]&  N_{2k+3} \ar[r]_-{d_{2k+3}} & N_{2k+2} \ar[r]_{d_{2k+2}} & N_{2k+1} \ar@{=}[r] & \cdots \ar@{=}[r] & N_{2k+1}  \ar[r]_{d_{2k+1}} & N_{2k} \ar[r]_{d_{2k}}  & N_{2k-1} \ar@{=}[r]&\cdots
 }   
 \ .
\]

Now it is an easy exercise to check that 
\begin{equation}\label{eqn-extended-homotopy}
\widehat{\mc{P}}(f)=\sum_{i=0}^{p-1} \dif^{p-1-i}_M \circ \widehat{\mc{P}}(h) \circ \dif^i_N,
\end{equation}
where $\dif_{M}$ denotes the extended $p$-differential on $\widehat{\mc{P}}(M)$, and similarly for $\dif_N$. For instance, between the rightmost repeated $M_{2k+1}$ and $N_{2k+1}$, the left-hand side of equation \eqref{eqn-extended-homotopy} equals $f_{2k+1}$, while there are only two nonzero terms contributing to the right-hand side of \eqref{eqn-extended-homotopy}, which are equal to, respectively, 
\[
\dif_N^{p-1}\circ \widehat{\mc{P}}(h)=d_{2k+2}\circ h_{2k+1}, \quad \quad \dif_N^{p-2}\circ \widehat{\mc{P}}(h)\circ \dif_M=h_{2k}\circ d_{2k+1}.
\]
The sum of these two nonzero terms is precisely $f_{2k+1}$ by the null-homotopy assumption on $h$. One similarly checks for the other repeated and unrepeated terms, and the lemma follows.
\end{proof}

This lemma implies that $\widehat{\mc{P}}$ descends to a functor
\begin{equation}\label{equation-functor-P}
    \mc{P}: \mc{C}(A,d_0)\lra \mc{C}(A,\dif_0).
\end{equation}
which we call the \emph{$p$-extension functor}.

\begin{prop}
The $p$-extension functor $\mc{P}$ is exact.
\end{prop}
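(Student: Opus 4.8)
The plan is to verify that $\mc{P}$ is a triangulated (i.e.\ ``exact'') functor: that it intertwines the two suspension functors up to a natural isomorphism and carries distinguished triangles to distinguished triangles. Since by the previous lemma $\widehat{\mc{P}}$ already descends to $\mc{P}$ on homotopy categories, it suffices to establish three properties of $\widehat{\mc{P}}$, viewed as a functor between the underlying categories of ($p$-)complexes of $A$-modules: (i) it is additive; (ii) it carries a short exact sequence of ($p$-)complexes which is split after forgetting the ($p$-)differential to a short exact sequence of the same kind; and (iii) it sends contractible complexes to contractible $p$-complexes. Recall from \cite{Hopforoots, QYHopf, KQ} that on both sides the triangulated structure is the standard hopfological one, for which such a split short exact sequence induces a distinguished triangle with connecting morphism natural in the sequence --- equivalently, $\mc{C}(A,d_0)$ and $\mc{C}(A,\dif_0)$ are the stable categories of the Frobenius categories of ($p$-)complexes with this split exact structure, whose projective-injective objects are precisely the contractible ones. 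Granting (i)--(iii), the general machinery of exact functors between Frobenius categories then yields the exactness of $\mc{P}$.

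First I would dispose of (i)--(iii). Additivity is immediate. For (ii): since $\widehat{\mc{P}}$ only duplicates the odd-degree terms and acts componentwise on morphisms, a degreewise-split short exact sequence $0\to M'\to M\to M''\to 0$ of complexes goes to $0\to\widehat{\mc{P}}(M')\to\widehat{\mc{P}}(M)\to\widehat{\mc{P}}(M'')\to 0$, which is still short exact and still split in each degree. For (iii): if $C$ is contractible then $\id_C$ is null-homotopic, so by the previous lemma $\id_{\widehat{\mc{P}}(C)}=\widehat{\mc{P}}(\id_C)$ is null-homotopic, whence $\widehat{\mc{P}}(C)\cong 0$ in $\mc{C}(A,\dif_0)$; concretely, $\widehat{\mc{P}}$ sends the elementary contractible complex $M\xrightarrow{\id}M$ to the elementary contractible $p$-complex $M\xrightarrow{\id}M\xrightarrow{\id}\cdots\xrightarrow{\id}M$ with $p$ terms, which is an image of a free $H_0$-module, and a general contractible complex to a direct sum of such.

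With (i)--(iii) in place, the suspension compatibility falls out by applying $\widehat{\mc{P}}$ to the degreewise-split sequence $0\to X\to\mathrm{Cone}(\id_X)\to X[1]\to 0$: the middle term becomes contractible, so in the resulting distinguished triangle the connecting map is an isomorphism $\mc{P}(X[1])\xrightarrow{\sim}\mc{P}(X)[1]_\dif$, natural in $X$. Likewise, every distinguished triangle of $\mc{C}(A,d_0)$ is isomorphic to one arising from a degreewise-split sequence $0\to Y\to\mathrm{Cone}(f)\to X[1]\to 0$; applying $\widehat{\mc{P}}$ to this sequence, and using naturality of the connecting morphism together with the isomorphism just obtained, identifies the image triangle with the cone triangle of $\widehat{\mc{P}}(f)$, which is distinguished. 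I expect the one genuinely delicate point to be the triangulated bookkeeping: confirming that ``split over $A$'' is the correct class of conflations on both sides, and that the connecting morphisms are compatible with $\widehat{\mc{P}}$ so that the image of the cone triangle of $f$ is indeed the cone triangle of $\widehat{\mc{P}}(f)$. Once the conflation structures are pinned down this is formal, and in particular it bypasses the unpleasant direct computation of $\widehat{\mc{P}}(M)\otimes U_{p-2}\{p-1\}$ that a head-on verification of $\mc{P}\circ[1]\cong[1]_\dif\circ\mc{P}$ would otherwise require.
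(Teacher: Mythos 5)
Your proof is correct, and it reaches the conclusion by a genuinely different route from the paper's on one of the two points. The half of your argument dealing with distinguished triangles is essentially the paper's: both rest on the characterization of distinguished triangles on either side as short exact sequences that split after forgetting the ($p$-)differential (the paper invokes \cite[Lemma 4.3]{QYHopf} for the $p$-side), plus the observation that $\widehat{\mc{P}}$ visibly preserves such sequences; the paper is in fact \emph{less} explicit than you are about matching the connecting morphisms. The real divergence is in the suspension compatibility. The paper proves $\mc{P}(M[1]_d)\cong \mc{P}(M)[1]_\dif$ by direct computation: it identifies each block of repeated odd-degree terms of $\mc{P}(M)$ with $M_{2k-1}\otimes U_{p-2}\{kp-1\}$ and uses the decomposition of $U_{p-2}\{p-1\}\otimes U_{p-2}\{kp-1\}$ into $U_0\{kp\}$ plus free $H_0$-summands. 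You instead extract the isomorphism formally from the triangle associated to $0\to X\to\mathrm{Cone}(\Id_X)\to X[1]\to 0$, using that $\mc{P}$ kills contractibles (a clean consequence of the preceding lemma applied to $\Id_C$); this is the standard argument that an additive functor between Frobenius categories preserving conflations and projective--injectives induces a triangle functor on stable categories, and it correctly defers the connecting-map bookkeeping to that general machinery. Your route is more robust and avoids the $U_{p-2}$ tensor calculus entirely; what the paper's computation buys is an explicit handle on how $\mc{P}$ transforms homological shifts into $q$-degree shifts, which the authors exploit later (e.g.\ Remark \ref{rem-half-grading-shifts}, where the failure of $\mc{P}$ to intertwine half grading shifts hinges on exactly this comparison of $[1]_d$ with $[1]_\dif$). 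I see no gap in your argument.
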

\begin{proof}
It suffices to show that $\mc{P}$ commutes with homological shifts in both categories and sends distinguished triangles to distinguished triangles.

On $\mc{C}(A,d_0)$, the homological shift $[1]_d$ moves every term of a complex one step to the left, while the homological shift $[1]_\dif$ on $\mc{C}(A,\dif_0)$ is given by tensoring a $p$-complex of $A$-modules with the $(p-1)$-dimensional complex
\[
U_{p-2}\{p-1\}=
\left(
\xymatrix{
\underline{\Bbbk} \ar@{=}[r] & \cdots \ar@{=}[r] & { \Bbbk }
}
\right)
\]
where the underlined $\Bbbk$ sits in degree $p-1$. Note that the collection of repeated terms can be identified with
\[
\left(
\xymatrix{
\underline{M_{2k-1}}\ar@{=}[r] & \cdots \ar@{=}[r] & M_{2k-1},
}
\right) \cong M_{2k-1}\otimes U_{p-2}\{kp-1\},
\]
where the underlined term sits in degree $kp-1$.
Using the fact that 
$$U_{p-2}\{p-1\}\otimes U_{p-2} \{kp-1\}\cong U_0\{kp\} \oplus F $$ 
where $F$ is a direct sum of graded free $H_0$-modules, we see that
\[
U_{p-2}\{p-1\}\otimes (M_{2k-1}\otimes U_{p-2}\{kp-1\}) \cong M_{2k-1}\{kp\}
\]
in the homotopy category $\mc{C}(A,\dif_0)$. From this it follows that $U_{p-2}\{p-1\}\otimes \mc{P}(M)$ is homotopy equivalent to the $p$-complex $\mc{P}(M[1]_d)$. Thus $\mc{P}$ commutes with homological shifts.

To show that $\mc{P}$ sends distinguished triangles in $\mc{C}(A,d_0)$ to those in $\mc{C}(A,\dif_0)$, we use the characterization of distinguished triangles in $\mc{C}(A,d_0)$.
Recall that a distinguished triangle $P \rightarrow Q \rightarrow R$ in $\mc{C}(A,d_0)$ is a short exact sequence $0 \rightarrow P \rightarrow Q \rightarrow R \rightarrow 0$ of complexes of $A$-modules which split when ignoring the differentials.  After applying the $p$-extension functor $\mc{P}$ to $0 \rightarrow P \rightarrow Q \rightarrow R \rightarrow 0$,
one gets a short exact sequences of $p$-complexes which splits when ignoring the $p$-differentials.  This is precisely the condition that 
$\mc{P}(P) \rightarrow \mc{P}(Q) \rightarrow \mc{P}(R)$ is a distinguished triangle in $\mc{C}(A,\dif_0)$ (see \cite[Lemma 4.3]{QYHopf}).
\end{proof}

\paragraph{Totalization.}
Another useful functor is the \emph{totalization functor} $\mc{T}$, which we introduce next. To do so, we will need the following result.

\begin{lem}\label{lemma-acylicity-commutator-relation}
Let $(K_\bullet,\dif_K)$ be a $p$-complex of modules over $A$. Then $K_\bullet$ is null-homotopic if and only if there exists an $A$-module map $\sigma: K_{\bullet}\lra K_{\bullet +1}$ such that $\dif_K\sigma-\sigma\dif_K=\Id_K$.
\end{lem}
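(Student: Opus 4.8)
The plan is to reduce the statement to a structural fact about finite-dimensional modules over the Hopf algebra $H_0=\Bbbk[\dif_0]/(\dif_0^p)$, namely that a $p$-complex $(K_\bullet,\dif_K)$ is null-homotopic in $\mc{C}(A,\dif_0)$ if and only if it is projective (equivalently free, since $H_0$ is a local Frobenius algebra) as an $A\otimes H_0$-module. I would first record the ``easy'' direction: given $\sigma$ with $\dif_K\sigma-\sigma\dif_K=\Id_K$, one manufactures an explicit $p$-null-homotopy expressing $\Id_K$ in the form \eqref{eqn-p-null-homotopy}. Concretely, set $h:=\sigma$ and compute $\sum_{i=0}^{p-1}\dif_K^{i}\circ\sigma\circ\dif_K^{p-1-i}$; using the commutator relation $\dif_K\sigma=\sigma\dif_K+\Id$ repeatedly to move all $\dif_K$'s to one side, this telescopes. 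The relevant combinatorial identity is that in the associative algebra generated by $\dif_K$ and $\sigma$ subject to $[\dif_K,\sigma]=\Id$, one has $\sum_{i=0}^{p-1}\dif_K^{i}\sigma\dif_K^{p-1-i}=\binom{p}{2}\dif_K^{p-1}\cdot(\text{something})+\dots$; the key point is that modulo $\dif_K^p=0$ the sum equals (a nonzero scalar times) $\Id_K$, since the ``extra'' terms all carry a factor of $\dif_K^p$. Actually it is cleaner to argue: the relation $\dif_K\sigma-\sigma\dif_K=\Id$ exhibits $(K_\bullet,\dif_K)$ as a module over the Weyl-type algebra, and one checks directly that $K_\bullet$ is then a free $H_0$-module (the operator $\sigma$ lets one build an $H_0$-basis), hence null-homotopic because $\Id$ factors through a free module.

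For the converse, suppose $K_\bullet$ is null-homotopic, so there is $h\colon K_\bullet\lra K_{\bullet+p-1}$ with $\Id_K=\sum_{i=0}^{p-1}\dif_K^{i}\circ h\circ\dif_K^{p-1-i}$. I would invoke the structural description recalled earlier in the section: every $p$-complex decomposes over $H_0$ into slash homology plus a maximal projective (free) summand, cf. \eqref{eqn-decomp-of-M-into-cohomology-and-free}, and null-homotopic means the slash homology vanishes, i.e. $K_\bullet$ is a free $A\otimes H_0$-module. On a free $H_0$-module $F=\bigoplus_j H_0\{a_j\}$ one has the canonical ``contracting'' operator $\sigma_0$ which on each summand $H_0=\Bbbk\langle 1,\dif_0,\dots,\dif_0^{p-1}\rangle$ sends $\dif_0^{k}\mapsto \tfrac{1}{?}\dif_0^{k-1}$ — more precisely, one wants $\sigma$ with $[\dif_0,\sigma]=\Id$ on $H_0$, which exists because $\dif_0$ acting on $H_0$ is a single nilpotent Jordan block of size $p$ and in characteristic $p$ one can solve $[\dif_0,\sigma]=\Id$ on such a block (this is exactly where $\chr\Bbbk=p$ is used: on a Jordan block of size $<p$ no such $\sigma$ exists, on a block of size $p$ it does). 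Transporting $\sigma_0$ through the $A\otimes H_0$-module isomorphism $K_\bullet\cong F$, and noting $\sigma_0$ is $A$-linear by construction, yields the desired $\sigma\colon K_\bullet\lra K_{\bullet+1}$ with $\dif_K\sigma-\sigma\dif_K=\Id_K$.

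The main obstacle, and the only place requiring genuine care, is the converse direction: passing from the abstract existence of a $p$-null-homotopy $h$ to the projectivity/freeness of $K_\bullet$ as an $A\otimes H_0$-module, and then exhibiting the specific $A$-linear operator $\sigma$. If one is willing to cite the standard hopfological fact (e.g. from \cite{Hopforoots, QYHopf}) that null-homotopic $p$-complexes are precisely the projective $A\otimes H_0$-modules, the argument becomes short; otherwise one must reprove that equivalence here, which amounts to showing that the identity of $K_\bullet$ factoring through a free module forces $K_\bullet$ itself to be free (using that $H_0$ is self-injective, so $\Id$ factoring through a projective implies $K_\bullet$ is a summand of a projective, hence projective). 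I expect the write-up to handle the forward direction by the explicit telescoping computation sketched above and the converse by the $A\otimes H_0$-freeness argument, with the characteristic-$p$ solvability of $[\dif_0,\sigma]=\Id$ on a size-$p$ Jordan block isolated as the crucial input.
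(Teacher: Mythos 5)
Your overall strategy is viable but it is genuinely different from, and considerably heavier than, the paper's argument. The paper proves both directions by a two-line computation with the iterated adjoint $\mathrm{ad}_{\dif}(\phi):=[\dif_K,\phi]$: given $\sigma$ with $[\dif_K,\sigma]=\Id_K$ one takes $h:=-\sigma^{p-1}$, so that $\mathrm{ad}_{\dif}^{p-1}(h)=-(p-1)!\,\Id_K=\Id_K$ by Wilson's theorem, and the congruence $\binom{p-1}{i}\equiv(-1)^i\pmod p$ identifies $\mathrm{ad}_{\dif}^{p-1}(h)$ with the null-homotopy sum $\sum_i \dif_K^{p-1-i}\circ h\circ\dif_K^{i}$; conversely, given the null-homotopy $h$ one simply sets $\sigma:=\mathrm{ad}_{\dif}^{p-2}(h)$, which is automatically $A$-linear. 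This avoids all structure theory of $H_0$-modules. Your route instead passes through the equivalence ``null-homotopic $\Leftrightarrow$ (relatively) free over $A\otimes H_0$'' and the Jordan-block analysis of when $[\dif,\sigma]=\Id$ is solvable; that is correct in outline (and your observation that solvability on a single Jordan block forces the block to have size exactly $p$ is right), but it imports the hopfological decomposition \eqref{eqn-decomp-of-M-into-cohomology-and-free} as a black box, needs care to keep everything $A$-linear and to pass from ``$K$ is a summand of $N\otimes H_0$'' back to an operator $\sigma$ on $K$ itself, and in the converse direction you never actually produce $\sigma$ except by transport of structure.

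One concrete error to fix: your first attempt at the forward direction, taking $h:=\sigma$, does not work. With $[\dif_K,\sigma]=\Id_K$ one computes $\dif_K^{i}\sigma\dif_K^{p-1-i}=\dif_K^{p-1}\sigma-(p-1-i)\dif_K^{p-2}$, so
\[
\sum_{i=0}^{p-1}\dif_K^{i}\,\sigma\,\dif_K^{p-1-i}
= p\,\dif_K^{p-1}\sigma-\tbinom{p}{2}\dif_K^{p-2}=0
\]
in odd characteristic $p$ --- the ``extra'' terms do not all carry a factor of $\dif_K^p$, and the sum is zero rather than a nonzero multiple of the identity. The correct choice is $h=-\sigma^{p-1}$ as above. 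Your fallback (freeness of $K$ as an $H_0$-module, e.g.\ via $\ker\dif_K\subseteq\mathrm{im}\,\dif_K^{p-1}$, which follows from $\dif_K^{p-1}\sigma^{p-1}v=(p-1)!\,v$ for $v\in\ker\dif_K$) does rescue that direction, but at the cost of the machinery the paper's proof is designed to bypass.
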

\begin{proof}
By definition (see equation \eqref{eqn-p-null-homotopy}), a $p$-complex is null-homotopic if and only if there is an $A$-linear map $h:K_{\bullet}\lra K_{\bullet+p-1}$ such that
$$\Id_K=\sum_{i=0}^{p-1} \dif_K^{p-1-i} \circ h \circ \dif_K^i.$$
For any linear map $\phi$ on $K_\bullet$, let $\mathrm{ad}_{\dif}(\phi):=[\dif_K,\phi]$. Now if $\sigma$ is as given satisfies $[\dif_K,\sigma]=\Id_K$, then the map
$h:=-\sigma^{p-1}$ satisfies
$[\dif_K,h]=-(p-1)\sigma^{p-2}$, and, inductively,
$$\mathrm{ad}_{\dif}^r(h)=-(p-1)\dots (p-r)\sigma^{p-r-1}.$$ 
In particular, we have 
$$
\sum_{i=0}^{p-1} (-1)^i \binom{p-1}{i} \dif_K^{p-1-i} \circ h \circ \dif_K^i=\mathrm{ad}_{\dif}^{p-1}(h)=-(p-1)!\Id_K=\Id_K.
$$
Since $\binom{p-1}{i}=(-1)^i$ in characteristic $p$, the null-homotopy formula follows.

Conversely, if $h$ is an $A$-linear null-homotopy map, then
$
\sigma:=\mathrm{ad}_{\dif}^{p-2}(h)
$
satisfies $[\dif_K, \sigma]=\Id_K$. As the iterated commutator of $A$-linear maps, $\sigma$ is also $A$-linear. The lemma follows.
\end{proof}

 Let $(A,\dif_A)$ be a $p$-DG algebra where $\dif_A$ has degree two\footnote{This degree convention is to match the usual
representation theoretical convention. See, for instance, \cite{KQ}. }. We regard $A$ as an algebra object in the graded
module category of the graded Hopf algebra $H_q=\Bbbk[\dif_q]/(\dif_q^p)$ in which $\mathrm{deg}(\dif_q)=2$. The
\emph{smash product algebra} $A\# H_q$ is then the graded algebra $A\otimes H_q$ containing the subalgebras $A\otimes 1$
and $1\otimes H_q$ and subject to the commutation relations
\[
(1\otimes \dif_q)(a\otimes 1)=a\otimes \dif_q + \dif_A(a)\otimes 1
\]
for any $a\in A$. Graded modules over $A\# H_q$ are also called \emph{$p$-DG modules over $A$}, the collection of which will be denoted $(A,\dif_A)\dmod$.

In analogy with the Hopf algebra $H_0$, we introduce the indecomposable \emph{balanced} $H_q$-complexes
\begin{equation}\label{eqn-Vi}
    V_i:=
    \left(
    \xymatrix{
\overset{-i}{\Bbbk} \ar@{=}[r] & \overset{-i+2}{\Bbbk} \ar@{=}[r] & \cdots \ar@{=}[r] & \overset{i-2}{\Bbbk} \ar@{=}[r] & \overset{i}{\Bbbk}
}
    \right)
\end{equation}
for $i=0, \dots, p-1$. 

As a matter of notation, we will denote the $q$-grading shifted copy of $V_i$ by $q^a V_i$, where the lowest degree term sits in degree $a-i$. Furthermore, if $M$ is any $p$-DG module over $A$, we will denote by $q^a M$ the $p$-DG module whose underlying module is the same as $M$, but grading shifted up by $a\in \Z$.

The \emph{$p$-DG homotopy category} $\mc{C}(A,\dif_A)$ can be defined, similarly as for $\mc{C}(A,\dif_0)$ before, by taking the quotient of the abelian category of $p$-DG modules by the ideal of \emph{null-homotopic morphisms}, which consists of homogeneous $A\# H_q$-module maps $f:(M,\dif_M)\lra (N,\dif_N)$ of the form
\begin{equation}
    f = \sum_{i=0}^{p-1} \dif_N^i \circ h \circ \dif_M^{p-1-i},
\end{equation}
where $h$ is an $A$-linear homomorphism from $(M,\dif_M)$ to $(N,\dif_N)$ of degree $2-2p$.

A $p$-DG homomorphism $f:(M,\dif_M)\lra (N,\dif_N)$ is called a \emph{quasi-isomorphism} if, again, $f$ induces an isomorphism of slash homology with respect to the $p$-differentials on $M$ and $N$. Inverting quasi-isomorphisms in $\mc{C}(A,\dif_A)$ results in the \emph{$p$-DG derived category} $\mc{D}(A,\dif_A)$.

Equip $A\# H_q$ with the zero $p$-differential $\dif_0$, and $\dif_0$ carries an additional $\Z$-grading that is independent of the original grading on $A$ and $H_q$. A $p$-complex of graded $A\# H_q$-modules thus has a $\Z\times \Z$-grading, where $\dif_q$ has degree $(2,0)$ and $\dif_0$ has degree $(0,-1)$. Consider the functor
\begin{equation}
    \widehat{\mc{T}}: (A\# H_q,\dif_0)\dmod \lra (A,\dif_A)\dmod
\end{equation}
defined as follows. To a $p$-complex $M$ of $A\# H_q$-modules
\[
\xymatrix{
\cdots \ar[r]^-{\dif_{M}}  & M_{i+1} \ar[r]^-{\dif_{M}} & M_{i} \ar[r]^-{\dif_{M}} & M_{i-1} \ar[r]^-{\dif_{M}} & M_{i-2} \ar[r]^-{\dif_{M}} & \cdots
} ,
\]
where each term $M_i$ is a graded $A$-module together with an internal $p$-differential $\dif_i$, compatible with $\dif_A$,
we assign to it the singly graded $A$-module $\oplus_{i\in \Z}q^{-2i} M_i$ whose new $p$-DG structure is given by
\[
\dif_T(m):=\dif_M(m)+\dif_i(m)\in q^{-2i+2}M_{i-1}\oplus q^{-2i}M_i
\]
if $m\in q^{-2i}M_i$.

\begin{lem}\label{lem-totalization-functor}
The functor $\widehat{\mc{T}}$ descends to a triangulated functor on the $p$-DG homotopy categories:
\begin{equation*}
    \mc{T}: \mc{C}(A\# H_q, \dif_0) \lra \mc{C}(A,\dif_A)
    \ .
\end{equation*}
\end{lem}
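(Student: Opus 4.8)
The plan is to show that $\widehat{\mc{T}}$ sends null-homotopic morphisms of $p$-complexes of $A\# H_q$-modules to null-homotopic morphisms of $p$-DG modules over $A$, and that it is compatible with the triangulated structure. By the characterization of null-homotopy given in Lemma~\ref{lemma-acylicity-commutator-relation} (applied in the appropriate module category), it suffices to work at the level of the commutator relation rather than with the full $(p-1)$-term null-homotopy formula. So first I would take a null-homotopic morphism $f\colon M_\bullet \lra N_\bullet$ in $\mc{C}(A\# H_q,\dif_0)$; unwinding, there is an $A\# H_q$-linear $h$ with $f=\sum_{i=0}^{p-1}\dif_N^i\circ h\circ \dif_M^{p-1-i}$, where here $\dif$ is the external differential $\dif_0$. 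One should instead use the commutator form: there is an $A\# H_q$-linear $\sigma$ (the iterated $\mathrm{ad}$ of $h$, which is $A$-linear and commutes with $\dif_A$, hence $A\# H_q$-linear) with $[\dif_0,\sigma]=f$ when $f$ is the identity case, and more generally with $\dif_0\sigma - \sigma\dif_0$ a chain-homotopy datum for $f$.

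Second, I would transport $\sigma$ through the totalization. Since $\widehat{\mc{T}}(M)=\bigoplus_i q^{-2i}M_i$ carries the total differential $\dif_T=\dif_M+\dif_{\mathrm{int}}$, and since $\sigma$ is built from the external $\dif_0=\dif_M$ only and is $A$-linear commuting with each internal $\dif_i$, the same map $\sigma$ regarded on $\bigoplus_i q^{-2i}M_i$ satisfies $[\dif_T,\sigma]=[\dif_M,\sigma]+[\dif_{\mathrm{int}},\sigma]=[\dif_M,\sigma]$, because the internal and external pieces live in independent gradings and $\sigma$ does not touch the internal one. Thus the commutator relation that witnessed null-homotopy of $f$ upstairs is preserved verbatim downstairs, and by Lemma~\ref{lemma-acylicity-commutator-relation} again (now in $(A,\dif_A)\dmod$) $\widehat{\mc{T}}(f)$ is null-homotopic. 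This shows $\widehat{\mc{T}}$ descends to a functor $\mc{T}\colon \mc{C}(A\# H_q,\dif_0)\lra \mc{C}(A,\dif_A)$.

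Third, for triangulatedness I would check two things: that $\mc{T}$ commutes (up to natural isomorphism) with the respective shift functors, and that it sends distinguished triangles to distinguished triangles. For the shift, note the shift on $\mc{C}(A\# H_q,\dif_0)$ is $-\otimes U_{p-2}\{p-1\}$ in the $\dif_0$-grading, while the shift on $\mc{C}(A,\dif_A)$ is $-\otimes V_{p-2}$ (up to a $q$-shift) in the $\dif_A$-grading; under totalization, tensoring with $U_{p-2}\{p-1\}$ in the external grading becomes, after collapsing via $q^{-2i}$, exactly tensoring with a balanced $V_{p-2}$-type complex up to the bookkeeping $q$-shift, which I would record precisely. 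For triangles: a distinguished triangle in $\mc{C}(A\# H_q,\dif_0)$ comes from a short exact sequence of $p$-complexes that splits after forgetting $\dif_0$; applying $\widehat{\mc{T}}$ (which is additive and exact on the underlying modules, as it is just a regrading and direct sum) yields a short exact sequence of $p$-DG modules splitting after forgetting $\dif_A$, which is precisely a distinguished triangle in $\mc{C}(A,\dif_A)$ by the analogue of \cite[Lemma 4.3]{QYHopf} used earlier.

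The main obstacle is bookkeeping rather than conceptual: verifying that the $A$-linear homotopy $\sigma$ built from $h$ genuinely commutes with the internal differentials $\dif_i$ (this uses that $h$ was $A\# H_q$-linear, so it already commutes with the internal $p$-DG structure, and that $\mathrm{ad}$-iterates of such maps stay $A\# H_q$-linear), and pinning down the exact grading shift in the identification of shift functors so that the isomorphism $\mc{T}(M[1]_{\dif_0})\cong \mc{T}(M)[1]_{\dif_A}$ holds on the nose in the homotopy category. Both reduce to the decomposition $U_{p-2}\otimes U_{p-2}\cong U_0\{2-p\}\oplus(\text{free})$ already invoked above, transported to the balanced $V_i$ setting.
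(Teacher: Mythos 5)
Your overall strategy (transport the witness of null-homotopy through the totalization) is viable, and your observations that $h$ and its $\mathrm{ad}_{\dif_0}$-iterates are $A\# H_q$-linear, hence commute with the internal differentials, are exactly the needed inputs. But there is a genuine gap in the last step: Lemma \ref{lemma-acylicity-commutator-relation} characterizes null-homotopic \emph{complexes} (i.e.\ when the identity map is null-homotopic), not arbitrary morphisms. For $p>2$ the implication ``$f=[\dif,\sigma]$ for some $\sigma$ implies $f$ is null-homotopic'' is false in general: inside $\Hom(M,N)$, viewed as an $H$-module via $\mathrm{ad}_{\dif}$, the null-homotopic maps form $\mathrm{Im}(\mathrm{ad}_{\dif}^{p-1})$, which is strictly smaller than $\mathrm{Ker}(\mathrm{ad}_{\dif})\cap \mathrm{Im}(\mathrm{ad}_{\dif})$ whenever $\Hom(M,N)$ contains a summand $U_i$ with $1\le i\le p-2$ in the relevant degree. (The converse direction of Lemma \ref{lemma-acylicity-commutator-relation} uses $h=-\sigma^{p-1}$, which only makes sense for endomorphisms, so it cannot be ``applied in the appropriate module category'' to a general $f\colon M\lra N$.) Hence ``$[\dif_T,\sigma]=\widehat{\mc{T}}(f)$, therefore $\widehat{\mc{T}}(f)$ is null-homotopic'' does not follow as stated.

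The repair is easy and makes your argument cleaner: do not pass to $\sigma$ at all. Since $\mathrm{ad}_{\dif_0}$ and the $\mathrm{ad}$ of the internal differential commute as operators on $\Hom(M,N)$, and the latter kills $h$, one gets $\mathrm{ad}_{\dif_T}^{p-1}(h)=\mathrm{ad}_{\dif_0}^{p-1}(h)=f$; expanding $\mathrm{ad}_{\dif_T}^{p-1}(h)$ and using $(-1)^i\binom{p-1}{i}\equiv 1\ (\mathrm{mod}\ p)$ shows that $h$ itself is a null-homotopy for $\widehat{\mc{T}}(f)$ in $(A,\dif_A)\dmod$. For comparison, the paper sidesteps morphisms altogether: it takes a contractible object $Q_\bullet$, where the commutator criterion of Lemma \ref{lemma-acylicity-commutator-relation} genuinely applies to $\Id_Q$, notes that the contracting homotopy $\sigma$ commutes with the internal $H_q$-action, and concludes $[\dif_Q+\dif_q,\sigma]=\Id_Q$, so that $\mc{T}(Q_\bullet)$ is contractible; since null-homotopic morphisms are exactly those factoring through contractible objects, this suffices for descent. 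Your discussion of shifts and distinguished triangles is consistent with what the paper leaves as an exercise.
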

\begin{proof}
 If $Q_\bullet$ is a null-homotopic $p$-complex of $A\# H_q$-modules, then, by Lemma \ref{lemma-acylicity-commutator-relation}, there exist an $A\# H_q$-linear $\sigma: Q_\bullet \lra Q_{\bullet + 1}$ such that
\[
[\dif_Q, \sigma]= \Id_{Q}.
\]
Since $h$ commutes with the $H_q$-actions, we have
\[
[\dif_Q+\dif_q, \sigma]=\Id_{Q}.
\]
It follows that $\mc{T}(Q_\bullet)$ is null-homotopic, and the functor $\mc{T}$ is well defined on the $p$-homotopy categories. It is then an easy exercise to verify that $\mc{T}$ preserves the triangulated structures on both sides.
\end{proof}

\subsection{Grothendieck rings}
We will be considering the Grothendieck rings of the homotopy categories $\mc{C}(\Bbbk,\dif_0)$ and $\mc{C}(\Bbbk,\dif_q)$.

\begin{lem}
The Grothendieck rings of the tensor triangulated categories $\mc{C}(\Bbbk,\dif_0)$ and $\mc{C}(\Bbbk,\dif_q)$ are respectively isomorphic to
\begin{align*}
    K_0(\mc{C}(\Bbbk,\dif_0) \cong \Z[a,a^{-1}]/(1+a+\cdots+a^{p-1}).\\
    K_0(\mc{C}(\Bbbk,\dif_q) \cong \Z[q,q^{-1}]/(1+q^2+\cdots+q^{2(p-1)}).
\end{align*}
\end{lem}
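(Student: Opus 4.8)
The plan is to prove the two isomorphisms in parallel, carrying out the argument in detail for $\mc{C}(\Bbbk,\dif_0)$ and then indicating the (verbatim) modifications for $\mc{C}(\Bbbk,\dif_q)$: replace $H_0$ by $H_q$, the modules $U_i$ by the balanced modules $V_i$, and the Laurent polynomial $1+a+\cdots+a^{p-1}=\gdim H_0$ by $1+q^2+\cdots+q^{2(p-1)}=\gdim H_q$. (This is essentially Khovanov's computation \cite{Hopforoots}; I would reproduce it here for self-containedness.)

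The first step is the easy \emph{upper} bound: $K_0(\mc{C}(\Bbbk,\dif_0))$ is a quotient ring of $\Z[a,a^{-1}]/(1+a+\cdots+a^{p-1})$. The key point is that, since $\Bbbk$ is a field, every short exact sequence $0\to X'\to X\to X''\to 0$ of finite-dimensional graded $H_0$-modules is automatically split over $\Bbbk$, hence — by the characterization of distinguished triangles recalled above (the $p$-analogue, \cite[Lemma 4.3]{QYHopf}) — underlies a distinguished triangle $X'\to X\to X''\to X'[1]_\dif$ in $\mc{C}(\Bbbk,\dif_0)$. Thus $[X]=[X']+[X'']$, so the identity on objects induces a surjective ring homomorphism onto $K_0(\mc{C}(\Bbbk,\dif_0))$ from the Grothendieck ring $G_0$ of the abelian category of graded $H_0$-modules; and since $\Bbbk$ is the unique graded simple $H_0$-module up to shift, $G_0\cong\Z[a,a^{-1}]$ with $a=[\Bbbk\{1\}]$. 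Finally $U_{p-1}=H_0$ is null-homotopic, so $[H_0]=1+a^{-1}+\cdots+a^{-(p-1)}$ lies in the kernel, and this element generates the ideal $(1+a+\cdots+a^{p-1})$; this yields the claimed surjection.

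The main step is to construct a left inverse. I would set, for any $p$-complex $M$ over $\Bbbk$,
\[
\psi(M) \;:=\; \overline{\gdim M}\;\in\;\Z[a,a^{-1}]/(1+a+\cdots+a^{p-1}),
\]
where $\gdim$ is the graded dimension for the internal $\Z$-grading, displayed as a Laurent polynomial in $a$, and the bar is reduction. Using the decomposition $M\cong\mH^{/}_{\bullet}(M)\oplus P(M)$ with $P(M)$ a free $H_0$-module, together with the fact that $\gdim$ of any free $H_0$-module is divisible in $\Z[a,a^{-1}]$ by $\gdim H_0$, one sees that $\psi(M)=\overline{\gdim \mH^{/}_{\bullet}(M)}$ and that $\psi$ vanishes on null-homotopic (equivalently, free) $p$-complexes. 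Since $\gdim$ is additive on short exact sequences on the nose, $\psi(X)=\psi(X')+\psi(X'')$ for every sequence as above, and as every distinguished triangle arises from such a sequence, $\psi$ is additive on triangles and descends to a homomorphism $K_0(\mc{C}(\Bbbk,\dif_0))\to\Z[a,a^{-1}]/(1+a+\cdots+a^{p-1})$. Because $\psi([\Bbbk\{n\}])=\overline{a^n}$, this is a two-sided inverse to the surjection of the previous step; multiplicativity of $\gdim$ under $\otimes_\Bbbk$ makes both maps ring homomorphisms, and the isomorphism follows.

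The part I expect to require the most care is bookkeeping rather than conceptual: first, confirming that the distinguished triangles in the $p$-homotopy category are \emph{exactly} those underlain by $\Bbbk$-split short exact sequences, so that $K_0$ really is a quotient of $G_0$ and additivity of $\psi$ need only be checked on such sequences; and second, transcribing the slash-homology decomposition $M\cong\mH^{/}_{\bullet}(M)\oplus P(M)$ and the divisibility property of $\gdim$ of free modules verbatim into the $\dif_q$-setting (degree-$2$ differential), where the relevant cyclotomic element becomes $1+q^2+\cdots+q^{2(p-1)}=\gdim H_q$. There is no homological input beyond the decomposition already recalled in this section.
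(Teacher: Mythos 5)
Your proof is correct and is precisely the standard argument from the sources the paper itself cites for this lemma (\cite{Hopforoots,KQ}); the paper gives no proof beyond that citation, so there is nothing to diverge from. The one hypothesis worth stating explicitly is that both categories must be taken to consist of \emph{finite-dimensional} $p$-complexes (as the paper intends), so that $\gdim$ is a genuine Laurent polynomial and the additivity/divisibility bookkeeping in your map $\psi$ makes sense.
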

\begin{proof}
 See \cite{Hopforoots,KQ} for the proof and motivation of introducing these rings, especially 
 the second one\footnote{Again, we emphasize that setting $\dif_q$ to be of degree two is to respect the usual
   convention in previous literature on Soergel bimodules, where polynomial generators are
   evenly graded. Alternatively, one may adapt the polynomial generators for Soergel bimodules
   in this paper to be of degree one, and both Grothendieck rings above are equal to the usual
   cyclotomic ring at a primitive $p$th root of unity.}.
\end{proof}

We will often abbreviate the Grothendieck rings by
\begin{subequations}
\begin{align}
    \mathcal{O}_p:=  K_0(\mc{C}(\Bbbk,\dif_0)) \cong \Z[a,a^{-1}]/(1+a+\cdots+a^{p-1}),\\
    \mathbb{O}_p:=K_0(\mc{C}(\Bbbk,\dif_q)) \cong \Z[q,q^{-1}]/(1+q^2+\cdots+q^{2(p-1)}).
\end{align}
\end{subequations}

\begin{rem}[Grading shift functors]
In what follows, we will freely use the notation $a^i(\mbox{-})$ and $q^{i}(\mbox{-})$, $i\in \Z$, to indicate the grading shift functors on $\mc{C}(\Bbbk,\dif_0)$ and $\mc{C}(\Bbbk,\dif_q)$. The functors then descend to multiplication by the corresponding monomials in the Grothendieck rings.
\end{rem}

In this paper, we will be working with (finite-dimensional) $a$ and $q$ bigraded complexes over $\Bbbk$ equipped with commuting differentials 
$\dif_0$ and $\dif_q$. On this category, one may consider the composition of slash-homology functors, first in the
$a$-direction and then in the $q$-direction:
\begin{equation}\label{eqn-composition-slash-homology}
    H_q\otimes H_0\dmod \xrightarrow{\mH^{/}_\bullet~\textrm{in $a$-direction}} \mc{C}(H_q,\dif_0) \xrightarrow{\mH^{/}_\bullet~\textrm{in $q$-direction}} \mc{C}(\mc{C}(\Bbbk,\dif_q),\dif_0).
\end{equation}
Here the last category stands for the homotopy category with object in $\mc{C}(\Bbbk,\dif_q)$.
As usual with taking homology of the usual bicomplexes, these functors do 
not commute, and their order matters in the construction.

\begin{cor}\label{cor-K0-bicomplexes}
The categories $\mc{C}(H_q,\dif_0)$ and $\mc{C}(\mc{C}(\Bbbk,\dif_q),\dif_0)$ have Grothendieck rings isomorphic to
\[
K_0(\mc{C}(H_q,\dif_0)) \cong \mathcal{O}_p[q,q^{-1}],
\quad \quad
K_0(\mc{C}(\mc{C}(\Bbbk,\dif_q),\dif_0)) \cong \mathcal{O}_p\otimes_\Z \mathbb{O}_p. 
\]
\end{cor}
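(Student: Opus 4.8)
The plan is to reduce the computation of both Grothendieck rings to the already-established computations of $K_0(\mc{C}(\Bbbk,\dif_0))=\mathcal{O}_p$ and $K_0(\mc{C}(\Bbbk,\dif_q))=\mathbb{O}_p$ by exhibiting each of the two categories as a ``category of $p$-complexes over a base category'' and then invoking the slash-homology decomposition \eqref{eqn-decomp-of-M-into-cohomology-and-free}. First I would treat $\mc{C}(H_q,\dif_0)$. An object here is a $p$-complex (with respect to $\dif_0$, of $q$-degree shift $0$) of graded $H_q$-modules; equivalently, a finite-dimensional bigraded $\Bbbk$-vector space with two commuting operators $\dif_0$ (bidegree $(0,-1)$) and $\dif_q$ (bidegree $(2,0)$), each $p$-nilpotent. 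Since we are over $\Bbbk$, every finitely generated graded $H_q$-module is a direct sum of shifts of the $V_i$'s of \eqref{eqn-Vi} (with $V_{p-1}$ the unique free indecomposable), and the slash-homology functor $\mH^/_\bullet$ in the $a$-direction splits off a maximal $\dif_0$-projective summand, landing us in $\mc{C}(H_q,\dif_0)$ with $\dif_0=0$ on homology, i.e.\ honestly in graded $H_q$-modules. Thus the classes $[q^a V_i]$ for $0\le i\le p-1$ generate $K_0$, and the distinguished triangles coming from the short exact sequences $0\to q^{?}V_{i-1}\to V_i\to \Bbbk\to 0$ (suitably shifted) give exactly the relations defining $\mathbb{O}_p$; because the $q$-shift is a free $\Z$-variable here (no Markov-type collapsing), we get $\mathcal{O}_p[q,q^{-1}]$ rather than a quotient. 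Concretely: $K_0$ is generated over $\Z[q,q^{-1}]$ by the classes of the $U_i$'s (the $\dif_0$-indecomposables), which satisfy the $\mathcal{O}_p$-relation $1+a+\cdots+a^{p-1}=0$, with $a$ the class of $U_0[1]_{\dif_0}$; hence $K_0(\mc{C}(H_q,\dif_0))\cong\mathcal{O}_p[q,q^{-1}]$.

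For the second isomorphism, I would argue that $\mc{C}(\mc{C}(\Bbbk,\dif_q),\dif_0)$ is the homotopy category of $p$-complexes (w.r.t.\ $\dif_0$) valued in the additive category $\mc{C}(\Bbbk,\dif_q)$, so its $K_0$ is computed by the same mechanism but with base ring $K_0(\mc{C}(\Bbbk,\dif_q))=\mathbb{O}_p$ in place of $\Z[q,q^{-1}]$. The key point is that passing from $\mc{C}(H_q,\dif_0)$ to $\mc{C}(\mc{C}(\Bbbk,\dif_q),\dif_0)$ amounts to additionally killing the free $H_q$-modules, i.e.\ imposing the relation $[q^{?}V_{p-1}]=0$, which is precisely the extra relation $1+q^2+\cdots+q^{2(p-1)}=0$ cutting $\Z[q,q^{-1}]$ down to $\mathbb{O}_p$. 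Since the $\dif_0$-structure is entirely independent of (and commutes with) the $\dif_q$-structure, the two sets of generators and relations do not interact, and one obtains $K_0\cong \mathcal{O}_p\otimes_\Z\mathbb{O}_p$. Formally I would invoke the composition of slash-homology functors in \eqref{eqn-composition-slash-homology}: taking $\mH^/_\bullet$ in the $a$-direction identifies objects up to iso in the homotopy category with honest $\dif_0$-homology (no free $H_0$-summands), and the standard dévissage over the two independent $p$-nilpotent operators then gives the tensor-product formula.

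The main obstacle I anticipate is making the ``dévissage over $\Bbbk$'' rigorous in the $2$-categorical/iterated setting: one must check that in $\mc{C}(\mc{C}(\Bbbk,\dif_q),\dif_0)$ the short exact sequences of $H_0$-modules used to derive the cyclotomic relation are still distinguished triangles after the base category has itself been replaced by a homotopy category of $p$-complexes (this is where ``the order matters'' and the functors fail to commute, as the text warns), and that no further relations are introduced. This should follow from the explicit description of distinguished triangles as split-exact-on-underlying-module sequences (as used in the proof that $\mc{P}$ is exact, citing \cite[Lemma 4.3]{QYHopf}), applied levelwise in the $q$-direction, together with the fact that $\mc{C}(\Bbbk,\dif_q)$ is Krull--Schmidt so that $K_0$ is free on the indecomposables $q^a V_i$. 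Given those structural inputs, which are all available from the earlier part of the section, the corollary is a formal consequence and I would present it as such rather than re-deriving the cyclotomic relations from scratch.
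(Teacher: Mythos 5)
Your argument is essentially the paper's: both compute $K_0$ of the underlying bigraded abelian category as $\Z[a^{\pm1},q^{\pm1}]$ and observe that passing to the homotopy category in the $a$-direction (resp.\ additionally in the $q$-direction) kills exactly the classes lying in the ideal $(1+a+\cdots+a^{p-1})$ (resp.\ $(1+a+\cdots+a^{p-1},\,1+q^2+\cdots+q^{2(p-1)})$), the paper phrasing this via Euler characteristics of objects in the kernel of the slash-homology functors and you via d\'evissage to indecomposables. One minor notational slip worth fixing: $a$ is the class of the grading shift $U_0\{1\}$, not of $U_0[1]_{\dif_0}$, since the homological shift $[1]_{\dif_0}$ descends to $-1$ in $\mathcal{O}_p$.
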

\begin{proof}
  The bigraded abelian category $H_q\otimes H_0\dmod$ has its Grothendieck ring isomorphic to $\Z[a^{\pm 1},q^{\pm 1}]$. An object lying in the first slash homology functor has Euler characteristic in the ideal 
  $$(1+a+\dots+ a^{p-1})\subset \Z[a^{\pm 1},q^{\pm 1}].$$ 
  Similarly, a module lying inside the kernel of the composition functor has Euler characteristic in the ideal
  $$(1+a+\dots+ a^{p-1}, 1+q^2+\cdots q^{2(p-1)})\subset \Z[a^{\pm 1},q^{\pm 1}].$$ 
  The result follows.
\end{proof}

\subsection{\texorpdfstring{$p$}{p}-Hochschild homology and cohomology}
Now we come to the construction of the $p$-DG simplicial bar complex of Mayer \cite{Mayer1, Mayer2} (see also \cite{KWa}). The usual simplicial bar complex of a unital, associative algebra $A$ is the complex:
\begin{subequations}\label{eqn-bar-complex}
\begin{equation}
\cdots \xrightarrow{d_{n+1}} A^{\otimes(n+2)} \stackrel{d_n}{\lra} A^{\otimes (n+1)}\xrightarrow{d_{n-1}} \cdots \stackrel{d_2}{\lra} A^{\otimes 3}\stackrel{d_1}{\lra} A^{\otimes 2} \lra 0, 
\end{equation}
where
\begin{equation}
     d_i(a_0\otimes a_1\otimes\cdots \otimes a_{i+1})=\sum_{k=0}^{i+1} (-1)^i a_0\otimes \cdots a_{k-1}\otimes a_ka_{k+1}\otimes a_{k+2} \otimes \cdots \otimes a_{i+1}
 \end{equation}
 \end{subequations}
The bar complex is a free bimodule resolution of $A$, as the augmented complex
\begin{equation}\label{eqn-bar-complex-aug}
\cdots \xrightarrow{d_{n+1}} A^{\otimes(n+2)} \stackrel{d_n}{\lra} A^{\otimes (n+1)}\xrightarrow{d_{n-1}} \cdots \stackrel{d_2}{\lra} A^{\otimes 3}\stackrel{d_1}{\lra} A^{\otimes 2} \lra A \lra 0, 
\end{equation}
is acyclic. This can be seen by constructing a left $A$-module map
\begin{equation}\label{eqn-null-homotopy-map}
    \sigma: A^{\otimes n}\lra A^{\otimes (n+1)},\quad x\mapsto x \otimes 1 
\end{equation}
as the null-homotopy.

Let $A$ be a $\Bbbk$-algebra. In analogy with the usual simplicial bar complex, Mayer introduced on the usual augmented bar complex \eqref{eqn-bar-complex} the linear map
\[
\dif_H(a_0\otimes a_1\otimes\cdots \otimes a_{i+1}):=\sum_{k=0}^{i+1}  a_0\otimes \cdots a_{k-1}\otimes a_ka_{k+1}\otimes a_{k+2} \otimes \cdots \otimes a_{i+1}.
\]
Then it is an easy exercise to show that $\dif^p\equiv 0$. Furthermore, the null-homotopy map $\sigma$ in equation \eqref{eqn-null-homotopy-map} clearly satisfies
\[
\dif_H \sigma -\sigma \dif_H = \Id.
\]
It follows that the augmented $p$-complex
\begin{equation}\label{eqn-p-bar-complex}
(\mathbf{p}_\bullet^\prime (A),\dif_H):=\left(\cdots \xrightarrow{\dif_{H}} A^{\otimes(n+2)} \stackrel{\dif_H}{\lra} A^{\otimes (n+1)}\xrightarrow{\dif_{H}} \cdots \stackrel{\dif_H}{\lra} A^{\otimes 3}\stackrel{\dif_H}{\lra} A^{\otimes 2} \lra A \lra 0\right)
\end{equation}
is acyclic.

Assume next that $(A,\dif_A)$ is a $p$-DG algebra. Extend the $p$-differential on $A$ to any $A^{\otimes (n+1)}$ by the Leibniz rule so that, 
\[
\dif_A(a_0\otimes a_1\otimes\cdots \otimes a_{n}):=\sum_{i=0}^{n} a_0\otimes \cdots \otimes \dif_A(a_i) \otimes \cdots \otimes a_{n}.
\]
As the multiplication map
$m:A\otimes A\lra A$ commutes with $\dif_A$, it follows that the boundary maps in $(\mathbf{p}_\bullet^\prime (A), \dif_M)$ commute with the internal differentials $\dif_A$ on each $A^{\otimes n}$. We may thus consider the total complex $(\mathbf{p}_{\bullet}^\prime (A),\dif_H+\dif_A)$. This construction is equivalent to the totalization $\mc{T}(\mathbf{p}_{\bullet}^\prime (A),\dif_H)$. To make a distinction, we will denote the total differential on $\mathbf{p}_\bullet^\prime (A)$ by $\dif_T:=\dif_H+\dif_A$ in order to avoid potential confusion with the other differentials $\dif_H$ and $\dif_A$.

There is a natural inclusion map 
\[
\iota_A: A\lra \mathbf{p}_\bullet^\prime (A)
\]
of $p$-DG bimodules over $A$, whose cokernel is the $p$-DG bimodule
\[
(\widetilde{\mathbf{p}}_\bullet (A), \dif_T):= \left(\cdots \xrightarrow{\dif_{H}} A^{\otimes(n+2)} \stackrel{\dif_H}{\lra} A^{\otimes (n+1)}\xrightarrow{\dif_{H}} \cdots \stackrel{\dif_H}{\lra} A^{\otimes 3}\stackrel{\dif_H}{\lra} A^{\otimes 2}  \lra 0\right).
\]
Recall here that each $A^{\otimes n}$ also carries its internal differential $\dif_A$. 

\begin{prop}\label{prop-P-prime-acylic}
The total $p$-complex $(\mathbf{p}_{\bullet}^\prime (A),\dif_T)$ is acyclic. Furthermore, if $Q$ is any $p$-DG module over $A$, $\mathbf{p}_{\bullet}^\prime (A)\otimes_A Q$ is also acyclic.
\end{prop}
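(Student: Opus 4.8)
The plan is to exhibit an explicit contracting $A$-linear homotopy for the total differential $\dif_T = \dif_H + \dif_A$ and then invoke Lemma~\ref{lemma-acylicity-commutator-relation}. The key observation is that the null-homotopy map $\sigma \colon A^{\otimes n} \lra A^{\otimes (n+1)}$, $x \mapsto x \otimes 1$, from \eqref{eqn-null-homotopy-map} already satisfies $\dif_H \sigma - \sigma \dif_H = \Id$ on the augmented $p$-complex $\mathbf{p}_\bullet^\prime(A)$, and moreover $\sigma$ commutes with the internal differential $\dif_A$: indeed $\dif_A(x \otimes 1) = \dif_A(x) \otimes 1 + x \otimes \dif_A(1) = \dif_A(x) \otimes 1$ since $\dif_A(1) = 0$ in any $p$-DG algebra (applying $\dif_A$ to $1 = 1 \cdot 1$ and using the Leibniz rule forces $\dif_A(1) = 0$ in odd characteristic). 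Hence $[\dif_A, \sigma] = 0$, and therefore
\[
[\dif_T, \sigma] = [\dif_H, \sigma] + [\dif_A, \sigma] = \Id + 0 = \Id_{\mathbf{p}_\bullet^\prime(A)}.
\]
Since $\sigma$ is left $A$-linear, Lemma~\ref{lemma-acylicity-commutator-relation} (applied with the $p$-DG algebra $A$ acting on the left, so that $\mathbf{p}_\bullet^\prime(A)$ is regarded as a complex of left $A$-modules) gives that $\mathbf{p}_\bullet^\prime(A)$ is null-homotopic as a $p$-complex of left $A$-modules, hence in particular acyclic, i.e.\ killed by the slash-homology functor.

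For the second assertion, I would tensor the homotopy over $A$ on the right. Applying $(-) \otimes_A Q$ to $\sigma$ yields an $A$-linear (for the residual left $A$-action on $Q$, or simply a $\Bbbk$-linear, map—whichever is needed by Lemma~\ref{lemma-acylicity-commutator-relation}) map $\sigma \otimes \id_Q \colon \mathbf{p}_\bullet^\prime(A) \otimes_A Q \lra \mathbf{p}_\bullet^\prime(A) \otimes_A Q$ of degree $+1$. Because $\dif_T$ on $\mathbf{p}_\bullet^\prime(A) \otimes_A Q$ is the total differential coming from $\dif_H$, the internal $\dif_A$ on the bar terms, and $\dif_Q$ on $Q$, and because the identity $[\dif_T, \sigma] = \Id$ is a pointwise relation of left $A$-module maps that is preserved under $(-) \otimes_A Q$ (here one uses that $\sigma(x) = x \otimes 1$ only touches the bar side, so it commutes with $\dif_Q$ after tensoring, just as it commuted with $\dif_A$), we again get $[\dif_T, \sigma \otimes \id_Q] = \Id$. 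Lemma~\ref{lemma-acylicity-commutator-relation} then finishes the proof.

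The main obstacle, and the only point requiring genuine care, is bookkeeping: verifying that $\sigma$ genuinely commutes (not just supercommutes) with $\dif_A$ and with $\dif_Q$ after tensoring, and that the signs in Mayer's differential—which, unlike the classical Hochschild differential, carries \emph{no} alternating signs—interact correctly so that $\dif_H \sigma - \sigma \dif_H = \Id$ holds on the nose rather than up to a sign or a scalar. Both were already recorded in the discussion preceding the proposition for $\dif_H$ alone; the new content is simply that adjoining the internal differentials $\dif_A$ (and $\dif_Q$) does not disturb the relation, which follows from $\dif_A(1)=0$ and the fact that $\sigma$ inserts a $1$ in the last tensor slot. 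I do not anticipate any difficulty beyond this.
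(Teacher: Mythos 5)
Your argument for the first assertion is correct and is exactly the paper's proof: the commutator relations $[\dif_H,\sigma]=\Id$, $[\dif_A,\sigma]=0$ (via $\dif_A(1)=0$) combined with Lemma \ref{lemma-acylicity-commutator-relation}. No issues there.

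The second assertion is where there is a genuine gap: the map $\sigma\otimes\id_Q$ you propose is not well defined. The tensor product $\mathbf{p}^\prime_\bullet(A)\otimes_A Q$ is formed using the \emph{right} $A$-action on $\mathbf{p}^\prime_\bullet(A)$, i.e.\ the action on the last tensor factor, and $\sigma(x)=x\otimes 1$ appends a $1$ precisely in that last slot; $\sigma$ is left $A$-linear but not right $A$-linear. Concretely, in $A^{\otimes(n+1)}\otimes_A Q$ one has $\sigma(xa)\otimes_A q=(xa\otimes 1)\otimes_A q$ whereas $\sigma(x)\otimes_A(aq)=(x\otimes 1)\otimes_A aq=(x\otimes a)\otimes_A q$, and these are different elements in general (already for $n=1$: $xa\otimes q$ versus $x\otimes aq$ in $A\otimes Q$). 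So the claim that $\sigma$ ``only touches the bar side'' is exactly where the argument breaks: it touches the slot that gets glued to $Q$. The standard fix is to use the other extra degeneracy, $\sigma^\prime(y)=1\otimes y$, inserting the unit in the \emph{first} slot: one checks the same way that $[\dif_H,\sigma^\prime]=\Id$ and $[\dif_A,\sigma^\prime]=0$, and $\sigma^\prime$ is right $A$-linear, so $\sigma^\prime\otimes_A\id_Q$ is well defined on $\mathbf{p}^\prime_\bullet(A)\otimes_A Q\cong(\cdots\to A\otimes A\otimes Q\to A\otimes Q\to Q\to 0)$ and still satisfies $[\dif_T,\sigma^\prime\otimes\id_Q]=\Id$ (it also commutes with $\dif_Q$ since it does not touch the $Q$ slot). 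Since acyclicity only requires a $\Bbbk$-linear contracting homotopy commuting with the total differential, this completes the second claim. The paper's own proof is terse here (``replace the last copy of $A$ by $Q$''), but the homotopy it relies on must be the unit insertion away from the $Q$ side, not the one you wrote down.
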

\begin{proof}
In order to show that $\mathbf{p}^\prime_\bullet(A)$ is acyclic, it suffices to check, by Lemma \ref{lemma-acylicity-commutator-relation}, that
\[
(\dif_H+\dif_A)\sigma-\sigma(\dif_H+\dif_A)=\Id_{\mathbf{p}^\prime},
\]
where $\mathrm{Id}_{\mathbf{p}^\prime}$ is the identity map of $\mathbf{p}^\prime_\bullet(A)$
This is clear since we have the easily verified commutator relations
\[
[\dif_H,\sigma]=\Id_{\mathbf{p}^\prime},\quad [\dif_A,\sigma]=0, \quad [\dif_H,\dif_A]=0.
\]
The last statement is similar, as one just needs to replace the last copy of $A$ in $A^{\otimes n}$ by $Q$.
\end{proof}

\begin{defn}
Suppose $(M,\dif_M)$ is a left $p$-DG module over $A$. Set $M[-1]$ to be the tensor product of $M$ with the $(p-1)$-complex  $q^{p}V_{p-2}$ (see equation \eqref{eqn-Vi}). The \emph{simplicial bar resolution} for $M$ is the $p$-DG module 
\[
\mathbf{p}_\bullet(M):=\widetilde{\mathbf{p}}_\bullet(A)\otimes_A M[-1] 
\]
It inherits the $p$-differential from that of $\dif_T$ and $\dif_M$ via the Leibniz rule. 
\end{defn}
Likewise, one defines the simplicial bar resolution for right $p$-DG modules.

\begin{prop}
For any left $p$-DG module $M$ over $A$, $\mathbf{p}_\bullet(M)$ is a cofibrant replacement of $M$. 
\end{prop}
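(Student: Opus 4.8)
The plan is to show that $\mathbf{p}_\bullet(M)$ is a \emph{cofibrant} $p$-DG module over $A$ (in the sense of the model structure on $(A,\dif_A)\dmod$ whose weak equivalences are the quasi-isomorphisms defined above) and that the canonical map $\mathbf{p}_\bullet(M)\lra M$ is a quasi-isomorphism; together these say $\mathbf{p}_\bullet(M)$ is a cofibrant replacement of $M$. I would organize this into three steps.

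\emph{Step 1: The augmentation is a quasi-isomorphism.} The defining exact sequence of $p$-DG bimodules
\[
0\lra A \xrightarrow{\iota_A} \mathbf{p}_\bullet^\prime(A) \lra \widetilde{\mathbf{p}}_\bullet(A)\lra 0
\]
shows that $\widetilde{\mathbf{p}}_\bullet(A)$ represents a shift of $A$ in $\mc{C}(A\otimes A^{\op},\dif_T)$; more precisely, by Proposition \ref{prop-P-prime-acylic} the total $p$-complex $\mathbf{p}_\bullet^\prime(A)$ is acyclic, so the connecting morphism in slash homology exhibits $\widetilde{\mathbf{p}}_\bullet(A)$ as quasi-isomorphic to $A[1]$. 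Since $M[-1]=M\otimes q^{p}V_{p-2}$ undoes this shift (using that tensoring with $q^{p}V_{p-2}$ is the inverse shift $[-1]_\dif$ up to free $H$-summands, as in the displayed decomposition of $U_{p-2}\otimes U_{p-2}$, adapted to the $V_i$'s), applying $-\otimes_A M[-1]$ and using the acyclicity of $\mathbf{p}_\bullet^\prime(A)\otimes_A Q$ from Proposition \ref{prop-P-prime-acylic} for $Q=M$ gives that $\mathbf{p}_\bullet(M)\lra M$ is a quasi-isomorphism. The one point needing care here is that $-\otimes_A M[-1]$ need not be exact on arbitrary modules, so I would argue on the level of slash homology directly, exploiting that each term $A^{\otimes n}$ of $\widetilde{\mathbf p}_\bullet(A)$ is free as a right $A$-module, so the tensor product computes the derived tensor product and acyclicity is preserved.

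\emph{Step 2: Cofibrancy via a filtration by free $p$-DG modules.} A standard criterion (see the model structure of \cite{QYHopf}) is that a $p$-DG module is cofibrant if it admits an exhaustive filtration whose subquotients are direct sums of shifts of the free module $A\# H_q$. Here the natural candidate filtration on $\mathbf{p}_\bullet(M)=\widetilde{\mathbf p}_\bullet(A)\otimes_A M[-1]$ is the ``simplicial degree'' (bar length) filtration $F_{\le N}$ spanned by $A^{\otimes(\le N+1)}\otimes_A M[-1]$. The subquotient $F_{\le N}/F_{\le N-1}$ is, as an $A$-module, $A\otimes (A^{\otimes(N-1)})\otimes M$ with the outer $A$ acting freely; one checks that the induced $p$-differential on the subquotient is the internal $\dif_A$ plus Leibniz terms, so the subquotient is free over $A\# H_q$ after choosing a $\Bbbk$-basis of the ``interior'' $A^{\otimes(N-1)}\otimes M$ compatible with the $H_q$-action — here one uses that over $\Bbbk$ any $p$-complex decomposes into $U_i$'s, and that $A\otimes U_i$ is a direct summand of a free $A\# H_q$-module (this is precisely where the smash-product commutation relation $(1\otimes\dif_q)(a\otimes1)=a\otimes\dif_q+\dif_A(a)\otimes 1$ matters). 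I would therefore reduce the cofibrancy of each subquotient to the statement that $A\# H_q$-modules of the form $A\otimes(\text{finite-dim.\ }H_q\text{-complex})$ are summands of frees, which follows from the self-injectivity of $H_q$.

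\emph{Step 3: Assemble.} Having the filtration with free-over-$A\# H_q$ subquotients gives cofibrancy of $\mathbf{p}_\bullet(M)$; Step 1 gives the quasi-isomorphism to $M$; hence $\mathbf{p}_\bullet(M)$ is a cofibrant replacement. I expect the main obstacle to be Step 2, specifically verifying that the associated graded of the bar-length filtration really is \emph{free} (not merely projective or cofibrant) as an $A\# H_q$-module once the internal differential $\dif_A$ and the simplicial differential $\dif_H$ are both taken into account by $\dif_T$; the subtlety is that $\dif_T$ mixes simplicial degrees, so one must confirm the filtration is by $p$-DG submodules (i.e.\ $\dif_T F_{\le N}\subseteq F_{\le N}$, which holds because $\dif_H$ lowers bar length by one and $\dif_A$ preserves it) and that passing to the subquotient kills the bar differential, leaving a genuinely free $p$-DG module. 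The other routine-but-necessary check is the compatibility of all the grading shifts ($M[-1]=M\otimes q^pV_{p-2}$ versus the homological shift $[1]_\dif$), which I would dispatch by citing the decomposition of $V_{p-2}\otimes V_{p-2}$ analogous to the $U_{p-2}$ computation quoted in the excerpt.
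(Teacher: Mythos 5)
Your proposal is correct and follows essentially the same route as the paper: cofibrancy via the bar-length filtration with subquotients $A^{\otimes n}\otimes M[-1]$ (the ``Property P'' criterion of \cite{QYHopf}), and the quasi-isomorphism via the short exact sequence $0\to M[-1]\to \mathbf{p}^\prime_\bullet(A)\otimes_A M[-1]\to \mathbf{p}_\bullet(M)\to 0$, which splits as $A$-modules and hence yields a distinguished triangle whose third term is controlled by the acyclicity in Proposition \ref{prop-P-prime-acylic}. The only difference is that you spell out why the subquotients are cofibrant (summands of free $A\# H_q$-modules via self-injectivity of $H_q$), a point the paper compresses into the assertion that they are ``direct sums of free $p$-DG $A$-modules.''
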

\begin{proof}
To see the cofibrance of $\mathbf{p}_\bullet(M)$, first note that 
\[
\mathbf{p}_\bullet(M)=\left(\cdots \lra A^{\otimes 3}\otimes M[-1] \lra A^{\otimes 2}\otimes M[-1] \lra A\otimes M[-1] \lra 0\right),
\]
which carries $p$-differentials on the arrows and internal differentials within each term. It
has a natural filtration by $p$-DG submodules, whose subquotients have the form
\[
A^{\otimes n}\otimes M[-1], \quad (n \geq 1)
\ .
\]
As left $p$-DG modules over $A$, such modules are clearly direct sums of free $p$-DG $A$-modules. Therefore $\mathbf{p}_\bullet(M)$ satisfies the ``Property P'' criterion of \cite[Definition 6.3]{QYHopf} and is thus cofibrant.

By construction, there is a short exact sequence of $p$-DG modules over $A$
\[
0 \lra M[-1] \lra \mathbf{p}^\prime_\bullet (A)\otimes_A M[-1] \lra \mathbf{p}_\bullet (M)\lra 0
\ .
\]
Since $\mathbf{p}_\bullet(M)$ is projective as a left $A$-module, the sequence splits when forgetting about $p$-differentials. By \cite[Lemma 4.3]{QYHopf},
the short exact sequence above gives rise to a distinguished triangle in the homotopy category
\[
 M[-1] \lra \mathbf{p}^\prime_\bullet (A)\otimes_A M[-1] \lra \mathbf{p}_\bullet (M)\stackrel{[1]}{\lra} M
 \ .
\]
Therefore there is a morphism $f: \mathbf{p}_\bullet(M)\lra M$ representing the $[1]$ map on the last arrow. By Proposition \ref{prop-P-prime-acylic}, $f$ is a quasi-isomorphism.
\end{proof}

Using the bar resolution, we recall the derived tensor product functor construction in the $p$-DG setting.

\begin{defn} \label{defdertens}
Let $M$ be a left $p$-DG module and $N$ be a right $p$-DG module over $A$. The \emph{$p$-DG derived tensor product} of $N$ and $M$ is the object in $H_q\udmod$ 
\[
N\otimes^{\mathbf{L}}_A M:=  N\otimes_A \mathbf{p}_\bullet(M).
\]
\end{defn}

As in \cite[Corollary 8.9]{QYHopf}, the functor is well defined. Furthermore, it is readily seen that it is independent of cofibrant replacements one chooses for $M$ (or $N$).

\begin{cor}\label{cor-KW-bar-resolution}For any $p$-DG module $M$ over
$(A,\partial_A)$, there is a cofibrant $p$-DG
replacement $\mathbf{p}_{\bullet}(M)\cong M$ in $\mc{D}(A)$. $\hfill\square$
\end{cor}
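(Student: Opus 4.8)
The plan is to obtain this as an essentially immediate consequence of the preceding Proposition together with the construction of the $p$-DG derived category $\mc{D}(A,\dif_A)$. Recall that the Proposition already produces, for any left $p$-DG module $M$ over $(A,\dif_A)$, a specific $p$-DG module $\mathbf{p}_\bullet(M)$ and a $p$-DG homomorphism $f\colon \mathbf{p}_\bullet(M)\lra M$, and establishes two facts about it: first, that $\mathbf{p}_\bullet(M)$ is cofibrant (via the filtration by $p$-DG submodules with subquotients $A^{\otimes n}\otimes M[-1]$ for $n\geq 1$, each of which is a direct sum of free $p$-DG $A$-modules, so that the ``Property P'' criterion of \cite[Definition 6.3]{QYHopf} applies), and second, that $f$ is a quasi-isomorphism, i.e.\ induces an isomorphism on slash homology.

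First I would recall that $\mc{D}(A)=\mc{D}(A,\dif_A)$ is by definition the localization of $\mc{C}(A,\dif_A)$ at the class of quasi-isomorphisms (the $p$-DG analogue of Definition \ref{def-p-derived-category-homology-version}, with the localizing property guaranteed as in the $\dif_0$-case). Consequently the image of $f$ in $\mc{D}(A)$ is an isomorphism, giving $\mathbf{p}_\bullet(M)\cong M$ in $\mc{D}(A)$. Combined with the cofibrance of $\mathbf{p}_\bullet(M)$ just recalled, this is precisely the assertion that $\mathbf{p}_\bullet(M)$ is a cofibrant $p$-DG replacement of $M$, and the corollary follows.

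There is no real obstacle here: the content has already been done in the Proposition, and the only thing to check is the formal step that a quasi-isomorphism becomes invertible after localization. The one point worth flagging is that $\mathbf{p}_\bullet(M)$ is unbounded in the simplicial (bar) direction, so one should make sure that the cofibrancy/``Property P'' formalism of \cite{QYHopf} indeed accommodates the countable exhausting filtration used above; this is exactly what that formalism is designed for, so it causes no difficulty. Independence of the cofibrant replacement up to homotopy equivalence, which makes the later derived tensor product well defined, is then standard and was already invoked after Definition \ref{defdertens}.
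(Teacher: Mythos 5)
Your proposal is correct and matches the paper's treatment: the corollary is stated with an immediate $\square$ precisely because it is a direct restatement of the preceding proposition (cofibrance via Property P plus the quasi-isomorphism $f\colon\mathbf{p}_\bullet(M)\to M$), combined with the formal fact that quasi-isomorphisms become invertible in the localization $\mc{D}(A)$. Nothing further is needed.
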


We define the analogue of Hochschild homology in the $p$-DG setting. As a shorthand notation, for an algebra $A$, we will denote by $A^{\rm en }:=A\otimes A^{\mathrm{op}}$ the \emph{enveloping algebra} of $A$. Thus an $(A,A)$-bimodule is synonymous with a left module over $A^{\rm en}$.

\begin{defn}\label{defn-derived-tensor}
Let $A$ be a $p$-DG algebra, and $M$ be a bimodule over $A$. Then the \emph{$p$-DG Hochschild (co)homology} is the $p$-complex 
\[
\pHH_\bullet(M):=\mH^/_\bullet(\mathbf{p}_\bullet(A)\otimes_{A^{\rm en}} M)
\quad \quad 
(\textrm{resp.}~
\pHH^\bullet(M):=\mH^/_\bullet(\HOM_{A^{\rm en}}(\mathbf{p}_\bullet(A),M))
\ .
\]
\end{defn}

The $p$-DG Hochschild homology is a functorial ``categorical trace'' on the category of $p$-DG bimodules over a $p$-DG algebra, similar to the usual Hochschild homology functor. Here, the functoriality means that, a morphism of $p$-DG bimodules $f: M\lra N$ over $A$ induces a morphism 
of $p$-Hochschild homology groups,
which is defined by
\begin{equation}
    \pHH_\bullet(f):= \mH^/_\bullet ( \mathrm{Id}_{\mathbf{p}_\bullet(A)}\otimes f) : \mH^/_\bullet(\mathbf{p}_\bullet(A)\otimes_{A^{\rm en}} M) \lra \mH^/_\bullet(\mathbf{p}_\bullet(A)\otimes_{A^{\rm en}} N).
\end{equation}

\begin{thm} \label{HHcyclprop}
Given two $p$-DG bimodules $M$ and $N$ over $A$, there is an isomorphism of $p$-complexes
\[
\pHH_{\bullet}(M\otimes^{\mathbf{L}}_A N)\cong \pHH_{\bullet}(N\otimes^{\mathbf{L}}_A M).
\]
\end{thm}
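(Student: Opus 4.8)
The plan is to establish the cyclicity isomorphism by the same strategy used for ordinary Hochschild homology: exhibit an explicit symmetric model for the Hochschild complex of a tensor product, and then observe that the symmetry of the model swaps $M$ and $N$. Concretely, $\pHH_\bullet(M\otimes^{\mathbf L}_A N)$ is computed as $\mH^/_\bullet$ of $\mathbf{p}_\bullet(A)\otimes_{A\otimes A^{\op}}(M\otimes^{\mathbf L}_A N)$, and using Definition \ref{defdertens} and Definition \ref{defn-derived-tensor} this unwinds to a two-sided bar-type construction. First I would set up the ``cyclic bar complex'' of the pair $(M,N)$: a $p$-DG bimodule built from terms of the shape $A^{\otimes a}\otimes M\otimes A^{\otimes b}\otimes N$ (with the appropriate degree/grading shifts coming from $M[-1]$ and the $V_{p-2}$ factors in $\mathbf{p}_\bullet(-)$), equipped with a $p$-differential $\dif_H$ of Mayer-type that contracts adjacent tensor factors cyclically, together with the internal $\dif_A$ extended by the Leibniz rule, and the total differential $\dif_T=\dif_H+\dif_A$. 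The key structural input is Proposition \ref{prop-P-prime-acylic}: $\mathbf{p}'_\bullet(A)\otimes_A Q$ is acyclic for any $p$-DG module $Q$, so one may freely resolve either the $M$-slot or the $N$-slot and get quasi-isomorphic total complexes. This is what lets me identify both $\pHH_\bullet(M\otimes^{\mathbf L}_A N)$ and $\pHH_\bullet(N\otimes^{\mathbf L}_A M)$ with $\mH^/_\bullet$ of the same cyclic object.

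In detail, the steps are: (i) write down the cyclic $p$-DG model $C_\bullet(M,N)$ and verify that $\dif_H$ squares to zero to the $p$-th power, that $[\dif_H,\dif_A]=0$, and hence that $\dif_T^{\,p}=0$ — these are straightforward extensions of the computations already done for $\mathbf{p}'_\bullet(A)$; (ii) construct the two comparison maps, one from $C_\bullet(M,N)$ to the complex computing $\pHH_\bullet(M\otimes^{\mathbf L}_A N)$ and one to the complex computing $\pHH_\bullet(N\otimes^{\mathbf L}_A M)$, by inserting the bar resolution in the appropriate slot, and show each is a quasi-isomorphism using the acyclicity of $\mathbf{p}'_\bullet(A)\otimes_A Q$ (filtering by the number of tensor factors on the resolved side, exactly as in the proof that $\mathbf{p}_\bullet(M)$ is a cofibrant replacement); (iii) define the cyclic rotation map $\tau\colon C_\bullet(M,N)\to C_\bullet(N,M)$ that moves the block $A^{\otimes a}\otimes M$ past $A^{\otimes b}\otimes N$, check it is $A\otimes A^{\op}$-linear, commutes with $\dif_H$ and $\dif_A$ (so with $\dif_T$), and is invertible; (iv) apply the slash-homology functor $\mH^/_\bullet$, which is functorial (as noted after Definition \ref{defn-derived-tensor}) and sends quasi-isomorphisms to isomorphisms by definition, to conclude. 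Assembling (ii)–(iv) gives the chain of isomorphisms $\pHH_\bullet(M\otimes^{\mathbf L}_A N)\cong \mH^/_\bullet C_\bullet(M,N)\cong \mH^/_\bullet C_\bullet(N,M)\cong \pHH_\bullet(N\otimes^{\mathbf L}_A M)$.

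The main obstacle I anticipate is bookkeeping rather than conceptual: getting the grading shifts to match. The derived tensor product and the bar resolution both introduce shifts (the $M[-1]=M\otimes q^p V_{p-2}$ convention, the degree $2-2p$ homotopies, the $q^{-2i}$ twists in the totalization), and one must check that the cyclic rotation $\tau$ is degree-preserving after these shifts are accounted for — in the ungraded world cyclic rotation is obviously an isomorphism, but here a careless setup produces an extra $q$- or $a$-shift. I would handle this by working with the un-totalized bicomplex of $A\#H_q$-modules first, where $\dif_H$ and $\dif_A$ are separate and the grading discrepancy (if any) is visible as a single global shift, then apply $\mc T$ via Lemma \ref{lem-totalization-functor}. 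A secondary point requiring a little care is that $\dif_H$ here is the unsigned Mayer differential, so the usual sign subtleties in the classical cyclic-rotation argument disappear, but one should double-check that the unsigned rotation genuinely commutes with the unsigned face maps on the nose; this is the analogue of the classical fact and should go through verbatim. Everything else — functoriality of $\mH^/_\bullet$, the Property P / cofibrancy input, acyclicity of $\mathbf{p}'_\bullet(A)\otimes_A Q$ — is already available from the preceding results, so the theorem reduces to carefully assembling these pieces.
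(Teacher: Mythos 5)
Your proposal is correct and follows essentially the same route as the paper: both unwind $\pHH_\bullet(M\otimes^{\mathbf L}_A N)$ into the cyclic tensor arrangement $M\otimes_A\mathbf{p}_\bullet(A)\otimes_A N\otimes_A\mathbf{p}_\bullet(A)$ (the paper draws this as modules on a circle) and observe that the $180^\circ$ rotation swapping the $M$- and $N$-blocks intertwines the Mayer and internal differentials, giving the isomorphism after taking slash homology. The comparison quasi-isomorphisms you build in step (ii) are not needed in the paper's version, since the cyclic complex is literally the defining complex for both sides, but they do no harm.
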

\begin{proof}
This is more or less parallel to the classical Hochschild homology case. For this, we notice that by Definition 
\ref{defdertens},
%\ref{defn-derived-tensor},
\[
M\otimes^{\mathbf{L}}_A N = M\otimes_A \mathbf{p}_\bullet(A) \otimes_A N
\ .
\]
Then by Definition \ref{defn-derived-tensor}, one takes another tensor product over $A^{\rm en}$ with $\mathbf{p}_\bullet(A)$ with respect to the left $A$-action on $M$ and right $A$-action on $N$. This is best visualized as putting everything on a circle:
\[
\begin{DGCpicture}
\DGCbubble(2,0){2}
\DGCcoupon(-0.5,-0.5)(0.5,0.5){$M$}
\DGCcoupon(1.5,1.5)(2.5,2.5){${\mathbf{p}_\bullet(A)}$}
\DGCcoupon(3.5,-0.5)(4.5,0.5){$N$}
\DGCcoupon(1.5,-1.5)(2.5,-2.5){${\mathbf{p}_\bullet(A)}$}
\end{DGCpicture}
\ .
\]
Here the connecting lines joining the $p$-DG modules in the picture stand for the usual tensor product over $A$.

Rotating the picture by $180^\circ$, one obtains the $p$-complex computing the $p$-Hochschild homology for the bimodule $N\otimes^{\mathbf{L}}_A M$. The result follows.
\end{proof}

\subsection{Relative homotopy categories}
For any ungraded algebra $B$ over $\Bbbk$, denote by $d_0$ the zero ordinary differential and by $\dif_0$ the zero $p$-differential on $B$, while letting $B$ sit in homological degree zero. When $B$ is graded, the homological grading is indepednent of the internal grading of $B$.

Suppose $(A,\dif_A)$ is a $p$-DG algebra. There is an exact forgetful functor between the usual homotopy categories of chain complexes of graded $A\# H$-modules
\[
\mc{F}_d: \mc{C}(A\# H_q,d_0)\lra \mc{C}(A,d_0).
\]
An object $K_\bullet$ in $\mc{C}(A\# H_q,d_0)$ lies inside the kernel of the functor if and only if, when forgetting the $H_q$-module structure on each term of $K_\bullet$, the complex of graded $A$ modules $\mc{F}_d(K_\bullet)$ is null-homotopic. The null-homotopy map on $\mc{F}_d(K_\bullet)$, though, is not required to intertwine $H_q$-actions.
 
Likewise, there is an exact forgetful functor 
\[
\mc{F}_\dif: \mc{C}(A\# H_q,\dif_0)\lra \mc{C}(A,\dif_0).
\]
Similarly, an object $K_\bullet$ in $\mc{C}(A\# H_q,\dif_0)$ lies inside the kernel of the functor if and only if, when forgetting the $H_q$-module structure on each term of $K_\bullet$, the $p$-complex of $A$ modules $\mc{F}_\dif(K_\bullet)$ is null-homotopic. The null-homotopy map on $\mc{F}_\dif(K_\bullet)$, though, is not required to intertwine $H_q$-actions.

\begin{defn}\label{def-relative-homotopy-category}
Given a $p$-DG algebra $(A,\dif_A)$, the \emph{relative homotopy category} is the Verdier quotient 
$$\mc{C}^{\dif_q}(A,d_0):=\dfrac{\mc{C}(A\# H_q,d_0)}{\mathrm{Ker}(\mc{F}_d)}.$$
Likewise, the \emph{relative $p$-homotopy category} is the Verdier quotient 
$$\mc{C}^{\dif_q}(A,\dif_0):=\dfrac{\mc{C}(A\# H_q,\dif_0)}{\mathrm{Ker}(\mc{F}_\dif)}.$$
\end{defn}
The subscripts in the definitions are to remind the reader of the $H_q$-module structures on the objects.

The categories $\mc{C}^{\dif_q}(A,d_0)$ and  $\mc{C}^{\dif_q}(A,\dif_0)$ are triangulated. By construction, there is a factorization of the forgetful functor
\[
\begin{gathered}
\xymatrix{ \mc{C}(A\# H_q,d_0) \ar[rr]^{\mc{F}_d} \ar[dr] && \mc{C}(A,d_0)\\
& \mc{C}^{\dif_q}(A,d_0)\ar[ur]&
} 
\end{gathered}
\ ,
\quad
\begin{gathered}
\xymatrix{ \mc{C}(A\# H_q,\dif_0) \ar[rr]^{\mc{F}_\dif} \ar[dr] && \mc{C}(A,\dif_0)\\
& \mc{C}^{\dif_q}(A,\dif_0)\ar[ur]&
}
\end{gathered} \ .
\]

Let us briefly remark on the triangulated structures of the relative homotopy categories $\mc{C}^{\dif_q}(A,d_0)$ and $\mc{C}^{\dif_q}(A,\dif_0)$. By construction the shift functors $[\pm 1]$ are inherited from those of $\mc{C}(A\# H_q,d_0)$ and $\mc{C}(A\# H_q,\dif_0)$. In the first case, the functor $[\pm 1]_d$ just shifts complexes one step to the left or right. In the second case, the functor $[1]_\dif$ is given by tensoring with the $p$-complex $U_{p-2}\{p-1\}$ and $[-1]_\dif$ by tensoring with $U_{p-2}\{-1\}$ (see equations \eqref{eqn-p-shift} and \eqref{eqn-p-shift-inverse}).

For the usual homotopy category $\mc{C}(A,d_0)$ of an algebra $A$, standard distinguished triangles arise from short exact sequences
  \[
 0 \lra M_\bullet \stackrel{f}{\lra} N_\bullet  \stackrel{g}{\lra} L_\bullet  \lra 0
  \]
of complexes of $A$-modules that are termwise split exact. The class of distinguished triangles in $\mc{C}(A,d_0)$ are declared to be those that are isomorphic to standard ones. Similarly, termwise split short exact sequences of $p$-complexes of $A$-modules lead to standard distinguished triangles in $\mc{C}(A,\dif_0)$ (\cite[Lemma 4.3]{QYHopf}).
For distinguished triangles in the relative homotopy categories, we have the following construction.

\begin{prop}\label{prop-construction-of-triangle}
\begin{enumerate}
    \item[(i)] A short exact sequence of chain complexes of $A\#H_q$-modules
  \[
 0 \lra M_\bullet  \stackrel{f}{\lra} N_\bullet  \stackrel{g}{\lra} L_\bullet  \lra 0
  \] that is termwise $A$-split exact gives rise to a distinguished triangle in $\mc{C}^{\dif_q}(A,d_0)$. Conversely, any distinguished triangle in $\mc{C}^{\dif_q}(A,d_0)$ is isomorphic to one that arises in this form.
     \item[(ii)]   A short exact sequence of $p$-complexes of $A\#H_q$-modules
  \[
 0 \lra M_\bullet  \stackrel{f}{\lra} N_\bullet  \stackrel{g}{\lra} L_\bullet  \lra 0
  \]
 that is termwise $A$-split exact gives rise to a distinguished triangle in $\mc{C}^{\dif_q}(A,\dif_0)$. Conversely, any distinguished triangle in $\mc{C}^{\dif_q}(A,\dif_0)$ is isomorphic to one that arises in this form.
\end{enumerate}
\end{prop}
\begin{proof}
We will only show the first statement. The proof of the second one is entirely similar.

By construction, distinguished triangles are those in $\mc{C}^{\dif_q}(A,d_0)$ that are isomorphic to standard distinguished triangles arising from  short exact sequences of $A\# H_q$-modules that are termwise $A\#H_q$-split exact. Forgetting about the $H_q$-actions, such sequences are also termwise $A$-split exact.

Now let $f: M_\bullet  \lra N_\bullet $ be the injection as in the statement.  The cone of $f$ in $\mc{C}(A\# H_q, d_0)$ is given by
\[
C_\bullet (f) \cong 
\left(
M_\bullet [1]_d \oplus N_\bullet , d_{C}:=
\begin{pmatrix}
d_{M[1]_d} & f \\
0 & d_N
\end{pmatrix}
\right).
\]  
The cone fits into a short exact sequence of $A\#H_q$-modules that are termwise $A\# H_q $-split:
\[
0\lra N_\bullet  \lra C_\bullet (f)\lra M_\bullet [1]_d\lra 0.
\]
Associated with this sequence is the (rotated) standard distinguished triangle 
\[
N_\bullet  \lra C_\bullet (f) \lra M_\bullet [1]_d \lra N_\bullet [1]_d
\]
in $\mc{C}(A\# H_q, d_0)$, which descends to a standard distinguished triangle in $\mc{C}^{\dif_q}(A,d_0)$.

To prove the statement, it then suffices to show that, in the relative homotopy category, we have an isomorphism
$C_\bullet (f) \cong L_\bullet $. Consider the map
\[
g^\prime : C_\bullet (f) = (M_\bullet [1]_d \oplus N_\bullet , d_{C}) \lra L_\bullet , \quad (m,n)\mapsto g(n).
\]
It is easily checked that $g^\prime$ is a surjective map of chain complexes, and the kernel is isomorphic to $C_\bullet (\Id_M)$. Thus we have a short exact sequence of chain complexes of $A\# H_q$-modules
\[
0\lra C_\bullet (\Id_M) \lra C_\bullet (f) \stackrel{g^\prime}{\lra} L_\bullet  \lra 0 .
\]
Now, under $\mc{F}_d$, the sequence termwise splits over $A$: 
\[
\mc{F}_d (C_\bullet (f)) \cong 
\left(
\mc{F}_d(M_\bullet [1]_d\xrightarrow{\Id_M} M_\bullet ) \oplus \mc{F}_d(L_\bullet )
\right)
\cong \left(\mc{F}_d(C_\bullet (\Id_M))\oplus \mc{F}_d(L_\bullet )\right).
\]
It follows that we have a distinguished triangle in $\mc{C}(A,d_0)$
\[
0\cong \mc{F}_d (C_\bullet (\Id_M)) \lra \mc{F}_d (C_\bullet (f)) \xrightarrow{\mc{F}_d(g^\prime)} \mc{F}_d (L_\bullet ) \lra \mc{F}_d (C_\bullet (\Id_M)[1]_d)\cong 0,
\]
implying that $g^\prime$ is an isomorphism under $\mc{F}_d$. The result follows.
\end{proof}

We also record the following useful fact.

\begin{prop}
The $p$-extension functor $\mc{P}: \mc{C}(A\# H_q, d_0)\lra \mc{C}(A\# H_q, \dif_0)$ descends to a functor, still denoted by $\mc{P}$, between the relative homotopy categories:
$$\mc{P}: \mc{C}^{\dif_q} (A, d_0)\lra \mc{C}^{\dif_q}(A, \dif_0) \ .$$
\end{prop}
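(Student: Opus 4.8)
The plan is to check that $\mc{P}$ respects the Verdier quotients defining the two relative homotopy categories, i.e.\ that $\mc{P}$ sends $\mathrm{Ker}(\mc{F}_d)$ into $\mathrm{Ker}(\mc{F}_\dif)$. Once this is established, the universal property of Verdier localization produces the induced exact functor on the quotients automatically, since $\mc{P}\colon \mc{C}(A\#H_q,d_0)\to\mc{C}(A\#H_q,\dif_0)$ is already known to be exact (this is the Proposition on exactness of $\mc{P}$ proved above, applied to the algebra $A\#H_q$ in place of $A$).

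First I would set up the commutativity square relating the two $p$-extension functors and the two forgetful functors. The point is that $p$-extension is defined purely in terms of the homological (``$d_0$'' or ``$\dif_0$'') grading: it repeats odd-degree terms $(p-1)$ times, and this operation does not touch the internal $A$-module or $H_q$-module structure on the individual terms. Hence forgetting the $H_q$-action before or after $p$-extending gives literally the same $p$-complex of $A$-modules, so the square
\[
\xymatrix{
\mc{C}(A\#H_q,d_0)\ar[r]^{\mc{P}}\ar[d]_{\mc{F}_d} & \mc{C}(A\#H_q,\dif_0)\ar[d]^{\mc{F}_\dif}\\
\mc{C}(A,d_0)\ar[r]^{\mc{P}} & \mc{C}(A,\dif_0)
}
\]
commutes (on the nose, or at worst up to canonical natural isomorphism).

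Second, I would use this square together with the fact that the bottom $\mc{P}$ is exact: if $K_\bullet\in\mathrm{Ker}(\mc{F}_d)$, then $\mc{F}_d(K_\bullet)\cong 0$ in $\mc{C}(A,d_0)$, hence $\mc{P}(\mc{F}_d(K_\bullet))\cong 0$ in $\mc{C}(A,\dif_0)$ because an exact functor sends zero objects to zero objects; by commutativity of the square this says $\mc{F}_\dif(\mc{P}(K_\bullet))\cong 0$, i.e.\ $\mc{P}(K_\bullet)\in\mathrm{Ker}(\mc{F}_\dif)$. Therefore $\mc{P}$ descends to the Verdier quotients, and the descended functor is triangulated because $\mc{P}$ itself is exact and the localization functors are exact.

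The only genuinely delicate point — and the step I'd expect to need the most care — is verifying the commutativity of the square at the level of \emph{homotopy} categories rather than just on underlying $p$-complexes: one must confirm that $\mc{P}$ applied to a morphism that becomes null-homotopic after $\mc{F}_d$ becomes null-homotopic after $\mc{F}_\dif$, with a homotopy that need not be $H_q$-equivariant. But this is exactly the content of the Lemma proved above (that $\widehat{\mc{P}}$ preserves null-homotopic morphisms), applied with $A$ replaced by the plain $\Bbbk$-algebra underlying $A\#H_q$, i.e.\ ignoring the $H_q$-structure throughout; the explicit extended homotopy formula \eqref{eqn-extended-homotopy} is built from the original homotopy $h$ by the same recipe, so it transports through the forgetful functor without modification. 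Thus the argument reduces entirely to results already in hand, and the proof is short.
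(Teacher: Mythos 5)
Your proposal is correct and follows essentially the same route as the paper: the paper's proof likewise reduces to showing $\mc{P}(\mathrm{Ker}(\mc{F}_d))\subset \mathrm{Ker}(\mc{F}_\dif)$ and observes that if $h$ is a (not necessarily $H_q$-equivariant) null-homotopy for $\mc{F}_d(K_\bullet)$, then $\mc{P}(h)$, via the extended-homotopy formula of the earlier lemma, is a null-homotopy for $\mc{F}_\dif(\mc{P}(K_\bullet))$. Your extra framing via the commuting square with the forgetful functors is harmless and makes explicit what the paper leaves implicit.
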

\begin{proof}
It suffices to show that, if $K_\bullet \in \mathrm{Ker}(\mc{F}_d)$, then $\mc{P}(K_\bullet)\in \mathrm{Ker}(\mc{F}_\dif)$. This is clear since, if $h$ provides a null-homotopy in $\mc{C}(A,d_0)$ for $\mc{F}_d(K_\bullet)$, then $\mc{P}(h)$ is the null-homotopy for $\mc{F}_\dif(\mc{P}(K_\bullet))$.
\end{proof}

\subsection{Relative \texorpdfstring{$p$}{p}-Hochschild homology}
In this paper, instead of the absolute version of $p$-Hochschild homology, we will need a relative version of $p$-Hochshild homology for a $p$-DG algebra, which we define now. An important reason for introducing the relative homotopy category is that the relative $p$-Hochschild homology functor descends to this category.

Let $(A,\dif_A)$ be a $p$-DG algebra. Equip the zero differential $d_0$ and $p$-differential $\dif_0$ on $A$, and denote the resulting trivial ($p$)-DG algebras by $(A_0,d_0)$ and $(A_0,\dif_0)$. Likewise, for a ($p$-)DG bimodule $M$ over $A$, we temporarily denote by $M_0$ the $A$-bimodule equipped with zero ($p$-) differentials.

The usual Hochschild homology of $M_0$ over $(A_0,d_0)$ in this case carries a natural $H_q$-action, since the $H_q$-action commutes with all differentials in the simplicial bar complex \eqref{eqn-bar-complex} for $A_0$. 

\begin{defn}
The \emph{relative Hochschild homology} of a $p$-DG bimodule $(M,\dif_M)$ over $(A,\dif_A)$ is the usual Hochschild homology of $M_0$ over $(A_0,d_0)$ equipped with the induced $H_q$-action from $\dif_M$ and $\dif_A$, and denoted
\[
\mHH^{\dif_q}_\bullet(M):=\mHH_\bullet(A_0,M_0)
\]
\end{defn}

Replacing the usual simplicial bar complex by Mayer's $p$-simplicial bar complex $\mathbf{p}_\bullet(A_0)$ for $(A_0,\dif_0)$, we make the following definition.

\begin{defn}
The \emph{relative $p$-Hochschild homology} of $M$ is the $p$-complex of 
\[
\pHH^{\dif_q}_\bullet (M):=\mH^/_{\bullet}(A_0 \otimes_{A_0^{\rm en}}^{\mathbf{L}} M_0)=\mH^/_{\bullet}(\mathbf{p}(A_0) \otimes_{A_0^{\rm en}} M_0)
\ .
\]
\end{defn}
Similar to $p$-Hochschild homology, the relative case is also covariant functor: If $f: M \lra N$ is a morphism of $p$-DG bimodules over $A$, it induces
\[
\pHH_\bullet^{\dif_q}( f ):=\mH^/_{\bullet}(\mathrm{Id}_{A_0}\otimes f): \mH^/_{\bullet}(A_0 \otimes_{A_0^{\rm en}}^{\mathbf{L}} M_0) 
\lra \mH^/_{\bullet}(A_0 \otimes_{A_0^{\rm en}}^{\mathbf{L}} N_0)
\ .
\]

\begin{prop}
The relative $p$-Hochschild homology descends to a functor defined on the relative homotopy category $\mc{C}^{\dif_q}(A,\dif_0)$ of $p$-DG bimodules over $A$.
\end{prop}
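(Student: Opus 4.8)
The plan is to invoke the universal property of the Verdier quotient
$\mc{C}_{\dif_q}(A,\dif_0)=\mc{C}(A\#H_q,\dif_0)/\mathrm{Ker}(\mc{F}_\dif)$: a triangulated functor out of $\mc{C}(A\#H_q,\dif_0)$ factors (uniquely) through this quotient precisely when it annihilates the thick subcategory $\mathrm{Ker}(\mc{F}_\dif)$. So the first step is to record that $\pHH^{\dif_q}_\bullet$ is already a well-defined triangulated functor out of $\mc{C}(A\#H_q,\dif_0)$. This is checked exactly as for the absolute $p$-Hochschild homology: by construction $\pHH^{\dif_q}_\bullet$ is the composite of forgetting the internal $p$-differential $\dif_M$ while retaining the induced $H_q$-action, tensoring over $A_0\otimes A_0^{\op}$ with the fixed cofibrant Mayer resolution $\mathbf{p}(A_0)$, totalizing the Mayer differential $\dif_H$ against the homological $p$-differential (exact by Lemma~\ref{lem-totalization-functor}), and finally applying $\mH^/_\bullet$, which descends to the $p$-homotopy category and kills null-homotopic $p$-complexes. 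Thus the whole statement reduces to showing $\pHH^{\dif_q}_\bullet(K_\bullet)=0$ for every $K_\bullet\in\mathrm{Ker}(\mc{F}_\dif)$.

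For such a $K_\bullet$, the underlying $p$-complex of $A$-bimodules $\mc{F}_\dif(K_\bullet)$ is null-homotopic, so by Lemma~\ref{lemma-acylicity-commutator-relation} there is an operator $\sigma\colon K_\bullet\lra K_{\bullet+1}$ raising the homological degree by one, with $[\dif_0,\sigma]=\mathrm{Id}_{K_\bullet}$, where $\dif_0$ denotes the homological $p$-differential. The key point is that $\sigma$ is only required to be $A$-bilinear, not $H_q$-equivariant — and that is exactly the amount of linearity the relative construction can exploit, because the resolution $\mathbf{p}(A_0)$ is built from $A_0$ (carrying the \emph{zero} $p$-differential), and the Mayer differential $\dif_H$ involves only the algebra multiplication and the $A$-bimodule action of $K_\bullet$, both of which commute with the $A$-bilinear map $\sigma$. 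Hence $\mathrm{Id}_{\mathbf{p}(A_0)}\otimes\sigma$ is a well-defined $\Bbbk$-linear operator on $\mathbf{p}(A_0)\otimes_{A_0\otimes A_0^{\op}}K_\bullet$ that commutes with $\dif_H$ and satisfies $[\dif_0,\mathrm{Id}\otimes\sigma]=\mathrm{Id}$. Since $\dif_H$ and $\dif_0$ commute, the total differential $\dif_T=\dif_H+\dif_0$ is again a $p$-differential ($\dif_T^p=\dif_H^p+\dif_0^p=0$ in characteristic $p$) and $[\dif_T,\mathrm{Id}\otimes\sigma]=\mathrm{Id}$. By Lemma~\ref{lemma-acylicity-commutator-relation}, applied over $\Bbbk$, the total $p$-complex $\mathbf{p}(A_0)\otimes_{A_0\otimes A_0^{\op}}K_\bullet$ is null-homotopic, so its slash homology vanishes; that is, $\pHH^{\dif_q}_\bullet(K_\bullet)=0$. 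This produces the desired factorization through $\mc{C}_{\dif_q}(A,\dif_0)$, and the induced functor is automatically triangulated.

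I expect the proof to be largely formal, reusing the acyclicity--commutator criterion of Lemma~\ref{lemma-acylicity-commutator-relation} twice; the only point requiring genuine care is the bookkeeping in the second paragraph — namely that the $A$-bilinear (but non-equivariant) contracting homotopy $\sigma$ really does pass through the tensor product with $\mathbf{p}(A_0)$ and commute with $\dif_H$, which is where it matters that the relative theory is computed from the differential-free algebra $A_0$ rather than from $(A,\dif_A)$, together with the (characteristic-$p$) legitimacy of totalizing two commuting $p$-differentials via $\dif_T^p=\dif_H^p+\dif_0^p$.
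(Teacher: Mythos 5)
Your proof is correct and follows the same route as the paper: reduce to showing that $\pHH^{\dif_q}_\bullet$ annihilates $\mathrm{Ker}(\mc{F}_\dif)$, and use that such objects are null-homotopic as $p$-complexes of plain $(A_0,\dif_0)$-bimodules. The paper states this in two sentences; your use of the commutator criterion of Lemma~\ref{lemma-acylicity-commutator-relation} to verify that the $A$-bilinear contracting homotopy survives tensoring with $\mathbf{p}(A_0)$ and totalization is exactly the detail the paper leaves implicit.
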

\begin{proof}
An object that lies in the kernel $\mc{F}$ for $p$-DG modules over $A\otimes A^{\mathrm{op}}$ consists of null-homotopic $p$-complexes of bimodules over $(A_0,\dif_0)$. Thus the relative $p$-Hochschild homology functor annihilates such objects, and descends to the quotient category.
\end{proof}

We also have the trace-like property for relative $p$-Hochschild homology.

\begin{prop}\label{HHrelativecyclprop}
Given two $p$-DG bimodules $M$ and $N$ over $A$, there is an isomorphism of $p$-complexes of $H_q$-modules
\[
\pHH^{\dif_q}_\bullet(M\otimes^{\mathbf{L}}_A N)\cong \pHH^{\dif_q}_\bullet(N\otimes^{\mathbf{L}}_A M).
\]
\end{prop}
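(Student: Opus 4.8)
The plan is to reduce Proposition \ref{HHrelativecyclprop} to the analogous statement for absolute $p$-Hochschild homology (Theorem \ref{HHcyclprop}), keeping track of the extra $H_q$-structure throughout. First I would unwind the definitions: by construction $\pHH^{\dif_q}_\bullet(M)$ is computed from the trivial $p$-DG algebra $(A_0,\dif_0)$, i.e. by forgetting $\dif_A$ but remembering that $\dif_A$ (together with $\dif_M$) endows Mayer's $p$-simplicial bar complex $\mathbf{p}_\bullet(A_0)\otimes_{A_0\otimes A_0^{\op}} M_0$ with an $H_q$-action via the Leibniz rule. The key observation is that the $H_q$-action commutes with every boundary map $\dif_H$ in Mayer's complex (exactly as it commutes with the face maps in the ordinary bar complex, cf. the discussion preceding Definition \ref{defn-derived-tensor}), so the whole cyclic-trace argument for Theorem \ref{HHcyclprop} can be run $H_q$-equivariantly.

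The main steps, in order: (1) Note that $M\otimes^{\mathbf{L}}_A N = M\otimes_A \mathbf{p}_\bullet(A)\otimes_A N$ as in the proof of Theorem \ref{HHcyclprop}, and that passing to the relative setting just means forgetting $\dif_A$ on the nose while recording the induced $H_q$-action. (2) Form the ``everything on a circle'' $p$-complex
\[
\mathbf{p}_\bullet(A_0)\otimes_{A_0\otimes A_0^{\op}}\bigl(M_0\otimes_{A_0}\mathbf{p}_\bullet(A_0)\otimes_{A_0}N_0\bigr),
\]
exactly the picture drawn in the proof of Theorem \ref{HHcyclprop}, but now over $A_0$ and carrying the diagonal $H_q$-action coming from $\dif_A$, $\dif_M$, $\dif_N$. (3) Observe that the $180^\circ$ rotation of the circle, which in the absolute case furnishes the isomorphism $\pHH_\bullet(M\otimes^{\mathbf{L}}_AN)\cong\pHH_\bullet(N\otimes^{\mathbf{L}}_AM)$, is visibly $H_q$-equivariant: it merely permutes the tensor factors, and the $H_q$-action is defined by the symmetric Leibniz rule on tensor factors, so it is preserved by any cyclic rotation. (4) Finally, apply the slash-homology functor $\mH^/_\bullet$, which is itself a functor on $H_q$-modules (as recorded in Section \ref{sechopf}), to conclude that the induced map on relative $p$-Hochschild homology is an isomorphism of $p$-complexes of $H_q$-modules.

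The only real subtlety — and the step I expect to require the most care — is verifying that the rotation isomorphism genuinely intertwines the two $H_q$-actions, i.e. that the $H_q$-action induced on $\mathbf{p}_\bullet(A_0)\otimes_{A_0\otimes A_0^{\op}}(M_0\otimes_{A_0}\mathbf{p}_\bullet(A_0)\otimes_{A_0}N_0)$ from $(\dif_A,\dif_M,\dif_N)$ is carried by the rotation to the $H_q$-action on $\mathbf{p}_\bullet(A_0)\otimes_{A_0\otimes A_0^{\op}}(N_0\otimes_{A_0}\mathbf{p}_\bullet(A_0)\otimes_{A_0}M_0)$ induced from $(\dif_A,\dif_N,\dif_M)$. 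This is essentially bookkeeping: the total $p$-differential in each case is a sum of Leibniz terms over all tensor slots, and the rotation is the tensor-slot permutation that sends each slot to the corresponding slot on the other side; since the $H_q$-action is defined slot-by-slot, it is automatically equivariant. Everything else is parallel to Theorem \ref{HHcyclprop}, and the proof can be written compactly by saying ``the isomorphism of Theorem \ref{HHcyclprop} is $H_q$-equivariant, hence descends to the relative setting, and remains an isomorphism after applying $\mH^/_\bullet$.''
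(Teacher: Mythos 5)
Your proposal is correct and follows essentially the same route as the paper, whose entire proof is the one-line reduction ``this follows from Theorem \ref{HHcyclprop} by replacing $(A,\dif_A)$ with $(A_0,\dif_0)$.'' Your expanded version, in particular the explicit check that the $180^\circ$ rotation is $H_q$-equivariant because the Leibniz-rule action is defined slot-by-slot, is exactly the bookkeeping the paper leaves implicit.
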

\begin{proof}
This follows from Theorem \ref{HHcyclprop} by replacing $(A,\dif_A)$ with $(A_0,\dif_0)$.
\end{proof}

Our next goal is to show that we may relax the requirement that we utilize the simplicial bar resolution when computing the relative Hochschild homology. For the next theorem, we use the fact that in the simplicial bar complex $\mathbf{p}_\bullet(A_0)$, all the $p$-complex maps are $H_q$-equivariant since they are just sums of multiplications maps of $A$ tensored with identities maps on $A$.

\begin{thm}\label{thm-resolution-independence}
Let $M$ be a $p$-DG bimodule over $A$. Suppose $f:Q_\bullet \lra M$ is a $p$-complex resolution of $M$ over $(A_0,\dif_0)$ which is $H_q$-equivariant, and each term of $Q_\bullet$ is projective as an $A_0^{\rm en}$-module. Then $f$ induces an isomorphism of $H_q$-modules
\[
\mH^{/}_\bullet(A_0\otimes_{A_0^{\rm en}}Q_\bullet)\cong \pHH^{\dif_q}_\bullet(M).
\]
\end{thm}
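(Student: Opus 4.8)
The plan is to reduce the statement to a standard comparison-of-resolutions argument, carried out entirely in the $p$-DG (hopfological) world over the trivial algebra $(A_0 \otimes A_0^{\mathrm{op}}, \dif_0)$, while keeping track of the extra $H_q$-action throughout. First I would invoke the fact, recorded just before the theorem, that Mayer's simplicial bar complex $\mathbf{p}_\bullet(A_0)$ is $H_q$-equivariant and consists of terms $A_0^{\otimes n}$ which are free, hence projective, as $A_0 \otimes A_0^{\mathrm{op}}$-modules; moreover by Proposition \ref{prop-P-prime-acylic} it is a $p$-complex resolution of $A_0$ (i.e.\ the augmented complex is acyclic, so $\mathbf{p}_\bullet(A_0) \to A_0$ is a quasi-isomorphism of $p$-complexes). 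So both $\mathbf{p}_\bullet(A_0)$ and the given $Q_\bullet$ are $H_q$-equivariant projective $p$-complex resolutions of $M$ over $(A_0 \otimes A_0^{\mathrm{op}}, \dif_0)$; the content of the theorem is that they compute the same thing after applying $A_0 \otimes_{A_0 \otimes A_0^{\mathrm{op}}} (-)$ and taking slash homology.

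The core step is to build a comparison morphism. I would show that for any two $H_q$-equivariant $p$-complexes of projective $A_0 \otimes A_0^{\mathrm{op}}$-modules resolving the same $p$-DG bimodule $M$, there is an $H_q$-equivariant chain map between them lifting the identity on $M$, unique up to $H_q$-equivariant $p$-homotopy. The existence and uniqueness here is the usual projective-resolution lifting lemma, but performed in the category of $p$-complexes of $A_0 \otimes A_0^{\mathrm{op}} \otimes H_q$-modules: a projective $A_0 \otimes A_0^{\mathrm{op}}$-module which additionally carries a compatible $H_q$-action and sits in a $p$-complex is, by the Property-P/cofibrancy discussion following Corollary \ref{cor-KW-bar-resolution} and \cite[Definition 6.3]{QYHopf}, cofibrant over $A_0 \otimes A_0^{\mathrm{op}} \otimes H_q$; hence it has the left-lifting property against acyclic fibrations, and since $Q_\bullet \to M$ and $\mathbf{p}_\bullet(A_0 )\otimes_{A_0} M[-1] \to M$ are quasi-isomorphisms, the lifting exists and is unique up to homotopy. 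Concretely one can also just do the classical inductive dimension-shifting construction degree by degree, checking at each stage that the freedom in the lift can be chosen $H_q$-equivariantly because $H_q$ is a (graded) Hopf algebra and the relevant $\Hom$-complexes inherit $H_q$-actions whose invariants surject appropriately; I expect this hands-on version to be cleaner to write.

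Once the comparison map $\phi\colon \mathbf{p}_\bullet(A_0)\otimes_{A_0} M[-1] \to Q_\bullet$ (or its inverse up to homotopy) is in hand, I would apply the functor $A_0 \otimes_{A_0 \otimes A_0^{\mathrm{op}}} (-)$, which is additive and sends $H_q$-equivariant $p$-homotopies to $H_q$-equivariant $p$-homotopies (since it is just a tensor functor and $H_q$ acts diagonally), to get an $H_q$-equivariant homotopy equivalence of $p$-complexes
\[
A_0 \otimes_{A_0 \otimes A_0^{\mathrm{op}}} \big(\mathbf{p}_\bullet(A_0)\otimes_{A_0} M[-1]\big) \;\simeq\; A_0 \otimes_{A_0 \otimes A_0^{\mathrm{op}}} Q_\bullet
\]
in $\mc{C}(H_q, \dif_0)$. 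Taking slash homology $\mH^/_\bullet$, which by construction is invariant under $p$-homotopy equivalence and commutes with the forgetful data of the $H_q$-grading, yields the claimed isomorphism of $H_q$-modules, using that the left-hand side is by definition $\pHH^{\dif_q}_\bullet(M)$ (after identifying $\mathbf{p}_\bullet(A_0) \otimes_{A_0} M[-1]$ with the cofibrant replacement $\mathbf{p}_\bullet(M)$ and absorbing the harmless grading shift $[-1]$, exactly as in Definition \ref{defn-derived-tensor} and Definition of relative $p$-Hochschild homology).

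The main obstacle I anticipate is not the homological algebra skeleton — that is routine — but verifying carefully that every lift and every homotopy in the comparison argument can be taken $H_q$-\emph{equivariantly}, i.e.\ that passing to $H_q$-fixed data does not obstruct the inductive construction. This is precisely where the hopfological framework (cofibrancy over the smash product with $H_q$, Property P, \cite[Lemma 4.3]{QYHopf}) must be used rather than naive chain-level manipulation; the subtlety is that $H_q$ is not semisimple, so one cannot simply average, and instead one must argue that the relevant obstruction groups vanish because the terms of $Q_\bullet$ and $\mathbf{p}_\bullet(A_0)$ are projective (indeed cofibrant) as $A_0\otimes A_0^{\mathrm{op}}\otimes H_q$-modules. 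Getting this compatibility bookkeeping right — in particular the interaction of the internal $\dif_A$-induced $H_q$-action on $M$ with the $H_q$-equivariance of the bar differentials — is the delicate point.
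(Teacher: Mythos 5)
Your overall strategy -- compare the two resolutions by an $H_q$-equivariant chain map lifting $\mathrm{id}_M$, unique up to equivariant homotopy -- is not the paper's argument, and it has a genuine gap precisely at the point you flag as ``delicate.'' The hypothesis of the theorem is only that each term of $Q_\bullet$ is projective over $A_0\otimes A_0^{\mathrm{op}}$; nothing is assumed about $Q_\bullet$ being projective or cofibrant over the smash product $(A_0\otimes A_0^{\mathrm{op}})\# H_q$, and the same is true of the bar complex: its terms $A^{\otimes n}$ carry the diagonal Leibniz $H_q$-action and are not in general projective over the smash product (Property P gives cofibrancy of $\mathbf{p}_\bullet(M)$ as a $p$-DG module over $(A,\dif_A)$, which is a different category from the one you need here). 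Since $H_q$ is not semisimple you cannot average a non-equivariant lift, and the obstruction groups you would need to vanish are exactly Ext groups over the smash product, which the hypotheses do not control. So neither the existence of the equivariant comparison map nor the claimed equivariant homotopy equivalence after tensoring is established; and even the non-equivariant lifting lemma for $p$-complexes ($N$-complexes) is not the routine classical statement.

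The paper's proof sidesteps all of this by never constructing a comparison map and never asking for homotopy equivalences -- only quasi-isomorphisms in $\mc{D}((A_0\otimes A_0^{\mathrm{op}})\# H_q,\dif_0)$. One forms the cone $C_\bullet(f)$ of $f:Q_\bullet\to M$, which is an acyclic $p$-complex of bimodules, and tensors the resulting short exact sequence with $\mathbf{p}_\bullet(A_0)$; exactness needs only projectivity of the bar terms over $A_0\otimes A_0^{\mathrm{op}}$, all maps are automatically $H_q$-equivariant, and acyclicity of the middle term is checked by a filtration argument that requires no equivariant contracting homotopy (acyclicity is a property of the underlying $p$-complex). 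This gives $\mathbf{p}_\bullet(A_0)\otimes Q_\bullet\cong\mathbf{p}_\bullet(A_0)\otimes M$ in the derived category. Symmetrically, tensoring Mayer's augmented sequence with $Q_\bullet$ (exact because $Q_\bullet$ is projective over the base) gives $A_0\otimes Q_\bullet\cong\mathbf{p}_\bullet(A_0)\otimes Q_\bullet$. Combining and taking slash homology yields the theorem. If you want to keep your outline, you must either add the hypothesis that $Q_\bullet$ is cofibrant over the smash product, or replace the lifting step with this cone argument.
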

\begin{proof}
By definition, there is a short exact sequence over $(A_0,\dif_0)$:
\[
0\lra M \lra C_\bullet(f)\lra Q_\bullet[1]\lra 0.
\]
The cone $C_\bullet(f)$, by construction, is equal to
\[
C_\bullet(f) \cong \dfrac{Q_\bullet \otimes \Bbbk[\dif_0]/(\dif_0^p)\oplus M}{ \left\{
(x\otimes \dif_0^{p-1}, f(x))|x\in Q_\bullet
\right\}} \ ,
\]
which is then an acyclic $p$-complex of bimodules over $(A,\dif_0)$. Since the $H_q$-actions on the modules $Q_\bullet$ and $M$ commute with the $\dif_0$ action, the above short exact sequence is an exact sequence of $H_q$-modules.

Taking the tensor product of $\mathbf{p}_\bullet(A_0)$ with the sequence, we get a short exact sequence since $\mathbf{p}_\bullet(A_0)$ is projective as a module over $A_0^{\rm en}$
\[
0\lra \mathbf{p}_\bullet(A_0)\otimes_{A_0^{\rm en}} M
\lra \mathbf{p}_\bullet(A_0)\otimes_{A_0^{\rm en}} C_\bullet(f)\lra 
\mathbf{p}_\bullet(A_0)\otimes_{A_0^{\rm en}} Q_\bullet[1]\lra 0
\ .
\]
The middle term $\mathbf{p}_\bullet(A_0)\otimes_{A_0^{\rm en}} C_\bullet(f)$ is a $p$-complex of $A_0$-bimodules equipped with a filtration, whose subquotients are isomorophic to grading shifts of $C_\bullet(f)$. Therefore it is acyclic. It follows that we have an isomorphism in $\mc{D}((A_0^{\rm en}\# H_q,\dif_0)$
\[
\mathbf{p}_\bullet(A_0)\otimes_{A_0^{\rm en}} Q_\bullet\cong \mathbf{p}_\bullet(A_0)\otimes_{A_0^{\rm en}} M
\ .
\]
Taking $p$-Hochschild homology, we obtain an isomorphism of $H_q$-modules
\[
\mH^/_\bullet(\mathbf{p}_\bullet(A_0)\otimes_{A_0^{\rm en}} Q_\bullet) \cong \mH^/_\bullet(\mathbf{p}_\bullet(A_0)\otimes_{A_0^{\rm en}} M).
\]
Similarly, since $Q_\bullet$ is projective as a bimodule over $A_0$, tensoring Mayer's short exact sequence of bimodules with $Q_\bullet$ over $A_0^{\rm en}$ remains exact:
\[
0\lra A_0 \otimes_{A_0^{\rm en}}  Q_\bullet  \lra  \mathbf{p}^{\prime}_\bullet (A_0)\otimes_{A_0^{\rm en}}  Q_\bullet\lra   \mathbf{p}_\bullet(A_0)[1]\otimes_{A_0^{\rm en}}  Q_\bullet \lra 0
\]
The middle term is, as above, acyclic since $\mathbf{p}_\bullet^{\prime}(A_0)$ is. It follows that
\[
 A_0 \otimes_{A_0^{\rm en}} Q_\bullet  \cong  \mathbf{p}_\bullet(A_0)\otimes_{A_0^{\rm en}} Q_\bullet 
\]
as objects in $\mc{D}(H_q,\dif_0)$. Taking slash homology on both sides gives the desired result.
\end{proof}

\section{\texorpdfstring{$p$}{p}-DG Soergel bimodules and braid relations} \label{secbraidrelations}

\subsection{\texorpdfstring{$p$}{p}-DG bimodules over the polynomial algebra}\label{subset-p-DG-pol}
Let $\Bbbk$ be a field of characteristic\footnote{This characteristic assumption is standard in the literature on Soergel bimodules. See, for instance \cite[Section 2]{EliasKh}.  Remark \ref{rem-char-2} also provides  an explanation for this assumption.} $p>2$. The graded polynomial algebra $R_n=\Bbbk[x_1,\ldots,x_n]$ has a natural module-algebra structure over the graded Hopf algebra $H_q=\Bbbk[\dif_q]/(\dif_q^p)$, where the generator $\partial_q \in H_q$ acts as a derivation determined by $\partial_q(x_i)=x_i^2$ for $i=1,\ldots,n$. Here the degree of each $x_i$ and $\dif_q$ are both two, and will be referred to as the \emph{$q$-degree}.

When $n$ is clear from the context, we will abbreviate $R_n$ by just $R$.
The differential is invariant under the permutation action of the symmetric group $S_n$ on the indices of the variables. Therefore let the subalgebra of polynomials symmetric in variables $x_i$ and $x_{i+1}$ with its inherited $H_q$-module structure be denoted by
\[
R^i_n=\Bbbk[x_1,\ldots,x_{i-1},x_i+x_{i+1},x_i x_{i+1},x_{i+2},\ldots,x_n].
\]
More generally, given any subgroup $G\subset S_n$, the invariant subalgebra $R_n^G$ inherits an $H_q$-algebra structure from $R_n$ (and is thus a $p$-DG algebra). In particular, we will also use the $H_q$-subalgebra 
$
R^{i,i+1}_n:= R^{S_3}_n
$,
where $S_3$ is the subgroup generated by permuting the indices $i$, $i+1$ and $i+2$.

When the number of variables $n$ is clear from the context or is irrelevant of the statements, we will abbreviate $R_n$ by just $R$ in what follows.

The $(R,R)$-bimodule 
$B_i=R \otimes_{R^i} R$ has the structure of an $H_q$-module (and is thus a $p$-DG bimodule) where the differential acts via the Leibniz rule: for any $h\otimes g\in R\otimes_{R^i} R$,
$$
\partial_q(h \otimes g)=\partial_q(h) \otimes g+ h \otimes \partial_q(g).
$$
The tensor category of $(R,R)$-bimodules generated by the $B_i$, has an $H_q$-module structure where the action comes from the comultiplication in $H_q$.  We denote this category by
$(R,R) \# H_q \dmod$.

Let $f=\sum_{i=1}^{n} a_ix_i \in \F_p[x_1,\dots, x_n]\subset R $ be a linear function.  We twist the $H_q$-action on the bimodule $B_i$ to obtain a bimodule $B_i^f$ defined as follows.
As an $(R_n,R_n)$-bimodule, it is the same as $B_i$ but the action of $H_q$ is twisted by defining 
\begin{subequations}
\begin{equation}\label{eqn-twistonBi-left}
  \partial_q(1 \otimes 1)=(1 \otimes 1)f.
\end{equation}
Similarly we define ${}^f B_i$ where now 
\begin{equation}\label{eqn-twistonBi-right}
    \partial_q(1\otimes1)=f(1 \otimes 1).
\end{equation}
\end{subequations}

For $R_n$ as a bimodule over itself, it is clear that $^fR_n \cong R_n^f$ as $p$-DG bimodules.
It follows that there are $p^n$ ways to put an $H_q$-module structure on a rank-one free module over $R_n$.
Each such $H_q$-module is quasi-isomorphic to a finite-dimensional $p$-complex. Choose numbers $b_i\in \{1,\dots ,p  \}$ such that $b_i\equiv a_i~(\mathrm{mod}~p)$, $i=1,\dots, n$, and define the $H_q$-ideal of $R$ 
\begin{equation}
I=(x_1^{p+1-b_1},\cdots, x_{n}^{p+1-b_n}).
\end{equation}
Then the natural quotient map
\begin{equation}
\pi:R_n^f \twoheadrightarrow R_n^f/(I\cdot R_n^f)
\end{equation}
is readily seen to be a quasi-isomorphism. 

\begin{lem}\label{lem-pol-mod}
For each $f=\sum_{i=1}^n a_ix_i$, the rank-one $p$-DG module $R_n^f$ has finite-dimensional slash homology: \
\[
\mH^/_{\bullet}(R_n^f)\cong \bigotimes_{i=1}^n V_{p-b_i} \{p-b_i\} .
\]
In particular, if any $a_i$ of $f=\sum_i a_ix_i$ is equal to one, then $\mH^/_{\bullet}(R_n^f)=0$.
\end{lem}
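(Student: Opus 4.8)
The plan is to realize $R_n^f$ as an external $\Bbbk$-tensor product over the $n$ variables, replace it by the finite-dimensional quotient supplied by the quasi-isomorphism $\pi$ recorded just above the statement, and compute the one-variable factors directly.

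First I would observe that $R_n=\Bbbk[x_1]\otimes_\Bbbk\cdots\otimes_\Bbbk\Bbbk[x_n]$ as an algebra, and that the $p$-differential on $R_n^f$ acts on a polynomial $g$ (viewed in $R_n^f$) by $g\mapsto\partial_q(g)+fg$, where $\partial_q$ denotes the derivation of $R_n$ with $\partial_q(x_i)=x_i^2$. Since $f=\sum_i a_ix_i$ is a sum of terms, each supported on a single tensor factor, and $\partial_q$ is primitive in $H_q$, a direct check identifies $R_n^f$, as a $p$-DG module over $R_n$, with the $\Bbbk$-tensor product of the rank-one $p$-DG modules $\Bbbk[x_i]^{a_ix_i}$ over $\Bbbk[x_i]$ (i.e.\ the ones with $\partial_q(1)=a_ix_i$). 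The $H_q$-ideal $I$ decomposes compatibly, so this identification descends to
\[
R_n^f/(I\cdot R_n^f)\;\cong\;\bigotimes_{i=1}^{n}\Bbbk[x_i]^{a_ix_i}/(x_i^{\,p+1-b_i}),
\]
a finite-dimensional $p$-DG module; by the quasi-isomorphism $\pi$ we get $\mH^/_\bullet(R_n^f)\cong\mH^/_\bullet\bigl(R_n^f/(I\cdot R_n^f)\bigr)$, so in particular the slash homology is finite-dimensional.

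Next I would analyze a single factor $\Bbbk[x]^{ax}/(x^{p+1-b})$. It has $\Bbbk$-basis $1,x,\dots,x^{p-b}$ concentrated in $q$-degrees $0,2,\dots,2(p-b)$, with $\partial_q(x^k)=(k+a)x^{k+1}$ for $0\le k<p-b$ and $\partial_q(x^{p-b})=0$. Since $b\equiv a\pmod p$ and $b\in\{1,\dots,p\}$, for $0\le k\le p-b-1$ the scalar $k+a\equiv k+b$ lies in $\{b,\dots,p-1\}$ and is invertible in $\Bbbk$; after rescaling the basis, the complex is a single Jordan block for $\partial_q$ of length $p+1-b$ with bottom degree $0$, that is, $V_{p-b}\{p-b\}$. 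Tensoring these identifications over the $n$ factors gives $R_n^f/(I\cdot R_n^f)\cong\bigotimes_{i=1}^{n}V_{p-b_i}\{p-b_i\}$ as graded $H_q$-modules.

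Finally, passing to slash homology: if $b\ge2$ then $\dim V_{p-b}=p+1-b\le p-1$, so $V_{p-b}$ has no free $H_q$-summand and $\mH^/_\bullet(V_{p-b})=V_{p-b}$ (just as $\mH^/_\bullet(U_i)=U_i$ for $i\le p-2$); if $b=1$ then $V_{p-1}$ is the $p$-dimensional regular representation of $H_q$, hence free, with vanishing slash homology. This yields $\mH^/_\bullet(R_n^f)\cong\bigotimes_{i=1}^{n}V_{p-b_i}\{p-b_i\}$ in $\mc{C}(\Bbbk,\partial_q)$; and if some $a_i=1$, forcing $b_i=1$, the tensor product contains the free factor $V_{p-1}$, is therefore free as an $H_q$-module, and has slash homology $0$, so $\mH^/_\bullet(R_n^f)=0$. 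The only point requiring care is that slash homology carries no free $H_q$-summand whereas the honest module $\bigotimes_iV_{p-b_i}\{p-b_i\}$ may (already for $n=2$ with small $b_i$), so the displayed isomorphism is to be read in $\mc{C}(\Bbbk,\partial_q)$; equivalently, $\mH^/_\bullet(R_n^f)$ is the maximal free-summand-free part of $\bigotimes_iV_{p-b_i}\{p-b_i\}$, which the ``in particular'' clause pins down in the case $b_i=1$. Everything else is a direct computation.
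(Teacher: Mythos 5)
Your proof is correct and follows essentially the same route as the paper: the paper's argument for the first claim is simply that it ``follows from the discussion before the lemma'' (i.e., the quasi-isomorphism $\pi$ onto the finite-dimensional quotient $R_n^f/(I\cdot R_n^f)$), and your one-variable Jordan-block computation fills in the details left implicit there, while the second claim is handled identically via the acyclicity of the free module $V_{p-1}$ and the fact that tensoring with it kills slash homology. Your closing caveat---that the displayed isomorphism must be read in $\mc{C}(\Bbbk,\dif_q)$ because $\bigotimes_i V_{p-b_i}\{p-b_i\}$ can contain free $H_q$-summands whereas slash homology never does---is a correct and worthwhile precision that the paper's statement elides.
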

\begin{proof}
The first statement follows from the discussion before the lemma. For the second statement, note that $V_{p-1}$ is acyclic, and the tensor product of an acyclic $H_q$-module with any $H_q$-module is acyclic.
\end{proof}

\begin{cor}\label{cor-finite-slash-homology}
Let $M$ be a $p$-DG module over $R$ which is equipped with a finite filtration, whose subquotients are isomorphic to $R^f$ for various $f$. Then $M$ has finite-dimensional slash homology.
\end{cor}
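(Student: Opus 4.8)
The plan is to induct on the length of the filtration. If the filtration has length zero, there is nothing to prove; if it has length one, then $M\cong R^f$ for some linear $f$, and the claim is exactly the first statement of Lemma~\ref{lem-pol-mod}. For the inductive step, suppose $M$ carries a $p$-DG submodule $M'$ such that $M'$ and $M/M'$ both admit finite filtrations with subquotients of the form $R^f$. By the inductive hypothesis, $\mH^/_\bullet(M')$ and $\mH^/_\bullet(M/M')$ are both finite-dimensional. The short exact sequence of $p$-DG modules
\[
0\lra M' \lra M \lra M/M' \lra 0
\]
splits as a sequence of $R$-modules (since over a polynomial ring one can choose the splitting, or more simply one just needs to forget the $\dif$-action), hence by \cite[Lemma 4.3]{QYHopf} it gives rise to a distinguished triangle in $\mc{C}(R,\dif_q)$, and therefore a long exact sequence in slash homology relating $\mH^/_\bullet(M')$, $\mH^/_\bullet(M)$, and $\mH^/_\bullet(M/M')$. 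Finite-dimensionality of $\mH^/_\bullet(M)$ then follows from finite-dimensionality of the two outer terms.

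Alternatively — and perhaps more cleanly, avoiding any discussion of the splitting — one can argue via the finite-dimensional models directly. Each subquotient $R^f$ admits, as recalled before Lemma~\ref{lem-pol-mod}, a quasi-isomorphism $\pi\colon R^f \twoheadrightarrow R^f/(I\cdot R^f)$ onto a finite-dimensional $p$-complex. Using the $H_q$-stable ideal $I$ (taking, for safety, the intersection or product of the finitely many ideals arising from the finitely many subquotients appearing in the filtration, so that the resulting quotient of $R$ is finite-dimensional and the quotient map is still a quasi-isomorphism on each subquotient), one reduces $M$ modulo this ideal to a finite-dimensional $p$-DG module $\overline{M}$ equipped with an induced finite filtration whose subquotients are the finite-dimensional $\overline{R^f}$. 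Since $\overline{M}$ is literally finite-dimensional, its slash homology is automatically finite-dimensional; it then remains to check that $M\to\overline{M}$ is a quasi-isomorphism, which again follows by filtration induction together with the long exact sequence (the five-lemma) from the triangle structure.

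I expect the main obstacle to be a bookkeeping point rather than a conceptual one: namely, verifying that the reduction map $M\to\overline{M}$ is a quasi-isomorphism, or equivalently that the long-exact-sequence/five-lemma argument goes through at each stage of the filtration. This requires knowing that slash homology sends distinguished triangles to long exact sequences — which is part of its being a (co)homological functor on the triangulated category $\mc{C}(R,\dif_q)$, implicit in the constructions recalled earlier in the section — and that short exact sequences of $p$-DG modules which split over the underlying algebra yield distinguished triangles, which is \cite[Lemma 4.3]{QYHopf}. Given those inputs, the corollary is a routine two-out-of-three argument along the filtration. I would present the first (long exact sequence) version as the main proof, as it is the shortest.
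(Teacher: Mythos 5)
Your first argument is exactly the paper's proof, which reads in its entirety ``Induct on the length of the filtration, and apply the previous lemma''; your long-exact-sequence version just makes the intended induction step explicit (and the splitting over $R$ is indeed automatic, since each subquotient $R^f$ is free as an $R$-module, so the quotient $M/M'$ is free and the sequence splits). The alternative reduction-mod-$I$ argument is a fine but unnecessary elaboration.
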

\begin{proof}
Induct on the length of the filtration, and apply the previous lemma.
\end{proof}

By Definition \ref{def-relative-homotopy-category}, a morphism $f \colon A \longrightarrow B$ in the homotopy category
$\mc{C}((R,R) \# H_q,d_0)$ is a relative isomorphism if $\mathcal{F}_d(f)$
is an isomorphism in the homotopy category $\mc{C}((R,R),d_0)$.
Localizing $\mc{C}((R,R) \# H_q,d_0)$ at all relative isomorphisms produces the relative homotopy category of $(R,R)$-bimodules $\mc{C}^{\dif_q}(R,R,d_0)$.
Similarly we have the $p$-version of the relative homotopy category $\mc{C}^{\dif_q}(R,R,\dif_0)$.
%, which we will abbreviate $\mc{C}_\dif(R,R)$ for ease of notation.

\subsection{Elementary braiding complexes}
\begin{lem} 
There are $(R,R) \# H_q$-module homomorphisms
\begin{enumerate}
\item[(i)] $rb_i \colon R \longrightarrow q^{-2} B_i^{-(x_i+x_{i+1})}$, where $1 \mapsto (x_{i+1} \otimes 1 
- 1 \otimes x_{i}) $;
\item[(ii)] $br_i \colon B_i \longrightarrow R$, where $1 \otimes 1 \mapsto 1$.
\end{enumerate}
\end{lem}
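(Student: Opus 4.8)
The plan is to verify directly that the two proposed maps are well-defined $(R,R)$-bimodule maps and then check that each intertwines the (possibly twisted) $H_q$-actions on source and target. For part (ii) this is nearly immediate: the map $B_i = R\otimes_{R^i}R \to R$ sending $h\otimes g \mapsto hg$ is the standard multiplication map, clearly $(R,R)$-bilinear and well-defined since elements of $R^i$ may be moved across the tensor symbol, and it sends $1\otimes 1 \mapsto 1$. For $H_q$-equivariance I would compute $\partial_q(h\otimes g) = \partial_q(h)\otimes g + h\otimes\partial_q(g)$, apply multiplication to get $\partial_q(h)g + h\partial_q(g)$, and observe this equals $\partial_q(hg)$ by the Leibniz rule for the derivation $\partial_q$ on $R$; since no twist appears on either $B_i$ or $R$ here, nothing further is needed.

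For part (i), the first task is to check that $1 \mapsto x_{i+1}\otimes 1 - 1\otimes x_i$ extends to a well-defined $(R,R)$-bimodule map $R \to q^{-2}B_i^{-(x_i+x_{i+1})}$; since $R$ is free of rank one as a bimodule over itself only in a weak sense, I would instead note that a bimodule map out of $R$ is determined by where $1$ goes, subject to the constraint that left and right multiplication by any $r\in R$ agree on the image, i.e. $r\cdot(x_{i+1}\otimes 1 - 1\otimes x_i) = (x_{i+1}\otimes 1 - 1\otimes x_i)\cdot r$ in $B_i$. This is the classical computation showing $x_{i+1}\otimes 1 - 1\otimes x_i$ is a "central" element of $B_i$ (equivalently, $B_i \cong R\oplus R\{-2\}$ with this element spanning a copy of $R$), and I would just recall it. The $q$-degree bookkeeping: $x_{i+1}\otimes 1 - 1\otimes x_i$ has $q$-degree $2$, and $1\in R$ has degree $0$, so the shift $q^{-2}$ is exactly what makes the map degree-preserving.

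The main point — and the step I expect to require the most care — is $H_q$-equivariance of $rb_i$ in the presence of the twist $f = -(x_i+x_{i+1})$ on the target. By the definition in \eqref{eqn-twistonBi-left}, on $B_i^{f}$ one has $\partial_q(1\otimes 1) = (1\otimes 1)f = -(x_i+x_{i+1})(1\otimes 1) = -(x_i\otimes 1 + 1\otimes x_{i+1})$ (using that $x_i+x_{i+1}\in R^i$ so it may be placed on either side), while the ordinary Leibniz-rule differential on the underlying $B_i$ sends $1\otimes 1 \mapsto 0$. So I would compute $\partial_q$ applied to the image $x_{i+1}\otimes 1 - 1\otimes x_i$ using the twisted differential: $\partial_q(x_{i+1}\otimes 1) = x_{i+1}^2\otimes 1 + x_{i+1}\cdot\partial_q(1\otimes 1)$ and similarly $\partial_q(1\otimes x_i) = 1\otimes x_i^2 + \partial_q(1\otimes 1)\cdot x_i$, then combine. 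The expected outcome is that the twist terms $-x_{i+1}(x_i\otimes 1 + 1\otimes x_{i+1})$ and $-(x_i\otimes 1+1\otimes x_{i+1})x_i$, together with $x_{i+1}^2\otimes 1 - 1\otimes x_i^2$, all cancel, leaving $\partial_q(x_{i+1}\otimes 1 - 1\otimes x_i) = 0$, which matches $\partial_q$ of $1\in R$ (namely $0$) under the map. Carrying out this cancellation carefully, using $R^i$-bilinearity to move symmetric functions across the tensor, is the heart of the verification; everything else is routine.
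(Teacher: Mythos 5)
Your proposal is correct and follows the paper's proof exactly: the bimodule statements are quoted as classical, $br_i$ is immediate since $\partial_q(1\otimes 1)=0=\partial_q(1)$, and for $rb_i$ one computes $\partial_q(x_{i+1}\otimes 1-1\otimes x_i)=x_{i+1}^2\otimes 1-1\otimes x_i^2+(x_{i+1}\otimes 1-1\otimes x_i)(-(x_i+x_{i+1}))=0$, which is precisely the cancellation you describe. One small slip: $(1\otimes 1)f=-(x_i+x_{i+1})\otimes 1=-1\otimes(x_i+x_{i+1})$, not $-(x_i\otimes 1+1\otimes x_{i+1})$ (mixing the tensor factors is not allowed for the individual variables), though the final cancellation happens to go through either way.
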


\begin{proof}
The fact that these maps are $(R,R)$-bimodule homomorphisms is well known. See, for instance, \cite{EliasKh} for more details.

In order to check that the homomorphism is compatible with the $H_q$-module structure, we must check that $rb_i(\partial_q(1))=\partial_q(rb_i(1))$.
Clearly $rb_i(\partial_q(1))=0$.
On the other hand
\begin{align*}
\partial_q(rb_i(1))&=\partial_q(x_{i+1} \otimes 1 - 1 \otimes x_{i}) \\
&=x_{i+1}^2 \otimes 1 - 1 \otimes x_{i}^2
+(x_{i+1} \otimes 1 - 1 \otimes x_{i})
(-x_i - x_{i+1})  \\
&=0
\end{align*}
where the third term in the second equation above comes from the twist on $B_i$.

The second homomorphism clearly respects the $H_q$-structure since 
$\partial_q(1 \otimes 1)=0=\partial_q(1)$.
\end{proof}

We now have complexes of $(R,R) \# H_q$-modules 
\begin{equation}\label{eqn-elementary-braids}
T_i :=
\left(t B_i  \xrightarrow{br_i} R\right)
,
\quad \quad \quad
T_i' :=  \left(R \xrightarrow{rb_i} q^{-2} t^{-1} B_i^{-(x_i+x_{i+1})}\right)
.
\end{equation}
In the coming sections we will, for presentation reasons, often omit the various shifts built into the definitions of $T_i$ and $T_i'$.

We associate respectively to the left and right crossings  $\sigma_i$ and $\sigma_i^{\prime}$ between the $i$th and $(i+1)$st strands in \eqref{2crossings} the chain complexes of $(R,R)\#H_q$-bimodules $T_i$ and $T_i'$: 
\begin{equation} \label{2crossings}
\sigma_i:=
\begin{DGCpicture}
\DGCPLstrand(-1,0)(-1,1)
\DGCPLstrand(0,0)(1,1)
\DGCPLstrand(1,0)(.75,.25)
\DGCPLstrand(0,1)(.25,.75)
\DGCPLstrand(2,0)(2,1)
\DGCcoupon*(-1,0)(0,1){$\cdots$}
\DGCcoupon*(1,0)(2,1){$\cdots$}
\end{DGCpicture}
\hspace{1in}
\sigma_i^\prime:=
\begin{DGCpicture}
\DGCPLstrand(-1,0)(-1,1)
\DGCPLstrand(0,0)(.25,.25)
\DGCPLstrand(.75,.75)(1,1)
\DGCPLstrand(1,0)(0,1)
\DGCPLstrand(2,0)(2,1)
\DGCcoupon*(-1,0)(0,1){$\cdots$}
\DGCcoupon*(1,0)(2,1){$\cdots$}
\end{DGCpicture}
\end{equation}
More generally, if $\beta\in \mathrm{Br}_n$ is a braid group element written as a product $\sigma_{i_i}^{\epsilon_1}\cdots \sigma_{i_k}^{\epsilon_k}$ in the elementary generators where $\epsilon_i\in \{\emptyset, \prime\}$, we assign the chain complex of $(R,R)\# H_q$-bimodules
\begin{equation}
    T_\beta:=T_{i_1}^{\epsilon_1}\otimes_R\cdots \otimes_R T_{i_k}^{\epsilon_k}.
\end{equation}

\begin{thm}\label{thm-braid-invariant}
Given any braid group element $\beta\in \mathrm{Br}_n$, the chain complex of $T_\beta$ associated to it is a well-defined element of the relative homotopy category $\mc{C}^{\dif_q}(R,R,d_0)$.
\end{thm}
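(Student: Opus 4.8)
The plan is to establish that $T_\beta$ is well-defined up to relative homotopy equivalence by verifying the defining relations of the braid group $\mathrm{Br}_n$: that each elementary complex $T_i$ is invertible with inverse $T_i'$ (so the assignment $\sigma_i \mapsto T_i$, $\sigma_i^{-1} \mapsto T_i'$ is consistent), that far-commutativity $T_i \otimes_R T_j \simeq T_j \otimes_R T_i$ holds for $|i-j| \geq 2$, and that the braid relation $T_i \otimes_R T_{i+1} \otimes_R T_i \simeq T_{i+1} \otimes_R T_i \otimes_R T_{i+1}$ holds, all as isomorphisms in $\mc{C}_{\dif_q}(R,R,d_0)$. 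Since every braid word represents $\beta$ up to these relations, once the relations are checked on the nose in the relative homotopy category, the complex $T_\beta$ associated to any word is independent of the chosen word.

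First I would verify the invertibility $T_i \otimes_R T_i' \simeq R \simeq T_i' \otimes_R T_i$. Writing out $T_i \otimes_R T_i'$ as a two-term-by-two-term complex and using the standard decomposition $B_i \otimes_R B_i \cong B_i\{1\} \oplus B_i\{-1\}$ together with the bimodule maps from the preceding lemma ($rb_i$, $br_i$, and the dot/multiplication maps), one identifies a contractible subcomplex whose complement is $R$; crucially, since we work in the \emph{relative} homotopy category $\mc{C}_{\dif_q}(R,R,d_0)$, the contracting homotopy only needs to be $(R,R)$-bilinear and need not intertwine the $H_q$-action — this is exactly the reason the relative category was introduced, and it lets us reuse the classical Gaussian-elimination arguments of Rouquier verbatim at the level of underlying bimodule complexes. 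The only extra bookkeeping is to check that the twists $B_i^{-(x_i+x_{i+1})}$ and the $q$-degree shifts built into \eqref{eqn-elementary-braids} are the correct ones for the maps $rb_i$, $br_i$ to be genuine $(R,R)\#H_q$-module maps, which is precisely the content of the lemma just proved.

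Next I would establish far-commutativity, which is essentially formal: for $|i-j|\geq 2$ the bimodules $B_i$ and $B_j$ are built from disjoint sets of variables, the tensor products $T_i \otimes_R T_j$ and $T_j \otimes_R T_i$ are literally isomorphic as complexes of $(R,R)\#H_q$-modules (the $H_q$-action respects the tensor factorization via the comultiplication), so no homotopy is even needed. Then for the braid relation I would follow the exposition in \cite{KRWitt} (which itself follows \cite{RouBraid, Roulink}) closely: expand both $T_i\otimes_R T_{i+1}\otimes_R T_i$ and $T_{i+1}\otimes_R T_i \otimes_R T_{i+1}$ as four-term complexes, use the bimodule decompositions over $R^{i,i+1}_n$ (the $S_3$-invariants), identify a common contractible piece, and exhibit an explicit bilinear isomorphism between the reductions. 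Again, because we are in $\mc{C}_{\dif_q}(R,R,d_0)$, the Gaussian eliminations only require $(R,R)$-bilinear homotopies, so the classical computation transfers; what must be added is the routine verification that every structural map appearing in the reduction is $H_q$-equivariant (with the appropriate twist), for which one checks compatibility of $\partial_q$ with each generating map exactly as in the preceding lemma.

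The main obstacle, and the only place real work is needed, is the braid relation: keeping track of all the linear-function twists $B_i^f$ and $q$-degree shifts through the iterated tensor products, and confirming that the explicit isomorphism between the two reduced complexes is compatible with the $H_q$-module structure on every term. This is where the parallel with \cite{KRWitt} is most heavily used — their Witt-algebra computation for the $L_1$ generator is structurally the same as our $\partial_q$ computation — so I expect to cite their Lemmas for the bimodule-level homotopy equivalences and supply only the $H_q$-equivariance checks and the twist bookkeeping. A secondary point to be careful about is that "well-defined in $\mc{C}_{\dif_q}(R,R,d_0)$" means well-defined up to the relative isomorphisms of Definition \ref{def-relative-homotopy-category}, i.e. up to morphisms that become homotopy equivalences after applying $\mathcal{F}_d$; so after proving the three relations at the level of underlying bimodule complexes, one notes that each relating map is visibly an $(R,R)\#H_q$-map and becomes a homotopy equivalence under $\mathcal{F}_d$, hence is a relative isomorphism, which is all that is required.
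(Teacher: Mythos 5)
Your proposal follows essentially the same route as the paper: reduce to the braid-group relations, prove Reidemeister II via the decomposition of $B_i\otimes_R B_i$ (which in the equivariant setting acquires the twist $q^2B_i^{x_i+x_{i+1}}$ of Lemma \ref{BB=B+B}, not the untwisted $B_i\{1\}\oplus B_i\{-1\}$ you first write) followed by Gaussian elimination, and prove Reidemeister III by splitting off pieces that are contractible after forgetting the $H_q$-action, exactly as in \cite{KRWitt}. The only real difference is that the paper establishes Reidemeister II in the non-relative category $\mc{C}((R,R)\#H_q,d_0)$, since the contracting homotopies there happen to satisfy $\dif_q(\gamma)=0$, whereas you invoke only the weaker relative statement, which still suffices for the theorem.
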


The proof of the theorem, which is like the analogous proof in \cite{KRWitt}, will take up the rest of this section. We will repeatedly apply Proposition \ref{prop-construction-of-triangle} to simplify complexes in the relative homotopy category.

\subsection{Reidemeister II}
The following lemma is crucial to proving the Reidemeister II braid relation and should be compared with \cite[Lemma 4.3]{KRWitt}.  

\begin{lem} \label{BB=B+B}
There exists an isomorphism of $(R,R) \#H_q$-modules
\[
B_i \otimes_R B_i \cong B_i \oplus q^2 B_i^{x_i+x_{i+1}}
\]
defined by
\[
\phi=\begin{pmatrix} \phi_1 \\ \phi_2 \end{pmatrix} \colon B_i \otimes_R B_i \longrightarrow B_i \oplus q^2 B_i^{x_i+x_{i+1}},
\quad 
\phi_1(1 \otimes 1 \otimes 1)=
\begin{pmatrix}
1 \otimes 1 \\ 0
\end{pmatrix},
\quad 
\phi_2(
1 \otimes (x_{i+1}-x_{i}) \otimes 1 )
=
\begin{pmatrix}
0 \\
1 \otimes 1
\end{pmatrix},
\]
\[
\psi=(\psi_1, \psi_2) \colon 
B_i \oplus q^2 B_i^{x_i+x_{i+1}} \longrightarrow B_i \otimes_R B_i ,
\quad
\psi_1(1 \otimes 1)=1 \otimes 1 \otimes 1, 
\quad
\psi_2(1 \otimes 1)=1 \otimes (x_{i+1}-x_i) \otimes 1.
\]
\end{lem}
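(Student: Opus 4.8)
The statement is a $p$-DG upgrade of the classical decomposition $B_i\otimes_R B_i\cong B_i\oplus q^2 B_i$ of Soergel bimodule theory, so the first step is to verify the underlying statement of $(R,R)$-bimodules, which is standard: one checks that $\phi$ and $\psi$ are mutually inverse $(R,R)$-bimodule maps. Concretely, $B_i\otimes_R B_i$ is free of rank $2$ as a left (or right) $R$-module, with basis $1\otimes 1\otimes 1$ and $1\otimes(x_{i+1}-x_i)\otimes 1$; the map $\phi_1$ is the multiplication $B_i\otimes_R B_i\to B_i$ using the middle tensor factor, and $\phi_2$ reads off the coefficient of the second basis vector. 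This part is routine and can be cited from \cite{EliasKh}; I would only spell out enough of it to fix conventions (in particular to pin down the formula $\phi_2(1\otimes g\otimes 1)$ on a general element $g\in R$, which requires writing $g$ in terms of $1$ and $x_{i+1}-x_i$ modulo $R^i$).

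**The $H_q$-compatibility.** The real content is checking that $\phi$ (equivalently $\psi$) intertwines the $\partial_q$-actions, where on $B_i\otimes_R B_i$ the differential is by the Leibniz rule, on the summand $B_i$ it is the untwisted Leibniz action, and on $q^2 B_i^{x_i+x_{i+1}}$ it is the Leibniz action twisted by \eqref{eqn-twistonBi-left}, i.e. $\partial_q(1\otimes 1)=(1\otimes 1)(x_i+x_{i+1})$. Since all maps are $(R,R)$-bimodule maps and $\partial_q$ is a derivation, it suffices to check commutativity with $\partial_q$ on the two generators $1\otimes 1\otimes 1$ and $1\otimes(x_{i+1}-x_i)\otimes 1$ of $B_i\otimes_R B_i$ as a bimodule — actually on a bimodule generating set, so on $1\otimes1\otimes1$ alone together with knowing the map on $1\otimes(x_{i+1}-x_i)\otimes1$. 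For $\psi_1$: $\partial_q(\psi_1(1\otimes1))=\partial_q(1\otimes1\otimes1)=0$ since each tensor factor contributes $\partial_q(1)=0$, matching $\psi_1(\partial_q(1\otimes1))=\psi_1(0)=0$. For $\psi_2$: compute $\partial_q\big(1\otimes(x_{i+1}-x_i)\otimes1\big)=1\otimes(x_{i+1}^2-x_i^2)\otimes1=1\otimes(x_{i+1}-x_i)(x_{i+1}+x_i)\otimes1$; using that $x_{i+1}+x_i\in R^i$ one can slide it across the middle tensor symbol, obtaining $(x_i+x_{i+1})\cdot\big(1\otimes(x_{i+1}-x_i)\otimes1\big)$, which is exactly $\psi_2$ applied to $\partial_q(1\otimes1)=(x_i+x_{i+1})(1\otimes1)$ in the twisted module $q^2 B_i^{x_i+x_{i+1}}$. (Here one must be slightly careful about left versus right: $x_{i+1}+x_i$ lies in $R^i$ so it equals $1\otimes 1\otimes(x_i+x_{i+1})$ acting on the right or $(x_i+x_{i+1})\otimes1\otimes1$ acting on the left; the twist convention \eqref{eqn-twistonBi-left} is the left one, so I would insert the elementary identity $1\otimes(x_{i+1}-x_i)\otimes(x_i+x_{i+1})=(x_i+x_{i+1})\otimes(x_{i+1}-x_i)\otimes1$ in $B_i\otimes_R B_i$ to match it.)

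**Wrap-up.** Having checked that $\psi$ is $H_q$-equivariant on bimodule generators, it is $H_q$-equivariant; then $\phi=\psi^{-1}$ is automatically $H_q$-equivariant as well (the inverse of an equivariant isomorphism is equivariant), so there is no need to redo the computation for $\phi$. The $q$-degree bookkeeping is consistent because $x_i+x_{i+1}$ and $x_{i+1}-x_i$ have $q$-degree $2$, which accounts for the shift $q^2$ on the second summand. The only genuine subtlety — and the step I expect to be the main obstacle to write cleanly rather than conceptually hard — is keeping straight the left/right twist conventions in \eqref{eqn-twistonBi-left}--\eqref{eqn-twistonBi-right} when pushing the symmetric polynomial $x_i+x_{i+1}$ through the central tensor factor, since a sign or side error there would break equivariance; everything else is the classical computation plus a one-line derivation check.
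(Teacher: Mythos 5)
Your proposal is correct and follows essentially the same route as the paper: cite the classical bimodule isomorphism and then verify $H_q$-equivariance on the two bimodule generators, with the key computation $\partial_q(1\otimes(x_{i+1}-x_i)\otimes 1)=1\otimes(x_{i+1}-x_i)(x_{i+1}+x_i)\otimes 1$ sliding the symmetric factor across $\otimes_{R^i}$ to produce exactly the twist $x_i+x_{i+1}$ (the paper simply declares this check ``straightforward,'' while you carry it out). One cosmetic slip: you call the convention of \eqref{eqn-twistonBi-left} ``the left one,'' whereas $B_i^f$ is twisted by \emph{right} multiplication, $\partial_q(1\otimes 1)=(1\otimes 1)f$; this does not affect your argument since, as you note, $1\otimes(x_{i+1}-x_i)\otimes(x_i+x_{i+1})=(x_i+x_{i+1})\otimes(x_{i+1}-x_i)\otimes 1$ in $B_i\otimes_R B_i$.
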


\begin{proof}
As an $(R,R)$-bimodule, $B_i \otimes_R B_i$ is generated by
$1 \otimes 1 \otimes 1 $ and
$1 \otimes (x_{i+1}-x_i) \otimes 1$.
It is a classical fact that the maps $\phi$ and $\psi$ provide inverse isomorphisms of bimodules.

It is straightforward to check that $\phi$ and $\psi$ are compatible with the $H_q$-module structure.
Note that this condition forces the twist $x_i+x_{i+1}$ on the second $B_i$ term.
\end{proof}

\begin{rem}\label{rem-char-2}
This lemma is where the characteristic $p>2$ assumption is crucially needed. Indeed, in characteristic $2$, the element $1 \otimes (x_{i+1}-x_i) \otimes 1 =1 \otimes (x_{i+1}+x_i) \otimes 1= (x_{i+1}+x_i)\otimes 1  \otimes 1$ becomes a redundant element in the standard set of bimodule generators for $B_i\otimes_R B_i$.
\end{rem}

\begin{prop} \label{propR2}
There are isomorphisms in the homotopy category
$\mc{C}((R,R) \# H_q, d_0)$
\[ 
T_i \otimes_R T_i' \cong \Id \cong T_i' \otimes_R T_i.
\]
\end{prop}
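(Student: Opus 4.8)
The plan is to establish the two homotopy equivalences $T_i \otimes_R T_i' \cong \Id$ and $T_i' \otimes_R T_i \cong \Id$ by writing out each composite as an explicit two-step complex of $(R,R)\#H_q$-bimodules and then exhibiting an explicit contraction onto the trivial bimodule $R$ in homological degree zero. Concretely, $T_i \otimes_R T_i'$ unfolds (omitting grading shifts, as the excerpt allows) into the total complex of the bicomplex whose terms are $B_i \otimes_R B_i^{-(x_i+x_{i+1})}$, $B_i \oplus R\otimes_R B_i^{-(x_i+x_{i+1})}$-type middle terms, and $R$, with maps built from $br_i$ and $rb_i$ tensored with identities. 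The key simplification is Lemma \ref{BB=B+B}, which identifies $B_i \otimes_R B_i$ (appropriately twisted) with $B_i \oplus q^2 B_i^{x_i+x_{i+1}}$ as $(R,R)\#H_q$-modules; after substituting this, the degree-$(\pm 1)$ part of the complex becomes a direct sum of an identity-type map and a map between the remaining summands, so that a Gaussian elimination argument collapses the acyclic $B_i$-pieces and leaves exactly $R$ in degree zero.

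First I would write down the underlying bimodule maps $rb_i$ and $br_i$ from the preceding lemma and compute the composites $br_i \circ (\text{something})$ and the induced maps on $B_i\otimes_R B_i$ after applying $\phi,\psi$ from Lemma \ref{BB=B+B}; this is the routine ``classical'' computation that already appears for ordinary Soergel bimodules, e.g. in \cite{EliasKh} and in \cite{KRWitt}. The one genuinely new point is to check at each stage that all the maps involved — the differentials in the complex, the isomorphisms $\phi$ and $\psi$, and the chosen contracting homotopy — are morphisms of $H_q$-modules, i.e. commute with $\partial_q$ (or more precisely that the homotopy is merely $R$-linear of the correct degree while the chain maps are $A\#H_q$-linear, which is all that is needed for an isomorphism in $\mc{C}((R,R)\#H_q,d_0)$). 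Here the twists $-(x_i+x_{i+1})$ and $x_i+x_{i+1}$ built into the definitions of $T_i'$ and into Lemma \ref{BB=B+B} are exactly what makes the $\partial_q$-equivariance work: the twist on $T_i'$ is forced by the computation $\partial_q(rb_i(1))=0$ in the preceding lemma, and the twist appearing in $B_i\otimes_R B_i \cong B_i \oplus q^2 B_i^{x_i+x_{i+1}}$ then threads through the Gaussian elimination so that the surviving summand is genuinely the trivial bimodule $R$ and not a nontrivially twisted $R^f$.

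The second equivalence $T_i'\otimes_R T_i \cong \Id$ is handled the same way, with the roles of $rb_i$ and $br_i$ interchanged; by the symmetry ${}^fR_n\cong R_n^f$ noted in the excerpt, and the analogous decomposition of $B_i \otimes_R B_i$ with the mirror-image twist, the computation is formally identical. I would therefore carry it out once in detail and then indicate the changes.

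The main obstacle I anticipate is bookkeeping rather than conceptual: keeping track of the three gradings (homological, $q$-degree, and the $H_q$-twist data) through the tensor products and through the application of $\phi$ and $\psi$, and verifying that the contracting homotopy produced by Gaussian elimination can be chosen $R$-bilinear of the right $q$-degree so that the resulting isomorphism lands in the homotopy category $\mc{C}((R,R)\#H_q,d_0)$ and not merely in $\mc{C}((R,R),d_0)$. In other words, the real content is checking $H_q$-compatibility at the level of the homotopy equivalence, and the honest way to do this is to write the explicit contraction and apply $\partial_q$ to both sides of the homotopy identity, using the Leibniz rule and the twist relations \eqref{eqn-twistonBi-left}--\eqref{eqn-twistonBi-right}. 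Since the analogous statement without the $H_q$-structure is \cite[Proposition 4.4]{KRWitt}, I expect the proof to amount to that argument plus these equivariance checks.
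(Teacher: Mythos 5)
Your proposal follows the paper's proof essentially verbatim: tensor the two elementary complexes, apply Lemma \ref{BB=B+B} to split the $B_i\otimes_R B_i$ term, and contract the two resulting acyclic pieces by Gaussian elimination, verifying $\partial_q$-compatibility of the maps along the way. One correction: your parenthetical claiming the contracting homotopy need only be $R$-bilinear is not sufficient for the statement as written --- the proposition asserts an isomorphism in the absolute homotopy category $\mc{C}((R,R)\# H_q, d_0)$, not the relative category $\mc{C}_{\dif_q}(R,R,d_0)$, so the homotopies must themselves be $(R,R)\#H_q$-linear; this is precisely why the paper's proof ends by noting that each contraction $\gamma$ satisfies $\partial(\gamma)=0$.
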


\begin{proof}
In order to prove the isomorphism $T_i' \otimes_R T_i\cong \Id$, we tensor the complexes for $T_i'$ and $T_i$ to obtain 
%\[
%\xymatrix{
%R \otimes_R R \ar[rr]^-{rb_i \otimes \Id}&& B^{-(x_i+x_{i+1})}_i \otimes_R %R\\
%R \otimes_R B_i\ar[u]^{\Id \otimes br_i}\ar[rr]^-{rb_i \otimes \Id}&& %B^{-(x_i+x_{i+1})}_i \otimes_R B_i~.\ar[u]_{\Id \otimes br_i}
%}.
%\]
%We then collapse this bicomplex along the northwest-southeast diagonal into a complex
\begin{equation} \label{TiTi'1}
\begin{gathered}
\xymatrix{
& R \otimes_R R \ar[dr]^{rb_i \otimes \Id} & \\
R \ar[ur]^{\Id \otimes br_i} \ar[dr]_{-rb_i \otimes \Id} \otimes_R B_i & & q^{-2} B_i^{-(x_i+x_{i+1})} \otimes_R R \\
& q^{-2} B^{-(x_i+x_{i+1})}_i \otimes_R B_i \ar[ur]_{\Id \otimes br_i} &
}
\end{gathered}
\end{equation}
By Lemma \ref{BB=B+B}, the complex \eqref{TiTi'1} is isomorphic to 
\begin{equation} \label{TiTi'2}
\begin{gathered}
\xymatrix{
& R \otimes_R R \ar[dr]^{rb_i \otimes \Id} & \\
R \otimes_R B_i \ar[ur]^{\Id \otimes br_i} \ar[dr]_-{\begin{pmatrix}
\gamma_1 \\ \gamma_2 \end{pmatrix}}  & & q^{-2} B_i^{-(x_i+x_{i+1})} \otimes_R R \\
& q^{-2} B^{-(x_i+x_{i+1})}_i \oplus B_i \ar[ur]_{\begin{pmatrix} \delta_1 & \delta_2 \end{pmatrix}} &
}
\end{gathered}
\end{equation}
where 
\begin{equation*}
\gamma_1(1 \otimes 1)=  \frac{1}{2}(x_i-x_{i+1}) \otimes 1, \quad \quad
\gamma_2(1 \otimes 1)=  -\frac{1}{2}(1 \otimes 1), \quad \quad
\delta_1(1 \otimes 1)=1 \otimes 1, \quad \quad
\delta_2(1 \otimes 1)=1 \otimes (x_{i+1}-x_i).
\end{equation*}
Contracting out the terms $B_i$, one gets that equation \eqref{TiTi'2} is homotopic to
\begin{equation} \label{TiTi'3}
\begin{gathered}
\xymatrix{
 R  \ar[dr]^{rb_i} & \\
 & q^{-2} B^{-(x_i+x_{i+1})}_i  \\
 q^{-2} B^{-(x_i+x_{i+1})}_i  \ar[ur]_{\Id} &
}
\end{gathered}
\end{equation}
Finally, contracting out the terms $B^{-(x_i+x_{i+1})}$, one gets
$T'_i \otimes_R T_i \cong R$.
Note that each homotopy $\gamma$ used in the contractions has the property $\partial(\gamma)=0$.

%There are maps of complexes of $p$-DG bimodules 
%\[
%\xymatrix{
%R \ar[r]^{\Gamma_1} & T_i \otimes_R T_i' \ar[r]^{\Gamma_2} & R
%}
%\]
%given by
%\[
%\xymatrix{
%& R \ar@/_/[d]_{\Id} \ar@/^2pc/[ddd]^{f} & \\
%& R \otimes_R R \ar[dr]^{rb_i \otimes \Id} \ar@/_5pc/[ddd]_{\Id} & \\
%R \ar[ur]^{\Id \otimes br_i} \ar[dr]^{-rb_i \otimes \Id} \otimes_R B_i & %& B^{-(x_i+x_{i+1})} \otimes_R R \\
%& B^{-(x_i+x_{i+1})}_i \otimes_R B_i \ar[ur]^{\Id \otimes br_i} %\ar@/^2pc/[d]^{g} & \\
%& R &
%}
%\]
%where
%\[
%f= 
%-\psi_1 \circ rb_i \quad \quad
%g=  br_i \circ \phi_2.
%\]
%It is easy to see that the composition of these maps $\Gamma_2 \circ %\Gamma_1$ is  $\Id \colon R \rightarrow R$.

%Below is a map $H \colon T_i \otimes_R T_i' \rightarrow T_i \otimes_R %T_i' $
%\[
%\xymatrix{
%& R \otimes_R R \ar[dr]^{rb_i \otimes \Id} & \\
%R \ar[ur]^{\Id \otimes br_i} \ar[dr]^{-rb_i \otimes \Id} \otimes_R B_i & %& B^{-(x_i+x_{i+1})} \otimes_R R \ar@/_2pc/[ldddd]_{-\psi_1} \\
%& B^{-(x_i+x_{i+1})}_i \otimes_R B_i \ar[ur]^{\Id \otimes br_i} %\ar@/_2pc/[ldd]_{2\phi_2}& \\
%& R \otimes_R R \ar[dr]^{rb_i \otimes \Id} & \\
%R \ar[ur]^{\Id \otimes br_i} \ar[dr]^{-rb_i \otimes \Id} \otimes_R B_i & %& B^{-(x_i+x_{i+1})} \otimes_R R \\
%& B^{-(x_i+x_{i+1})}_i \otimes_R B_i \ar[ur]^{\Id \otimes br_i} &
%}
%\]
%It is easy to check that $H$ is a homotopy between $\Gamma_1 \circ %\Gamma_2 - \Id$ and the zero map.  It is also straightforward to check %that $H$ is compatible with the $p$-DG structure.

%The proof that $T_i' \otimes_R T_i \cong \Id$ is similar.
\end{proof}

\subsection{Reidemeister III} \label{reid3sect}
Let $B_{i,i+1,i}=R \otimes_{R^{i,i+1}}R$ where
\begin{equation*}
R^{i,i+1}=\Bbbk[x_1,\ldots,x_{i-1},e_1(x_i,x_{i+1},x_{i+2}),e_2(x_i,x_{i+1},x_{i+2}),e_3(x_i,x_{i+1},x_{i+2}),x_{i+3},\ldots,x_n].    
\end{equation*}

The following lemma is a rephrasing of \cite[Lemma 4.7]{KRWitt}. We will use the map $\phi_2$ from the statement of Lemma \ref{BB=B+B}.

\begin{lem} \label{Bii+1ilemma}
There exists a short exact sequence of $(R,R) \# H_q$-modules which splits upon restricting to the category of $(R,R)$-bimodules
\begin{equation*}
\xymatrix{
0 \ar[r] & B_{i,i+1,i} \ar[r]^-{f_1} & B_i \otimes B_{i+1} \otimes B_i \ar[r]^-{f_2}
& B_i^{x_i+x_{i+1}} \ar[r] & 0
}
\end{equation*}
where
\begin{equation*}
f_1(1 \otimes 1)=1 \otimes 1 \otimes 1, \quad \quad \quad
f_2=\phi_2 \circ (\Id \otimes br_{i+1} \otimes \Id).
\end{equation*}
\end{lem}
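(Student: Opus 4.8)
The plan is to verify the stated short exact sequence by first recalling the underlying statement for $(R,R)$-bimodules and then checking $H_q$-equivariance of the maps $f_1$ and $f_2$. For the bimodule statement, which is classical (see \cite{EliasKh} or the cited \cite[Lemma 4.7]{KRWitt}), one knows that $B_i \otimes B_{i+1} \otimes B_i \cong B_{i,i+1,i} \oplus B_i$ as $(R,R)$-bimodules; the map $f_1$ realizes $B_{i,i+1,i}$ as the evident summand generated by $1\otimes 1\otimes 1$, while $f_2 = \phi_2\circ(\Id\otimes br_{i+1}\otimes\Id)$ projects onto the complementary $B_i$ factor, twisted appropriately. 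So the sequence splits as bimodules, and what remains is to check that $f_1$ and $f_2$ commute with $\partial_q$, and that the twist appearing on the quotient term is forced to be $x_i+x_{i+1}$.

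First I would check $H_q$-equivariance of $f_1$. Since $\partial_q$ acts on $B_{i,i+1,i} = R\otimes_{R^{i,i+1}}R$ and on $B_i\otimes B_{i+1}\otimes B_i$ by the Leibniz rule with $\partial_q(1\otimes\cdots\otimes 1)=0$ in both cases, and $f_1(1\otimes 1)=1\otimes 1\otimes 1$, equivariance of $f_1$ is immediate from $R$-bilinearity: $f_1(\partial_q(r\otimes s)) = f_1(\partial_q(r)\otimes s + r\otimes\partial_q(s)) = \partial_q(r)\otimes 1\otimes 1\otimes s + r\otimes 1\otimes 1\otimes\partial_q(s)\cdot(\text{appropriate placement}) = \partial_q(f_1(r\otimes s))$. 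Then I would check $f_2$. Here $\Id\otimes br_{i+1}\otimes\Id\colon B_i\otimes B_{i+1}\otimes B_i \to B_i\otimes_R B_i$ is $H_q$-equivariant because $br_{i+1}$ is (proved in the lemma preceding \eqref{eqn-elementary-braids}), and $\phi_2\colon B_i\otimes_R B_i \to q^2 B_i^{x_i+x_{i+1}}$ is $H_q$-equivariant by Lemma \ref{BB=B+B}, where the twist $x_i+x_{i+1}$ is precisely the one forced by that lemma (Remark after Lemma \ref{BB=B+B} explains it is forced). Composing equivariant maps gives that $f_2$ is equivariant, and the target twist is $x_i+x_{i+1}$ as claimed.

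Finally I would confirm exactness as a sequence of $H_q$-modules: since the sequence is exact as $(R,R)$-bimodules (it is short exact and bimodule-split), and $f_1$, $f_2$ are $H_q$-module maps, the same sequence is automatically short exact in the category of $(R,R)\#H_q$-modules — the underlying exactness does not depend on the $H_q$-action. The splitting, however, is only asserted at the level of $(R,R)$-bimodules, consistent with the statement.

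The main obstacle, though it is routine rather than deep, is bookkeeping: one must track the $H_q$-twists through the composite $\phi_2\circ(\Id\otimes br_{i+1}\otimes\Id)$ carefully, since $B_i\otimes B_{i+1}\otimes B_i$ carries the untwisted diagonal $H_q$-action while the target $B_i^{x_i+x_{i+1}}$ is twisted, and one should double-check that the twist picked up by passing through $B_i\otimes_R B_i$ (via Lemma \ref{BB=B+B}) matches exactly. I expect no genuine difficulty here because Lemma \ref{BB=B+B} already isolates precisely this computation, so the present lemma reduces to assembling that building block with the equivariance of $br_{i+1}$.
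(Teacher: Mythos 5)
Your proposal is correct and follows essentially the same route as the paper: cite the classical bimodule-level short exact sequence (with its splitting) from the Soergel bimodule literature, then observe that $f_1$ is killed by $\partial_q$ and that $f_2$ is a composite of the $H_q$-equivariant maps $br_{i+1}$ and $\phi_2$, with the twist $x_i+x_{i+1}$ forced by Lemma \ref{BB=B+B}. The paper's proof is just a terser version of the same argument.
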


\begin{proof}
Ignoring the $H_q$-module structure, this is a classical statement.  See, for example \cite[Definition 3.9, Section 4.5]{EliasKh}.

Clearly $\partial_q(f_1)=0$.  Since $\partial_q(\phi_2)=0$ and
$ \partial_q(br_{i+1})=0$, it follows that $\partial_q(f_2)=0$.
\end{proof}

\begin{prop} \label{R3prop}
There exist isomorphisms in the relative homotopy category
$\mc{C}^{\dif_q}(R,R,d_0)$ 
\begin{enumerate}
    \item[(i)] $T_i T_{i+1} T_i \cong T_{i+1} T_i T_{i+1}$,
    \item[(ii)] $T_i' T_{i+1}' T_i' \cong T_{i+1}' T_i' T_{i+1}'$.
\end{enumerate}
\end{prop}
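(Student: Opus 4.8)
\textbf{Proof strategy for Proposition \ref{R3prop}.}

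The plan is to follow the classical proof of the Reidemeister III relation for Rouquier complexes, as presented in \cite{KRWitt}, and upgrade each step so that all homotopy equivalences and simplifications are carried out in the relative $p$-homotopy category $\mc{C}_{\dif_q}(R,R,d_0)$ rather than in the plain homotopy category of $(R,R)$-bimodules. First I would reduce part (ii) to part (i): applying the contravariant duality that sends each elementary complex $T_i$ to $T_i'$ (equivalently, using the symmetry of the braid group relations under $\sigma_i \leftrightarrow \sigma_i^{-1}$) interchanges the two isomorphisms, so it suffices to establish (i). For (i), I would form the two triple tensor products $T_i \otimes_R T_{i+1}\otimes_R T_i$ and $T_{i+1}\otimes_R T_i \otimes_R T_{i+1}$ and expand each as a multicomplex (a cube) of $(R,R)\#H_q$-bimodules, whose vertices are tensor products of the $B_j$'s and of $R$, with the various twists forced by the $H_q$-equivariance of the maps $br_j$, $rb_j$.

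The key computational input is Lemma \ref{Bii+1ilemma}, which provides the $H_q$-equivariant short exact sequence $0 \to B_{i,i+1,i}\to B_i\otimes B_{i+1}\otimes B_i \to B_i^{x_i+x_{i+1}}\to 0$ splitting as $(R,R)$-bimodules; together with Lemma \ref{BB=B+B} (the $H_q$-equivariant decomposition $B_j\otimes_R B_j \cong B_j \oplus q^2 B_j^{x_j+x_{j+1}}$), this lets me identify the ``large'' middle terms appearing in both cubes with $B_{i,i+1,i}$ plus extra summands built from single $B_j$'s. The strategy is then to perform Gaussian elimination (contraction of acyclic subcomplexes) on both cubes: in each case one contracts away all the terms involving a bare $B_j$ or $B_j^f$, exactly as in the classical argument, and one is left with a complex whose ``interesting'' part is governed by $B_{i,i+1,i}$, manifestly symmetric in the roles of $i$ and $i+1$. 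Each contraction must be checked to be a \emph{relative} homotopy equivalence, i.e. the contracting homotopy need only be $(R,R)$-linear, not $H_q$-equivariant — but this is precisely the extra flexibility built into $\mc{C}_{\dif_q}(R,R,d_0)$ by Definition \ref{def-relative-homotopy-category}, so the classical homotopies from \cite{EliasKh, KRWitt} apply verbatim once one checks that the maps being contracted are genuine $(R,R)\#H_q$-module maps (which follows from the equivariance statements in the lemmas above).

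The main obstacle I anticipate is bookkeeping of the twists: every application of $br_j$, $rb_j$, $\phi_2$, or $f_2$ introduces or shifts a linear twisting function $f = \sum a_i x_i$ on the relevant rank-one or $B_j$-summand, and one must verify that the twists produced along the two sides of the relation match up — or, where they do not literally match, that the discrepancy is absorbed by an isomorphism ${}^fB_j \cong B_j^f$ of $p$-DG bimodules (noted in Section \ref{subset-p-DG-pol}) or by a grading shift. A secondary point requiring care is that, as remarked after Proposition \ref{propR2}, the homotopies used in the contractions should be chosen so that $\partial_q$ applied to them vanishes (or at least so that they remain $(R,R)$-linear); since we work in the relative category this is automatic, but one still wants to confirm that the contracted complexes are honestly complexes of $(R,R)\#H_q$-bimodules and not merely of $(R,R)$-bimodules. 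Once the twist bookkeeping is done, both sides reduce to the same complex built from $B_{i,i+1,i}$, $R$, and elementary shifts, and the isomorphism in $\mc{C}_{\dif_q}(R,R,d_0)$ follows. This establishes (i), and hence (ii) by the duality reduction, completing the proof of Theorem \ref{thm-braid-invariant} for the braid relations.
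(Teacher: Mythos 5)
Your proposal is correct and follows essentially the same route as the paper: part (ii) is deduced from part (i) via the invertibility of $T_i$ (Proposition \ref{propR2}), and part (i) is proved by expanding $T_iT_{i+1}T_i$ as a cube, invoking Lemmas \ref{BB=B+B} and \ref{Bii+1ilemma}, and stripping off contractible pieces (the paper phrases the Gaussian eliminations as termwise $(R,R)$-split short exact sequences of complexes with contractible sub- or quotient complexes) until one reaches a complex $E_5$ built from $B_{i,i+1,i}$ that is manifestly symmetric in $i\leftrightarrow i+1$. Your points about the twist bookkeeping and about the chain maps and homotopies being compatible with $\partial_q$ are exactly the checks the paper carries out.
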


\begin{proof}
We will prove the first isomorphism.  The second follows from the first part and Proposition \ref{propR2}.

By definition
\begin{equation*}
T_i T_{i+1} T_i \cong
\big(\xymatrix{
B_i \ar[r]^{br_i} & R
}
\big)
\big(\xymatrix{
B_{i+1} \ar[r]^{br_{i+1}} & R
}
\big)
\big(\xymatrix{
B_i \ar[r]^{br_i} & R
}
\big).
\end{equation*}
We will use the notation $f_1$ and $f_2$ defined in Lemma \ref{Bii+1ilemma}
and $\phi_1, \phi_2, \psi_1,$ and $\psi_2$ defined in 
Lemma \ref{BB=B+B}. 
There is a short exact sequence of $(R,R) \# H_q$-modules which splits when forgetting the $H_q$-actions involved:
\begin{equation*}
0 \longrightarrow E_1 \longrightarrow T_i T_{i+1} T_i \longrightarrow E_2 \longrightarrow 0  \end{equation*} 
which we write vertically in diagram \eqref{R3eq1} below.
Most of the maps in the complexes below transform a $B_i$ or $B_{i+1}$ into an $R$ (via $br_i$ or $br_{i+1}$) and act as the identity on the other tensor factors.  We use the following shorthand notation. A ``$+$'' indicates that the coefficient of such a map is $1$, and a ``$-$'' indicates that the coefficient of such a map is $-1$. For example, the negative sign below stands for
\begin{equation*}
- := - br_i \otimes \Id \otimes \Id \colon
B_i \otimes R \otimes B_i \longrightarrow R \otimes R \otimes B_i.
\end{equation*}

\begin{equation}
\label{R3eq1}
\begin{gathered}
\xymatrix{
& & R \otimes B_{i+1} \otimes B_i \ar[r]^{+} \ar[dr]^>{ - }
& R \otimes R \otimes B_i \ar[dr]^{+}
& \\
E_1:= \ar[dddd] &B_{i,i+1,i} \ar[r]^{\phi_1 \circ + \circ f_1} \ar[ur]^{+ \circ f_1} \ar[dr]^{+ \circ f_1} \ar[dddd]_{f_1}
& B_i   \ar[ur]^<<<{- \circ \psi_1} \ar[dr]_<<<{+ \circ \psi_1}
& R \otimes B_{i+1} \otimes R \ar[r]^-{+}
& R \ar[dddd]^{\Id} \\
& & B_i \otimes B_{i+1} \otimes R \ar[r]^{-} \ar[ur]^<{+} \ar[dd]_{\begin{pmatrix}
\Id & & \\
& \psi_1 & \\
 & & \Id \end{pmatrix}}
& B_i \otimes R  \otimes R \ar[ur]^{+} \ar[dd]_{\begin{pmatrix}
\Id & & \\
 & \Id & \\
 & & \Id \end{pmatrix}}
& \\
& & & & \\
%%%%%
& & R \otimes B_{i+1} \otimes B_i \ar[r]^{+} \ar[dr]_<{-}
& R \otimes R \otimes B_i \ar[dr]^{+}
& \\
T_i T_{i+1} T_i= \ar[ddd] & B_i \otimes B_{i+1} \otimes B_i \ar[r]^{+} \ar[ur]^{+} \ar[dr]^{+} \ar[ddd]^{f_2}
& B_i \otimes R \otimes B_i  \ar[ur]^{-} \ar[dr]_<{+}
& R \otimes B_{i+1} \otimes R \ar[r]^-{+}
& R \\
& & B_i \otimes B_{i+1} \otimes R \ar[r]^{-} \ar[ur]_{+} \ar[dd]^{\begin{pmatrix} 0 & \phi_2 & 0 \end{pmatrix}}
& B_i \otimes R  \otimes R \ar[ur]^{+}
& \\
%%%%%%
& & & & \\
E_2:= & B_i^{x_i+x_{i+1}} \ar[r]^{\Id} & B_i^{x_i+x_{i+1}} &
}
\end{gathered}
\end{equation}
All of the chain maps are annhilated by $\partial_q$ and clearly $E_2$ is homotopically equivalent to $0$.  Thus $T_i T_{i+1} T_i$ is isomorphic to $E_1$ in the relative homotopy category.

There is an isomorphism of complexes $E_1 \cong E_3$, via a basis change respecting the $H_q$-structures, given by the diagram

\begin{equation*}
\xymatrix{
& & R \otimes B_{i+1} \otimes B_i \ar[r] \ar[dr]
& R \otimes R \otimes B_i \ar[dr]
& \\
E_1= \ar[dddd] &B_{i,i+1,i} \ar[r] \ar[ur] \ar[dr] \ar[dddd]_{\Id}
& B_i  \ar[ur] \ar[dr]
& R \otimes B_{i+1} \otimes R \ar[r]
& R \ar[dddd]^{\Id} \\
& & B_i \otimes B_{i+1} \otimes R \ar[r] \ar[ur] \ar[dd]_{\begin{pmatrix}
\Id & &  \\
& \Id & \\
 & & \Id \end{pmatrix}}
& B_i \otimes R  \otimes R \ar[ur] \ar[dd]_{\begin{pmatrix}
\Id & & \Id \\
 & \Id & \\
 & & \Id \end{pmatrix}}
& \\
& & & & \\
%%%%%
& & R \otimes B_{i+1} \otimes B_i \ar[r]^{+} \ar[dr]^{-}
& R \otimes R \otimes B_i \ar[dr]^{+}
& \\
E_3:= & B_{i,i+1,i} \ar[r]^-{\phi_1 \circ (1 \otimes br \otimes 1) \circ f_1} \ar[ur]^{(br \otimes 1 \otimes 1) \circ f_1} \ar[dr]^{(1 \otimes 1 \otimes br) \circ f_1} 
& B_i   \ar[dr]^>{(1 \otimes br) \circ \psi_1}
& R \otimes B_{i+1} \otimes R \ar[r]^-{+}
& R \\
& & B_i \otimes B_{i+1} \otimes R \ar[r]_{-} \ar[ur]_>{+} \ar[uur]^{-} 
& B_i \otimes R  \otimes R 
& \\
}
\end{equation*}
There is a short exact sequence of complexes of $(R,R)$-bimodules compatible with the $H_q$-structure which splits when forgetting the $H_q$-structure
\begin{equation*}
\xymatrix{
%%%%%
E_4:= \ar[ddd] & & B_i \ar[r]^{\Id} \ar[dd]_{\begin{pmatrix} 0 \\ \Id \\ 0 \end{pmatrix}} & B_i \ar[dd]^{\begin{pmatrix} 0 \\ 0 \\  \Id \end{pmatrix}} & \\
\\
%%%%%
& & R \otimes B_{i+1} \otimes B_i \ar[r] \ar[dr]
& R \otimes R \otimes B_i \ar[dr]
& \\
E_3= \ar[dddd] & B_{i,i+1,i} \ar[r] \ar[ur] \ar[dr] \ar[dddd]_{\Id}
& B_i   \ar[dr]
& R \otimes B_{i+1} \otimes R \ar[r]
& R \ar[dddd]^{\Id} \\
& & B_i \otimes B_{i+1} \otimes R \ar[r] \ar[ur] \ar[uur] \ar[dd]^{\begin{pmatrix} \Id & 0 & 0 \\ 0 & 0 & \Id \end{pmatrix}}
& B_i \otimes R  \otimes R \ar[dd]^{\begin{pmatrix} \Id & 0 & 0 \\ 0 & \Id & 0 \end{pmatrix}}
& \\
\\
%%%%%%%
& &  B_{i+1} \otimes B_i \ar[r]^{br_{i+1} \otimes \Id} \ar[ddr]^<<<<<<{-\Id \otimes br_i}
&  B_i \ar[dr]^{br_i}
& \\
E_5:= & B_{i,i+1,i}  \ar[ur]_{(br_i \otimes \Id \otimes \Id) \circ f_1} \ar[dr]^{(\Id \otimes \Id \otimes br_i) \circ f_1}
&   
&  
& R \\
& & B_i \otimes B_{i+1} \ar[r]_{br_i \otimes \Id}  \ar[uur]_<<<<<<{-\Id \otimes br_{i+1} } 
& B_{i+1} \ar[ur]^{br_{i+1}}
& 
}
\end{equation*}
Since $E_4$ is contractible, $T_i T_{i+1} T_i \cong E_5$.
In the same exact way, one deduces that $T_{i+1} T_i T_{i+1} \cong E_5$, which proves the proposition.
\end{proof}

\section{HOMFLYPT homology under a differential} \label{sechomflypt}
\subsection{Triply graded theory} \label{sectriple}
In this section we categorify the HOMFLYPT polynomial of any link using analogous arguments from  \cite{Cautisremarks}, \cite{RW} and \cite{Roulink} adapted to the $p$-DG setting. 

For the next definition, we will allow complexes of Soergel bimodules to sit in half-integer degrees in the Hochschild ($a$) and topological ($t$) degrees when considering the usual complexes of vector spaces.
We then modify the elementary braiding complexes of equation \eqref{eqn-elementary-braids} to be
\begin{equation}\label{eqn-elementary-braids-half-grading}
T_i :=
 (at)^{-\frac{1}{2}}q^{-2} \left(t B_i  \xrightarrow{br_i} R\right)
,
\quad \quad \quad
T_i' := (at)^{\frac{1}{2}} q^{2} \left(R \xrightarrow{rb_i} q^{-2} t^{-1} B_i^{-(x_i+x_{i+1})}\right)
.
\end{equation}

Let $\beta\in \mathrm{Br}_n$ be a braid group element on $n$ strands. By Theorem \ref{thm-braid-invariant}, there is a chain complex of $(R_n,R_n)\# H_q$-bimodules $T_\beta$, well defined up to homotopy, associated with $\beta$. Then we write
\begin{equation}\label{eqn-chain-complex-for-braid}
    T_\beta = \left(\dots\stackrel{d_0}{\lra} T_\beta^{i+1} \stackrel{d_0}{\lra} T_\beta^{i} \stackrel{d_0}{\lra} T_\beta^{i-1}\stackrel{d_0}{\lra}\dots\right).
\end{equation}

\begin{defn}\label{def-HHH}
The \emph{untwisted $H_q$-HOMFLYPT homology} of $\beta$ is the object 
\[
\mtHHH^{\dif_q}(\beta):=a^{-\frac{n}{2}}t^{\frac{n}{2}}\mH_\bullet \left(\dots \lra \mHH_\bullet^{\dif_q}(T_\beta^{i+1}) \xrightarrow{d_t} \mHH_\bullet^{\dif_q}(T_\beta^{i}) \xrightarrow{d_t}  \mHH_\bullet^{\dif_q}(T_\beta^{i-1})\lra \dots\right)
\]
in the category of triply graded $H_q$-modules, where $d_t:=\mHH_\bullet^{\dif_q}(d_0)$ is the induced map of $d_0$ on Hochschild homology.  
\end{defn}

By construction, the space $\mtHHH^{\dif_q}(\beta)$ is triply graded by the topological ($t$) degree, the Hochschild ($a$) degree as well as the quantum ($q$) degree. When necessary to emphasize each graded homogeneous piece of the space, we will write
$\mtHHH^{\dif_q}_{i,j,k}(\beta)$ to denote the homogeneous component concentrated in $t$-degree $i$, $a$-degree $j$ and $q$-degree $k$.

The following theorem is a particular case of the main result of \cite{KRWitt}, where we have only kept track of the degree two $p$-nilpotent differential in finite characteristic $p$. The detailed verification given below, however, uses the main ideas of \cite{Roulink} and differs from that of \cite{KRWitt}. This proof serves as the model for the other link homology theories in this paper. 

\begin{thm}\label{thm-untwisted-HOMFLY}
The untwisted $H_q$-HOMFLYPT homology of $\beta$ depends only on the braid closure of $\beta$ as a framed link in $\R^3$.
\end{thm}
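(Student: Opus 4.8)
The plan is to establish invariance of $\mtHHH^{\dif_q}(\beta)$ under the two Markov moves, since by the Markov theorem braid closures related by conjugation in $\mathrm{Br}_n$ and by (de)stabilization $\beta \leftrightarrow \beta\sigma_n^{\pm 1} \in \mathrm{Br}_{n+1}$ present the same framed link in $\R^3$. (One must be slightly careful: since we are working with \emph{framed} links, only the framed version of Markov II is needed, which is why the grading shifts in \eqref{eqn-elementary-braids-half-grading} are chosen as they are and no writhe correction is required.)

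First I would treat \textbf{Markov I (conjugation invariance)}. Given $\beta, \gamma \in \mathrm{Br}_n$, I want $\mtHHH^{\dif_q}(\beta\gamma) \cong \mtHHH^{\dif_q}(\gamma\beta)$. Since $T_{\beta\gamma} = T_\beta \otimes_R T_\gamma$ and $T_{\gamma\beta} = T_\gamma \otimes_R T_\beta$ as complexes of $(R_n,R_n)\#H_q$-bimodules (well-defined up to homotopy by Theorem \ref{thm-braid-invariant}), and since the relative $p$-Hochschild homology $\mHH_\bullet^{\dif_q}$ has the trace property from Proposition \ref{HHrelativecyclprop}, one gets a term-by-term isomorphism of the complexes $\mHH_\bullet^{\dif_q}(T_{\beta\gamma}^\bullet) \cong \mHH_\bullet^{\dif_q}(T_{\gamma\beta}^\bullet)$ compatible with the topological differential $d_t$ and with the $H_q$-action. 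Taking homology $\mH_\bullet$ in the topological direction and noting the grading shift $a^{-n/2}t^{n/2}$ is symmetric in $\beta$ and $\gamma$ gives the claim. The only subtlety is checking that the trace isomorphism is natural enough to intertwine $d_t = \mHH_\bullet^{\dif_q}(d_0)$; this follows from functoriality of $\mHH_\bullet^{\dif_q}$ together with the naturality of the ``rotation by $180^\circ$'' isomorphism in the proof of Theorem \ref{HHcyclprop}.

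Next comes \textbf{Markov II (stabilization invariance)}, which is the main obstacle. Here one must show, for $\beta \in \mathrm{Br}_n$ viewed inside $\mathrm{Br}_{n+1}$, that $\mtHHH^{\dif_q}(\beta) \cong \mtHHH^{\dif_q}(\beta\sigma_n^{\pm 1})$ after accounting for the grading shifts in \eqref{eqn-elementary-braids-half-grading}. The strategy, following \cite{Roulink} and \cite{RW} adapted to the $p$-DG/$H_q$-equivariant setting, is to compute $\mHH_\bullet^{\dif_q}(T_\beta \otimes_R T_n)$ and $\mHH_\bullet^{\dif_q}(T_\beta \otimes_R T_n')$ in terms of $\mHH_\bullet^{\dif_q}(T_\beta)$. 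The key input is a computation of the relative $p$-Hochschild homology of a Soergel bimodule of the form $M \otimes_{R_n} B_n$ (and its twists $B_n^f$) with the last strand closed off: one uses that $B_n = R_{n+1}\otimes_{R_{n+1}^n} R_{n+1}$ and an explicit $H_q$-equivariant projective resolution (via Theorem \ref{thm-resolution-independence}, so we need not use the full simplicial bar resolution) to reduce the trace over $R_{n+1}$ to a trace over $R_n$, at the cost of an explicit tensor factor built from a rank-one $p$-DG module $R^f$. Lemma \ref{lem-pol-mod} controls the slash homology of that rank-one factor, and in particular the vanishing clause (when some $a_i = 1$) is what makes the cone computations collapse correctly. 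One then feeds this into the two-term complexes $T_n$ and $T_n'$: the resulting complex of relative $p$-Hochschild homology groups is, up to the appropriate $(at)^{\pm 1/2}q^{\pm 2}$ shift and an $H_q$-equivariant homotopy, isomorphic to $\mHH_\bullet^{\dif_q}(T_\beta)$ itself, after contracting an acyclic piece. The hard part is keeping careful track of the $H_q$-module structure (the twists $x_i + x_{i+1}$ appearing on the $B_i^f$) through all the cone and contraction arguments, and verifying that the contracting homotopies are themselves $\partial_q$-closed (as in the remark at the end of the proof of Proposition \ref{propR2}) so that the homotopy equivalences are genuinely morphisms in the relative $p$-homotopy category $\mc{C}_{\dif_q}(R_n,R_n,d_0)$ and survive passage to $\mtHHH^{\dif_q}$. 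Finally, I would record that the resulting invariant has graded Euler characteristic the HOMFLYPT polynomial: this is automatic on the level of Grothendieck groups since the $H_q$-equivariant structure only refines the underlying complex of $(R,R)$-bimodules, whose $\mHH_\bullet$ is the classical one, so the Euler characteristic is unchanged from the classical computation.
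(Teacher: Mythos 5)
Your overall plan (reduce to the braid relations of Theorem \ref{thm-braid-invariant} plus the two Markov moves) and your Markov I argument via the trace property of relative Hochschild homology match the paper. The gap is in Markov II. You assert that, after the $(at)^{\pm 1/2}q^{\pm 2}$ shifts and contraction of an acyclic piece, $\mtHHH^{\dif_q}(\beta\sigma_n^{\pm 1})$ is isomorphic to $\mtHHH^{\dif_q}(\beta)$ ``itself''. This is false; if it were true, the untwisted theory would be an invariant of the \emph{unframed} link, and the framing hypothesis in the theorem, as well as the later framing correction of Definition \ref{def-twisted-HHH}, would be pointless. What the partial trace over $x_{n+1}$ actually yields (Proposition \ref{prop-Markov-II-for-HHH}) is $\mtHHH^{\dif_q}(\beta)^{\pm 2x_n}$: the surviving piece of $C_1^\prime\otimes_{(\Bbbk[x_{n+1}],\Bbbk[x_{n+1}])}T_n$ is the two-term complex $Y_1$ whose only nonvanishing homology is the rank-one bimodule $R_{n+1}^{2x_{n+1}}$, so the $H_q$-module structure of the answer is twisted by $2x_n$ (and by $-2x_n$ for $T_n^\prime$). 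Concretely, the unknot and its positive stabilization give $\Bbbk[x]$ versus $\Bbbk[x]^{2x}$, which are not isomorphic as $H_q$-modules (Lemma \ref{lem-pol-mod}). The theorem holds only because this twist is exactly governed by the change in framing number, which is the content of the concluding theorem of Section \ref{secmarkov2}; your proof must track the twist and match it against the framing change rather than claim it is absent.

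A secondary issue: your mechanism for killing the unwanted summand --- the vanishing clause of Lemma \ref{lem-pol-mod}, i.e.\ acyclicity of $R^f$ when some $a_i=1$ --- is not what makes the computation close up for $\mtHHH^{\dif_q}$, since no slash homology in the $q$-direction is taken at this stage. In the paper the discarded piece $Y_2$ contributes nothing because, after taking vertical (Hochschild) homology, it is quasi-isomorphic to a two-term horizontal complex whose topological differential is the identity (see \eqref{bicomplexmor}), hence has vanishing $d_t$-homology; and the short exact sequence of bicomplexes must be shown to split as $(R_n,R_n)$-bimodules (via the explicit splitting \eqref{sesbicomplexessplitting}) so that the long exact sequence in vertical homology degenerates. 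These are concrete checks your outline does not supply.
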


As a convention for the framing number of braid closure, if a strand for a component of link is altered as in the left of \eqref{framing}, then we say that the framing of the component is increased by $1$ (with respect to the blackboard framing).
If a strand for a component of link is altered as in the right of \eqref{framing}, then we say that the framing of the component is decreased by $1$.

\begin{equation} \label{framing}
\begin{DGCpicture}
\DGCPLstrand(-2,-1)(-2,2)
\end{DGCpicture}
\quad
\rightsquigarrow
\quad
\begin{DGCpicture}
\DGCstrand(0,0)(1,1)
\DGCPLstrand(1,0)(.75,.25)
\DGCPLstrand(0,1)(.25,.75)
\DGCstrand(1,1)(1.5,1.5)(2,1)(2,0)(1.5,-.5)(1,0)
\DGCPLstrand(0,1)(0,2)
\DGCPLstrand(0,0)(0,-1)
\end{DGCpicture}
\quad \quad \quad \quad \quad \quad \quad \quad \quad
\begin{DGCpicture}
\DGCPLstrand(-2,-1)(-2,2)
\end{DGCpicture}
\quad
\rightsquigarrow
\quad
\begin{DGCpicture}
\DGCPLstrand(0,0)(.25,.25)
\DGCPLstrand(.75,.75)(1,1)
\DGCstrand(1,1)(1.5,1.5)(2,1)(2,0)(1.5,-.5)(1,0)
\DGCstrand(1,0)(0,1)
\DGCPLstrand(0,1)(0,2)
\DGCPLstrand(0,0)(0,-1)
\end{DGCpicture}
\end{equation}
Denote by $\mathtt{f}_i(L)$ the framing number of the $i$th strand of a link $L$. Then, under the two Reidemeister moves of \eqref{framing}, $\mathtt{f}_i(L)$ adds or subtracts $1$ respectively when changing from the corresponding left local picture to the right local picture.

Our main goal in this section is to establish this result. Due to Theorem \ref{thm-braid-invariant}, the proof reduces to showing the invariance under the two Markov moves.

\subsection{Doubly graded theory}
We next seek to define the analogue of $\mtHHH$ in the category of $p$-complexes. This construction will serve as a precursor to the finite-dimensional $\mf{sl}_2$-homology theory defined in the next section.

We first would like to define $\pT_\beta$ to be the $p$-complex of Soergel bimodules associated with $\beta$ by
\begin{equation}
    \pT_\beta :=\mc{P}(T_\beta).
\end{equation}
In other words, $\pT_\beta$ should be a $p$-complex of the form
\[
\pT_\beta = \left(
\cdots \lra T_\beta^{2k} \xrightarrow{d_0} T_\beta^{2k-1} =\cdots = T_\beta^{2k-1} \xrightarrow{d_0} T_\beta^{2k-2} \lra \cdots
\right),
\]
where every term in odd topological degree is repeated $p-1$ times.
We will denote the boundary maps in the $p$-extended complex $\pT_\beta$ by $\dif_0$, in contrast to the usual  topological differential $d_0$.

\begin{rem}[Half grading shifts for $p$-complexes]\label{rem-half-grading-shifts}
Here, we point out that, unlike in the ordinary homotopy category of complexes, we do not need to formally introduce half grading shifts in $\mc{C}(\Bbbk,\dif)$ in odd prime characteristic. For instance, one may set
\begin{equation}
a^{[\frac{1}{2}]}:=a^{\frac{p+1}{2}}[-1]_{\dif}^a  ,  \quad \quad
t^{[\frac{1}{2}]}:=t^{\frac{p+1}{2}}[-1]_\dif^t.
\end{equation}
Using $[-1]^a_\dif\circ [-1]^a_\dif=[-2]^{a}_\dif=a^{-p}$, one sees that, as functors, $a^{[\frac{1}{2}]}\circ a^{[\frac{1}{2}]} = a$. Likewise, $t^{[\frac{1}{2}]}\circ t^{[\frac{1}{2}]}=t$.

The same half grading shift functors can also be interpreted as
\begin{equation}
a^{[\frac{1}{2}]}:=a^{\frac{1-p}{2}}[1]_{\dif}^a  ,  \quad \quad
t^{[\frac{1}{2}]}:=t^{\frac{1-p}{2}}[1]_\dif^t.
\end{equation}
The two seemingly different definitions actually agree, as both are given by taking tensor product with the $p$-complex $U_{p-2}\{\frac{p-1}{2}\}$ in the $a$ or $t$ direction.

However, the $p$-extension functor $\mc{P}$ does not intertwine between the ordinary half graded complexes and $p$-complexes, since, if we were to set 
$\mc{P}(a^{\frac{1}{2}})=a^{[\frac{1}{2}]}$, then we would have
\[
\mc{P}([1]_d^a)=\mc{P}(a)=\mc{P}(a^{\frac{1}{2}} \circ a^{\frac{1}{2}})=a^{[\frac{1}{2}]} \circ a^{[\frac{1}{2}]}=a \neq [1]^a_\dif.
\]
Thus we do not naively $p$-extend the braiding complexes \eqref{eqn-elementary-braids-half-grading} via $\mc{P}$.
\end{rem}

We emphasize that the proof of invariance under both Markov II moves of our theory, forces us to collapse the $a$ and $t$ gradings in this construction.
%We emphasize here that, as the proof of Markov II invariance of the theory will show below, we will
%be forced to collapse the $t$ and $a$ gradings in this construction. 
The resulting homology theory will be 
doubly graded. 
More specifically,  let us first collapse the $a$ and $t$ gradings in $\mHHH$ into a single grading satisfying $a=q^2t$, then we will categorically specialize $t=[1]^t_d$ into $[1]^t_\dif$ by $p$-extension. We then lose the $a$-grading below, which is determined by the $t$-grading and $q$-grading. The $t$-degree remains independent of the $q$-degree on Soergel bimodules. 
%For any integer $n$, we set
%\begin{equation}
%    t^{[\frac{n}{2}]}:= ( t^{[\frac{1}{2}]} )^{\circ n}.
%\end{equation}

We then define 
\begin{equation}\label{eqn-elementary-braids-p-ext}
pT_i :=
 q^{-3} \left(B_i \xrightarrow{br_i} R[-1]^t_\dif\right)
,
\quad \quad \quad
pT_i' := q^{3} \left(R[1]^t_\dif \xrightarrow{rb_i} q^{-2}  B_i^{-(x_i+x_{i+1})}\right)
.
\end{equation}
Since $pT_i$ and $pT_i^\prime$ are, up to $t$-grading shifts, obtained by applying $\mc{P}$ to $T_i$ and $T_i^\prime$, it follows that they satisfy the same braid relations, and can be used to define $pT_\beta$ similarly as done in Theorem \ref{thm-braid-invariant}.

\begin{defn}\label{def-pTbeta}
Let $\beta\in \mathrm{Br}_n$ be a braid group element written as a product $\sigma_{i_i}^{\epsilon_1}\cdots \sigma_{i_k}^{\epsilon_k}$ in the elementary generators, where $\epsilon_i\in \{\emptyset, \prime\}$. We assign to $\beta$ the $p$-chain complex of $(R_n,R_n)\# H_q$-bimodules
\begin{equation}
    pT_\beta:=pT_{i_1}^{\epsilon_1}\otimes_R\cdots \otimes_R pT_{i_k}^{\epsilon_k}.
\end{equation}
\end{defn}

The boundary maps in the $t$-direction will be denoted $\dif_0$ for any $pT_\beta$.

\begin{defn}\label{def-pHHH}
The \emph{ untwisted $H_q$-HOMFLYPT $p$-homology } of $\beta$ is the object 
\[
 \ptHHH^{\dif_q}(\beta):= q^{-n} \mH^/_{\bullet} \left(\dots \lra \pHH_\bullet^{\dif_q}(\pT_\beta^{i+1}) \xrightarrow{\dif_t}  \pHH_\bullet^{\dif_q}(\pT_\beta^{i}) \xrightarrow{\dif_t}  \pHH_\bullet^{\dif_q}(\pT_\beta^{i-1})\lra \dots\right)
\]
in the homotopy category of bigraded $H_q$-modules. Here $\dif_t$ stands for the induced map of the topological differentials on $p$-Hochschild homology groups $\dif_t:=\pHH_\bullet^{\dif_q}(\dif_0)$.
\end{defn}

In the definition of the $H_q$-HOMFLYPT $p$-homology, we have applied the $p$-extensions in both the topological and the Hochschild directions so that they can be collapsed into a single degree. The reason will become clearer later when categorifying the Jones polynomial at roots of unity. Therefore, in contrast to $\mtHHH(\beta)$, $\ptHHH(\beta)$ is only doubly graded, and we will adopt the notation $\ptHHH_{i,j}(\beta)$ as above to stand for its homogeneous components in topological degree $i$ and $q$ degree $j$. Further, the overall grading shift in the definition will be utilized in the invariance under the Markov II moves below.

\begin{thm}\label{thm-untwisted-HOMFLY-p-ext}
The untwisted $H_q$-HOMFLYPT $p$-homology of $\beta$ depends only on the braid closure of $\beta$ as a framed link in $\R^3$.
\end{thm}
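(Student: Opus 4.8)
The plan is to follow the same strategy that establishes Theorem~\ref{thm-untwisted-HOMFLY}, namely reducing the statement to invariance under the two Markov moves, and then checking each move in the $p$-extended setting. Since $pT_\beta$ satisfies the same braid relations as established in Theorem~\ref{thm-braid-invariant} (as noted right after Definition~\ref{def-pTbeta}), the construction $\ptHHH^{\dif_q}(\beta)$ already descends to an invariant of the underlying braid; the content of the theorem is that conjugation (Markov~I) and stabilization (Markov~II) are respected.

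For Markov~I, the key input is the trace-like property of relative $p$-Hochschild homology, Proposition~\ref{HHrelativecyclprop}: given $\beta = \gamma\delta$, we have $pT_\beta = pT_\gamma \otimes_R pT_\delta$, and applying $\pHH^{\dif_q}_\bullet$ term-by-term together with $\pHH^{\dif_q}_\bullet(M\otimes^{\mathbf{L}}_A N)\cong \pHH^{\dif_q}_\bullet(N\otimes^{\mathbf{L}}_A M)$ yields an isomorphism of the underlying $p$-complexes, which commutes with the topological differential $\dif_t$. Taking slash homology $\mH^/_\bullet$ of the totalized complex gives the desired isomorphism $\ptHHH^{\dif_q}(\gamma\delta)\cong \ptHHH^{\dif_q}(\delta\gamma)$; one must check that the Bott--Samelson bimodules appearing are cofibrant enough that the derived tensor products compute what we want, which is exactly the content of the $p$-DG bar resolution machinery of Section~\ref{sechopf} (Theorem~\ref{thm-resolution-independence} allows us to use any $H_q$-equivariant projective bimodule resolution).

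For Markov~II, one computes $\pHH^{\dif_q}_\bullet$ of $pT_\beta \otimes_R pT_i^{\pm}$ where the braid $\beta$ involves only the first $n-1$ strands and $\sigma_i$ (resp.~$\sigma_i'$) is the positive (resp.~negative) stabilizing crossing on the last strand. Here one adapts the computation of \cite{RW, Roulink} to the $H_q$-equivariant and hopfological setting — and indeed the paper's outline already flags that Section~\ref{sechomflypt}'s Markov~II proof "builds upon the techniques in \cite{RW}." The essential point is that $pT_i$ and $pT_i'$ are, up to $t$-grading shift, $\mc{P}$ applied to $T_i$ and $T_i'$, so the underlying bimodule-level identities (such as Lemma~\ref{BB=B+B}, the short exact sequences and contractions used in Propositions~\ref{propR2} and \ref{R3prop}) carry over verbatim; what needs care is tracking the overall grading shift $q^{-n}$ in Definition~\ref{def-pHHH} and the collapsed $a=q^2t$ grading so that the two stabilization moves produce isomorphic (framed) invariants after the shift corrects for the change in the number of strands, with the framing bookkeeping of \eqref{framing} accounting for the discrepancy between positive and negative stabilization. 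The key finite-dimensionality statement — that $\pHH^{\dif_q}_\bullet$ of a Soergel bimodule has finite-dimensional slash homology — follows from Corollary~\ref{cor-finite-slash-homology}, since Bott--Samelson bimodules admit finite filtrations with subquotients of the form $R^f$.

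The main obstacle I anticipate is the Markov~II computation, specifically verifying that the null-homotopies used in the contractions (as in the proof of Proposition~\ref{propR2}, where one needs $\partial(\gamma)=0$ for the homotopies $\gamma$) are genuinely $H_q$-equivariant and hence survive in the relative $p$-homotopy category, and simultaneously that the $p$-extended topological differential $\dif_0$ interacts correctly with the $H_q$-action on Hochschild homology. Unlike the ordinary setting, one cannot freely introduce half grading shifts (Remark~\ref{rem-half-grading-shifts}), so the grading shifts must be arranged so that all shifts appearing are genuine integer $q$-shifts and $t$-shifts by $[\pm 1]_\dif$; getting the normalization in Definition~\ref{def-pHHH} to make both Markov~II moves come out consistently — in particular that the Euler characteristic lands on the correct framing-dependent normalization of the HOMFLYPT polynomial specialized at a prime root of unity — is the delicate bookkeeping that the detailed proof in Section~\ref{sechomflypt} will need to carry out.
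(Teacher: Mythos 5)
Your plan matches the paper's proof in all essentials: reduce to the two Markov moves; handle Markov I by applying the trace property of relative $p$-Hochschild homology (Proposition \ref{HHrelativecyclprop}) termwise to $\pT_{\beta_1}^i\otimes_{R_n}\pT_{\beta_2}^i$; handle Markov II by redoing the Rouquier/Robert--Wagner stabilization computation with the $H_q$-equivariant $p$-extended Koszul resolution $\pC_{n+1}=\pC_n\otimes \pC_1'$ (legitimized by Theorem \ref{thm-resolution-independence}), with finite-dimensionality coming from Corollary \ref{cor-finite-slash-homology}.

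One point where your picture diverges from what actually happens, and which you would need to get right to carry out the Markov II step: the reason the invariant is only a \emph{framed} invariant is not a residual grading-shift discrepancy between positive and negative stabilization. In the paper's computation (Corollary \ref{cor-Markov-II-for-pHHH}) the bicomplex $\pC_1'\otimes_{(\Bbbk[x_{n+1}],\Bbbk[x_{n+1}])}\pT_n^{\pm}$ is filtered with pieces $pY_1,pY_2$ (resp.\ $pZ_1,pZ_2$); the piece $pY_2$ (resp.\ $pZ_1$) is an acyclic $p$-complex, and the surviving piece is quasi-isomorphic to the rank-one twisted bimodule $qR_n^{\pm 2x_n}$. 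Hence positive and negative stabilization return $\ptHHH^{\dif_q}(M)^{+2x_n}$ and $\ptHHH^{\dif_q}(M)^{-2x_n}$: the framing dependence is a twist of the $H_q$-module structure by $\pm 2x_n$ on the relevant component, while the $q$-grading shifts from the two moves do match once the $a$ and $t$ gradings are collapsed and the overall $q^{-n}$ normalization of Definition \ref{def-pHHH} is inserted. (The remark following Corollary \ref{cor-Markov-II-for-pHHH} explains that it is precisely the impossibility of matching these shifts in an uncollapsed $a,t$-bigrading that forces the collapse in the first place.) The twist itself is only removed later, in Definition \ref{def-twisted-HHH}, by tensoring with $R_k^{\mathtt{f}_\beta}$; at the level of the present theorem it is exactly the framing ambiguity allowed by the statement.
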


The proof of Theorems \ref{thm-untwisted-HOMFLY} and \ref{thm-untwisted-HOMFLY-p-ext} will occupy the next few subsections, after we introduce the $H_q$-equivariant ($p$-)Koszul resolutions.

\subsection{Koszul complexes}
We recall here the Koszul resolution $C_n$ of the polynomial algebra $R_n$ as well as its adaption in the $p$-DG setting.  In the one variable case, we have a short exact sequence of $\Bbbk[x]$-bimodules:
\begin{equation}
    0 \lra q^2\Bbbk[x]\otimes \Bbbk[x] \xrightarrow{x\otimes 1-1\otimes x}\Bbbk[x]\otimes \Bbbk[x] \stackrel{m}{\lra} \Bbbk[x]\lra 0.
\end{equation}
In order to make the maps $H_q$-equivariant, we twist the $H_q$-action on the leftmost bimodule:
\begin{equation}
    0 \lra q^2\Bbbk[x]^x\otimes \Bbbk[x]^x \xrightarrow{x\otimes 1-1\otimes x}\Bbbk[x]\otimes \Bbbk[x] \stackrel{m}{\lra} \Bbbk[x]\lra 0.
\end{equation}

In the usual homotopy category of $(\Bbbk[x], \Bbbk[x])\# H_q$-modules, we have then an $H_q$-equivariant relative replacement of $\Bbbk[x]$
\begin{equation}
    0\lra q^2a\Bbbk[x]^x\otimes \Bbbk[x]^x \xrightarrow{x\otimes 1-1\otimes x}\Bbbk[x]\otimes \Bbbk[x]\lra 0 
\end{equation} 
where we have inserted $a$ in the leftmost nonzero term to emphasize the homological degree that it sits in.
Denote by $(C_1,d)$ this $H_q$-equivariant Koszul resolution in the one-variable case.  

In the $p$-homotopy category, a relative replacement is then obtained by applying the $p$-extension functor $\mc{P}$ to $C_1$: 
\begin{equation}
     0\lra q^2a^{p-1}\Bbbk[x]^x\otimes \Bbbk[x]^x= \dots =  q^2a\Bbbk[x]^x\otimes \Bbbk[x]^x \xrightarrow{x\otimes 1-1\otimes x}\Bbbk[x]\otimes \Bbbk[x]\lra 0 
\end{equation}
We abbreviate this $p$-resolution by $\pC_1$.

For the ease of notation, it will be useful to denote both replacements by
\[
0\lra q^2\Bbbk[x]^x\otimes \Bbbk[x]^x [1]^a_{*}\xrightarrow{x\otimes 1-1\otimes x}\Bbbk[x]\otimes \Bbbk[x]\lra 0 
\]
where $*\in \{d, \dif\}$ will label $[1]$ as either the usual homological or  $p$-homological shift in $a$-degree.

For the polynomial ring $R_n=\Bbbk[x_1,\dots, x_n]$, one takes the $n$-fold tensor product over $\Bbbk$ of the bimodule resolution $C_1$ of $\Bbbk[x_1]$ to get $(C_n=C_1^{\otimes n},d)$. We also define
\begin{equation}
  (\pC_n,\dif):=\pC_1\otimes_\Bbbk \cdots \otimes_\Bbbk\pC_1 
\end{equation}
as the $n$-fold tensor product of the one-variable resolution. Note that $\pC_n$ is homotopic to, but bigger than, the $p$-complex of bimodules $\mc{P}(C_n)$.

\begin{example}
In characteristic $3$, the $3$-complex $3C_2$ is the total complex of the cube
\[
\xymatrix{
\Bbbk[x,y]^{x}\otimes \Bbbk[x,y]^{x} \ar[r]^{=} & \Bbbk[x,y]^{x}\otimes \Bbbk[x,y]^{x}\ar[rr]^-{x\otimes 1-1\otimes x} && \Bbbk[x,y]\otimes \Bbbk[x,y]\\
\Bbbk[x,y]^{x+y}\otimes \Bbbk[x,y]^{x+y} \ar[r]^{=}\ar[u]_-{y\otimes1-1\otimes y} & \Bbbk[x,y]^{x+y}\otimes \Bbbk[x,y]^{x+y}\ar[rr]^-{x\otimes 1-1\otimes x} \ar[u]_-{y\otimes1-1\otimes y} && \Bbbk[x,y]^y\otimes \Bbbk[x,y]^y\ar[u]^{y\otimes1-1\otimes y}\\
\Bbbk[x,y]^{x+y}\otimes \Bbbk[x,y]^{x+y} \ar[r]^{=}\ar[u]_{=} & \Bbbk[x,y]^{x+y}\otimes \Bbbk[x,y]^{x+y}\ar[rr]^-{x\otimes 1-1\otimes x} \ar[u]_{=} && \Bbbk[x,y]^y\otimes \Bbbk[x,y]^y\ar[u]^{=}
}
\]
Under the total $p$-differential, the copy of $\Bbbk[x,y]^{x+y}\otimes \Bbbk[x,y]^{x+y}$ sitting in the southwest corner generates an acyclic $3$-subcomplex, modulo which one obtains the total $3$-complex of $\mc{P}(C_2)$:
\[
\xymatrix{
& \Bbbk[x,y]^{x}\otimes \Bbbk[x,y]^x \ar[r]^= &\Bbbk[x,y]^{x}\otimes \Bbbk[x,y]^x \ar[dr]^{x\otimes 1-1\otimes x} & \\
\Bbbk[x,y]^{x+y}\otimes \Bbbk[x,y]^{x+y} \ar[ur]^{-y\otimes 1+1\otimes y} \ar[dr]_{x\otimes 1-1\otimes x} & & & \Bbbk[x,y]\otimes \Bbbk[x,y]\\
& \Bbbk[x,y]^{y}\otimes \Bbbk[x,y]^y \ar[r]^=& \Bbbk[x,y]^{y}\otimes \Bbbk[x,y]^y \ar[ur]_{y\otimes 1-1\otimes y}& 
}
\]
\end{example}

Using this Koszul resolution, one immediately obtains, via the following result, that the relative $p$-Hochschild homology is completely determined by the classical relative Hochschild homology. Therefore, it may appear that one does not gain any more information by introducing $\pHHH_\bullet^\dif$. However, this construction is essential for the collapsed homology theory to be defined in the next section.

\begin{prop}\label{prop-pHH-determined-by-HH}
Let $M$ be an $(R_n,R_n) \#H_q$-bimodule. Then $\pHH_\bullet^{\dif_q}(M)$ is determined by $\mHH_\bullet^{\dif_q}(M)$ by
\begin{gather}
    \pHH^{\dif_q}_i (M) = 
    \begin{cases}
     \mHH_{2k-1}^{\dif_q}(M) &  (k-1)p+1 \leq i \leq kp-1\\
     \mHH_{2k}^{\dif_q}(M) & i=kp
    \end{cases}
\end{gather}
\end{prop}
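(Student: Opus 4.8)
The plan is to compute $\pHH^{\dif_q}_\bullet(M)$ using the $p$-extension $\mc{P}(C_n)$ of the $H_q$-equivariant Koszul resolution in place of Mayer's simplicial bar resolution, and then to read off slash homology. Since $\mc{P}(C_n)$ is an $H_q$-equivariant cofibrant replacement of $R_n$ as an $(R_n,R_n)$-bimodule (as constructed in the discussion of Koszul complexes above) and the $p$-DG derived tensor product is independent of the cofibrant replacement chosen (see Corollary~\ref{cor-KW-bar-resolution} and the remark following Definition~\ref{defdertens}), there is an isomorphism
\[
\pHH^{\dif_q}_\bullet(M)\;=\;\mH^/_\bullet\!\big(R_n\otimes^{\mathbf{L}}_{R_n\otimes R_n^{\mathrm{op}}}M\big)\;\cong\;\mH^/_\bullet\!\big(\mc{P}(C_n)\otimes_{R_n\otimes R_n^{\mathrm{op}}}M\big).
\]
Because $\mc{P}$ acts on a chain complex only by repeating the terms in odd homological degree, inserting identity maps and leaving the remaining differentials untouched, it is additive and commutes with the termwise functor $-\otimes_{R_n\otimes R_n^{\mathrm{op}}}M$; hence the right-hand side equals $\mH^/_\bullet(\mc{P}(X))$, where $X:=C_n\otimes_{R_n\otimes R_n^{\mathrm{op}}}M$ is the ordinary relative Hochschild chain complex — a bounded complex of graded vector spaces carrying an $H_q$-action, with $H_\bullet(X)=\mHH^{\dif_q}_\bullet(M)$.

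It then remains to compute $\mH^/_\bullet(\mc{P}(X))$ for an arbitrary bounded complex $X$ in terms of $H_\bullet(X)$. Over $\Bbbk$ one splits $X\cong H_\bullet(X)\oplus C$ as a direct sum of its homology (with zero differential) and a contractible complex $C$; applying the additive, null-homotopy-preserving functor $\mc{P}$ gives $\mc{P}(X)\cong\mc{P}(H_\bullet(X))\oplus\mc{P}(C)$. Each indecomposable summand of $C$ is a two-term complex $(\Bbbk\xrightarrow{\ \id\ }\Bbbk)$, which $\mc{P}$ sends to a copy of $U_{p-1}$ (occupying $p$ consecutive degrees), so $\mc{P}(C)$ is free over $H_0$ and is annihilated by $\mH^/_\bullet$. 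For the homology part, the $p$-extension places a class of $H_{2k}(X)$ (even homological degree, not repeated) as a single copy of $U_0\{kp\}$ in $p$-degree $kp$, and a class of $H_{2k-1}(X)$ (odd homological degree, repeated $p-1$ times) as a copy of $U_{p-2}\{kp-1\}$ occupying $p$-degrees $(k-1)p+1,\dots,kp-1$, exactly as recorded after \eqref{eqn-decomp-of-M-into-cohomology-and-free}. Since $\mH^/_\bullet(U_0)=U_0$ and $\mH^/_\bullet(U_{p-2})=U_{p-2}$, taking slash homology leaves these strands intact, and reading off the $p$-degrees in which $\mHH^{\dif_q}_j(M)=H_j(X)$ appears yields precisely the stated formula.

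The one delicate point is the $H_q$-equivariance of the final identification: the splitting $X\cong H_\bullet(X)\oplus C$ need not be compatible with the $H_q$-action, since $H_q$ is not semisimple. This is circumvented by observing that in each $p$-degree $i$ the group $\mH^{/k}_i(\mc{P}(X))=\mathrm{Ker}(\dif_0^{k+1})/\big(\mathrm{Im}(\dif_0^{p-k-1})+\mathrm{Ker}(\dif_0^{k})\big)$ is built entirely from the $H_q$-linear operator $\dif_0$, so all kernels, images and quotients are $H_q$-submodules, and the degreewise computation above — which manipulates only these — is automatically an isomorphism of graded $H_q$-modules. I expect this equivariance bookkeeping, together with the verification that the $p$-extension reindexing sends homological degree $2k$ to $p$-degree $kp$ and degree $2k-1$ to the block $[(k-1)p+1,\,kp-1]$, to be the main thing to get right; the remainder is a direct application of the structure theory of $p$-complexes recalled in Section~\ref{sechopf}.
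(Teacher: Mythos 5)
Your argument is correct and follows essentially the same route as the paper: replace the Mayer bar resolution by the $p$-extended Koszul resolution (the paper justifies this via Theorem \ref{thm-resolution-independence} rather than the cofibrancy of the derived tensor product, but these amount to the same thing here), commute $\mc{P}$ past $-\otimes_{R_n\otimes R_n^{\op}}M$, and then compute $\mH^/_\bullet\circ\mc{P}$ on a bounded complex exactly as in Remark \ref{rem-ususal-homology-determines-p-homology}. Your closing observation that the identification is automatically $H_q$-equivariant because the slash-homology subquotients are defined by the $H_q$-linear operator $\dif_0$ is a point the paper leaves implicit, and your degree bookkeeping ($2k\mapsto kp$, $2k-1\mapsto[(k-1)p+1,\,kp-1]$) matches the stated formula.
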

\begin{proof}
Using the Koszul resolution, we have\footnote{See the remark below for more explanation of the slash homology computation.} 
\[
\pHH^\dif_\bullet(M) = \mH^/_{\bullet}(M\otimes_{R_n^{\rm en}} \mc{P}(C_n))= \mH^/_{\bullet}(\mc{P}(M\otimes_{R_n^{\rm en}} C_n))=\mc{P}(\mH_\bullet (M\otimes_{R_n^{\rm en}}C_n)).
\]
The result follows.
\end{proof}

\begin{rem}\label{rem-ususal-homology-determines-p-homology}
The result is true in more generality. If $A$ is an (ungraded) algebra equipped with the zero $p$-differential, then its $p$-Hochschild homology is entirely determined by its usual Hochschild homology. This result is essentially due to Spanier \cite{Spanier}, but is also proved in more generality by \cite{KWa}.

Indeed, if $P_\bullet \lra A$ is any projective resolution of $A$ over $A\otimes A^{\mathrm{op}}$, then $\mc{P}(P_\bullet)$ provides a $p$-resolution of $A$, and
\begin{equation}
   \pHH_\bullet(A)=\mH^/_{\bullet} (\mc{P}(P_\bullet)\otimes_{A\otimes A^{\mathrm{op}}}A)=\mH^/_{\bullet}(\mc{P}(P_\bullet\otimes_{A\otimes A^{\mathrm{op}}}A)).
\end{equation}
When computing the last slash homology, one may safely forget about the module structures involved in $\mc{P}(P_\bullet\otimes_{A\otimes A^{\mathrm{op}}}A)$ and think of it as a direct sum, possibly infinite copies, of chain-complexes of the form 
\begin{equation}
    0\lra \underline{\Bbbk} \lra 0,\quad \quad  \quad 0\lra \underline{\Bbbk}\lra \Bbbk \lra 0,
\end{equation}
where the underlined term sits in some homological degree $i$.
Under the $p$-extension functor, the last complex extends to a contractible $p$-complex, while the first complex becomes
\[
0\lra \Bbbk \lra 0,
\]
if $i$ is even, or the $(p-1)$-dimensional
\[
0\lra \Bbbk=\cdots = \Bbbk\lra 0
\]
if $i$ is odd. The slash homology computation then follows.
\end{rem}

%A tensor product of $C_1$ gives a resolution $C_n$ of $R_n$.
%\JS{We already defined and used $C_n$.  Is the restatement here supposed to be connected the definition below?} \QY{These two paragraphs were left over from earlier editions. I think we can remove them now.}

%The $p$-HOMFLYPT homology of the closure of a braid $\beta$ is defined to be
%\begin{equation*}
%\pHH\mH(\beta) := (a^{-1} t^{-1})^{\frac{n}{2}} %\mH_*(\pHH(R_n,\beta)) .  
%\end{equation*}

%\JS{We did define the homology earlier.  Do we want to emphasize here that it could now be computed via $C_n$? Or is this supposed to be something distinct- where we don't consider the relative version?}

\subsection{Markov I} \label{secmarkov1}
The usual HOMFLYPT homologies of  two braid compositions $\beta_1\beta_2$ and $\beta_2\beta_1$ are isomorphic due to the trace-like property of the usual Hochschild homology functor. The relative Hochschild homology also remembers the $H_q$-action.

\begin{prop}
Let $\beta_1$ and $\beta_2$ be two braids on $n$ strands.
Then $\mtHHH^{\dif_q}(\beta_1 \beta_2) \cong \mtHHH^{\dif_q}(\beta_2 \beta_1)$. \qedhere
\end{prop}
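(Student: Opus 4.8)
The plan is to reduce the statement to the trace-like (cyclicity) property of relative Hochschild homology. Writing $\beta_1$ and $\beta_2$ as words in the elementary generators, one has $T_{\beta_1\beta_2}=T_{\beta_1}\otimes_R T_{\beta_2}$ and $T_{\beta_2\beta_1}=T_{\beta_2}\otimes_R T_{\beta_1}$ as complexes of $(R_n,R_n)\#H_q$-bimodules. Since every term of $T_{\beta_1}$ and $T_{\beta_2}$ is a tensor product of copies of $R$ and Bott--Samelson bimodules $B_k=R\otimes_{R^k}R$, each such term is free as a one-sided $R$-module, so $-\otimes_R-$ agrees with $-\oL_R-$ on these terms; consequently $T_{\beta_1\beta_2}^\bullet$ is the totalization of the bicomplex with $(i,j)$-term $T_{\beta_1}^i\otimes_R T_{\beta_2}^j$ and differentials induced by the topological differentials of $T_{\beta_1}$ and $T_{\beta_2}$, and likewise for $\beta_2\beta_1$. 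By Definition \ref{def-HHH}, $\mtHHH^{\dif_q}(\beta_1\beta_2)$ is then computed by applying $\mHH^{\dif_q}_\bullet(-)$ term by term to this bicomplex, totalizing, taking homology with respect to the induced topological differential $d_t$, and applying the shift $a^{-n/2}t^{n/2}$.

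Next I would invoke the cyclicity isomorphism $\tau_{M,N}\colon \mHH^{\dif_q}_\bullet(M\oL_R N)\xrightarrow{\sim}\mHH^{\dif_q}_\bullet(N\oL_R M)$, valid for any pair of $p$-DG bimodules $M,N$ over $R$ and an isomorphism of $H_q$-modules. This is the classical trace property of Hochschild homology, refined to track the $H_q$-action by exactly the ``circle rotation'' argument proving Theorem \ref{HHcyclprop} (and its relative version, Proposition \ref{HHrelativecyclprop}), with Mayer's $p$-bar complex replaced by the ordinary bar complex. Crucially, $\tau_{M,N}$ is natural in $M$ and in $N$; applying it with $M=T_{\beta_1}^i$, $N=T_{\beta_2}^j$ and using naturality in both variables, the collection $\{\tau_{T_{\beta_1}^i,T_{\beta_2}^j}\}$ assembles into an isomorphism of bicomplexes of $H_q$-modules intertwining the differentials induced from the topological differentials of $T_{\beta_1}$ and $T_{\beta_2}$, up to the standard Koszul signs built into the totalization.

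Finally, I would pass to totalizations: $\tau$ induces an isomorphism of the single complexes of $H_q$-modules computing the two HOMFLYPT homologies before the overall shift, and since $\beta_1\beta_2$ and $\beta_2\beta_1$ are braids on the same number $n$ of strands the shift $a^{-n/2}t^{n/2}$ is identical in the two cases; taking homology with respect to $d_t$ then yields $\mtHHH^{\dif_q}(\beta_1\beta_2)\cong\mtHHH^{\dif_q}(\beta_2\beta_1)$. The only genuinely delicate point I anticipate is the naturality of $\tau$ in both arguments together with the sign bookkeeping, so that the termwise cyclic isomorphisms glue into a morphism of complexes and not merely a degreewise isomorphism; this is routine given the explicit description of the trace isomorphism, and is the exact analogue of the corresponding step in the classical Markov I argument for triply graded homology.
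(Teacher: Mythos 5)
Your argument is correct and follows essentially the same route as the paper: the paper treats this proposition as an immediate consequence of the trace-like property of relative Hochschild homology (Theorem \ref{HHcyclprop} and Proposition \ref{HHrelativecyclprop}), applied functorially term by term to $T_{\beta_1}^i\otimes_R T_{\beta_2}^j$ exactly as you do. Your extra care about identifying $\otimes_R$ with $\otimes^{\mathbf{L}}_R$ on Bott--Samelson terms and about assembling the termwise cyclic isomorphisms into a map of (bi)complexes is sound and just makes explicit what the paper leaves implicit.
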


The same property also holds for the HOMFLYPT $pH_q$-homology groups.

\begin{prop}
Let $\beta_1$ and $\beta_2$ be two braids on $n$ strands.
Then $\ptHHH^{\dif_q}(\beta_1 \beta_2) \cong \ptHHH^{\dif_q}(\beta_2 \beta_1)$.
\end{prop}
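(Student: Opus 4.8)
The plan is to reduce the statement to the trace-like property of relative $p$-Hochschild homology proved in Proposition \ref{HHrelativecyclprop}, mirroring the proof of the preceding proposition for $\mtHHH^{\dif_q}$. Writing $pT_{\beta_m}$ for the tensor product of elementary braiding $p$-complexes attached to $\beta_m$, Definition \ref{def-pTbeta} gives
\[
pT_{\beta_1\beta_2} = pT_{\beta_1}\otimes_R pT_{\beta_2}, \qquad pT_{\beta_2\beta_1} = pT_{\beta_2}\otimes_R pT_{\beta_1}
\]
as $p$-complexes of $(R_n,R_n)\#H_q$-bimodules. The first observation is that, after forgetting differentials, every term of $pT_{\beta_m}$ is a tensor product over $R_n$ of copies of the bimodules $B_i$ (possibly with $H_q$-twists, which do not change the underlying bimodule) and of $R_n$ itself, and each $B_i = R_n\otimes_{R_n^i}R_n$ is free of finite rank both as a left and as a right $R_n$-module. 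Hence the ordinary tensor products $\otimes_{R_n}$ appearing above compute the derived tensor products $\otimes^{\mathbf{L}}_{R_n}$ termwise, so Proposition \ref{HHrelativecyclprop} applies to each pair of terms.

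Next I would apply Proposition \ref{HHrelativecyclprop} termwise: for each pair of indices $a,b$ it produces an isomorphism of $p$-complexes of $H_q$-modules
\[
\pHH^{\dif_q}_\bullet\!\left(pT_{\beta_1}^{a}\otimes_{R_n}pT_{\beta_2}^{b}\right)\;\cong\;\pHH^{\dif_q}_\bullet\!\left(pT_{\beta_2}^{b}\otimes_{R_n}pT_{\beta_1}^{a}\right),
\]
induced by the ``rotate the bar resolution on a circle by $180^\circ$'' construction used in Theorem \ref{HHcyclprop}. Since that construction is visibly functorial in each bimodule entry, and the topological differentials $\dif_0$ of $pT_{\beta_1}$ and $pT_{\beta_2}$ are morphisms of $(R_n,R_n)\#H_q$-bimodules, naturality guarantees that the collection of these termwise isomorphisms intertwines the induced topological $p$-differentials $\dif_t = \pHH^{\dif_q}_\bullet(\dif_0)$. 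Therefore they assemble into an isomorphism of the $p$-complexes
\[
\left(\cdots\xrightarrow{\dif_t}\pHH^{\dif_q}_\bullet(pT^{i}_{\beta_1\beta_2})\xrightarrow{\dif_t}\pHH^{\dif_q}_\bullet(pT^{i-1}_{\beta_1\beta_2})\xrightarrow{\dif_t}\cdots\right)\;\cong\;\left(\cdots\xrightarrow{\dif_t}\pHH^{\dif_q}_\bullet(pT^{i}_{\beta_2\beta_1})\xrightarrow{\dif_t}\pHH^{\dif_q}_\bullet(pT^{i-1}_{\beta_2\beta_1})\xrightarrow{\dif_t}\cdots\right).
\]
Applying the slash-homology functor $\mH^{/}_\bullet$ and the overall shift $q^{-n}$, which is the same on both sides because $\beta_1\beta_2$ and $\beta_2\beta_1$ are both braids on $n$ strands, yields $\ptHHH^{\dif_q}(\beta_1\beta_2)\cong\ptHHH^{\dif_q}(\beta_2\beta_1)$.

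I expect the only delicate point to be the compatibility of the cyclic isomorphism with the topological differential $\dif_t$ — a step that is implicit in the single-bimodule statement of Proposition \ref{HHrelativecyclprop} but must be made explicit here, where the bimodules carry a further $p$-complex structure. This is, however, essentially bookkeeping: the cyclicity isomorphism is the canonical identification obtained from rotating the circular picture of $\mathbf{p}_\bullet(R_n)\otimes_{R_n\otimes R_n^{\mathrm{op}}}(-)$, and is functorial in the bimodule arguments, so the relevant squares commute on the nose. No genuinely new ingredient beyond what was used for $\mtHHH^{\dif_q}$ is needed.
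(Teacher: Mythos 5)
Your proposal is correct and follows essentially the same route as the paper: the paper's proof is a two-line appeal to the functorial cyclicity isomorphism of Proposition \ref{HHrelativecyclprop} applied termwise, which is exactly what you do (you simply make explicit the projectivity of the Soergel bimodules and the compatibility with $\dif_t$ that the paper leaves implicit in the word ``functorial''). The paper also notes a second, equally short route — combining the already-proved statement for $\mtHHH^{\dif_q}$ with Proposition \ref{prop-pHH-determined-by-HH} — but your main argument matches its primary one.
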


\begin{proof}
This follows from Proposition \ref{HHrelativecyclprop}, since we have the functorial isomorphism
\[
\pHH^{\dif_q}_\bullet(\pT_{\beta_1}^i\otimes_{R_n} \pT_{\beta_2}^i)\cong\pHH^{\dif_q}_\bullet(\pT_{\beta_2}^i\otimes_{R_n} \pT_{\beta_1}^i)
\]
for all $i\in \Z$. Alternatively, this follows from combining the previous result with Proposition \ref{prop-pHH-determined-by-HH}.
\end{proof}

\subsection{Markov II} \label{secmarkov2}
In order to prove the second Markov move, one needs to show that for a (complex of) Soergel bimodules $M$ over the polynomial $p$-DG algebra $R_n$, that HOMFLYPT ($p$)$H_q$-homologies of the bimodules \eqref{markov2pic} are isomorphic (up to shifts and twists).
\begin{equation}
\label{markov2pic}
\begin{DGCpicture}[scale={1,0.8}]
\DGCstrand(0,0)(0,3)
\DGCstrand(2,0)(2,3)
\DGCcoupon(-0.5,1)(2.5,2){$M$}
\DGCcoupon*(0,0)(2,1){$\cdots$}
\DGCcoupon*(0,2)(2,3){$\cdots$}
%\DGCPLstrand(-1,0.5)(-1,-1)
%\DGCPLstrand(0,.5)(0,-1)
%\DGCcoupon(-1.25,-0.5)(.25,0){$M$}
%\DGCcoupon*(-1,-1)(0,-.5){$\cdots$}
%\DGCcoupon*(-1,0)(0,.5){$\cdots$}
%%%%BENEATH IS CLOSURE%%%%
%\DGCstrand(0,.5)/u/(1,.5)/d/
%\DGCstrand(-1,.5)/u/(2,.5)/d/
%\DGCstrand(0,-1)/d/(1,-1)/u/
%\DGCstrand(-1,-1)/d/(2,-1)/u/
%\DGCPLstrand(1,.5)(1,-1)
%\DGCPLstrand(2,.5)(2,-1)
\end{DGCpicture}
\quad \quad \quad \quad \quad \quad 
\begin{DGCpicture}[scale={1,0.8}]
\DGCstrand(0,0)(0,-3)
\DGCstrand(2,0)(2,-1.7)(2.4,-2.5)/dr/
\DGCstrand(3,0)(3,-2)(2,-3)/dl/
\DGCPLstrand(2.6,-2.7)(3,-3)
\DGCcoupon(-0.25,-0.5)(2.25,-1.5){$M$}
\DGCcoupon*(0,0)(2,-0.5){$\cdots$}
\DGCcoupon*(0,-2)(2,-3){$\cdots$}
\end{DGCpicture}
%\begin{DGCpicture}
%\DGCstrand(0,0)(0.75,0.75)
%\DGCPLstrand(-1,2)(-1,-2)
%\DGCPLstrand(.75,.25)(1,0)(1,-1)
%\DGCPLstrand(0.75,0)(.25,.75)
%\DGCPLstrand(0,1)(0,2)
%\DGCPLstrand(0,0)(0,-2)
%\DGCcoupon(-1.25,-0.5)(.25,0){$M$}
%\DGCcoupon*(-1,-1)(0,-.5){$\cdots$}
%\DGCcoupon*(-1,1)(0,2){$\cdots$}
%\DGCstrand(1,-1)/d/(2,-1)/u/
%\DGCstrand(2,-1)(2,1)
%\DGCstrand(1,1)/u/(2,1)/d/
%\DGCstrand(0,2)/u/(3,2)/d/
%\DGCstrand(0,-2)/d/(3,-2)/u/
%\DGCPLstrand(3,2)(3,-2)
%\DGCstrand(-1,2)/u/(4,2)/d/
%\DGCstrand(-1,-2)/d/(4,-2)/u/
%\DGCPLstrand(4,2)(4,-2)
%\end{DGCpicture}
\end{equation}
Let $\Lambda \langle x_{n+1} \rangle$ be the exterior algebra in the variable $x_{n+1}$.  Recall that $R_n=\Bbbk[x_1,\ldots,x_n]$ and let
$M \in (R_n,R_n) \#H_q \dmod$. Set $C_1^\prime=\Bbbk[x_{n+1}] \otimes \Lambda \langle x_{n+1} \rangle \otimes \Bbbk[x_{n+1}] \cong C_1$.
Letting $C_n$ denote the Koszul resolution of $R_n$, we have inductively that $C_{n+1}=C_n \otimes C_1^\prime$.

As in the proof of Theorem \ref{HHcyclprop}, the Hochschild homology of $M$ is depicted by the closure diagram
\[
\begin{DGCpicture}
\DGCPLstrand(-1,0.5)(-1,-1)
\DGCPLstrand(0,.5)(0,-1)
\DGCcoupon(-1.25,-0.25)(.25,-0.75){$M$}
\DGCcoupon(-1.25,0)(-0.75,0.5){$C_1$}
\DGCcoupon(-0.25,0)(0.25,0.5){$C_1$}
\DGCcoupon*(-1,-1.75)(0,-.75){$\cdots$}
\DGCcoupon*(-1,0.5)(0,1){$\cdots$}
%%%%BENEATH IS CLOSURE%%%%
\DGCstrand(0,.5)/u/(1,.5)/d/
\DGCstrand(-1,.5)/u/(2,.5)/d/
\DGCstrand(0,-1)/d/(1,-1)/u/
\DGCstrand(-1,-1)/d/(2,-1)/u/
\DGCPLstrand(1,.5)(1,-1)
\DGCPLstrand(2,.5)(2,-1)
\end{DGCpicture} \ ,
\]
where the single strands connecting the boxes indicate tensor products over the one-variable polynomial rings labelling those strands.

The proof of second Markov move essentially reduces to a computation of the partial Hochschild homology with respect to the last variable $x_{n+1}$.  This operation is diagramatically represented in \eqref{picpartialtrace}.
\begin{equation}
\label{picpartialtrace}
\begin{DGCpicture}[scale={1,0.8}]
\DGCstrand(0,0)(1,1)
\DGCPLstrand(-1,2)(-1,-2)
\DGCPLstrand(.75,.25)(1,0)(1,-1)
\DGCPLstrand(0,1)(.25,.75)
\DGCPLstrand(0,1)(0,2)
\DGCPLstrand(0,0)(0,-2)
\DGCcoupon*(-1,-1)(0,-.5){$\cdots$}
\DGCcoupon*(-1,1)(0,2){$\cdots$}
\DGCstrand(1,-1)(1,-2)
\DGCstrand(1,1)(1,2)
\end{DGCpicture}
\quad \quad
\rightsquigarrow
\quad \quad
\begin{DGCpicture}[scale={1,0.8}]
\DGCstrand(4,0)(5,1)
\DGCPLstrand(3,2)(3,-2)
\DGCPLstrand(4.75,.25)(5,0)(5,-1)
\DGCPLstrand(4,1)(4.25,.75)
\DGCPLstrand(4,1)(4,2)
\DGCPLstrand(4,0)(4,-2)
\DGCcoupon*(3,-1)(4,-.5){$\cdots$}
\DGCcoupon*(3,1)(4,2){$\cdots$}
\DGCstrand(5,-1)/d/(6,-1)/u/
\DGCstrand(6,-1)(6,1)
\DGCstrand(5,1)/u/(6,1)/d/
\end{DGCpicture}
\end{equation}
This leads to an analysis of 
$C_1^\prime \otimes_{\Bbbk[x_{n+1}]^{\rm en}} T_n$ in Proposition \ref{prop-Markov-II-for-HHH}.

The following technical result will be the heart of establishing the invariance under the Markov II moves. We start with the usual HOMFLYPT homology case under the Hopf algebra $H_q$-action.

%\begin{equation}
%\begin{DGCpicture}
%\DGCstrand(0,0)(1,1)
%\DGCPLstrand(-1,2)(-1,-2)
%\DGCPLstrand(.75,.25)(1,0)(1,-1)
%\DGCPLstrand(0,1)(.25,.75)
%\DGCPLstrand(0,1)(0,2)
%\DGCPLstrand(0,0)(0,-2)
%\DGCcoupon(-1.25,-0.5)(.25,0){$M$}
%\DGCcoupon*(-1,-1)(0,-.5){$\cdots$}
%\DGCcoupon*(-1,1)(0,2){$\cdots$}
%\DGCstrand(1,-1)/d/(2,-1)/u/
%\DGCstrand(2,-1)(2,1)
%\DGCstrand(1,1)/u/(2,1)/d/
%\end{DGCpicture}
%\end{equation}

\begin{prop}\label{prop-Markov-II-for-HHH}
Let $\beta$ be a braid with $n$ strands which is assigned a usual complex of Soergel bimodules $M$.  Then there is an $H_q$-equivariant isomorphism of the HOMFLYPT homology groups
\begin{enumerate}
    \item[(i)] $\mtHHH^{\dif_q}((M\otimes \Bbbk[x_{n+1}]) \otimes_{R_{n+1}} T_n) \cong \mtHHH^{\dif_q}(M)^{2x_n}$,
    \item[(ii)] $\mtHHH^{\dif_q}((M\otimes \Bbbk[x_{n+1}])\otimes_{R_{n+1}} T_n') \cong \mtHHH^{\dif_q}(M)^{-2x_n}$,
\end{enumerate}
where $\mtHHH^{\dif_q}(M)^{\pm 2x_n}$ denotes a twisting in the $H_q$-action. 
\end{prop}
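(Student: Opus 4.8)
The plan is to deduce (i) and (ii) from a single local computation — the \emph{partial closure} of the last strand — following the strategy of \cite{Roulink, RW} adapted to the $H_q$-equivariant setting. Recall from the preceding discussion that the Koszul resolution factors as $C_{n+1}=C_n\otimes_\Bbbk C_1'$, and that every differential in $C_n$ and $C_1'$ was deliberately arranged to commute with $\partial_q$; hence tensoring a complex of $(R_{n+1},R_{n+1})\# H_q$-bimodules with $C_{n+1}$ over $R_{n+1}\otimes R_{n+1}^{\mathrm{op}}$ computes relative Hochschild homology $H_q$-equivariantly, and the factorization lets us carry this out in two stages: first tensoring with $C_1'$ over the $\Bbbk[x_{n+1}]$-bimodule structure (``closing strand $n+1$''), then with $C_n$ over $R_n\otimes R_n^{\mathrm{op}}$ (``closing the first $n$ strands''). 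It therefore suffices to prove the local claim that, up to the grading shifts built into \eqref{eqn-elementary-braids-half-grading}, one has isomorphisms in the relative homotopy category $\mc{C}_{\dif_q}(R_n,R_n,d_0)$
\[
\mathrm{cl}_{n+1}\big((M\otimes\Bbbk[x_{n+1}])\otimes_{R_{n+1}}T_n\big)\;\simeq\;M^{\,2x_n},\qquad
\mathrm{cl}_{n+1}\big((M\otimes\Bbbk[x_{n+1}])\otimes_{R_{n+1}}T_n'\big)\;\simeq\;M^{\,-2x_n},
\]
where $\mathrm{cl}_{n+1}$ denotes the partial relative Hochschild homology in the variable $x_{n+1}$ and $M^{\pm 2x_n}$ is the complex $M$ with its $\partial_q$-action modified by $\pm 2x_n$. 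Applying $\mHH^{\dif_q}_\bullet$ over $R_n$ and then homology in the $t$-direction to both sides then gives (i) and (ii).

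To prove the local claim I would compute $\mathrm{cl}_{n+1}$ of the two-term complex $T_n^{(\prime)}$ after tensoring with $M\otimes\Bbbk[x_{n+1}]$; the result is the totalization of a $2\times 2$ square, with the two columns coming from the two terms of $T_n^{(\prime)}$ and the two rows from Hochschild degrees $0$ and $1$ produced by the length-one complex $C_1'$. On the identity-bimodule term $(M\otimes\Bbbk[x_{n+1}])\otimes_{R_{n+1}}R_{n+1}=M\otimes\Bbbk[x_{n+1}]$ the variable $x_{n+1}$ acts centrally, so its partial closure is $M\otimes\Bbbk[x_{n+1}]$ in Hochschild degree $0$ and a copy twisted by $+2x_{n+1}$ (coming from the $\Bbbk[x]^x$-factors in $C_1$) in Hochschild degree $1$. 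On the $B_n$-term one uses $B_n\cong R_{n+1}\otimes_{R_{n+1}^n}R_{n+1}$, the freeness of $R_{n+1}$ over $R_{n+1}^n$ of rank two, and a direct evaluation of the commutator $[x_{n+1},-]$ (as in \cite{RW}) to see that both Hochschild-degree pieces of $\mathrm{cl}_{n+1}(B_n)$ are free of rank one over $M\otimes\Bbbk[x_{n+1}]$, with $x_n$ acting diagonally. The crucial point is the map induced by $br_n$ (resp.\ $rb_n$): on one of the two Hochschild degrees it carries the rank-one generator to the rank-one generator, hence is an isomorphism that may be contracted away, while on the other it is multiplication by $x_n-x_{n+1}$, whose mapping cone is quasi-isomorphic to $M$ with $x_{n+1}$ specialized to $x_n$. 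Tracking which Hochschild degree survives (degree $1$ for $T_n$, degree $0$ for $T_n'$) and the twist it carries ($+2x_{n+1}$ from the Koszul resolution for $T_n$, and $-(x_n+x_{n+1})$ from the twist built into $B_n^{-(x_n+x_{n+1})}$ for $T_n'$) then yields, after the substitution $x_{n+1}\rightsquigarrow x_n$, the twists $+2x_n$ and $-2x_n$, together with the shifts recorded in \eqref{eqn-elementary-braids-half-grading}.

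Finally I would assemble the pieces: tensoring the local identity with $C_n$ over $R_n\otimes R_n^{\mathrm{op}}$ and taking homology in the topological direction produces $\mtHHH^{\dif_q}(M)^{\pm 2x_n}$, the splitting of the closure into two stages being exactly the decomposition $C_{n+1}=C_n\otimes_\Bbbk C_1'$ together with the $H_q$-equivariance of all the maps (alternatively one may invoke the ordinary-Hochschild analogue of Theorem \ref{thm-resolution-independence}). The step I expect to be the main obstacle is the $H_q$-twist bookkeeping inside the partial closure: one must check that the homotopies contracting the acyclic summands of the square can be chosen to be annihilated by $\partial_q$, so that they are genuine morphisms in $\mc{C}_{\dif_q}(R_n,R_n,d_0)$ — exactly as the contracting homotopies $\gamma$ were in the proof of Proposition \ref{propR2} — and that the surviving twist is precisely $\pm 2x_n$ rather than, say, $\pm(x_n+x_{n+1})$ or $\pm x_n$. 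This parallels the corresponding step in \cite{KRWitt, RW}, and it is where the choice $\partial_q(x_i)=x_i^2$ and the hypothesis $p>2$ (through Lemma \ref{BB=B+B}) enter; the underlying bimodule isomorphisms themselves are classical and are recalled in \cite{EliasKh}.
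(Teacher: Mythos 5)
Your proposal follows essentially the same route as the paper's proof: factor $C_{n+1}=C_n\otimes C_1'$, compute the partial closure of the last strand as a $2\times 2$ bicomplex built from $C_1'\otimes_{(\Bbbk[x_{n+1}],\Bbbk[x_{n+1}])}T_n^{(\prime)}$, split off the contractible part, and identify the surviving rank-one piece (Hochschild degree $1$ for $T_n$, degree $0$ for $T_n'$) as $R_n^{\pm 2x_n}$ after the specialization $x_{n+1}\rightsquigarrow x_n$, with the $H_q$-equivariance checks you flag being exactly the ones the paper carries out. This is correct and matches the paper's argument, which organizes the same splitting via the explicit sub/quotient bicomplexes $Y_1,Y_2$ (resp.\ $Z_1,Z_2$) and the resulting long exact sequences.
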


\begin{proof}
Both identities are proved in a similar way.  For the first statement, we note that by definition
\begin{equation*}
\mHH^{\dif_q}_\bullet((M\otimes \Bbbk[x_{n+1}])\otimes_{R_{n+1}} T_n) =
\mH_\bullet^v(C_{n+1} \otimes_{R_{n+1}^{\rm en}} ((M \otimes \Bbbk[x_{n+1}])\otimes_{R_{n+1}} T_n)).
\end{equation*}
where the (vertical) homology $\mH_\bullet^v$ above is taken with respect to the differential coming from the  Koszul complex $C_{n+1}$. 

Note that
\begin{align}\label{eqn-bimod-iso-tensor-with-Tn}
C_{n+1} \otimes_{R_{n+1}^{\rm en}} ((M \otimes \Bbbk[x_{n+1}]) \otimes_{R_{n+1}} T_n) &=
(C_n \otimes C_1^\prime) \otimes_{R_{n+1}^{\rm en}} ((M \otimes \Bbbk[x_{n+1}]) \otimes_{R_{n+1}} T_n) \nonumber \\
& \cong 
C_n \otimes_{R_n^{\rm en}} (M \otimes_{R_n} (C_1^\prime \otimes_{\Bbbk[x_{n+1}]^{\rm en}} T_n)).
\end{align}
These isomorphisms, in terms of diagrammatics, can be interpreted as taking closures of the following diagrammatic equalities:
\[
\begin{DGCpicture}
\DGCstrand(0,0)(0,-3)
\DGCstrand(2,0)(2,-1.7)(2.4,-2.5)/dr/
\DGCstrand(3,0)(3,-2)(2,-3)/dl/
\DGCPLstrand(2.6,-2.7)(3,-3)
\DGCcoupon(-0.25,-1.5)(2.25,-2){$M$}
\DGCcoupon(-0.25,-0.5)(3.25,-1){$C_{n+1}$}
\DGCcoupon*(0,0)(2,-0.5){$\cdots$}
\DGCcoupon*(0,-2)(2,-3){$\cdots$}
\end{DGCpicture}
~=~
\begin{DGCpicture}
\DGCstrand(0,0)(0,-3)
\DGCstrand(2,0)(2,-1.7)(2.4,-2.5)/dr/
\DGCstrand(3,0)(3,-2)(2,-3)/dl/
\DGCPLstrand(2.6,-2.7)(3,-3)
\DGCcoupon(-0.25,-1.5)(2.25,-2){$M$}
\DGCcoupon(-0.25,-0.5)(2.25,-1){$C_{n}$}
\DGCcoupon(2.75,-0.5)(3.25,-1){$C_{1}^\prime$}
\DGCcoupon*(0,0)(2,-0.5){$\cdots$}
\DGCcoupon*(0,-2)(2,-3){$\cdots$}
\end{DGCpicture}
~=~
\begin{DGCpicture}
\DGCstrand(0,0)(0,-3)
\DGCstrand(2,0)(2,-1.7)(2.4,-2.5)/dr/
\DGCstrand(3,0)(3,-2)(2,-3)/dl/
\DGCPLstrand(2.6,-2.7)(3,-3)
\DGCcoupon(-0.25,-1.25)(2.25,-1.75){$M$}
\DGCcoupon(-0.25,-0.5)(2.25,-1){$C_{n}$}
\DGCcoupon(2.75,-1.75)(3.25,-2.25){$C_{1}^\prime$}
\DGCcoupon*(0,0)(2,-0.5){$\cdots$}
\DGCcoupon*(0,-2)(2,-3){$\cdots$}
\end{DGCpicture}
\ .
\]

Observe that $C_1^\prime \otimes_{\Bbbk[x_{n+1}]^{\rm en}} T_n$ is a bicomplex of $(R_{n+1},R_{n+1})$-bimodules
\begin{equation*}
\xymatrix{
a^{\frac{1}{2}}t^{\frac{1}{2}}\left({}^{x_{n+1}}B_n^{x_{n+1}}\right) \ar[rr]^-{br} \ar[d]_{x_{n+1} \otimes 1 - 1 \otimes x_{n+1}} && a^{\frac{1}{2}}t^{-\frac{1}{2}}R_{n+1}^{2x_{n+1}} \ar[d]^{0} \\
a^{-\frac{1}{2}}t^{\frac{1}{2}}q^{-2}B_n \ar[rr]^-{br} && a^{-\frac{1}{2}}t^{-\frac{1}{2}}q^{-2}R_{n+1}.
}
\end{equation*}
Here the grading shift conventions follow from equation \eqref{eqn-elementary-braids}. For ease of notation, we will mostly ignore them within this proof below.

It follows that there is a short exact sequence of bicomplexes of $(R_{n+1},R_{n+1})$-bimodules
\begin{equation*}
0 \longrightarrow Y_1 \longrightarrow C_1^\prime \otimes_{\Bbbk[x_{n+1}]^{\rm en}} T_n
\longrightarrow Y_2 \longrightarrow 0
\end{equation*}
where the terms of the sequence are defined by

\begin{equation} \label{sesbicomplexes}
\begin{gathered}
\xymatrix@R=1em@C=3em{
& & & & & 0 \ar[dd] \\
R_{n+1}^{x_n+3x_{n+1}} \ar@/_2pc/[dddd]^(0.75){(x_{n+1}-x_n) \otimes 1 + 1 \otimes (x_{n+1}-x_n)} \ar[rrr]^{2(x_{n+1}-x_n)} \ar[dd] & & & R_{n+1}^{2x_{n+1}} \ar[dd] \ar@/^2pc/[dddd]^{\Id} &  \\
& & & & := & Y_1 \ar[dddd]\\
0 \ar[rrr] & & & 0 \\ 
& & &  \\
{}^{x_{n+1}}B_n^{x_{n+1}} \ar[rrr]^{br} \ar[dd]^{x_{n+1} \otimes 1 - 1 \otimes x_{n+1}} \ar@/_2pc/[dddd]_{\tilde{br}} & & & R_{n+1}^{2x_{n+1}} \ar[dd]^{0} &  & \\
& & &  & = &
C_1^\prime \otimes_{\Bbbk[x_{n+1}]^{\rm en}} T_n \ .\ar[dddd] \\
B_n \ar[rrr]^{br} \ar@/_2pc/[dddd]_{2 \Id} & & & R_{n+1} \ar@/^2pc/[dddd]^{2\Id} \\
&  &  & \\
\widetilde{R}_{n+1}^{x_n+x_{n+1}} \ar[rrr] \ar[dd]^{(x_{n+1}-x_n) \otimes 1 - 1 \otimes (x_{n+1}-x_n)} & & & 0 \ar[dd] &  & \\
& & & & := & Y_2 \ar[dd] \\
B_n \ar[rrr]^{br} & & & R_{n+1} \\
& & & & & 0  
}
\end{gathered}
\end{equation}
Here $\widetilde{R}_{n+1}$ is equal to $R_{n+1}$ as a left $R_{n+1}$-module but the right action of $R_{n+1}$ is twisted by the permutation $\sigma_n \in S_{n+1}$ and $\tilde{br}(a \otimes b)=br(a \sigma_n(b))$.
It is a straightforward exercise to check that all maps above are equivariant with respect to the $H_q$-action.  We show it for the map
\begin{equation*}
\phi := (x_{n+1}-x_n) \otimes 1 + 1 \otimes (x_{n+1}-x_n) \colon     
R_{n+1}^{x_n+3x_{n+1}} \longrightarrow {}^{x_{n+1}}B_n^{x_{n+1}}.
\end{equation*}
One calculates
\begin{align*}
\phi(\partial_q(1))
 & = \phi(x_n+x_{n+1}+2x_{n+1})
 \\
 & =
(x_{n+1}^2-x_n^2) \otimes 1 + 1 \otimes (x_{n+1}^2-x_n^2)
+2x_{n+1}(x_{n+1} -x_n) \otimes 1
+2x_{n+1} \otimes (x_{n+1}-x_n),
\end{align*}
and
\begin{align*}
\partial_q(\phi(1)) = & ~ \dif((x_{n+1}-x_n)\otimes 1 + 1\otimes (x_{n+1}-x_n))  \\
 = & ~(x_{n+1}^2-x_n^2) \otimes 1 + 1 \otimes (x_{n+1}^2-x_n^2) 
+ x_{n+1}(x_{n+1}-x_n) \otimes 1 + \\
& ~(x_{n+1}-x_n) \otimes x_{n+1}
+ x_{n+1} \otimes (x_{n+1}-x_n)
+ 1 \otimes (x_{n+1}-x_n)x_{n+1}.
\end{align*}
Comparing the terms of $\phi(\partial_q(1))$ and $\partial_q(\phi(1))$ it suffices to check the identity inside the bimodule $B_n$
\begin{equation*}
x_{n+1}^2 \otimes 1 - x_{n+1} \otimes x_n = 1 \otimes x_{n+1}^2 - x_n \otimes x_{n+1}.    
\end{equation*}
Rearranging the terms of the above equation, we must show
\begin{equation} \label{needtocheck1}
x_{n+1}^2 \otimes 1 + x_{n} \otimes x_{n+1} = 1 \otimes x_{n+1}^2 + x_{n+1} \otimes x_{n}.    
\end{equation}
Adding $x_n x_{n+1} \otimes 1$ to both sides of \eqref{needtocheck1} and using the fact that symmetric functions in $x_n$ and $x_{n+1}$ may be brought through a tensor product finishes the proof that $\phi(\partial_q(1))=\partial_q(\phi(1))$.
%Clearly $\partial(\phi(1))=0$.  On the other hand,
%\begin{align*}
%\partial(\phi(1)) = & \partial((x_{n+1}-x_n) \otimes 1 -1 \otimes %(x_{n+1}-x_n)) \\
%=& (x_{n+1}^2-x_n^2) \otimes 1 - 1 \otimes (x_{n+1}^2-x_n^2)
%-x_{n+1}(x_{n+1}-x_n) \otimes 1 - (x_{n+1}-x_n) \otimes x_{n+1} \\
% & +x_{n+1} \otimes (x_{n+1}-x_n) +1 \otimes (x_{n+1}-x_n)x_{n+1} \\
% = &  -x_n^2 \otimes 1 +1 \otimes x_n^2 + x_n \otimes x_{n+1} - x_{n+1} %\otimes x_n \\
% = & -x_n^2 \otimes 1 - x_n x_{n+1} \otimes 1 + x_n x_{n+1} \otimes 1
% + 1 \otimes x_n^2 + x_n \otimes x_{n+1} - x_{n+1} \otimes x_n \\
%  = & -x_n \otimes x_n - x_n \otimes x_{n+1} + 1 \otimes x_n x_{n+1} 
% + 1 \otimes x_n^2 + x_n \otimes x_{n+1} - x_{n+1} \otimes x_n \\
%  = & -x_n \otimes x_n  + 1 \otimes x_n x_{n+1} 
% + 1 \otimes x_n^2  - x_{n+1} \otimes x_n \\
%   = & -x_n \otimes x_n  + x_n \otimes x_n 
% + x_{n+1} \otimes x_n  - x_{n+1} \otimes x_n \\
% = & 0.
%\end{align*}

There is a splitting of the short exact sequence \eqref{sesbicomplexes} regarded as a short exact sequence of $(R_n,R_n)$-bimodules, given by
\begin{equation} \label{sesbicomplexessplitting}
\begin{gathered}
\xymatrix{
{}^{x_{n+1}}B_n^{x_{n+1}} \ar[rrr]^{} \ar[d]^{}  & & & R_{n+1}^{2x_{n+1}} \ar[d]^{} \\
B_n \ar[rrr]^{}  & & & R_{n+1}  \\
&  & & \\
\tilde{R}_{n+1}^{x_n+x_{n+1}} \ar[rrr] \ar[d]^{} \ar@/^3pc/[uuu]^{\theta} & & & 0 \ar[d]  \\
B_n \ar[rrr]^{} \ar@/^3pc/[uuu]^{\frac{1}{2} \Id} & & & R_{n+1} \ar@/_2pc/[uuu]_{\frac{1}{2} \Id} \\
}
\end{gathered}
\end{equation}
where 
\begin{equation*}
\theta(f(x_1,\ldots,x_{n-1})x_n^i x_{n+1}^j)
=f(x_1,\ldots,x_{n-1}) x_n^i \otimes x_{n}^j.
\end{equation*}
We briefly explain why $\theta$ is a well-defined bimodule homomorphism. By definition 
$\theta(x_n^i x_{n+1}^j)=x_n^i \otimes x_{n}^j$.
Note that $\theta(x_n^i x_{n+1}^j)=\theta(x_n^i \cdot x_{n+1}^j)=x_n^i \theta(x_{n+1}^j)=x_n^i \otimes x_{n}^j $ where we viewed $x_n^i$ as acting on the left of $x_{n+1}^j$.
Similarly, 
$\theta(x_n^i x_{n+1}^j)=\theta(x_n^i \cdot x_{n}^j)=\theta(x_n^i)x_n^j=
x_n^i \otimes x_{n}^j $ where we viewed $x_n^j$ as acting on the right of $x_{n}^i$.

The short exact sequence \eqref{sesbicomplexes} plugged back into \eqref{eqn-bimod-iso-tensor-with-Tn} gives us a short exact sequence
\begin{equation}\label{eqn-split-exact-sequence}
   0\to  C_n \otimes_{R_n^{\rm en}} (M \otimes_{R_n} Y_1) \to C_n \otimes_{R_n^{\rm en}}(M \otimes \Bbbk[x_{n+1}])\otimes_{R_{n+1}} T_n \to
C_n \otimes_{R_n^{\rm en}} (M \otimes_{R_n} Y_2) \to 0,
\end{equation}
which is split as a sequence of $(R_n,R_n)$-bimodules. Taking homology with respect to the vertical differentials gives rise to a long exact sequence 
\begin{equation}
\begin{gathered}
\xymatrix@C=2em{
  & \cdots  \ar[r] & \mH_{i+1}^v(C_n \otimes_{R_n^{\rm en}} (M \otimes_{R_n} Y_2)) \ar `r[rd] `_l `[lld] `[d] [ld] & \\
& \mH_i^v(C_n \otimes_{R_n^{\rm en}} (M \otimes_{R_n} Y_1)) \ar[r] & \mHH_i^{\dif_q}((M\otimes \Bbbk[x_{n+1}]) \otimes_{R_{n+1}} T_n) \ar `r[rd] `_l `[lld] `[d] [ld] & \\
& 
\mH_i^v(C_n \otimes_{R_n^{\rm en}} (M \otimes_{R_n} Y_2))\ar[r]  & \cdots &
}
 \end{gathered}
\end{equation}
\begin{equation*}
\cdots
\rightarrow \mH_i^v(C_n \otimes_{R_n^{\rm en}} (M \otimes_{R_n} Y_1)) \rightarrow \mHH_i^{\dif_q}((M\otimes \Bbbk[x_{n+1}]) \otimes_{R_{n+1}} T_n) \rightarrow 
\mH_i^v(C_n \otimes_{R_n^{\rm en}} (M \otimes_{R_n} Y_2))
\rightarrow \cdots.
\end{equation*}
Due to the splitting exactness of \eqref{eqn-split-exact-sequence}, the long exact sequence breaks up into short exact sequences of the form
\begin{equation}\label{eqn-ses-in-homology}
0
\rightarrow \mH_i^v(C_n \otimes_{R_n^{\rm en}} (M \otimes_{R_n} Y_1)) \rightarrow \mHH_i^{\dif_q}((M\otimes \Bbbk[x_{n+1}]) \otimes_{R_{n+1}} T_n) \rightarrow 
\mH_i^v(C_n \otimes_{R_n^{\rm en}} (M \otimes_{R_n} Y_2))
\rightarrow 0,
\end{equation}
one for each $i\in \Z$. 

So far we have ignored the topological (horizontal) differential in diagram \eqref{sesbicomplexes}. Each term in  the short exact sequence \eqref{eqn-ses-in-homology} carries the topological differential $d_t$. Taking homology with respect to $d_t$ gives us another long exact sequence 
\begin{equation}
\begin{gathered}
\xymatrix{
\cdots \ar[r] & \mH^h_j \mH_i^v(C_n \otimes_{R_n^{\rm en}} (M \otimes_{R_n} Y_1)) \ar[r] & \mtHHH_{j,i}^{\dif_q}((M\otimes \Bbbk[x_{n+1}]) \otimes_{R_{n+1}} T_n) \ar `r[rd] `_l `[lld] `[d] [ld] & \\
& \mH^h_j\mH_i^v(C_n \otimes_{R_n^{\rm en}} (M \otimes_{R_n} Y_2))
\ar[r]  & \mH^h_{j-1} \mH_i^v(C_n \otimes_{R_n^{\rm en}} (M \otimes_{R_n} Y_1))  \ar[r] & \cdots 
}
\end{gathered}
\ .
\end{equation}

We claim that 
$\mH^h_j(\mH_i^v(C_n \otimes_{R_n^{\rm en}} (M \otimes_{R_n} Y_2)))=0$ for all $j$, (which we will show shortly), which implies that
\begin{equation*}
\mH^h_j\mH_i^v(C_n \otimes_{R_n^{\rm en}} (M \otimes_{R_n} Y_1)) \cong 
\mtHHH_{j,i}^{\dif_q}((M\otimes \Bbbk[x_{n+1}]) \otimes_{R_{n+1}} T_n).
\end{equation*}
The definition \eqref{sesbicomplexes} of $Y_1$ shows that
$ \mH^h_j\mH_i^v(C_n \otimes_{R_n^{\rm en}} (M \otimes_{R_n} Y_1)) $ 
is the homology of the two-term complex 
\begin{equation*}
\xymatrix{
0 \ar[r] &
\mH_i^v(C_n \otimes_{R_n^{\rm en}} M \otimes_{R_n} R_{n+1}^{x_n+3x_{n+1}})
\ar[rr]^-{2(x_{n+1}-x_n)} & &
\mH_i^v(C_n \otimes_{R_n^{\rm en}} M \otimes_{R_n} R_{n+1}^{2x_{n+1}})
\ar[r] & 0
}.
\end{equation*}
Taking grading shifts back into account, this complex has homology concentrated in the second nonzero term, which is isomorphic to
\begin{equation*}
\mH^h_j\mH_i^v(C_n \otimes_{R_n^{\rm en}} M \otimes_{R_n} R_{n}^{2x_n})
\cong
\mtHHH_{j,i}^{\dif_q}(M)^{2x_n}
\end{equation*}
where the latter space is twisted as an $H_q$-module by $2x_n$. This is part $(i)$ of the proposition.

We now prove the claim that 
$\mH^h_j\mH_i^v(C_n \otimes_{R_n^{\rm en}} (M \otimes_{R_n} Y_2))=0$.
Note that $Y_2$ fits into a short exact sequence of $(R_{n+1},R_{n+1})$-bimodules
\[
0\lra Y_2^{\prime \prime} \lra Y_2\stackrel{\psi}{\lra} Y_2^\prime \lra 0,
\]
where the surjective map $\psi$ is given by 
\begin{equation} \label{bicomplexmor}
\begin{gathered}
\xymatrix@R=1em{
& \widetilde{R}_{n+1}^{x_n+x_{n+1}} \ar[r] \ar[dd]^{}    & 0 \ar[dd]  & &  0 \ar[r] \ar[dd] & 0 \ar[dd] &  \\
\psi: Y_2= & & && & & :=Y_2' \\
& B_n \ar[r]^{} \ar@/_2pc/[rrr]_{br}   & R_{n+1} \ar@/_2pc/[rrr]_{\Id} & & R_{n+1} \ar[r]^{\Id} & R_{n+1} \\
}
\end{gathered}
\ .
\end{equation}
Since the kernel of the multiplication map
$br \colon B_n \rightarrow R_{n+1}$ is generated as an
$(R_{n+1},R_{n+1})$-bimodule by
$v=x_{n+1} \otimes 1 -1 \otimes x_{n+1}$, it is easy to check that
$x_n v =v x_{n+1}$ and $v$ generates a copy of bimodule isomorphic to $\widetilde{R}_{n+1}^{x_n+x_{n+1}}$.
Thus the kernel of $\psi$ is given by
\begin{equation}
\begin{gathered}
\xymatrix@R=1em{
 & \widetilde{R}_{n+1}^{x_n+x_{n+1}} \ar[dd]^{\mathrm{Id}} \\
 Y_2^{\prime \prime} := & \\
 & \widetilde{R}_{n+1}^{x_n+x_{n+1}}
}
\end{gathered}
\ .
\end{equation}
Clearly, $\mH^v_\bullet(Y_2^{\prime \prime})=0$, and it follows that 
 $\mH^v_\bullet(Y_2)\cong \mH^v_\bullet (Y_2^{\prime})$.
Taking homology with respect to the topological (horizontal) differential $d_t$ then yields $\mH^h_j\mH_i^v(Y_2')=0$ and $\mH^h_j\mH^v_i(Y_2)=0$.

The computation of $\mHH_\bullet^{\dif_q}((M\otimes \Bbbk[x_{n+1}]) \otimes_{R_{n+1}} T_n') $ is very similar. We only outline the necessary changes. Again, first note that $C_1^\prime \otimes_{\Bbbk[x_{n+1}]^{\rm en}} T_n^\prime$ is a bicomplex of $(R_{n+1},R_{n+1})$-bimodules
\begin{equation}
\begin{gathered}
\xymatrix{
a^{\frac{3}{2}}t^{\frac{1}{2}}q^2 R_{n+1}^{2x_{n+1}} \ar[rr]^-{rb} \ar[d]_{0} && a^{\frac{3}{2}} t^{-\frac{1}{2}} {}^{x_{n+1}}B_{n}^{-x_n} \ar[d]^{x_{n+1} \otimes 1 - 1 \otimes x_{n+1}} \\
a^{\frac{1}{2}}t^{\frac{1}{2}}q^2R_{n+1} \ar[rr]^-{rb} && a^{\frac{1}{2}}t^{-\frac{1}{2}}B_{n}^{-(x_n+x_{n+1})}.
}
\end{gathered}
\end{equation}
Ignore the grading shifts for now for ease of notation. There is a short exact sequence of bicomplexes of $(R_{n+1},R_{n+1})$-bimodules
\[
0\lra Z_1 \lra C_1^\prime \otimes_{\Bbbk[x_{n+1}]^{\rm en}} T_n^\prime \lra Z_2\lra 0,
\]
whose terms are defined by
\begin{equation} \label{sesbicomplexes2}
\begin{gathered}
\xymatrix@R=1em{
& & & & & 0 \ar[dd] \\
R_{n+1}^{2x_{n+1}} \ar@/_2pc/[dddd]_{\Id} \ar[rrr]^{\Id} \ar[dd] & & & R_{n+1}^{2x_{n+1}} \ar[dd] \ar@/^2pc/[dddd]^{rb} &  & \\
& & & & := & Z_1\ar[dddd] \\
0 \ar[rrr] & & & 0 \\ 
& & & & &  \\
R_{n+1}^{2x_{n+1}} \ar[rrr]^{rb} \ar[dd]^{0}  & & & {}^{x_{n+1}}B_{n}^{-x_n} \ar[dd]_{x_{n+1} \otimes 1 - 1 \otimes x_{n+1}} \ar@/^3pc/[dddd]^{\tilde{br}} & & \\
& & & & = & 
C_1^\prime \otimes_{\Bbbk[x_{n+1}]^{\rm en}} T_n' \ar[dddd] \\
R_{n+1} \ar[rrr]^{rb} \ar@/_3pc/[dddd]_{ \Id} & & & B_{n}^{-(x_n+x_{n+1})} \ar@/^2pc/[dddd]^{\Id} \\
& & & & & \\
0 \ar[rrr] \ar[dd]^{} & & & \tilde{R}_{n+1} \ar[dd]_{x_{n+1} \otimes 1 - 1 \otimes x_{n+1}} &  &   \\
 & & & & := & Z_2\ar[dd]\\
R_{n+1} \ar[rrr]^{rb} & & & B_{n}^{-(x_n+x_{n+1})} \\
& & & & & 0  
}
\end{gathered}
\end{equation}

As in the previous part, there is a splitting of bicomplexes of $(R_n,R_n)$-bimodules given by
\begin{equation} \label{bicomplexsplitting'}
\begin{gathered}
\xymatrix{
R_{n+1}^{2x_{n+1}} \ar[rrr]^{rb} \ar[d]^{0}  & & & {}^{x_{n+1}}B_{n}^{-x_n} \ar[d]_{x_{n+1} \otimes 1 - 1 \otimes x_{n+1}}  \\
R_{n+1} \ar[rrr]^{rb}  & & & B_{n}^{-(x_n+x_{n+1})}  \\
&  & & \\
0 \ar[rrr] \ar[d]^{} & & & \tilde{R}_{n+1} \ar[d]_{x_{n+1} \otimes 1 - 1 \otimes x_{n+1}} \ar@/_3pc/[uuu]_{ \phi}  \\
R_{n+1} \ar[rrr]^{rb} \ar@/^3pc/[uuu]^{\Id} & & & B_{n}^{-(x_n+x_{n+1})}
\ar@/_3pc/[uuu]_{ \Id} \\
}
\end{gathered}
\end{equation}
where $\phi$ was defined earlier.
Thus we get short exact sequences of the form
\begin{equation}\label{eqn-ses-in-homology-prime}
0
\rightarrow \mH_i^v(C_n \otimes_{R_n^{\rm en}} (M \otimes_{R_n} Z_1)) \rightarrow \mHH_i^{\dif_q}((M\otimes \Bbbk[x_{n+1}]) \otimes_{R_{n+1}} T_n^\prime) \rightarrow 
\mH_i^v(C_n \otimes_{R_n^{\rm en}} (M \otimes_{R_n} Z_2))
\rightarrow 0
\end{equation}
for each $i\in \Z$. 
%\begin{equation*}
%0
%\longrightarrow \mH_i^v(C_n \otimes_{(R_n,R_n)} (M \otimes_{R_n} Z_1)) \longrightarrow \mHH_i(M 
%\otimes_{R_{n+1}} T_n') \longrightarrow 
%\mH_i^v(C_n \otimes_{(R_n,R_n)} (M \otimes_{R_n} Z_2))
%\longrightarrow 0.
%\end{equation*}
Taking horizontal homology for this short exact sequence gives us a long exact sequence. However, since the homology of
$\mH_j^h\mH_i^v(C_n \otimes_{R_n^{\rm en}} (M \otimes_{R_n} Z_1))$ is clearly always zero, we get that
\begin{equation*}
\mtHHH_{j,i}((M \otimes \Bbbk[x_{n+1}])\otimes_{R_{n+1}} T_n') \cong
\mH^h_j(\mH_i^v(C_n \otimes_{R_n^{\rm en}} (M \otimes_{R_n} Z_2)))
\end{equation*}
for all $j\in \Z$. We need to analyze the latter homology space.

There is a morphism of bicomplexes
\begin{equation} \label{bicomplexmor'}
\begin{gathered}
\xymatrix{
& 0 \ar[r] \ar[dd]^{}    & \tilde{R}_{n+1} \ar[dd]^{x_{n+1} \otimes 1 - 1 \otimes x_{n+1}}  & &  0 \ar[rr] \ar[dd] & & 0 \ar[dd] &  \\
Z_2:= & & && && &:=Z_2'\\
& R_{n+1} \ar[r]^-{rb} \ar@/_2pc/[rrr]_{\Id}   & B_n^{-(x_n+x_{n+1})} \ar@/_2pc/[rrrr]_{br} & & R_{n+1} \ar[rr]^-{x_{n+1}-x_n} & & R_{n+1}^{-(x_n+x_{n+1})} \\
}
\end{gathered}
\end{equation}
whose kernel is a vertical complex connected by the identity map. 
Thus 
$$\mtHHH_{j,i}((M\otimes \Bbbk[x_{n+1}]) \otimes_{R_{n+1}} T_n') \cong \mH_j^h\mH_i^v((C_n \otimes_{R_n^{\rm en}} (M \otimes_{R_n} Z_2^\prime)))\cong \mtHHH_{j,i}(M)^{-2x_n}.$$
The result follows.
\end{proof}

The above proof serves as a model for the Markov II invariance of $\ptHHH$. We only present the necessary changes. 

\begin{cor}\label{cor-Markov-II-for-pHHH}
Let $\beta$ be a braid and $M$ be the associated $p$-chain complex of Soergel bimodules.  Then there are isomorphisms of chain complexes of relative $pH_q$-Hochschild homology groups:
\begin{enumerate}
    \item[(i)] $\ptHHH^{\dif_q}((M\otimes \Bbbk[x_{n+1}]) \otimes_{R_{n+1}} \pT_n) \cong \ptHHH^{\dif_q}(M)^{2x_n}$,
    \item[(ii)] $\ptHHH^{\dif_q}((M\otimes \Bbbk[x_{n+1}])\otimes_{R_{n+1}} \pT_n') \cong \ptHHH^{\dif_q}(M)^{-2x_n}$.
\end{enumerate}
\end{cor}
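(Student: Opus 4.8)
The plan is to transcribe the proof of Proposition~\ref{prop-Markov-II-for-HHH} into the $p$-DG setting, making three systematic substitutions: the Koszul resolution $C_n$ is replaced by the $H_q$-equivariant $p$-Koszul resolution $\pC_n$, the elementary braiding complexes $T_n, T_n'$ by their $p$-extensions $pT_n, pT_n'$ of \eqref{eqn-elementary-braids-p-ext}, and the vertical/horizontal homologies $\mH^v_\bullet, \mH^h_\bullet$ by slash homologies; throughout one works with the collapsed grading $a = q^2 t$ and the overall shift $q^{-n}$ built into Definition~\ref{def-pHHH}. First I would note that $\pC_1' \otimes_{(\Bbbk[x_{n+1}],\Bbbk[x_{n+1}])} pT_n$ is exactly the $p$-bicomplex of $(R_{n+1},R_{n+1})\#H_q$-bimodules obtained by applying $\mc{P}$, in both the Koszul and the topological directions, to the bicomplex \eqref{sesbicomplexes}, and similarly $\pC_1' \otimes_{(\Bbbk[x_{n+1}],\Bbbk[x_{n+1}])} pT_n'$ is obtained from \eqref{sesbicomplexes2}; by Theorem~\ref{thm-resolution-independence} this $p$-Koszul model may be used to compute the relative $p$-Hochschild homology.

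The key observation to exploit is that every short exact sequence appearing in the proof of Proposition~\ref{prop-Markov-II-for-HHH} --- namely $0 \to Y_1 \to \pC_1'\otimes_{(\Bbbk[x_{n+1}],\Bbbk[x_{n+1}])} T_n \to Y_2 \to 0$, the sequence $0\to Y_2'' \to Y_2 \to Y_2' \to 0$ of \eqref{bicomplexmor}, and their primed analogues coming from \eqref{sesbicomplexes2} and \eqref{bicomplexmor'} --- splits when regarded as a sequence of $(R_n,R_n)$-bimodules, via the explicit splittings \eqref{sesbicomplexessplitting} and \eqref{bicomplexsplitting'}, and all the structure maps together with these splittings were already checked to be $H_q$-equivariant. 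Applying the exact $p$-extension functor $\mc{P}$ therefore produces split short exact sequences of $p$-complexes, hence distinguished triangles in the relative $p$-homotopy category $\mc{C}_{\dif_q}(R_n,R_n,\dif_0)$ by \cite[Lemma~4.3]{QYHopf}. Since $\mc{P}$ carries null-homotopic complexes to null-homotopic ones, the contractible pieces of the classical argument --- the kernel $Y_2''$ of \eqref{bicomplexmor} and the identity-connected kernel of \eqref{bicomplexmor'} --- $p$-extend to acyclic $p$-complexes and drop out upon taking slash homology. To pin down the surviving terms I would invoke Proposition~\ref{prop-pHH-determined-by-HH} (and Remark~\ref{rem-ususal-homology-determines-p-homology}): slash homology of $\pC_n \otimes_{(R_n,R_n)} (M\otimes_{R_n}(-))$ is the $p$-extension of the ordinary Koszul homology $\mH^v_\bullet(C_n \otimes_{(R_n,R_n)} (M\otimes_{R_n}(-)))$, and likewise in the topological direction, so the nonzero contribution is the grading-collapsed $p$-extension of $\mH^h_\bullet \mH^v_\bullet(C_n \otimes_{(R_n,R_n)} M\otimes_{R_n} R_n^{2x_n}) \cong \mtHHH^{\dif_q}(M)^{2x_n}$, which is precisely $\ptHHH^{\dif_q}(M)^{2x_n}$ after incorporating the shift $q^{-n}$; the classical vanishing statements for the $Y_2$- and $Z_1$-terms then $p$-extend to the required slash-homology vanishing. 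This yields (i), and (ii) follows in the same way from $Z_1, Z_2$ and \eqref{bicomplexmor'}.

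The hardest part will not be conceptual but a matter of careful bookkeeping. One must check that, after collapsing the $a$- and $t$-gradings with $a = q^2 t$, the three operators in play --- the Koszul $p$-differential, the topological $p$-differential $\dif_t$, and the internal $H_q$-differential $\dif_q$ --- remain pairwise commuting, so that forming the triple $p$-complex and applying the successive slash-homology functors as in \eqref{eqn-composition-slash-homology} is legitimate; and one must track all of the grading shifts (the half-shifts absorbed into $pT_i, pT_i'$ in \eqref{eqn-elementary-braids-p-ext} and the global $q^{-n}$) so that they match up with the $H_q$-twists $\pm 2x_n$ appearing on the two sides. These are exactly the points at which sign or degree errors would otherwise creep in, but no genuinely new phenomenon beyond Proposition~\ref{prop-Markov-II-for-HHH} should arise.
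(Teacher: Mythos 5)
Your proposal is correct and follows essentially the same route as the paper: the same $p$-Koszul model $\pC_n$, the same decomposition into the pieces $pY_1, pY_2$ (resp.\ $pZ_1, pZ_2$), and the same conclusion that the non-surviving piece drops out while the surviving piece contributes the $\pm 2x_n$-twist. The only (immaterial) differences are that the paper observes directly that $pY_2$ and the relevant part of $pZ_2$ are acyclic/contractible as $p$-complexes rather than deducing this by $p$-extending the classical vanishing, and that the splittings \eqref{sesbicomplexessplitting}, \eqref{bicomplexsplitting'} are only $(R_n,R_n)$-bimodule splittings (not $H_q$-equivariant ones) --- which is all the argument needs.
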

\begin{proof}
To begin with, one replaces the Koszul complex $C_n$ utilized in the proof of Proposition \ref{prop-Markov-II-for-HHH} with the $p$-extended Koszul complex $\pC_n$. Also one needs to replace the vertical homology taken there by vertical slash homology (see Remark \ref{prop-Markov-II-for-HHH}).

For part $(i)$, one adapts equation \eqref{eqn-bimod-iso-tensor-with-Tn} into\footnote{The monoidality of $\pC_{n}$ is more convenient here than the homotopy-equivalent $\mc{P}(C_n)$.}
\begin{align}
\pC_{n+1} \otimes_{R_{n+1}^{\rm en}} ((M \otimes \Bbbk[x_{n+1}]) \otimes_{R_{n+1}} \pT_n) &=
(\pC_n \otimes \pC_1^\prime) \otimes_{R_{n+1}^{\rm en}} ((M \otimes \Bbbk[x_{n+1}]) \otimes_{R_{n+1}} \pT_n) \nonumber \\
& \cong 
\pC_n \otimes_{R_n^{\rm en}} (M \otimes_{R_n} (\pC_1^\prime \otimes_{\Bbbk[x_{n+1}]^{\rm en}} \pT_n)),
\end{align}
where $\pC_1^\prime\cong \pC_1$ is the $p$-extended complex of bimodules
\begin{equation}
    0\lra q^2 \Bbbk[x_{n+1}]^{x_{n+1}}\otimes \Bbbk[x_{n+1}]^{x_{n+1}}[1]^t_\dif \xrightarrow{x_{n+1}\otimes 1-1\otimes x_{n+1}} \Bbbk[x_{n+1}]\otimes \Bbbk[x_{n+1}]\lra 0.
\end{equation}
Then, diagram \eqref{sesbicomplexes} becomes the following $p$-extended version in the Hochschild direction: 
\begin{equation} \label{sesbicomplexespHHH}
\begin{gathered}
\xymatrix@R=1em@C=1.4em{
& & & & & 0 \ar[dd] \\
 q^3  R_{n+1}^{x_n+3x_{n+1}} [1]_\dif^t\ar@/_4pc/[dddd]_{\phi} \ar[rrr]^-{2(x_{n+1}-x_n)} \ar[dd] & & &  q R_{n+1}^{2x_{n+1}}\ar[dd] \ar@/^2pc/[dddd]^{\Id} &  \\
& & & & := & pY_1 \ar[dddd]\\
0 \ar[rrr] & & & 0 \\ 
& & &  \\
q ({}^{x_{n+1}}B_n^{x_{n+1}})[1]_\dif^t \ar[rrr]^{br} \ar[dd]^{x_{n+1} \otimes 1 - 1 \otimes x_{n+1}} \ar@/_4pc/[dddd]_{\tilde{br}} & & & q^{} R_{n+1}^{2x_{n+1}}  \ar[dd]^{0} &  & \\
& & &  & = &
\pC_1^\prime \otimes_{\Bbbk[x_{n+1}]^{\rm en}} \pT_n \ar[dddd] \\
 q^{-3} B_n \ar[rrr]^{br} \ar@/_4pc/[dddd]_{2 \Id} & & &  q^{-3 } R_{n+1}[-1]^t_\dif \ar@/^2pc/[dddd]^{2\Id} \\
&  &  & \\
q^{} \tilde{R}_{n+1}^{x_n+x_{n+1}} [1]_\dif^t \ar[rrr] \ar[dd]^{(x_{n+1}-x_n) \otimes 1 - 1 \otimes (x_{n+1}-x_n)} & & & 0 \ar[dd] &  & \\
& & & & := & pY_2 \ar[dd] \\
q^{-3} B_n \ar[rrr]^{br} & & &  q^{-3} R_{n+1}[-1]^t_\dif \\
& & & & & 0  
}
\end{gathered} \ .
\end{equation}
Here $\phi$ is the map that sends $1$ to $(x_{n+1}-x_n) \otimes 1 + 1 \otimes (x_{n+1}-x_n)$

Now, as in the previous proof, one needs to show that $\pC_n \otimes_{R_n^{\rm en}} (M \otimes_{R_n} pY_2)$ does not contribute to $\ptHHH^{\dif_q}$. This is easier since now $pY_2$ is an acyclic $p$-complex. Furthermore, $pY_1$ is quasi-isomorphic to the $p$-complex $q R_n^{2x_n}$ sitting in $t$-degree zero. Hence we obtain the isomorphism
 \[
 \ptHHH^{\dif_q}((M\otimes \Bbbk[x_{n+1}]) \otimes_{R_{n+1}} \pT_n) \cong
q^{-n} \mH^/_\bullet(\pC_n \otimes_{R_n^{\rm en}} (M \otimes_{R_n} R_n^{2x_n})) \cong \ptHHH^{\dif_q}(M)^{2x_n}.
 \]

For the second isomorphism, again there is a short exact sequence of bicomplexes of $(R_{n+1},R_{n+1})$-bimodules
\begin{equation} \label{sesbicomplexes2pHHH}
\begin{gathered}
\xymatrix@R=1em@C=2em{
& & & & & 0 \ar[dd] \\
q^7 R_{n+1}^{2x_{n+1}}[2]^t_\dif \ar@/_2pc/[dddd]_{\Id} \ar[rrr]^{\Id} \ar[dd] & & & q^7 R_{n+1}^{2x_{n+1}}[1]^t_\dif \ar[dd] \ar@/^4pc/[dddd]^{rb} &  & \\
& & & & := & pZ_1\ar[dddd] \\
0 \ar[rrr] & & & 0 \\ 
& & & & &  \\
q^7 R_{n+1}^{2x_{n+1}} [2]^t_\dif \ar[rrr]^{rb} \ar[dd]^{0}  & & &  q^5 ( {}^{x_{n+1}}B_{n}^{-x_n})[1]^t_\dif \ar[dd]_{x_{n+1} \otimes 1 - 1 \otimes x_{n+1}} \ar@/^4pc/[dddd]^{\tilde{br}} & & \\
& & & & = & 
pC_1^\prime \otimes_{\Bbbk[x_{n+1}]^{\rm en}} pT_n' \ar[dddd] \\
 q^3  R_{n+1} [1]^t_\dif \ar[rrr]^{rb} \ar@/_2pc/[dddd]_{ \Id} & & &  q B_{n}^{-(x_n+x_{n+1})} \ar@/^4pc/[dddd]^{\Id} \\
& & & & & \\
0 \ar[rrr] \ar[dd]^{} & & & q^5 \tilde{R}_{n+1}[1]^t_\dif \ar[dd]_{x_{n+1} \otimes 1 - 1 \otimes x_{n+1}} &  &   \\
 & & & & := & pZ_2\ar[dd]\\
 q^3 R_{n+1}[1]^t_\dif \ar[rrr]^{rb} & & &  q B_{n}^{-(x_n+x_{n+1})} \\
& & & & & 0  
}
\end{gathered}
\end{equation}
Getting rid of contractible summands, we see that $pZ_2$ is homotopy equivalent to
\begin{equation}\label{eqn-twisting-factor-pHHH-2}
   q^3 R_{n+1}[1]^t_\dif \xrightarrow{x_{n+1}-x_n}  q R_{n+1}^{-(x_n+x_{n+1})},
\end{equation}
which is, in turn, quasi-isomorphic to $q R_n^{-2x_n}$. The result follows after accounting for the shift built into $\ptHHH^{\dif_q}$.
\end{proof}

\begin{rem}
A closer examination of the proof of Corollary \ref{cor-Markov-II-for-pHHH} shows the necessity of collapsing $t$ and $a$ gradings for the construction of $\ptHHH^{\dif_q}$. Indeed, a comparison between equations \eqref{sesbicomplexes} and \eqref{sesbicomplexespHHH}, \eqref{sesbicomplexes2} and \eqref{sesbicomplexes2pHHH} shows that, if there were an $a$ and $t$ bigrading as for $\mtHHH$, then the grading shifts arising from the positive
and negative Markov II moves would not match. This is caused by the fact that the homological shift $[1]^t_\dif$ and grading shift $t$ functors are different when $p>2$, and thus 
\[
(t^{[\frac{1}{2}]})^{\circ 2}=t\neq [1]^t_\dif.
\]
Consequently, there does not seem to exist an overall compensation factor such as  $a^{-\frac{n}{2}}t^{\frac{n}{2}}$ in Definition \ref{def-HHH} that would make a triply graded $\ptHHH$ invariant under both Markov II moves.
\end{rem}

\begin{thm}
Let $\beta_1$ and $\beta_2$ be two braids whose closures represent the same link $L$ of $r$ components up to framing.  Suppose the framing numbers of the closures $\widehat{\beta}_1$ of $\beta_1$ and $\widehat{\beta}_2$ of $\beta_2$ differ by $\mathtt{f}_i(\widehat{\beta}_1)-\mathtt{f}_i(\widehat{\beta}_2)=a_i$, $i=1,\dots, r$. Then 
\begin{equation*}
\mtHHH^{\dif_q}(\beta_1) \cong \mtHHH^{\dif_q}(\beta_2)^{2\sum_{i=1}^r a_i x_i}
\end{equation*}
and
\begin{equation*}
p \mtHHH^{\dif_q}(\beta_1) \cong p \mtHHH^{\dif_q}(\beta_2)^{2 \sum_{i=1}^r a_i x_i}
\end{equation*}
where the generator of the polynomial action for the $i$th component is denoted $x_i$ and $\mtHHH^\dif(\beta_2)^{2 x_i}$ means we twist the $H_q$-module structure on the $i$th component by $2x_i$.
\end{thm}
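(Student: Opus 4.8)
The plan is to reduce the framing-change statement to the three Markov-type results already established: Markov I (Section \ref{secmarkov1}), Markov II (Proposition \ref{prop-Markov-II-for-HHH} and Corollary \ref{cor-Markov-II-for-pHHH}), and the braid invariance of the complex $T_\beta$ (Theorem \ref{thm-braid-invariant}, resp. its $p$-extended analogue). By the Alexander and Markov theorems, any two braid words $\beta_1$, $\beta_2$ whose closures give the same underlying unframed link $L$ are related by a finite sequence of braid relations (conjugations and braid-group relations, which are Markov I moves) together with positive and negative stabilizations (Markov II moves $\beta \mapsto \beta \sigma_n^{\pm 1}$ on one more strand). Since $\mtHHH^{\dif_q}$ and $\ptHHH^{\dif_q}$ are already invariant under all of these up to the twist in the $H_q$-action recorded in Proposition \ref{prop-Markov-II-for-HHH} and Corollary \ref{cor-Markov-II-for-pHHH}, the only thing to track is how the accumulated twist depends on the sequence of moves, and to show it depends only on the net framing change on each component.

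The key steps, in order. First I would set up bookkeeping: for a braid word $\beta$ on $n$ strands, each elementary generator $\sigma_i^{\pm 1}$ contributes $\pm 1$ to the writhe of the strand(s) it involves, and a positive (resp. negative) stabilization $\beta \mapsto \beta\sigma_n$ (resp. $\beta\sigma_n'$) raises (resp. lowers) the framing of the relevant component by one, matching the conventions fixed in \eqref{framing}. Second, I would observe that Markov I moves (conjugation $\beta_1\beta_2 \mapsto \beta_2\beta_1$ and the braid relations $T_iT_{i+1}T_i \cong T_{i+1}T_iT_{i+1}$, etc.) change neither the framing numbers $\mathtt{f}_i$ nor, by the propositions in Section \ref{secmarkov1} and Proposition \ref{R3prop}, the homology up to genuine (untwisted) isomorphism. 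Third, for each stabilization, Proposition \ref{prop-Markov-II-for-HHH} (resp. Corollary \ref{cor-Markov-II-for-pHHH}) tells us that the positive stabilization introduces exactly a $2x_j$-twist on the $H_q$-action for the component $j$ being stabilized (the twisting variable $x_n$ there being the polynomial generator attached to that component), and the negative stabilization introduces a $-2x_j$-twist. Composing twists along a sequence of moves, the total twist picked up on the $i$th component is $2(\#\text{positive stabilizations on comp. }i - \#\text{negative stabilizations on comp. }i)\,x_i = 2\,a_i\,x_i$, where $a_i = \mathtt{f}_i(\widehat\beta_1) - \mathtt{f}_i(\widehat\beta_2)$ is precisely the net framing difference on that component. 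This gives the formula $\mtHHH^{\dif_q}(\beta_1) \cong \mtHHH^{\dif_q}(\beta_2)^{2\sum_i a_i x_i}$, and the identical argument with Corollary \ref{cor-Markov-II-for-pHHH} in place of Proposition \ref{prop-Markov-II-for-HHH} gives the $p$-extended statement.

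One subtlety that needs care — and the main obstacle — is the well-definedness of the twist: a priori the twisting exponent produced by a sequence of Markov moves might depend on the sequence, not just on the endpoints. Here the resolution is that the twist lies in $2\F_p[x_1,\dots]$ acting by the linear functions appearing in the bimodule twists $R^f$, and by Lemma \ref{lem-pol-mod} the slash homology of a twisted rank-one module depends only on the residues $a_i \bmod p$ of the coefficients; moreover twisting by $2x_i$ on component $i$ and then by $2x_j$ on component $j$ commute, so the total twist is genuinely additive and symmetric in the moves. Thus the well-known invariance of the net framing number $\mathtt{f}_i$ under all moves except $\pm$-stabilization on component $i$, combined with the additivity of twists, shows the exponent $2\sum_i a_i x_i$ is independent of the chosen sequence. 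The remaining verification — that the twisting variable $x_n$ appearing in Proposition \ref{prop-Markov-II-for-HHH} is indeed the one attached to the component undergoing stabilization, and that stacking stabilizations on different components uses distinct polynomial generators — is routine from the construction of $T_\beta$ as an iterated tensor product over $R_n$ and the explicit description of the Markov II isomorphisms above.
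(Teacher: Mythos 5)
Your proposal is correct and follows essentially the same route as the paper, whose entire proof is the one-line observation that the result follows from the braid relations of Section \ref{secbraidrelations} together with the Markov I and Markov II arguments (Proposition \ref{prop-Markov-II-for-HHH} and Corollary \ref{cor-Markov-II-for-pHHH}). Your write-up simply makes explicit the bookkeeping of the accumulated $\pm 2x_i$ twists that the paper leaves implicit.
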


\begin{proof}
The topological invariance follows from the proof of the braid relations in Section \ref{secbraidrelations} and the proof of the Markov moves.
\end{proof}

%Thus we obtain a link invariant 
%$HHH(L)$ with values in
%$\Bbbk[\partial_q,\partial_a] / (\partial_q^{j_q}, \partial_a^{j_a})\udmod$ where $j_a \in \{2,p\}$ and $j_q \in \{1,p \}$.

%This invariant categorifies the following invariants.
%\begin{itemize}
%    \item If $j_a=j_q=p$, then $\chi(HHH(L))$ is the HOMFLYPT polynomial where $a$ and $q$ are prime roots of unity.
 %     \item If $j_a=2, j_q=p$, then $\chi(HHH(L))$ is the HOMFLYPT polynomial where $a$ is generic and $q$ is a prime root of unity.
 %     \item If $j_a=p, j_q=1$, then $\chi(HHH(L))$ is the HOMFLYPT polynomial where $q$ is generic and $a$ is a prime root of unity.
  %    \item If $j_a=2, j_q=1$, then $\chi(HHH(L))$ is the HOMFLYPT polynomial where $a$ and $q$ are generic.
%\end{itemize}

\subsection{Unlinks and twistings}\label{subsecHOMFLYunlink}
In this section, we compute $\mHHH^{\dif_q}$ and $\pHHH^{\dif_q}$ for the identity element of the braid group $\mathrm{Br}_n$, and define an unframed link invariant in $\R^3$.

For the unknot, recall from the previous section the Koszul resolution $C_1$ of $\Bbbk[x]$, as a bimodule, is given by
\begin{equation*}
\xymatrix{
q^2a\Bbbk[x]^{x} \otimes \Bbbk[x]^{x}   \ar[rr]^-{x\otimes 1-1\otimes x} &&
\Bbbk^{}[x] \otimes \Bbbk^{}[x]
}
\ .
\end{equation*}
Tensoring this complex with $\Bbbk[x]$ as a bimodule yields 
\begin{equation*}
\xymatrix{
q^2a\Bbbk[x]^{2x}  \ar[r]^{\hspace{.2in} 0} & \Bbbk[x]
}    
\ .
\end{equation*}
Thus the homology of the unknot (up to shift) is identified with the bigraded $H_q$-module
\begin{equation*}
\Bbbk[x]\oplus q^2 a\Bbbk[x]^{2x}  
\ .
\end{equation*}

More generally, via the Koszul complex $C_n=C_1^{\otimes n}$, we have that the homology of the $n$-component unlink $L_0$   is equal to
\begin{equation}
   \mtHHH^{\dif_q}(L_0)\cong  a^{-\frac{n}{2}}t^{\frac{n}{2}} \mHH_\bullet(R_n) \cong a^{-\frac{n}{2}}t^{\frac{n}{2}} \bigotimes_{i=1}^n \left( \Bbbk[x_i] \oplus q^2 a \Bbbk[x_i]^{2x_i} \right).
\end{equation}
Alternatively, up to the grading shift $ a^{-\frac{n}{2}}t^{\frac{n}{2}} $,  we may identify $\mtHHH^{\dif_q}(L_0)$ with the exterior algebra over $R_n$ generated by the differential forms $dx_i$ of bidegree $aq^2$, $i=1,\dots, n$, subject to the condition that each $dx_i$ accounts for a twisting of $H_q$-module structure by $2x_i$.

It follows that, as for the ordinary HOMFLYPT homology, given a framed link $L$ of $n$ components arising as a braid closure $\widehat{\beta}$, its untwisted HOMFLYPT $H_q$-homology $\mtHHH^{\dif_q} (\beta) $ is a module over 
$$\mtHHH_{0,0,\bullet}^{\dif_q}(L_0)\cong R_n,$$ 
and thus one may consider a twisting\footnote{The independence of choices as to where to introduce the twist on $\beta$ can be proven as in the usual triply graded homology case. See, for instance, \cite[Theorem 1.3]{KRWitt}.} of the $H_q$-module structure on $\mtHHH^{\dif_q}(\beta)$ via the functor
$ R_n^f\otimes_{R_n}(\mbox{-}) $, where $f$ is a linear polynomial in $x_1, \dots , x_n$, (see Section \ref{subset-p-DG-pol}).

\begin{defn}\label{def-twisted-HHH}
Let $L$ be an $n$-strand framed link arising from the closure of a braid $\beta$. Label the components of $L$ by $1$ through $k$, and set the \emph{(linear) framing factor} of $\beta$ to be the linear polynomial
\[
\mathtt{f}_\beta = -\sum_{i=1}^k 2 \mathtt{f}_i x_i.
\]
\begin{enumerate}
\item[(1)]The \emph{$H_q$-HOMFLYPT homology} of $\beta$ is the triply graded $H_q$-module
\[
\mHHH^{\dif_q}(\beta):= \mtHHH^{\dif_q}(\beta)^{\mathtt{f}_\beta}\cong R_k^{\mathtt{f}_\beta}\otimes_{R_k} \mtHHH^{\dif_q}(\beta).
\]
\item[(2)]Likewise, the  \emph{$H_q$-HOMFLYPT $p$-homology} is the doubly graded $H_q$-module
\[
\pHHH^{\dif_q}(\beta):= \ptHHH^{\dif_q}(\beta)^{\mathtt{f}_\beta}\cong R_k^{\mathtt{f}_\beta}\otimes_{R_k} \ptHHH^{\dif_q}(\beta).
\]
\end{enumerate}
\end{defn}
   
 \begin{cor}\label{cor-HHH-twisted-Euler-characteristic}
Given a braid $\beta$, both  $\mHHH^{\dif_q}(\beta)$ and $\pHHH^{\dif_q}(\beta)$ are link invariants that only depend on the closure of $\beta$ as a link in $\R^3$.
 \begin{enumerate}
     \item[(i)] The slash homologies of $\mHHH^{\dif_q}(\beta)$ and $\pHHH^{\dif_q}(\beta)$ are finite dimensional.
     \item[(ii)] The Euler characteristic of $\mHHH^{\dif_q}(\beta)$ is equal to the HOMFLYPT polynomial of $\widehat{\beta}$ in the formal variables $q$ and $a$, while the Euler characteristic of $\pHHH^{\dif_q}(\beta)$ is equal to the Jones polynomial of $\widehat{\beta}$ in a formal $q$-variable.
     \item[(iii)] The Euler characteristic of the slash homology of $\mHHH^{\dif_q}(\beta)$ is equal to the specialization of the HOMFLYPT polynomial of $\widehat{\beta}$ at a root of unity $q$, while the Euler characteristic of the slash homology of $\pHHH^{\dif_q}(\beta)$ is equal to the specialization of the Jones polynomial of $\widehat{\beta}$ at a root of unity $q$.
 \end{enumerate}
 \end{cor}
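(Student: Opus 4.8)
The plan is to assemble Corollary~\ref{cor-HHH-twisted-Euler-characteristic} from pieces already in place, treating the three statements in order.

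\textbf{Invariance as a link invariant.} By Definition~\ref{def-twisted-HHH}, $\mHHH^{\dif_q}(\beta)$ and $\pHHH^{\dif_q}(\beta)$ are obtained from $\mtHHH^{\dif_q}(\beta)$ and $\ptHHH^{\dif_q}(\beta)$ by tensoring with the framing factor $R_k^{\mathtt{f}_\beta}$, where $\mathtt{f}_\beta=-\sum 2\mathtt{f}_i x_i$. Theorems~\ref{thm-untwisted-HOMFLY} and~\ref{thm-untwisted-HOMFLY-p-ext} already give framed invariance of the untwisted homologies; the previous theorem records precisely how a change of framing twists the $H_q$-module structure, $\mtHHH^{\dif_q}(\beta_1)\cong\mtHHH^{\dif_q}(\beta_2)^{2\sum a_i x_i}$ (similarly for the $p$-version). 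So I would just observe that the chosen framing factor $R_k^{\mathtt{f}_\beta}$ exactly cancels this twist: if $\widehat\beta_1$ and $\widehat\beta_2$ are the same underlying link with framings differing by $a_i$, then tensoring $\mtHHH^{\dif_q}(\beta_1)$ and $\mtHHH^{\dif_q}(\beta_2)^{2\sum a_i x_i}$ with the respective $R_k^{\mathtt{f}_{\beta_j}}$ produces isomorphic $H_q$-modules because $\mathtt{f}_{\beta_1}-\mathtt{f}_{\beta_2}=-2\sum a_i x_i$, and ${}^{f}R_k\cong R_k^f$ with ${}^{f}({}^{g}R_k)\cong R_k^{f+g}$ as $p$-DG bimodules. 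This reduces unframed invariance to the already-proven framed invariance plus Markov~I (Section~\ref{secmarkov1}).

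\textbf{Part (i): finite-dimensionality of slash homology.} Here I would invoke Corollary~\ref{cor-finite-slash-homology}: any $p$-DG module over $R$ carrying a finite filtration with subquotients of the form $R^f$ has finite-dimensional slash homology. The computations of $\mtHHH^{\dif_q}(L_0)$ in Section~\ref{subsecHOMFLYunlink} via the Koszul complex $C_n$ show the homology of the unlink is a finite sum of rank-one pieces $\Bbbk[x_i]$ and $q^2 a\,\Bbbk[x_i]^{2x_i}$; for a general braid $\beta$, the complex $C_n\otimes_{(R_n,R_n)}(T_\beta\otimes R[x_{n+1}]\cdots)$ is a finite complex of Soergel bimodules, each of which is (after tensoring with the Koszul resolution) a finite sum of rank-one $R^f$'s, so each term — and hence the homology — has a finite filtration with $R^f$-subquotients. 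Then Lemma~\ref{lem-pol-mod} gives $\mH^/_\bullet(R_n^f)\cong\bigotimes V_{p-b_i}\{p-b_i\}$, finite-dimensional, and Corollary~\ref{cor-finite-slash-homology} finishes. The twisting by $\mathtt{f}_\beta$ preserves the filtration structure, so the twisted versions are finite-dimensional too. The same argument, with $\pC_n$ replacing $C_n$ and vertical slash homology replacing vertical homology, handles $\pHHH^{\dif_q}$.

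\textbf{Parts (ii) and (iii): Euler characteristics.} For (ii), I would run the standard decategorification argument: the graded Euler characteristic of $\mtHHH^{\dif_q}(\beta)$ — summing $(-1)^j$ over the topological degree $j$, with $q,a$ tracking the other gradings — equals the graded Euler characteristic of the underlying (non-$p$-DG) triply graded Khovanov--Rozansky homology, since the $H_q$-action and the differential $\dif_q$ do not change the underlying complex of vector spaces. That Euler characteristic is the HOMFLYPT polynomial by the theorem of Khovanov--Rozansky. Twisting by $\mathtt{f}_\beta$ changes only the $H_q$-action, not the $q,a$-graded dimension, so the Euler characteristic of $\mHHH^{\dif_q}(\beta)$ is likewise the HOMFLYPT polynomial. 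For $\pHHH^{\dif_q}(\beta)$, Proposition~\ref{prop-pHH-determined-by-HH} and Remark~\ref{rem-ususal-homology-determines-p-homology} express the $p$-Hochschild homology in terms of the ordinary one; the $p$-extension in both $t$ and $a$ directions, together with the collapse $a=q^2t$, has the effect on Euler characteristics of substituting $a\mapsto q^2$ (and accounting for the $q^{-n}$ shift), which sends the HOMFLYPT polynomial to the (suitably normalized) Jones polynomial — this is exactly the specialization used by Cautis and Queffelec--Rose--Sartori. For (iii), the point is that passing to slash homology categorifies the passage to the Grothendieck ring: by the Lemma on Grothendieck rings, $K_0(\mc{C}(\Bbbk,\dif_q))\cong\Z[q,q^{-1}]/(1+q^2+\cdots+q^{2(p-1)})$, so the Euler characteristic of the slash homology is the image of the ordinary Euler characteristic in the cyclotomic ring, i.e. the HOMFLYPT (resp. Jones) polynomial evaluated at a primitive $p$-th root of unity.

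\textbf{Main obstacle.} The genuinely delicate point is (iii): I must make precise that the Euler characteristic of $\mH^/_\bullet(-)$ as an element of the cyclotomic ring $\mathbb{O}_p$ agrees with the reduction of the ``naive'' Euler characteristic computed from the underlying finite-dimensional bigraded space. This requires knowing that the slash homology functor induces, on the level of $K_0$, the quotient map $\Z[q^{\pm1}]\to\mathbb{O}_p$ — which is the content of the Grothendieck-ring lemma and the decomposition $M\cong\mH^/_\bullet(M)\oplus P(M)$ with $P(M)$ free (equation~\eqref{eqn-decomp-of-M-into-cohomology-and-free}), since free $H_q$-modules have Euler characteristic $1+q^2+\cdots+q^{2(p-1)}=0$ in $\mathbb{O}_p$. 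Once that bookkeeping is set up carefully — keeping the overall grading shifts from Definitions~\ref{def-HHH} and~\ref{def-pHHH} straight and confirming the $a\mapsto q^2$ specialization indeed produces the Jones polynomial in the normalization we want — the rest is assembly of prior results.
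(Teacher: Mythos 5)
Your proposal is correct and follows essentially the same route as the paper: cancellation of the framing twist for link invariance, the finite filtration by rank-one modules $R^f$ together with Corollary \ref{cor-finite-slash-homology} for (i), the Poincar\'e-polynomial specialization for (ii), and the observation that slash homology realizes, on $K_0$, the quotient onto the cyclotomic ring (using the decomposition of $M$ into its slash homology plus a free summand) for (iii). The one detail to correct is the sign in the specialization: since $[1]^t_\dif$ descends to $-1$ in $\mathcal{O}_p$, the collapse $a=q^2t$ evaluates $a$ at $-q^2$ rather than $q^2$, which is the standard substitution taking HOMFLYPT to Jones.
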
  
\begin{proof}
For the first statement, just notice that the twisting of the $p$-DG structure by the framing factor takes care of the Markov II move.

Next, the finite dimensionality of the homology theories follows, by construction, from the fact that ${^{f_{i_1}}B_{i_1}^{g_{i_1}}}\otimes_R \cdots \otimes_R {^{f_{i_k}}B_{i_k}^{g_{i_k}}}$ is an $H_q$-module with $2^k$-step filtration whose subquotients are isomorphic to $R^f$ as left $R\# H_q$-modules, and thus Corollary \ref{cor-finite-slash-homology} applies.

The Poincar\'{e} polynomial of $\mHHH^{\dif_q}(\beta)$, which is independent of the $H_q$-module structure on $\mHHH^{\dif_q}(\beta)$, is well known to be a Laurent polynomial in
$a$ and $t$ (i.e., in $\Z[a^{\pm},t^{\pm}]$), and Laurent series in $q$ (i.e., in
$\Z[q^{-1},q]]$). Specializing $t=-1$ recovers the HOMFLYPT polynomial (see, e.g., \cite{KR2,
KRWitt}). On the other hand, in the construction of $\pHHH^{\dif_q}(\beta)$, we have
categorically specialized the $a$ and $t$ grading shifts according to the relation $a=q^{2}t$,
and then transformed $t$ into $[1]^t_\dif$. The Grothendieck ring of $t$ and $q$ bigraded
$p$-complexes up to homotopy is equal to $\mathcal{O}_p\otimes_{\Z} \Z[q,q^{-1}]$ (c.f.~Corollary \ref{cor-K0-bicomplexes}). In this
ring, $[1]^t_\dif$ descends to $-1\in \mathcal{O}_p$. In turn, the $a$ variable is then
evaluated at $-q^2$. Hence the Poincare polynomial of $\pHHH^{\dif_q}(\beta)$, taking value in
$$\Z[q^{-1},q]]\cong \Z\otimes_\Z \Z[q^{-1}, q]]\subset \mathcal{O}_p \otimes_\Z \Z[q^{-1},q]],$$
is equal to the HOMFLYPT polynomial with $a=-q^2$. This is just the Jones polynomial in a formal variable $q$, and the second part follows.

Finally, taking slash homology of the homology theories has the effect, on the level of Grothendieck groups, of passing from $\Z[q^{-1},q]$ onto $\mathbb{O}_p$ (here we need part (i) showing that both $\mHHH^{\dif_q}(\beta)$ and $\pHHH^{\dif_q}(\beta)$ are quasi-isomorphic to finite-dimensional $H_q$-modules). Therefore, taking slash homology of $\mHHH^{\dif_q}(\beta)$ and $\pHHH^{\dif_q}(\beta)$ is equivalent to categorically specializing $q$ at a primitive $p$th root of unity. This finishes the proof of the corollary.
\end{proof}

\begin{rem}
One of the open problems in the triply graded Khovanov-Rozansky theory is whether the theory is (projectively) functorial with respect to link cobordisms. A fundamental obstruction lies in the fact that the conditions of the (projective) TQFT would require one to assign, to the unknot, a Frobenius algebra that is finite dimensional (or rather, a compact Frobenius algebra object in a triangulated category). Therefore, the slash homology of $\mHHH^{\dif_q}(\beta)$ and $\pHHH^{\dif_q}(\beta)$ serve as candidates of potentially functorial link homology theories.
\end{rem}

\section{A finite-dimensional \texorpdfstring{$\mathfrak{sl}_2$}{sl(2)}-homology theory} \label{sl2section}
From Corollary \ref{cor-HHH-twisted-Euler-characteristic}, one sees that the slash homology of $\pHHH^{\dif_q}(\beta)$ categorifies the Jones polynomial at a root of unity $q$. However, the specialized Jones polynomial, as an element of $\mathbb{O}_p$, sits inside the ambient ring 
\[
\mathcal{O}_p\otimes_{\Z} \mathbb{O}_p\supset \Z\otimes_\Z \mathbb{O}_p \cong \mathbb{O}_p.
\]

Following \cite{Cautisremarks} (see also \cite{RW} and \cite{QRS}), we define a $p$-differential on $\pHH^\dif_\bullet(\beta)$ of a braid $\beta$ as a categorically specialized homology theory of links. The slash homology of this theory bypasses the ambient ring construction, and directly constructs a singly graded finite-dimensional homology theory whose Euler characteristic lives in $\mathbb{O}_p$.

\subsection{A singly graded homology}
Consider the $H_q$-Koszul complex in one variable: 
\begin{equation}\label{eqn-ordinary-Koszul-with-Cautis-d}
  C_1:  0 \lra a q^2 \Bbbk[x]^x\otimes \Bbbk[x]^{x} \stackrel{d_C}{\lra} \Bbbk[x]\otimes \Bbbk[x]\lra 0
\end{equation}
where $d_C$ is the map
$d_C(f)=(x^2\otimes 1+1\otimes x^2)f$.
We regard the differential on the arrow as an endomorphsim of the Koszul complex, of bidegree $(-1,2)$.

\begin{lem}
The commutator of the endomorphisms $d_C$ and $\dif_q\in H_q$ is null-homotopic on the Koszul complex $C_1$.
\end{lem}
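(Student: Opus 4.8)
The plan is to compute both endomorphisms explicitly on the two-term complex $C_1$ and exhibit a homotopy directly. Write $C_1 = (C_1^1 \xrightarrow{d_C} C_1^0)$ with $C_1^1 = aq^2\Bbbk[x]^x\otimes\Bbbk[x]^x$ and $C_1^0 = \Bbbk[x]\otimes\Bbbk[x]$, where $d_C$ is multiplication by $x^2\otimes 1 + 1\otimes x^2$. The internal $p$-differential $\partial_q$ acts on each term by the Leibniz rule with $\partial_q(x) = x^2$, but twisted on $C_1^1$: because of the superscript $x$ on each tensor factor, $\partial_q(1\otimes 1) = (x\otimes 1 + 1\otimes x)(1\otimes 1)$ there, whereas on $C_1^0$ it is the untwisted derivation. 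First I would record these two actions carefully, then compute the commutator $[\partial_q, d_C]$ as a map $C_1^1 \to C_1^0$ (it is an endomorphism of bidegree $(-1,2)+(0,2) = (-1,4)$, concentrated in this one matrix entry since there is nothing in degrees $2$ or $-1$).

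The key computation is $[\partial_q, d_C](1\otimes 1) = \partial_q(d_C(1\otimes1)) - d_C(\partial_q(1\otimes1))$. On the right, $d_C(\partial_q(1\otimes1)) = d_C((x\otimes1+1\otimes x)) = (x^2\otimes1+1\otimes x^2)(x\otimes1+1\otimes x)$. On the left, $\partial_q(d_C(1\otimes1)) = \partial_q(x^2\otimes1+1\otimes x^2)$, and since $\partial_q(x^2) = \partial_q(x)x + x\partial_q(x) = 2x^3$ (characteristic $p>2$, so $2$ is invertible and nonzero), this gives $2x^3\otimes1 + 1\otimes 2x^3$ plus the contribution from the module being $\Bbbk[x]\otimes\Bbbk[x]$ untwisted, i.e.\ nothing extra. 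Expanding and subtracting, the commutator becomes multiplication by some explicit polynomial $g(x\otimes1, 1\otimes x)$; the point is that it factors as $g = (x^2\otimes1 + 1\otimes x^2)\cdot h$ for an explicit $h \in \Bbbk[x]\otimes\Bbbk[x]$, i.e.\ it factors through $d_C$. Then the map $\sigma\colon C_1^0 \to C_1^0$ given by multiplication by $h$ (together with the zero map $C_1^1\to C_1^1$, or rather the relevant degree-shifted component) furnishes the null-homotopy: $[\partial_q,d_C] = d_C\circ 0 + \sigma\circ d_C$ on $C_1^1$, and the other potential component vanishes for degree reasons. More precisely, a null-homotopy of an endomorphism $\varphi$ of a two-term complex consists of a map $s\colon C_1^0 \to C_1^1$ (lowering homological degree by $-1$ the wrong way is impossible, so here we need $s\colon C_1^1 \to C_1^0$ raising degree by $1$... ) — I would set this up as $\varphi = d_C s + s' d_C$ with $s'\colon C_1^0\to C_1^0$, $s\colon C_1^1\to C_1^1$ and check it reduces to the single identity $[\partial_q,d_C]|_{C_1^1} = (s')\circ d_C$ in the one nonzero matrix slot, which is exactly the factorization just found.

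The main obstacle I anticipate is getting the twist bookkeeping exactly right: the commutator only lands in the image of $d_C$ \emph{because} of the superscript twist $\Bbbk[x]^x$ on $C_1^1$ — without it the cross terms would not cancel, mirroring Remark \ref{rem-char-2}'s warning about how twists are forced. So I would double-check the sign and the coefficient ($2$ vs.\ $-2$, and the role of $p>2$) by expanding $(x^2\otimes1+1\otimes x^2)(x\otimes1+1\otimes x) = x^3\otimes1 + x^2\otimes x + x\otimes x^2 + 1\otimes x^3$ against $2x^3\otimes1 + 1\otimes 2x^3$, giving $[\partial_q,d_C](1\otimes1) = x^3\otimes1 - x^2\otimes x - x\otimes x^2 + 1\otimes x^3 = (x\otimes1 - 1\otimes x)(x^2\otimes1 - 1\otimes x^2)/1$, hmm — more carefully $x^3\otimes1 + 1\otimes x^3 - x^2\otimes x - x\otimes x^2 = (x^2\otimes 1)(x\otimes1) - (x^2\otimes1)(1\otimes x) - (1\otimes x^2)(x\otimes1) + (1\otimes x^2)(1\otimes x) = (x^2\otimes1 - 1\otimes x^2)(x\otimes1 - 1\otimes x)$. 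Thus $h = x\otimes1 - 1\otimes x$ (up to sign/twist conventions) and the homotopy $\sigma$ is multiplication by $\pm(x\otimes1-1\otimes x)$. Once this factorization through $d_C$ is verified, the lemma follows immediately, and this is essentially the degree-$2$ specialization of the corresponding statement in \cite{Cautisremarks, RW}; I would remark that it is exactly at this point that the restriction $N=2$ enters, since for a degree-$2N$ differential the analogous commutator need not be a multiple of the Koszul differential.
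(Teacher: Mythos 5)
Your explicit computation of the commutator is correct --- $[\partial_q,d_C](1\otimes 1)=x^3\otimes 1-x^2\otimes x-x\otimes x^2+1\otimes x^3=(x^2\otimes 1-1\otimes x^2)(x\otimes 1-1\otimes x)$ agrees, up to overall sign, with the paper's $-(x\otimes 1+1\otimes x)(x\otimes 1-1\otimes x)^2$ --- but the framework you wrap around it contains a genuine error. The complex $C_1$ in \eqref{eqn-ordinary-Koszul-with-Cautis-d} is the \emph{Koszul} complex: its differential is $x\otimes 1-1\otimes x$, and $d_C$ (multiplication by $x^2\otimes 1+1\otimes x^2$) is an \emph{additional} endomorphism of bidegree $(-1,2)$ living on that arrow. ``Null-homotopic on $C_1$'' therefore means null-homotopic with respect to the Koszul differential, which is exactly what is needed so that the induced actions of $d_C$ and $\partial_q$ commute after passing to Hochschild homology (Corollary \ref{cor-dC-commutes-with-H}). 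You instead declare $d_C$ to be the differential of the complex and seek a factorization of the commutator through $x^2\otimes 1+1\otimes x^2$. No such factorization exists: your own formula shows the commutator is $(x\otimes 1-1\otimes x)^2(x\otimes 1+1\otimes x)$, and in $\Bbbk[x]\otimes\Bbbk[x]$ this is not a multiple of $x^2\otimes 1+1\otimes x^2$ when $p>2$ (comparing coefficients of a putative linear cofactor forces $1=-1$). Identifying the factor $x^2\otimes 1-1\otimes x^2$ that you actually obtained with $d_C=x^2\otimes 1+1\otimes x^2$ ``up to sign/twist conventions'' is not legitimate; these are different elements, and the sign is the whole point.

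The repair is immediate from what you already have: read your factorization the other way around. Since $(x^2\otimes 1-1\otimes x^2)(x\otimes 1-1\otimes x)=(x\otimes 1-1\otimes x)\cdot(x^2\otimes 1-1\otimes x^2)$, the commutator equals the Koszul differential composed with $h$, where $h$ is multiplication by $\pm(x^2\otimes 1-1\otimes x^2)$ on the degree-one term $\Bbbk[x]^x\otimes\Bbbk[x]^x$ and zero elsewhere; this is a bimodule map and is precisely the homotopy the paper exhibits. Your proposed homotopy, multiplication by $\pm(x\otimes 1-1\otimes x)$, does not work: composed with $x\otimes 1-1\otimes x$ it gives a degree-$2$ element, and composed with $d_C$ it gives $x^3\otimes 1-x^2\otimes x+x\otimes x^2-1\otimes x^3$, neither of which is the commutator.
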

\begin{proof}
The commutator map $\phi:=[d_C,\dif_q]$ is given by
\[
\xymatrix{
 & 0 \ar[rr] && \Bbbk[x]^x\otimes \Bbbk[x]^{x} \ar[rr]^{x\otimes 1-1\otimes x} \ar[d]^{\phi}&& \Bbbk[x]\otimes \Bbbk[x] \ar[r] & 0 &\\
0\ar[r] &\Bbbk[x]^x\otimes \Bbbk[x]^{x} \ar[rr]^{x\otimes 1-1\otimes x} && \Bbbk[x]\otimes \Bbbk[x] \ar[rr] && 0  & 
}
\]
where $\phi$ sends the bimodule generator $1\otimes 1 \in \Bbbk[x]^x\otimes\Bbbk[x]^{x}$ to
\begin{align*}
    \phi(1\otimes 1) & = d_C(\dif_q(1\otimes 1)) -\dif_q d_C(1\otimes 1) =d_C(x\otimes 1+1\otimes x)-\dif_q(x^2\otimes 1+1\otimes x^2)\\
     & = (x\otimes 1+1\otimes x)(x^2\otimes 1+1\otimes x^2)-2(x^3\otimes 1+1\otimes x^3) \\
     & = (x\otimes 1+1\otimes x)(x^2\otimes 1+1\otimes x^2)-2(x\otimes 1+1\otimes x)(x^2\otimes 1-x\otimes x+1\otimes x^2)\\
     & = (x\otimes 1+1\otimes x)(-x^2\otimes 1+2x\otimes x-1\otimes x^2)\\
     & =-(x\otimes 1+1\otimes x)(x\otimes 1-1\otimes x)^2.
\end{align*}
We may thus choose a null-homotopy to be
\[
\xymatrix{
 & 0 \ar[rr] && \Bbbk[x]^x\otimes \Bbbk[x]^{x}\ar[dll]_h \ar[rr]^{x\otimes 1-1\otimes x} \ar[d]^{\phi}&& \Bbbk[x]\otimes \Bbbk[x] \ar[r] & 0 &\\
0\ar[r] &\Bbbk[x]^x\otimes \Bbbk[x]^{x} \ar[rr]^{x\otimes 1-1\otimes x} && \Bbbk[x]\otimes \Bbbk[x] \ar[rr] && 0  & 
}
\]
where $h$ is given by multiplication by the element $1\otimes x^2-x^2\otimes 1$, and acts on the rest of the complex by zero. The result follows.
\end{proof}

The Koszul complex $C_n$ inherits the endomorphism $d_C$ by forming the $n$-fold tensor product from the one-variable case. It follows, that for a given $p$-DG bimodule $M$ over $R_n$, there is an induced differential, still denoted $d_C$, given via the identification
\begin{equation}
    \mHH_\bullet^{\dif_q}(M)\cong \mH_\bullet (M\otimes_{R_n^{\rm en}} C_n),
\end{equation}
where the induced differential acts on the right-hand side by $\Id_M \otimes d_C$. By construction, $d_C$ has Hochschild degree $-1$ and $q$-degree $2$.

Lemma \ref{lemma-acylicity-commutator-relation} immediately implies the following.

\begin{cor}\label{cor-dC-commutes-with-H}
The induced differential $d_C$ on $\mHH^{\dif_q}_\bullet(M)$ commutes with the $H_q$-action.\hfill$\square$
\end{cor}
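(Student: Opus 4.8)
The plan is to deduce the statement from the null-homotopy of the preceding lemma, by lifting that identity from the one-variable Koszul complex $C_1$ to $C_n$ and then tensoring with $M$. Recall that $\mHH_\bullet^{\dif_q}(M)$ is computed as $\mH_\bullet(M\otimes_{(R_n,R_n)}C_n)$, with homology taken with respect to the Koszul differential $d$, and that both the Cautis endomorphism $d_C$ and the generator $\dif_q\in H_q$ act on $C_n=C_1^{\otimes n}$ as derivations (for $\dif_q$ this is the coproduct of $H_q$), each graded-commuting with $d$ --- which is exactly what makes them descend to operators on $\mHH_\bullet^{\dif_q}(M)$. Since $H_q$ is generated as a $\Bbbk$-algebra by $\dif_q$, it suffices to prove that the induced operators $d_C$ and $\dif_q$ commute on $\mHH_\bullet^{\dif_q}(M)$.

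First I would propagate the one-variable identity to $C_n$. The preceding lemma produces an $(R_1,R_1)$-bimodule endomorphism $h$ of $C_1$ --- multiplication by $1\otimes x^2-x^2\otimes 1$ on the top term, zero elsewhere --- satisfying $[d_C,\dif_q]=[d,h]$ on $C_1$. Writing $d=\sum_i d^{(i)}$, $d_C=\sum_i d_C^{(i)}$, $\dif_q=\sum_i\dif_q^{(i)}$ and $h_n=\sum_i h^{(i)}$ for the corresponding sums of operators supported on a single tensor slot of $C_n$, one notes that operators supported on distinct slots commute (the pertinent Koszul signs being trivial, as $\dif_q$ and $h$ have homological degree zero), so all cross terms cancel and $[d_C,\dif_q]=\sum_i[d_C^{(i)},\dif_q^{(i)}]=\sum_i[d^{(i)},h^{(i)}]=[d,h_n]$ as endomorphisms of $C_n$. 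This is the only step with genuine bookkeeping; everything downstream is formal.

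Next I would tensor with $M$ over $(R_n,R_n)$. On the complex $(M\otimes_{(R_n,R_n)}C_n,\,\Id_M\otimes d)$, the operator $d_C$ acts as $\Id_M\otimes d_C$ and $\dif_q$ acts through the coproduct; a one-line computation shows that the $\dif_M$-contributions to the commutator cancel, so that the commutator of $d_C$ and $\dif_q$ on $M\otimes_{(R_n,R_n)}C_n$ equals $\Id_M\otimes[d_C,\dif_q]=[\,\Id_M\otimes d,\,\Id_M\otimes h_n\,]$, with $\Id_M\otimes d$ the very differential whose homology is $\mHH_\bullet^{\dif_q}(M)$. This endomorphism is thus null-homotopic (cf.\ Lemma \ref{lemma-acylicity-commutator-relation}), hence induces the zero map on $\mHH_\bullet^{\dif_q}(M)$. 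Therefore $[d_C,\dif_q]=0$ on $\mHH_\bullet^{\dif_q}(M)$, and since $\dif_q$ generates $H_q$, the differential $d_C$ commutes with the entire $H_q$-action.

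The only real obstacle is the middle step: checking $[d_C,\dif_q]=[d,h_n]$ on $C_n$ from its one-variable counterpart, with the correct signs and with all operators ($d$, $d_C$, $h_n$, $\dif_q$) genuinely respecting the $(R_n,R_n)$-bimodule structure so that the argument typechecks after tensoring with $M$. Once that is settled, the conclusion is immediate, since null-homotopic chain endomorphisms act as zero on homology and $M\otimes_{(R_n,R_n)}(-)$ sends null-homotopies to null-homotopies.
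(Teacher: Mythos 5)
Your proposal is correct and follows the same route the paper intends: the corollary is stated with no proof beyond the preceding lemma, precisely because the one-variable null-homotopy $h$ extends slot-by-slot to $C_n$ and survives $M\otimes_{(R_n,R_n)}(-)$ (all of $d$, $d_C$, $h$ being bimodule maps, and the $\dif_M$-terms cancelling in the commutator), so $[d_C,\dif_q]$ is null-homotopic on $M\otimes_{(R_n,R_n)}C_n$ and hence zero on homology. You have simply made explicit the bookkeeping the authors leave implicit.
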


\begin{rem}
The differential, first observed by Cautis \cite{Cautisremarks}, has the following more algebro-geometric meaning. Identifying $\mHH^1(R_n)$ as vector fields on $\mathrm{Spec}(R_n)=\mathbb{A}^n$, $\mHH^1(R_n)$ acts as differential operators on $\mHH_\bullet(M)$ for any $R_n$-bimodule $M$, regarded as a coherent sheaf on $\mathbb{A}^n\times \mathbb{A}^n \cong T^* (\mathbb{A}^n)$. Under this identification, $d_C$ is given by, up to scaling by a nonzero number, contraction with the vector field
\[
\zeta_C:=\sum_{i=1}^n x_i^2\frac{\dif}{\dif x_i}.
\]
On the other hand, this is also the vector field that defines the $p$-DG structure on $R_n$ by derivation. Therefore, via the Gerstenhaber module structure on $\mHH_\bullet(M)$, the two actions naturally commute with each other on $\mHH_\bullet(M)$.

In a more general context, Hochschild homology is a Gerstenhaber module over Hochschild cohomology viewed
as a Gerstenhaber algebra.  We may view $d_C$ and $\dif_q$ as the same element $\zeta$ in Hochschild
cohomology but the element $d_C$ acts on homology via cap product $\zeta \cap \bullet $ and the element
$\dif_q$ acts via a Lie algebra action $\mathcal{L}_{\zeta}(\bullet)$.  The compatibility of these actions
is given by the equation
\[
\zeta \cap \mathcal{L}_{\zeta}(x)=[\zeta,\zeta] \cap x + \mathcal{L}_{\zeta}(\zeta \cap x).
\]
Since $[\zeta,\zeta]=0$, these actions commute.
\end{rem}

Now we are ready to introduce a collapsed $p$-homology theory of a braid closure. Let $\beta\in \mathrm{Br}_n$ be an $n$-stranded braid. We have associated to $\beta$ a usual chain complex of $H_q$-equivariant Soergel bimodules $T_\beta$ as in equation \eqref{eqn-chain-complex-for-braid}, of which we take $\pHH^{\dif_q}_\bullet$ for each term: 
\begin{equation}\label{eqn-collapsed-diagram-1}
\begin{gathered}
   \xymatrix{ & \vdots & \vdots & \vdots & \\
 \dots \ar[r]^-{\dif_t} & \pHH_i^{\dif_q}(\pT_\beta^{k+1}) \ar[u]^{\dif_C} \ar[r]^-{\dif_t} & \pHH_i^{\dif_q}(\pT_\beta^{k})\ar[u]^{\dif_C}  \ar[r]^-{\dif_t} & \pHH_i^{\dif_q}(\pT_\beta^{k-1}) \ar[u]^{\dif_C} \ar[r]^-{\dif_t}  &\dots \\
 \dots \ar[r]^-{\dif_t} & \pHH_{i+1}^{\dif_q}(\pT_\beta^{k+1})  \ar[r]^{\dif_t} \ar[u]^{\dif_C} & \pHH_{i+1}^{\dif_q}(\pT_\beta^{k}) \ar[u]^{\dif_C} \ar[r]^-{\dif_t} & \pHH_{i+1}^{\dif_q}(\pT_\beta^{k-1}) \ar[u]^{\dif_C} \ar[r]^-{\dif_t} &\dots \\
              & \vdots \ar[u]^{\dif_C} & \vdots \ar[u]^-{\dif_C} & \vdots \ar[u]^{\dif_C} &
   } 
   \end{gathered}
\end{equation}
Here, $\dif_C$ is a $p$-differential arising from $d_C$ as follows. By Proposition \ref{prop-pHH-determined-by-HH}, the $p$-Hochschild homology groups in a column above are identified with the terms in 
\begin{equation}\label{eqn-pH-construction}
\begin{gathered}
\xymatrix@C=1.5em{ 
\cdots \ar[r] & \mHH_{2i+1}^{\dif_q}(pT_\beta^k) \ar@{=}[r] & \cdots \ar@{=}[r] & \mHH_{2i+1}^{\dif_q}(pT_\beta^k) \ar[r]^{d_C} & \mHH_{2i}^{\dif_q}(pT_\beta^k) \ar `r[rd] `_l `[lllld]^{d_C} `[d][llld] %`[d] [ld]^{d_C}
&
\\
&  \mHH_{2i-1}^{\dif_q}(pT_\beta^k) \ar@{=}[r] & \cdots\ar@{=}[r] & \mHH_{2i-1}^{\dif_q}(pT_\beta^k) \ar[r]^-{d_C} & \cdots  & 
}
\end{gathered},
\end{equation}
where each term in odd Hochschild degree is repeated $p-1$ times.  Here the horizontal differential is the 
$p$-Hochschild induced map of the topological differential, which we have denoted by $\dif_t$ to indicate its 
origin. On the arrows connecting even and odd Hochschild degree terms, we put the map $d_C$ while keeping the
repeated terms connected by identity maps. This defines a $p$-complex structure, denoted $\dif_C$,  in each column 
in diagram \eqref{eqn-collapsed-diagram-1}. The $p$-differential $\dif_C$ commutes with the $H_q$-action on each 
term by Corollary \ref{cor-dC-commutes-with-H}. It follows that, applying the totalization construction $\mc{T}$ of
Lemma \ref{lem-totalization-functor}, we obtain a bigraded $p$-bicomplex of $H_q$-modules, with a horizontal 
(topological) $p$-differential $\dif_t$, a vertical $p$-differential $\dif_C$ and internal $p$-differential 
$\dif_q$. Denote the total $p$-differential $\dif_T:= \dif_t+\dif_C+\dif_q$, which collapses the double grading 
into a single $q$-grading.

\begin{defn} \label{pdgjonesdef} 
Let $\beta$ be an $n$ stranded braid. The \emph{untwisted $\mathfrak{sl}_2$ 
$p$-homology} of $\beta$ is the slash homology group
\[
p\widehat{\mH}(\beta):=
q^{-n}\mH^/_{\bullet}(\pHH^{\dif_q}_\bullet(pT_\beta),\dif_T),
\]
viewed as an object in $\mc{C}(\Bbbk,\dif_q)$.
\end{defn} 

The homology group $p\widehat{\mH}(\beta)$ is only singly graded as an object in $\mc{C}(\Bbbk,\dif_q)$. By 
construction, $p\widehat{\mH}(\beta)$ is the slash homology with respect to the $\dif_T$ action on $\oplus_{i,j} 
\pHH_i^{\dif_q}(\pT_\beta^j)$ (see diagram \eqref{eqn-pH-construction}). The latter space is doubly graded by the 
topological degree and $q$-degree with values in $\Z\times \Z$ (the Hochschild $a$ degree is already forced to be 
collapsed with the $q$ degree to make the Cautis differential $\dif_C$ homogeneous).  However, as in the proof of 
Corollary \ref{cor-Markov-II-for-pHHH}, the Markov II invariance for the homology theory requires one to collapse 
the $t$-grading onto the $a$-grading, thus also onto the $q$-grading.  We will use $p\widehat{\mH}_{i}(\beta)$ to 
stand for the homogeneous subspace sitting in some $q$-degree $i$. 

Let us also emphasize an important point about the vertical grading collapsing as the following remark.

\begin{rem}
A special remark is needed here about the grading specialization. In order to $p$-extend the Koszul complex \eqref{eqn-ordinary-Koszul-with-Cautis-d} into a $p$-Koszul complex with $\dif_C$ of degree two, we are forced to make the functor specialization from $[1]^a_d=a$ into $q^2[1]^q_\dif$, so that the $p$-extended complex looks like
\begin{equation}\label{eqn-pC1-with-q-shift}
     pC_1:~  0 \lra  q^4 \Bbbk[x]^x\otimes \Bbbk[x]^{x}[1]^q_\dif \stackrel{d_C}{\lra} \Bbbk[x]\otimes \Bbbk[x]\lra 0.
\end{equation}
Forming iterated tensor products of $pC_1$ determines the correct vertical $q$-degree shifts in each column of diagram \eqref{eqn-collapsed-diagram-1} of the $p$-Hochschild homology groups. 

Notice that, on the level of Grothendieck groups, this has the effect of specializing the formal variable $a$ into $-q^2$.
\end{rem}

When $[1]^t_{\dif}=[1]^q_{\dif}$ and $a=q^2[1]_{\dif}^q$, the grading shifts in equation \eqref{eqn-elementary-braids-p-ext} translate
into
\begin{equation}\label{eqn-elementary-braid-q-ext}
pT_i :=
 q^{-3} \left(B_i  \xrightarrow{br_i} R[-1]^q_{\dif}\right)
,
\quad \quad \quad
pT_i' :=  q^{3}\left(R[1]^q_\dif \xrightarrow{rb_i} q^{-2}  B_i^{-(x_i+x_{i+1})}\right)
.
\end{equation}
This also explains the necessity of $p$-extension in the collapsed $t$ and $a$ direction in $\pHHH$ in the previous section: The homological shift in that direction needs to be $p$-extended to agree with the homological shift in the $q$-direction.

Furthermore, the bigrading in diagram \eqref{eqn-collapsed-diagram-1} is now interpreted as a single grading, with both $\dif_C$ and $\dif_t$ raising $q$-degree by two.

This approach to a categorification of the Jones polynomial, at generic values of $q$, was first developed by Cautis \cite{Cautisremarks}.
We follow the exposition of Robert and Wagner from \cite{RW} and the closely related approach of Queffelec, Rose, and Sartori \cite{QRS}.

\subsection{Framed topological invariance}
In this subsection, we establish the topological invariance of the untwisted homology theory.

\begin{thm}\label{thm-untwisted-sl2-homology}
The homology $p\widehat{\mH}(\beta)$ is a finite-dimensional framed link invariant depending only on the braid closure of $\beta$.
\end{thm}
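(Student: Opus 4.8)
The plan is to establish $p\widehat{\mH}(\beta)$ as a framed link invariant by verifying invariance under the Markov moves, mirroring the structure of the proofs of Theorems~\ref{thm-untwisted-HOMFLY-p-ext} and the already-established analysis for $\ptHHH^{\dif_q}$. First I would reduce to the two Markov moves: topological invariance of the underlying chain complex $T_\beta$ (equivalently $pT_\beta$) up to homotopy is Theorem~\ref{thm-braid-invariant}, and since $\pHH^{\dif_q}_\bullet$, the Cautis differential $\dif_C$, and the totalization functor $\mc{T}$ are all functorial and homotopy-invariant, braid-equivalent words give $p$-homotopy-equivalent totalizations, hence isomorphic $p\widehat{\mH}(\beta)$. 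Markov I (conjugation invariance $\beta_1\beta_2 \sim \beta_2\beta_1$) follows from the trace-like property of relative $p$-Hochschild homology, Proposition~\ref{HHrelativecyclprop}, exactly as in Section~\ref{secmarkov1}; one only needs to observe that the cyclic rotation isomorphism there is compatible with the extra differential $\dif_C$, which it is, since $\dif_C$ is induced by cap product with the Hochschild cohomology class $\zeta_C = \sum_i x_i^2 \partial/\partial x_i$ and such cap products commute with the cyclic symmetry of the bar complex.

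The substance is Markov II. Here I would follow Corollary~\ref{cor-Markov-II-for-pHHH} but carry the Cautis differential $\dif_C$ through the entire argument. Concretely, for a $p$-complex of Soergel bimodules $M$ over $R_n$, I would reanalyze $pC_1^\prime \otimes_{(\Bbbk[x_{n+1}],\Bbbk[x_{n+1}])} pT_n$ (and the primed version) using the $p$-extended Koszul complex $pC_1$ now equipped with the Cautis endomorphism $d_C$ of bidegree $(-1,2)$ from \eqref{eqn-ordinary-Koszul-with-Cautis-d}. The short exact sequences of bicomplexes \eqref{sesbicomplexespHHH} and \eqref{sesbicomplexes2pHHH} get upgraded to short exact sequences of \emph{tricomplexes} (topological $\dif_t$, Cautis $\dif_C$, internal $\dif_q$). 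The key points to check are: (a) all the structural maps $\phi$, $\tilde{br}$, $br$, $rb$, $\theta$ appearing in those diagrams commute with $d_C$ as well as with $\partial_q$ — this is the main new computation, and it should go through because $d_C$ is, like $\partial_q$, contraction with a derivation and the relevant maps are multiplications by (skew-)symmetric functions or bimodule multiplication maps; (b) the pieces $pY_2$, $pZ_1$ (resp.\ their analogues) remain acyclic once $\dif_C$ is incorporated, so they still do not contribute; and (c) the ``surviving'' summand $q R_n^{2x_n}$ (resp.\ $q R_n^{-2x_n}$) in $t$-degree zero acquires the correct $\dif_C$ action, which on $R_n$-bimodule Hochschild homology is multiplication by $x_n^2$-type terms absorbed into the twisting — so that after totalization one recovers $p\widehat{\mH}(\beta)^{2x_n}$ (resp.\ $p\widehat{\mH}(\beta)^{-2x_n}$), and the framing factor $\mathtt{f}_\beta$ of Definition~\ref{def-twisted-HHH} compensates exactly as before.

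I expect the main obstacle to be the compatibility of $\dif_C$ with $\dif_t$ and $\dif_q$ simultaneously on the totalization, i.e.\ checking that $\dif_T = \dif_t + \dif_C + \dif_q$ really squares (in the $p$-sense) to zero on $\pHH^{\dif_q}_\bullet(pT_\beta)$ and that the Markov II splittings are compatible with all three. The restriction to $N=2$ is essential precisely here: the Lemma preceding Corollary~\ref{cor-dC-commutes-with-H} shows $[\dif_C,\dif_q]$ is null-homotopic on the one-variable Koszul complex, and $[\dif_C,\dif_t]=0$ holds because $\dif_C$ is built columnwise while $\dif_t$ is horizontal and the two commute termwise on $\mHH_\bullet$; but the null-homotopy for $[\dif_C,\dif_q]$ must be tracked through the $p$-extension and totalization to ensure $\dif_T^{\,p}$ is genuinely null-homotopic, not merely that the naive square vanishes. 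Once this bookkeeping is in place, finite-dimensionality follows from Corollary~\ref{cor-finite-slash-homology} (the relevant bimodules have finite filtrations with subquotients $R^f$), and the proof concludes just as in Corollary~\ref{cor-HHH-twisted-Euler-characteristic}.
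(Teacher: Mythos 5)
Your overall strategy is the paper's: reduce to the Markov moves, rerun the Markov~II analysis of Corollary~\ref{cor-Markov-II-for-pHHH} with the Cautis differential carried along, and get finite-dimensionality from Corollary~\ref{cor-finite-slash-homology}. Two of your checkpoints, though, are handled differently in the paper, and one of them as you state it would fail. First, the worry about tracking the null-homotopy of $[\dif_C,\dif_q]$ through the totalization is moot: $\dif_C$ is only ever defined on the ($p$-)Hochschild homology groups, where the null-homotopy on the Koszul complex forces the commutator to vanish on the nose (Corollary~\ref{cor-dC-commutes-with-H}); there is no residual higher coherence to verify for $\dif_T^p$.

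The step that would fail is ``the short exact sequences of bicomplexes \eqref{sesbicomplexespHHH} and \eqref{sesbicomplexes2pHHH} get upgraded to short exact sequences of tricomplexes,'' together with the claim that the splitting $\theta$ commutes with $d_C$. The subobject $pY_1$ is concentrated in Hochschild degree $1$, and $d_C$ \emph{lowers} Hochschild degree by one, so $d_C$ pushes $pY_1$ out of itself into the Hochschild-degree-zero part, which lives in $pY_2$; the sub/quotient structure is simply not $d_C$-stable in the stated order, and the bimodule splitting cannot be $d_C$-equivariant. The paper's fix is a degree argument made \emph{after} taking vertical (Hochschild) homology: the direct sum decomposition of $\pHH_\bullet^{\dif_q}$ degrades to a filtration of $\Bbbk[d_C]/(d_C^2)$-modules with the order reversed (equation \eqref{eqn-dc-filtration}), the $Y_2$-homology now being the $\dif_C$-submodule and the $Y_1$-homology the quotient, each with the induced $\dif_C$ acting trivially for degree reasons. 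From there your conclusions (b) and (c) are correct: the $pY_2'$ piece is the cone of an identity map on which $\dif_T$ acts only through $\dif_t+\dif_q$, hence dies in slash homology, and the $pY_1'$ quotient survives with the twist $2x_n$, which the framing factor absorbs. So the proposal is the right plan with the right endpoints, but the mechanism for making the Markov~II decomposition interact with $\dif_C$ needs to be the filtration-by-Hochschild-degree argument rather than an equivariant splitting.
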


\begin{proof}
The proof of the theorem will mostly be parallel to that of Proposition \ref{prop-Markov-II-for-HHH} and Corollary \ref{cor-Markov-II-for-pHHH}. It amounts to showing that taking slash homology of $\pHH_\bullet^{\dif}(\beta)$ with respect to $\dif_T$ satisfies the Markov II move.

We start by discussing the normal $H_q$-equivariant Hochschild homology version. Let $L$ be a link in $\R^3$ obtained as a braid closure $\widehat{\beta}$, where $\beta\in \mathrm{Br}_n$ is an $n$-stranded braid. Recall that the homology groups $\mHH^\dif_\bullet(L)$ are defined by tensoring a complex of Soergel bimodules $M$ determined by $\beta$ with the Koszul complex $C_n$ and computing its termwise vertical (Hochschild) homology.
The differential $d_C$ is defined on the Koszul complex $C_n$. To emphasize its dependence on $n$, we will write $d_C$ on $C_n$ as $d_n$ in this proof, and likewise write $\dif_n$ for the $p$-extended differential on $\pC_n$.

Since
\begin{equation*}
C_{n+1}=C_n\otimes C_1^\prime=C_n \otimes \Bbbk[x_{n+1}] \otimes \Lambda \langle dx_{n+1} \rangle \otimes \Bbbk[x_{n+1}],
\end{equation*}
the vertical differential may be inductively defined as
\begin{equation} \label{Drecursive}
d_{n+1}=d_n \otimes\mathrm{Id} +
\mathrm{Id}\otimes d_1^{\prime}.
\end{equation}
Here we have  set
$C_1^\prime=\Bbbk[x_{n+1}] \otimes \Lambda \langle dx_{n+1} \rangle \otimes \Bbbk[x_{n+1}] $ equipped with part of the Cautis differential
\[
{d}^\prime_{1}:=x_{n+1}^2 \otimes \iota_{\frac{\partial}{\partial x_{n+1}} }\otimes 1+ 1 \otimes \iota_{\frac{\partial}{\partial x_{n+1}} }\otimes x_{n+1}^2
\ .
\]
The notation $\iota$ denotes the contraction of $dx_{n+1}$ with $\frac{\partial}{\partial x_{n+1}}$ .

From the proof of Proposition \ref{prop-Markov-II-for-HHH} (see equation \eqref{eqn-split-exact-sequence}), we have a vector space decomposition
\[
 \mHH_\bullet^{\dif_q}((M\otimes \Bbbk[x_{n+1}]) \otimes_{R_{n+1}} T_n) \cong
\mH_\bullet^v(C_n \otimes_{R_n^{\rm en}} (M \otimes_{R_n} Y_1)) \oplus
\mH_\bullet^v(C_n \otimes_{R_n^{\rm en}} (M \otimes_{R_n} Y_2)).
\]
Here $Y_1$ and $Y_2$ are the terms of $C_1^\prime\otimes_{\Bbbk[x_{n+1}]^{\rm en}} T_n$ (see equation \eqref{sesbicomplexes} for the definition). 
We claim that, instead of a direct sum decomposition,  we obtain a filtration of $ \mHH_\bullet^{\dif_q}((M\otimes \Bbbk[x_{n+1}]) \otimes_{R_{n+1}} T_n) $ as a module over $\Bbbk[d_C]/(d_C^2)$:
\begin{equation}\label{eqn-dc-filtration}
0\rightarrow
\mH_\bullet^v(C_n \otimes_{R_n^{\rm en}} (M \otimes_{R_n} Y_2))
\rightarrow
 \mHH_\bullet^{\dif_q}((M\otimes \Bbbk[x_{n+1}]) \otimes_{R_{n+1}} T_n) 
\rightarrow
\mH_\bullet^v(C_n \otimes_{R_n^{\rm en}} (M \otimes_{R_n} Y_1)) 
\rightarrow 0.
\end{equation}
Indeed, since $d_C$ acts on the $Y_1$ and $Y_2$ tensor factors via $d_1^\prime$, it suffices to check that $d_1^\prime$ preserves the submodule arising from $Y_2$ and presents the part arising from $Y_1$ as a quotient. To do this, we reexamine the sequence \eqref{sesbicomplexes} under vertical (Hochschild) homology. The part $Y_2$, under vertical homotopy equivalence, contributes to the horizontal (topological) complex (see equation \eqref{bicomplexmor})
\begin{subequations}
\begin{equation}
    {Y}^\prime_2:=\left(R_{n+1}[1]^t_d \xrightarrow{d_t=\mathrm{Id}} R_{n+1}\right)
\end{equation}
sitting entirely in Hochschild degree $0$. Likewise, the part $Y_1$ contributes to the horizontal
\begin{equation}
   {Y}^\prime_1:= \left(R_{n+1}^{x_n+3x_{n+1}}[1]^t_d [1]^a_d  \xrightarrow{d_t=2(x_{n+1}-x_n)}R_{n+1}^{2x_{n+1}}[1]^a_d \right)
\end{equation}
\end{subequations}
sitting entirely in Hochschild degree $1$.
Since $d_1^{\prime}$ decreases Hochschild degree by one, $ {Y}^\prime_1 $ must be preserved under $d_1^\prime$, acting upon it trivially, and $  {Y}^\prime_2 $ is equipped with the (zero) quotient action by $d_1^\prime$. 

Similar behavior happens under $p$-extension and degree collapsing with respect to the $\dif_C$ action (c.f.~the proof and notation of Corollary \ref{cor-Markov-II-for-pHHH}). Write  $\dif_1^\prime$ as the $p$-extended differential of $d_1^\prime$. Consider the degree collapsed diagram obtained from tensoring equation \eqref{eqn-elementary-braid-q-ext} with equation \eqref{eqn-pC1-with-q-shift} while taking vertical slash homology:
\begin{equation} \label{eqncollapsedsquare}
\begin{gathered}
\xymatrix@R=1em@C=1.4em{
q({}^{x_{n+1}}B_n^{x_{n+1}})[1]_\dif^q \ar[rrr]^{br} \ar[dd]^{x_{n+1} \otimes 1 - 1 \otimes x_{n+1}} & & &  qR_{n+1}^{2x_{n+1}} \ar[dd]^{0} &  & \\
& & &  & = &
\pC_1^\prime \otimes_{\Bbbk[x_{n+1}]^{\rm en}} \pT_n \\
 q^{-3} B_n \ar[rrr]^{br} & & &  q^{-3} R_{n+1} [-1]_\dif^q
}
\end{gathered} \ .
\end{equation}

After taking (vertical) $p$-Hochschild homology, the part of the term $pY_2$ that is not killed arises from 
\begin{subequations}
\begin{equation}\label{eqn-pY2-cone}
    p{Y}^\prime_2:=q^{-3}\left(R_{n+1} \xrightarrow{\mathrm{Id}} R_{n+1}[-1]^q_\dif \right)
\end{equation}
sitting entirely horizontally inside the lower horizontal arrow of \eqref{eqncollapsedsquare}. On the other hand, the part $pY_1$ contributes to the horizontal
\begin{equation}
   p{Y}^\prime_1:= q^3R_{n+1}^{x_n+3x_{n+1}}[1]^q_\dif  \xrightarrow{2(x_{n+1}-x_n)} q R_{n+1}^{2x_{n+1}}
\end{equation}
\end{subequations}
sitting in the top horizontal line of the square \eqref{eqncollapsedsquare}. Since $\dif_1^\prime$ acts vertically down, upon taking the slash homology with respect to $\dif_T$, it now follows that the term arising from $pY_2^\prime$, on which $\dif_T$ acts just via $\dif_t+\dif_q$, contributes nothing to the total slash homology, as this term is the cone of the identity map in the homotopy category of $p$-complexes. 

Now, translating the exact sequence \eqref{eqn-dc-filtration} as in the final step in the proof of Corollary \ref{cor-Markov-II-for-pHHH}, we obtain that
\[
\mH_\bullet^/( \pHH_\bullet (( M\otimes \Bbbk[x_{n+1}])\otimes_{R_{n+1}} T_n ),\dif_T)
\cong \mH_\bullet^/(\pHH_\bullet( M \otimes_{R_n} pY_1^\prime),\dif_T)
\cong q\mH_\bullet^/(\pHH_\bullet(M),\dif_T)^{2x_n}.
\]
The $q$ factor is cancelled out in the overall shift of $p\widehat{\mH}$. This finishes the first part of Markov II move.

The other case of the Markov II move is entirely similar, which we leave the reader as an exercise.

Finally, the finite dimensionality of $p\widehat{\mH}(\beta)$ follows from Corollary \ref{cor-finite-slash-homology}. The theorem follows.
\end{proof}

\begin{rem}
Note that there are more general super differentials $d_N$ considered in \cite{Cautisremarks, RW, QRS} which gives rise to an $\mathfrak{sl}_N$ link homology for general $N$ when the quantum parameter $q$ is assumed to be generic.  It is not clear how to recover general $\mathfrak{sl}_N$ link homologies when $q$ is a root of unity because $d_N$ and $\partial_q$ only commute for $N=2$.
\end{rem}

\subsection{Categorical Jones number}
For the unknot, the $p$-Hochschild homology of $\Bbbk[x]$ (see Section \ref{subsecHOMFLYunlink}) but now with the Cautis differential included, is given by 
%\begin{equation}\label{eqn-unknot-sl2}
%\xymatrix{
%q^2a^{p-1}\Bbbk[x]^{2x} \ar@{=}[r] & \cdots \ar@{=}[r] & q^2a\Bbbk[x]^{2x} \ar[r]^-{x^2} & \Bbbk[x]
%}    
%\end{equation}
\begin{equation}\label{eqn-unknot-sl2}
\xymatrix{
q^{2(3-p)}\Bbbk[x]^{2x} \ar@{=}[r] & \cdots \ar@{=}[r] & q^2\Bbbk[x]^{2x} \ar[r]^-{x^2} & \Bbbk[x]
}    
\end{equation}
whose slash homology is equal to $\Bbbk[x]/(x^2)$ equipped with the trivial $p$-differential. 
Likewise, using the $p$-extended Koszul complex $\pC_k$ one obtains, for the $k$-component unlink $L_0$, that
\begin{equation}\label{eqn-unlink-sl2}
p\widehat{\mH}(L_0)=\bigotimes_{i=1}^k \dfrac{\Bbbk[x_i]}{(x_i^2)},
\end{equation}
with the zero $p$-differential. Let us call this $p$-DG algebra $A_k$.

However, to correct the twisting factor as we have done for $\mtHHH$ and $\ptHHH$ in Section \ref{subsecHOMFLYunlink}, it is a bit more subtle. As the following example would show.

\begin{example}
Consider the two rank-one $p$-DG modules over $\Bbbk[x]$, where $\dif(x)=x^2$, $\Bbbk[x]$ and $\Bbbk[x]^x$. It is clear that twisting the second module by $\Bbbk[x]^{-x}$ results in an isomorphism
\[
\Bbbk[x]\cong \Bbbk[x]^x\otimes_{\Bbbk[x]}\Bbbk[x]^{-x}.
\]
However, this can not be done after taking slash homology of $\Bbbk[x]$ and $\Bbbk[x]^x$, as the first module is quasi-isomorphic to the ground field, while the second module is acyclic.
\end{example}

We therefore need to introduce a $p$-differential twisting to correct the framing factor occurring in Theorem \ref{thm-untwisted-sl2-homology} slightly differently from what we have done in Definition \ref{def-twisted-HHH}.
For a braid $\beta\in \mathrm{Br}_n $ whose closure is a $k$-stranded framed link. Choose\footnote{Again, the independence of choices can be proven as in the usual triply graded homology case. See, for instance, \cite[Theorem 1.3]{KRWitt}.} for each framed component of $\widehat{\beta}$ in $\beta$ a single strand in $\beta$ that lies in that component after closure, say, the $i_r$th strand is chosen for the $r$th component. Then define the polynomial ring $\Bbbk[x_{i_1},\dots, x_{i_k}]$ as a subring of $\Bbbk[x_1,\dots, x_n]$ generated by the chosen variables. Set
\begin{equation}
    \Bbbk[x_{i_1},\dots, x_{i_k}]^{\mathtt{f}_\beta}:= \Bbbk[x_{i_1},\dots, x_{i_k}]\cdot 1_\beta, \quad \quad \dif(1_\beta):=-\sum_{r=1}^k 2\mathtt{f}_r x_{i_r} 1_\beta.
\end{equation}
Then we make the twisting of $H_q$-modules on the $\pHH_\bullet$-level, termwise on $\pHH_\bullet(pT_\beta^i)$:
\begin{equation}
    \pHH^{\mathtt{f}_\beta}_\bullet(pT_\beta):=\pHH_\bullet(pT_\beta)\otimes_{\Bbbk[x_{i_1},\dots, x_{i_k}]}{\Bbbk[x_{i_1},\dots, x_{i_k}]}^{\mathtt{f}_\beta}.
\end{equation}

\begin{defn}
Given $\beta\in \mathrm{Br}_n$ whose closure is a $k$ stranded framed link, the \emph{$\mathfrak{sl}_2$ $p$-homology} is the object
\[
\pH(\beta):= \mH^/_\bullet(\pHH^{\mathtt{f}_\beta}_\bullet(pT_\beta),\dif_T)
\]
in the homotopy category $\mc{C}(\Bbbk, \dif_q)$.
\end{defn}

The internal $H_q$-structure shifts do not interfere with the proof of Theorem \ref{thm-untwisted-sl2-homology}, and they correct the overall twisting of $H_q$-modules arising from Markov II moves in the theorem. It follows that we obtain the first part of the following.

\begin{thm}
The $\mathfrak{sl}_2$ $p$-homology $\pH(\beta)$ is a singly graded, finite-dimensional link invariant depending only on the braid closure of $\beta$ as a link in $\R^3$. Furthermore its graded Euler characteristic 
$$\chi(\pH(L)):=\sum_{i}q^i \mathrm{dim}_\Bbbk(\pH_{i}(L))$$
is equal to the Jones polynomial at a $2p$th root of unity.
\end{thm}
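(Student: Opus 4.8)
The plan is to combine three ingredients already assembled in the paper: the topological invariance of $\pH(\beta)$, which is the content of Theorem~\ref{thm-untwisted-sl2-homology} together with the remark immediately preceding this theorem showing that the internal $H_q$-twisting by $\mathtt{f}_\beta$ exactly cancels the twisting produced by the Markov~II moves; the finiteness statement of Corollary~\ref{cor-finite-slash-homology}, applied to each term $\pHH^{\mathtt{f}_\beta}_\bullet(pT_\beta^i)$, which is filtered with subquotients of the form $R^f$; and the Grothendieck group calculations of Section~\ref{sechopf}. So the first step is simply to invoke Theorem~\ref{thm-untwisted-sl2-homology} (and its proof verbatim, since the $H_q$-structure shifts ``do not interfere'') to conclude that $\pH(\beta)$ is a well-defined, finite-dimensional, singly $q$-graded link invariant. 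The finite-dimensionality is needed so that the Euler characteristic is actually a Laurent polynomial in $q$ and so that passing to slash homology corresponds, on $K_0$, to the quotient map onto $\mathbb{O}_p$.

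Next I would identify $\chi(\pH(L))$ on the level of Grothendieck groups. The key bookkeeping is already recorded in Corollary~\ref{cor-HHH-twisted-Euler-characteristic} and the remark with equation~\eqref{eqn-pC1-with-q-shift}: in building $p\widehat{\mH}(\beta)$ we $p$-extended the Cautis Koszul complex so that $[1]^a_d = a$ becomes $q^2[1]^q_\dif$, and in $\mc{O}_p\otimes_\Z\mathbb{O}_p$ (or rather, after the total differential $\dif_T$ collapses everything into a single $q$-grading, in $\mathbb{O}_p$) the functor $[1]^q_\dif$ descends to $-1$. Hence the formal $a$ variable of the HOMFLYPT specialization is sent to $-q^2$, while $t$ is likewise sent to $-1$ via $[1]^t_\dif = [1]^q_\dif$. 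Substituting $a=-q^2$, $t=-1$ into the (well-known) HOMFLYPT Euler characteristic of the triply graded theory yields precisely the Jones polynomial of $\widehat\beta$ in the formal variable $q$ — this is the same substitution already used in Corollary~\ref{cor-HHH-twisted-Euler-characteristic}(ii). The twisting by $\mathtt{f}_\beta$ does not affect the Euler characteristic, since twisting the $H_q$-action changes neither the underlying graded vector spaces nor the differentials $\dif_t,\dif_C$ used to form the total complex.

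The final step is to pass from the formal-variable statement to the root-of-unity statement by taking slash homology with respect to $\dif_q$. Since $\pH(\beta)$ is a finite-dimensional object of $\mc{C}(\Bbbk,\dif_q)$, its class in $K_0(\mc{C}(\Bbbk,\dif_q))\cong\mathbb{O}_p = \Z[q,q^{-1}]/(1+q^2+\cdots+q^{2(p-1)})$ is the image of its Poincaré polynomial under the quotient map $\Z[q,q^{-1}]\twoheadrightarrow\mathbb{O}_p$; this is exactly the statement that taking $\mH^/_\bullet$ realizes the categorical specialization of $q$ at a primitive $p$-th root of unity, as explained in the proof of Corollary~\ref{cor-HHH-twisted-Euler-characteristic}. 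Therefore $\chi(\pH(L))$ equals the image in $\mathbb{O}_p$ of the Jones polynomial of $L$, i.e.\ the Jones polynomial evaluated at a prime root of unity.

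I expect the main obstacle to be purely one of bookkeeping rather than of substance: one must check that the half-integer shift conventions of equation~\eqref{eqn-elementary-braids-half-grading}, the collapse $a=q^2t$ followed by $t\mapsto[1]^q_\dif$, and the overall normalization $q^{-n}$ together with the framing factor $\mathtt{f}_\beta$ all line up so that the resulting element of $\mathbb{O}_p$ is genuinely the standardly normalized Jones polynomial (value $1$, or the appropriate normalization, on the unknot) and not off by a global unit $\pm q^{k}$. The unknot computation of equation~\eqref{eqn-unknot-sl2}, giving slash homology $\Bbbk[x]/(x^2)$ with trivial differential and Poincaré polynomial $1+q^2$, and the unlink computation~\eqref{eqn-unlink-sl2} serve as the base cases pinning down this normalization; combined with the already-established Markov invariance and the $K_0$ argument above, they close the proof.
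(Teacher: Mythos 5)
Your proposal is correct and follows essentially the same route as the paper's proof: invariance is inherited from Theorem \ref{thm-untwisted-sl2-homology} with the framing twist cancelling the Markov II factors, and the Euler characteristic is computed by noting that slash homology preserves $K_0$-classes in $\mathbb{O}_p$ while the collapses $a=q^2[1]^q_\dif$ and $[1]^t_d\mapsto[1]^q_\dif$ specialize $a\mapsto -q^2$, $t\mapsto -1$, recovering the Jones polynomial at a $p$th root of unity (the paper cites Cautis's $P_\beta(v,-1)$ directly where you substitute into the HOMFLYPT polynomial, which is equivalent). Your extra normalization check against the unknot is a sensible sanity check but not needed beyond what the grading conventions already guarantee.
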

\begin{proof}
As in the proof of Corollary \ref{cor-HHH-twisted-Euler-characteristic}, the twisting compensates for the
linear factors appearing in Markov II moves, thus establishing the topological 
invariance of $\pH(\beta)$. 
  
For the last statement, we will use the fact that the Euler characteristic does not change
before or after taking slash homology. This is because, as with the usual chain complexes, taking slash homology only gets rid of acyclic summands whose Euler characteristics are zero. 

Let us revisit diagram \eqref{eqn-pH-construction}. Before collapsing the $t$-grading onto $q$, the diagram arises by $p$-extending the same Hochschild homology group diagram of $T_\beta$ in the $t$-direction. Let $P_\beta(v,t)$ be the Poincar\'{e} polynomial of Cautis's bigraded diagram for now, where $v$, $t$ are treated as formal variables coming from $q$ and $t$ grading shifts.  As shown by Cautis \cite{Cautisremarks}, $P_\beta(v,-1)$ is the Jones polynomial of the link $\widehat{\beta}$ in the variable $v$.

The $p$-extension in the topological direction is equivalent to categorically specializing $[1]^t_d$ to $[1]^q_\dif$. It has the effect, on the Euler 
characteristic level, of specializing $t=-1$. Thus we obtain that the Euler characteristic of $p\mH(\beta)$ is equal to $P_\beta(v=q,t=-1)$. This the Jones polynomial evaluated at a $2p$th root of unity $q$.
The result follows. 
\end{proof}

\section{Examples} \label{secexamples}
In this section we compute the various homologies constructed earlier for $(2,n)$ torus links. Note that there are no framing factors to incorporate in this family of examples.
\subsection{HOMFLYPT homology}
We follow the exposition in \cite{KRWitt} to compute variations of the HOMFLYPT homology of $(2,n)$ torus links while making the necessary $p$-DG modifications. 

We begin by reviewing the homology of unlinks.  Recall the Koszul resolutions $C_1$ and $pC_1 $ of $\Bbbk[x]$ as bimodules are written uniformly as
\begin{equation*}
\xymatrix{
q^2\Bbbk[x]^{x} \otimes \Bbbk[x]^{x} [1]_*^a  \ar[r] &
\Bbbk^{}[x] \otimes \Bbbk^{}[x]
}
\ ,
\end{equation*}
where $*\in \{d,\dif\}$.
Tensoring this complex with $\Bbbk[x]$ as a bimodule yields
\begin{equation*}
\xymatrix{
q^2\Bbbk[x]^{2x} [1]_*^a  \ar[r]^{\hspace{.2in} 0} & \Bbbk[x]
}    
\ .
\end{equation*}
Thus the homology of the unknot (up to shift) is given by:
\begin{equation*}
\Bbbk[x] \oplus q^2\Bbbk[x]^{2x} [1]_*^a  
\ .
\end{equation*}
By the monoidal structure, the homology of the $n$-component unlink $L_0$ is 
\begin{subequations}
\begin{align}
   \mHHH^{\dif_q}(L_0) &\cong (a^{-1}t)^{\frac{n}{2}}\bigotimes_{i=1}^n \left( \Bbbk[x_i] \oplus q^2 a \Bbbk[x_i]^{2x_i} \right), \label{eqn-HHH-unlink} \\
    \pHHH^{\dif_q}(L_0) &\cong \bigotimes_{i=1}^n q^{-1} \left( \Bbbk[x_i] \oplus q^4 [1]^t_{\dif} \Bbbk[x_i]^{2x_i} \right) \label{eqn-pHHH-unlink}
    \ .
\end{align}
\end{subequations}
Here, in the second equation \eqref{eqn-pHHH-unlink}, the shift $q^4[1]^t_d$ arises from specializing the grading shift functor $a=[1]^a_d$ in \eqref{eqn-HHH-unlink} to $q^2[1]^t_\dif$.  The slash homology of \eqref{eqn-pHHH-unlink} is
\begin{equation*}
\left(q^{-1} \Bbbk \oplus q^{p+1} V_{p-2}^q [1]^t_{\dif} \right)^{\otimes n}
\end{equation*}
and hence its Euler characteristic is $(q+q^{-1})^n$.

Throughout the remainder of this subsection, let $R=\Bbbk[x_1,x_2]$,
$B=B_1$, and $T=T_1$.
We begin with the computation of the two component unlink since it will play a role in the homology of the $(2,n)$ torus link $T_{2,n}$.

By tensoring the Koszul resolution of $\Bbbk[x]$ by itself we obtain a resolution of $\Bbbk[x_1,x_2]$ as bimodules homotopic to
%\begin{equation} \label{k[x,y]resolved}
%\xymatrix{
%& R^{x_1} \otimes R^{x_1} \ar[dr] & \\
%R^{x_1+x_2} \otimes R^{x_1+x_2}
%\ar[ur] \ar[dr] &
% &
%R \otimes R \\
%& R^{x_2} \otimes R^{x_2} \ar[ur]
%&
%}    
%\end{equation}

%\begin{equation} \label{k[x,y]resolved}
%\xymatrix{
%q^4R^{x_1+x_2} \otimes R^{x_1+x_2}  
%\ar[rrr]^{\left(\begin{smallmatrix}  x_2 \otimes 1 -  1 %\otimes x_2 \\ -x_1 \otimes 1 +1 \otimes x_1 %\end{smallmatrix}\right) \hspace{.2in}} 
%& & &  
%q^2 R^{x_1} \otimes R^{x_1} \oplus
%q^2 R^{x_2} \otimes R^{x_2}
%\ar[rrrr]^{\hspace{.5in} \left( \begin{smallmatrix} x_1 %\otimes 1 - 1 \otimes x_1 , &  x_2 \otimes 1 -  1 \otimes x_2 %\end{smallmatrix} \right) }
%  & & & & R \otimes R
%}  
%\ .
%\end{equation}

\begin{equation} \label{k[x,y]resolved}
\xymatrix{
q^4R^{x_1+x_2} \otimes R^{x_1+x_2}  [2]^a_*
\ar[rrr]^-{\left(\begin{smallmatrix}  x_2 \otimes 1 -  1 \otimes x_2 \\ -x_1 \otimes 1 +1 \otimes x_1 \end{smallmatrix}\right) \hspace{.2in}} 
& & &  
{\begin{pmatrix} q^2 R^{x_1} \otimes R^{x_1} \\ \oplus \\
q^2 R^{x_2} \otimes R^{x_2} 
\end{pmatrix}}
\ar[rrrr]^{\hspace{.5in} \left( \begin{smallmatrix} x_1 \otimes 1 - 1 \otimes x_1 , &  x_2 \otimes 1 -  1 \otimes x_2 \end{smallmatrix} \right) }[1]^a_*
  & & & & R \otimes R
}  
\ .
\end{equation}

Tensoring this complex with $R$ as a bimodule yields
\begin{equation*}
\xymatrix{
q^4 R^{2(x_1+x_2)} [2]^a_*
\ar[r]^{ 0 \hspace{.2in}} &
(q^2 R^{2x_1} \oplus q^2 R^{2x_2})[1]^a_* \ar[r]^<<<<{0}
& 
R^{}.
}    
\end{equation*}

Thus the homology of the two component unlink (up to shift) is given by the relative Hochschild homology (assuming $*=d$):
\begin{equation} \label{R2homology}
\mHH^{\dif_q}_i=
\begin{cases}
R & \text{ if } i=0 \\
q^2 R^{2x_1} \oplus q^2 R^{2x_2} & \text{ if } i=1 \\
q^4 R^{2(x_1+x_2)}  & \text{ if } i=2 \\
0 & \text{ otherwise. } 
\end{cases}   
\end{equation}
We write the relative Hochschild homology uniformly for $* \in \{ d,\dif \}$ as
\begin{equation*}
R \oplus q^2(R^{2x_1} \oplus R^{2x_2}) [1]^a_* 
\oplus q^4 R^{2(x_1+x_2)}[2]^a_*.
\end{equation*}

%%%This intermediate homology calculation seems unnecessary%%
%As objects in the stable category of $H_q$, the relative %Hochschild homology of $R$ (for $*=d_0$) is:
%\begin{equation*}
%\mHH^{\dif_q}_i=
%\begin{cases}
%V_0^q \otimes V_0^q & \text{ if } i=0 \\
%q^2 V_{p-2}^q \otimes V_0^q \oplus q^2 V_0 \otimes V_{p-2}^q & %\text{ if } i=-1 \\
%q^4 V_{p-2}^q \otimes V_{p-2}^q  & \text{ if } i=-2 \\
%0 & \text{ otherwise. } 
%\end{cases}   
%\end{equation*}
%We write is unformly for $*=d_0,\dif_0$ as
%\begin{equation*}
%V_0^q \otimes V_0^q \oplus (V_{p-2}^q \otimes V_0^q \oplus V_0^q %\otimes V_{p-2}^q) \{2 \}[1]^a_* 
%\oplus V_{p-2}^q \otimes V_{p-2}^q \{4 \} [2]^a_*.
%\end{equation*}

Now we return to the computation of the HOMFLYPT homology of the $(2,n)$ torus link.
The complex \eqref{k[x,y]resolved} is isomorphic by a change of basis in the middle term in the complex by the matrix
\begin{equation*}
\begin{pmatrix}
-1 & 0 \\ -1 & 1 
\end{pmatrix}
\ ,
\end{equation*}
to
\begin{equation} \label{k[x,y]resolved2}
\xymatrix{
q^4 R^{x_1+x_2} \otimes R^{x_1+x_2} [2]^a_*
\ar[r]  & (q^2 R \otimes R \oplus q^2 R \otimes R)[1]^a_* \ar[r]
 &
R^{} \otimes R^{} 
}
\ ,
\end{equation}
where now the $H_q$-structure in the middle term is twisted by the matrix
\begin{equation} \label{twistmid}
\begin{pmatrix}
x_1 \otimes 1 +1 \otimes x_1 & 0 \\
x_1 \otimes 1 + 1 \otimes x_1 - x_2 \otimes 1 - 1 \otimes x_2 & x_2 \otimes 1 + 1 \otimes x_2
\end{pmatrix}   
\ .
\end{equation}

Now we determine the relative Hochschild homology of the bimodule $B$.
Tensoring \eqref{k[x,y]resolved2} on the left as a bimodule with $B$ yields
\begin{equation} \label{Bcomplex}
\xymatrix{
q^4 ({}^{x_1+x_2} B^{x_1+x_2})[2]^a_* \ar[r]^{\hspace{.1in} \alpha}
& (q^2 B \oplus q^2 B)[1]^a_* \ar[r]^{\hspace{.3in} \beta} & B
}  
\end{equation}
where the middle term has an $H_q$-structure twisted by \eqref{twistmid}
%\begin{equation*}
%\begin{pmatrix}
%x_1 \otimes 1 + 1 \otimes x_1 & 0 \\
%x_1 \otimes 1 + 1 \otimes x_1 - x_2 \otimes 1 - 1 \otimes x_2 %& x_2 \otimes 1 + 1 \otimes x_2
%\end{pmatrix}    
%\end{equation*}
and $\alpha$ and $\beta$ are given by
\begin{equation*}
\alpha=\begin{pmatrix}
-x_2 \otimes 1 + 1 \otimes x_2 \\ 0
\end{pmatrix}  
\quad \quad
\beta=\begin{pmatrix}
0 & x_2 \otimes 1 - 1 \otimes x_2
\end{pmatrix}.
\end{equation*}
The kernel of $\alpha$ is generated as a left $R$-module by 
\begin{equation*}
\Gamma = \frac{1}{2}(x_2 \otimes 1 + 1 \otimes x_2  - x_1 \otimes 1 - 1 \otimes x_1).  
\end{equation*}
It is easy to verify that $\partial_q(\Gamma)=(x_1+x_2)\Gamma$ so that after accounting for the extra twist,
$\ker \alpha \cong R^{3(x_1+x_2)}$.
Note that in $\mathrm{cok} \alpha$, we have $x_2 \otimes 1 = 1 \otimes x_2$ and $x_1 \otimes 1 = 1 \otimes x_1$, so
$\mathrm{cok} \alpha$ is generated as an $R$-module by $1 \otimes 1$ and thus $\mathrm{cok}  \alpha \cong R$.  Similarly, 
$ \ker \beta \cong \mathrm{cok}  \beta \cong R$.

These observations combined with the $H_q$-structure given in \eqref{Bcomplex} yield the following result for the relative Hochschild homology of $B$:
\begin{equation} \label{HH(B)}
\mHH^{\dif_q}_i(B) \cong
\begin{cases}
R & \text{ if } i=0  \\
q^2 R \oplus q^4 R \text{ twisted by } \begin{pmatrix}
2x_1 & 0 \\
-2 & x_1+3x_2
\end{pmatrix} & \text{ if } i=1  
\\
q^6 R^{3(x_1+x_2)} & \text{ if } i=2 \\
0 & \text{ otherwise. } 
\end{cases}   
\end{equation}
%In the stable category of $H_q$-modules, we write this uniformly %for $*=d_0,\dif_0$ as
%\begin{equation*}
%V_0^q \otimes V_0^q
%\oplus
%(V_{p-2}^q \otimes V_0^q \{2 \} \oplus V_{p-1}^q \otimes %V_{p-3}^q \{4 \})[1]^a_*
%\oplus 
%V_{p-3}^q \otimes V_{p-3}^q \{ 6 \} [2]^a_*.
%\end{equation*}

\begin{lem} \label{complexforT^n}
The braiding complex $T^{\otimes n} $ simplifies in the following ways.
\begin{enumerate}
\item[(i)] In $\mc{C}^{\dif_q}(R,R,d_0)$, one has $T^{\otimes n} \cong (atq^4)^{-\frac{n}{2}}$
\begin{equation*}
%(atq^2)^{\frac{-n}{2}} 
\left(
\xymatrix{
 q^{2(n-1)}B^{(n-1)e_1}[n]^t_{d} \ar[r]^-{p_n} 
& q^{2(n-2)}B^{(n-2)e_1}[n-1]^t_{d} \ar[r]^{\hspace{.5in}p_{n-1}}
& \cdots \ar[r]^{p_3 \hspace{.2in}}
& q^2 B^{e_1}[2]^t_{d} \ar[r]^{\hspace{.2in} p_{2}}
& B^{}[1]^t_{d} \ar[r]^{br} 
& R}
\right)\ .
\end{equation*}

\item[(ii)] In $\mc{C}^{\dif_q}(R,R,\dif_0)$, one has $T^{\otimes n} \cong (q^{-3}[-1]^t_{\dif})^n$
\begin{equation*}
\left(
\xymatrix{
q^{2(n-1)}B^{(n-1)e_1}[n]^t_{\dif} \ar[r]^-{p_n} 
& q^{2(n-2)}B^{(n-2)e_1}[n-1]^t_{\dif} \ar[r]^{\hspace{.5in}p_{n-1}}
& \cdots \ar[r]^{p_3 \hspace{.2in}}
& q^2 B^{e_1}[2]^t_{\dif} \ar[r]^{\hspace{.2in} p_{2}}
& B^{}[1]^t_{\dif} \ar[r]^{br} 
& R
}    
\right) \ ,
\end{equation*}
\end{enumerate}
where
\begin{equation*}
p_{2i}=1 \otimes (x_2-x_1) - (x_2-x_1) \otimes 1
\quad \quad
p_{2i+1}=1 \otimes (x_2-x_1)+(x_2-x_1) \otimes 1.
\end{equation*}
\end{lem}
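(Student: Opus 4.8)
The plan is to prove both statements by a single induction on $n$, carrying out part (i) in $\mc{C}_{\dif_q}(R,R,d_0)$ and then transporting the argument to $\mc{C}_{\dif_q}(R,R,\dif_0)$ for part (ii) via the exact $p$-extension functor $\mc{P}$. This zig-zag description of the Rouquier complex of $\sigma_1^n$ is classical; the only new content is that all the intermediate isomorphisms, homotopies, and contractions can be chosen to be $H_q$-equivariant, so that they are valid in the relative (equivariant) homotopy categories. For the base case $n=1$, the complex $T=T_1$ is by definition the two-term complex appearing in \eqref{eqn-elementary-braids-half-grading}, and after rewriting its shifts this is exactly the $n=1$ instance of (i); likewise $pT_1$ from \eqref{eqn-elementary-braids-p-ext} gives the $n=1$ instance of (ii).

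For the inductive step of (i), I would tensor the complex for $T^{\otimes(n-1)}$, assumed to be in the stated zig-zag form, on the right over $R$ with $T=\left(B[1]^t_d\xrightarrow{br}R\right)$, and form the associated total complex. Each term is of the shape $B^{ke_1}\otimes_R B$, $B^{ke_1}\otimes_R R=B^{ke_1}$, $R\otimes_R B=B$, or $R$. The decisive input is Lemma \ref{BB=B+B}, applied in its twisted form: propagating the twist forces $B^{ke_1}\otimes_R B\cong B^{ke_1}\oplus q^2B^{(k+1)e_1}$, and the iteration of the twist $x_1+x_2$ from that lemma is precisely what produces the powers $ke_1$ of $e_1=x_1+x_2$ in the statement. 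I would then perform Gaussian elimination: the identity components of the maps coming out of these decompositions form contractible two-term summands, and contracting them collapses the total complex to a single zig-zag of length $n+1$. The one genuine computation is to check that the surviving connecting maps are multiplication by $1\otimes(x_2-x_1)\pm(x_2-x_1)\otimes 1$ with the alternating signs as stated; this follows from composing the braiding differential with the maps $\psi_2$ and $\phi_2$ of Lemma \ref{BB=B+B} (recall $\psi_2(1\otimes 1)=1\otimes(x_{i+1}-x_i)\otimes 1$) and tracking the sign of the differential at each homological degree. At each step I must also verify that the homotopies effecting the contractions are $\partial_q$-closed, so the equivalences hold in $\mc{C}_{\dif_q}(R,R,d_0)$; this is automatic, since all maps in Lemma \ref{BB=B+B} and in the braiding complexes are $H_q$-equivariant and the Gaussian elimination homotopies are built from them.

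For part (ii), I would note that $pT=pT_1$ is, up to a $t$-grading shift, the image $\mc{P}(T_1)$ of $T_1$ under the $p$-extension functor, and rerun the inductive argument verbatim in $\mc{C}_{\dif_q}(R,R,\dif_0)$: Lemma \ref{BB=B+B} is a statement about $H_q$-modules insensitive to the choice of (zero) homological differential, Gaussian elimination is available in any triangulated homotopy category, and the exactness of $\mc{P}$ guarantees that contractible $p$-summands stay contractible. The only bookkeeping change is that each shift $[k]^t_d$ is replaced by $[k]^t_\dif$ and the overall prefactor $(atq^4)^{-n/2}$ becomes $(q^{-3}[-1]^t_\dif)^n$ after the specialization $a=q^2t$, $t=[1]^t_\dif$ built into \eqref{eqn-elementary-braids-p-ext}. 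The main obstacle throughout is not conceptual but the careful tracking of the $q$-degree shifts, the $t$-shifts, and the $H_q$-twists through the iterated Gaussian elimination, together with pinning down the exact signs of the maps $p_k$ at each homological degree.
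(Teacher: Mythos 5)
Your proposal is correct and follows essentially the same route as the paper: induction on $n$, using the ($H_q$-equivariantly twisted) decomposition of Lemma \ref{BB=B+B} to obtain the homotopy equivalence $q^{2i}B^{ie_1}\otimes_R T\simeq q^{2(i+1)}B^{(i+1)e_1}$, then tracking the induced maps $br$, $p_{2i}$, $p_{2i+1}$ and the twists through the contraction; part (ii) follows by $p$-extension exactly as you describe. Your write-up is in fact more detailed than the paper's two-sentence sketch, in particular in flagging that the Gaussian-elimination homotopies must be $\partial_q$-closed for the equivalences to hold equivariantly.
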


\begin{proof}
This is proved by induction on $n$.  One uses
homotopy equivalences
\[q^{2i}B^{ie_1} \otimes T \cong q^{2(i+1)}B^{(i+1)e_1}\] 
and then determining the images of the maps $br$, $p_{2i}$ and $p_{2i+1}$ under these equivalences.
\end{proof}

\begin{prop}
The $H_q$-HOMFLYPT homology of a $(2,n)$ torus link, as an $H_q$-module depends on the parity of $n$.
\begin{enumerate}
\item[(i)] If $n$ is odd:
    \begin{align*}
    & (atq^4)^{-\frac{n}{2}}a^{-1}t \Bigg( \left( q^2 [1]^a_d \Bbbk[x]^{2x} \oplus q^4 [2]^a_d \Bbbk[x]^{4x} \right)
    \bigoplus \\
    &\bigoplus_{i \in \{2,4,\ldots,n-1 \}} \Big(
    q^{2(i-1)}\Bbbk[x]^{2(i-1)x}
    \oplus [1]^a_d \begin{pmatrix} q^{2i}\Bbbk[x] \\ \oplus  \\ q^{2i+2}\Bbbk[x] \end{pmatrix} 
    \oplus q^{2i+4}[2]^a_d \Bbbk[x]^{2(i+1)x} \Big)[i]^t_d \Bigg)
    \end{align*}
    with the $H_q$-structure on the middle object 
    $\begin{pmatrix} q^{2i}\Bbbk[x] \\ \oplus  \\ q^{2i+2}\Bbbk[x] \end{pmatrix} $ given by
   $
        \begin{pmatrix}
        2i x & 0 \\
        -2 & (2i+2)x
        \end{pmatrix}
    $.
    \item[(ii)] If $n$ is even:
    \begin{align*}
    &(atq^4)^{-\frac{n}{2}}a^{-1}t \Bigg(  \left( q^2 [1]^a_d \Bbbk[x]^{2x} \oplus q^4 [2]^a_d \Bbbk[x]^{4x} \right)
    \bigoplus \\ 
    & \bigoplus_{i \in \{2,4,\ldots,n-2 \}} \Big(
    q^{2(i-1)}\Bbbk[x]^{2(i-1)x}
    \oplus [1]^a_d \begin{pmatrix} q^{2i}\Bbbk[x] \\ \oplus  \\ q^{2i+2}\Bbbk[x] \end{pmatrix} 
    \oplus q^{2i+4}[2]^a_d \Bbbk[x]^{2(i+1)x} \Big)[i]^t_d \bigoplus \\
     &
     \Big(
    q^{2(n-1)}\Bbbk[x_1,x_2]^{(n-1)(x_1+x_2)}
    \oplus [1]^a_d \begin{pmatrix} q^{2n}\Bbbk[x_1,x_2] \\ \oplus  \\ q^{2n+2}\Bbbk[x_1,x_2] \end{pmatrix} 
    \oplus q^{2n+4}[2]^a_d \Bbbk[x_1,x_2]^{(n+2)(x_1+x_2)} \Big)[n]^t_d
    \Bigg)
    \end{align*}
    with the $H_q$-structure on the middle object 
    $\begin{pmatrix} q^{2i}\Bbbk[x] \\ \oplus  \\ q^{2i+2}\Bbbk[x] \end{pmatrix} $ given by
   $
        \begin{pmatrix}
        2i x & 0 \\
        -2 & (2i+2) x
        \end{pmatrix}
    $
        and the $H_q$ structure on the middle object 
    $\begin{pmatrix} q^{2n}\Bbbk[x_1,x_2] \\ \oplus  \\ q^{2n+2}\Bbbk[x_1,x_2] \end{pmatrix} $ given by
   $
        \begin{pmatrix}
        (n+1)x_1+(n-1)x_2 & 0 \\
        -2 & n(x_1+x_2)+2x_2
        \end{pmatrix}
    $.
\end{enumerate}
\end{prop}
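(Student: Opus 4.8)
The plan is to follow the strategy of \cite{KRWitt}: replace the braiding complex by its minimal model, apply relative Hochschild homology termwise, and take homology with respect to the induced topological differential $d_t$. First I would invoke Lemma~\ref{complexforT^n}(i) to replace $T^{\otimes n}$ in $\mc{C}_{\dif_q}(R,R,d_0)$ by the minimal complex
\begin{equation*}
q^{2(n-1)}B^{(n-1)e_1}[n]^t_d \xrightarrow{\,p_n\,} q^{2(n-2)}B^{(n-2)e_1}[n-1]^t_d \xrightarrow{\,p_{n-1}\,}\cdots\xrightarrow{\,p_2\,} B[1]^t_d \xrightarrow{\,br\,} R,
\end{equation*}
up to the overall shift. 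Applying $\mHH^{\dif_q}_\bullet(-)$ to each term produces a bicomplex with three Hochschild ($a$-degree) rows $0,1,2$ and $n+1$ topological columns: on the rightmost term $R=R_2$ the relative Hochschild homology is the two-component unlink computation \eqref{R2homology}, while on $q^{2(k-1)}B^{(k-1)e_1}[k]^t_d$ it is the $(k-1)e_1$-twisted, $q^{2(k-1)}[k]^t_d$-shifted version of \eqref{HH(B)}. The homology $\mtHHH^{\dif_q}$ (which for this family coincides with $\mHHH^{\dif_q}$, there being no framing correction to incorporate) is then the $d_t$-homology of this bicomplex, reassembled with the shifts coming from Lemma~\ref{complexforT^n}(i) and Definition~\ref{def-HHH}.

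The heart of the argument is the row-by-row homology computation. Using $p_{2j}=1\otimes(x_2-x_1)-(x_2-x_1)\otimes 1$ and $p_{2j+1}=1\otimes(x_2-x_1)+(x_2-x_1)\otimes 1$, one checks that on each of $\mHH^{\dif_q}_0$, $\mHH^{\dif_q}_1$, $\mHH^{\dif_q}_2$ exactly one of the two parities of $p_k$ vanishes and the other is a twisted form of multiplication by $x_2-x_1$, while $br_\ast$ is an isomorphism on $\mHH^{\dif_q}_0$. Since $R=\Bbbk[x_1,x_2]$ is a domain, a column whose outgoing map is this multiplication and whose incoming map vanishes contributes $0$, whereas a column whose incoming map is the multiplication and whose outgoing map vanishes contributes the quotient $R/(x_1-x_2)\cong \Bbbk[x]$ carrying the induced (collapsed) twist; this is exactly the mechanism that collapses two variables to one and leaves nonzero homology only in even topological degrees. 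An interior even column $i$ then contributes $q^{2(i-1)}\Bbbk[x]^{2(i-1)x}$ in Hochschild degree $0$, the twisted rank-two module $q^{2i}\Bbbk[x]\oplus q^{2i+2}\Bbbk[x]$ in Hochschild degree $1$, and $q^{2i+4}\Bbbk[x]^{2(i+1)x}$ in Hochschild degree $2$. The topological degree-$0$ column is special: its Hochschild degree-$0$ part is killed by $br_\ast$, and the degree-$1,2$ parts of \eqref{R2homology} survive modulo the image of $br_\ast$, giving the leading summand $q^2[1]^a_d\Bbbk[x]^{2x}\oplus q^4[2]^a_d\Bbbk[x]^{4x}$.

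The parity of $n$ enters only at the top of the complex. For $n$ odd, $p_n$ is of \emph{odd} type, hence acts as the collapsing multiplication on $\mHH^{\dif_q}_0$ (and dually on the other rows); the top column then dies, its neighbour at topological degree $n-1$ collapses to a one-variable module, and part~(i) follows. For $n$ even, $p_n$ is of \emph{even} type and vanishes on $\mHH^{\dif_q}_0$ and $\mHH^{\dif_q}_2$, so the top column $\mHH^{\dif_q}_\bullet(q^{2(n-1)}B^{(n-1)e_1}[n]^t_d)$ survives uncollapsed, producing the extra $\Bbbk[x_1,x_2]$-summand of part~(ii), with twisting matrix obtained by composing the matrix of \eqref{HH(B)} with the $(n-1)e_1$-twist. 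The corresponding statement for $\pHHH^{\dif_q}$ then follows by running the same argument with Lemma~\ref{complexforT^n}(ii) and Proposition~\ref{prop-pHH-determined-by-HH}, which expresses $p$-Hochschild homology in terms of the ordinary one.

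The step I expect to be the real obstacle is the precise bookkeeping of the $H_q$-twists and of the $q$- and $a$-degree shifts: one must pin down the induced twisting matrices on the rank-two pieces and track how the $(k-1)e_1$-twists compose with those already present in \eqref{HH(B)}, with the twists built into the Koszul resolution, and with the twist shift incurred when passing to a quotient by $x_1-x_2$ --- recall $\partial_q(x_1-x_2)=(x_1+x_2)(x_1-x_2)$, so multiplication by $x_1-x_2$ is $H_q$-equivariant only after a compensating shift. The homological algebra proper --- contracting $br_\ast$, the alternating vanishing of the $p_{k\ast}$, and reading off kernels and cokernels of multiplication by a linear form --- is routine once these conventions are fixed.
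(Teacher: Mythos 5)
Your proposal follows the paper's argument essentially verbatim: reduce via Lemma \ref{complexforT^n}(i), use $\mHH^{\dif_q}_\bullet(p_{2j})=0$ and $\mHH^{\dif_q}_\bullet(p_{2j+1})=2(x_2-x_1)$ to break the Hochschild complex into two-term pieces, and read off kernels and cokernels from \eqref{R2homology} and \eqref{HH(B)}, with the parity of $n$ deciding whether the top term survives. One point to tighten: $\mHH^{\dif_q}_\bullet(p_{2j})$ vanishes in \emph{every} Hochschild degree (left and right multiplication by the same polynomial coincide on Hochschild homology), so your hedge that the vanishing parity might depend on the row, and in particular your statement that the even-type $p_n$ ``vanishes on $\mHH^{\dif_q}_0$ and $\mHH^{\dif_q}_2$,'' should be upgraded to include $\mHH^{\dif_q}_1$ as well --- otherwise the middle row of the top column would not survive uncollapsed, as your (correct) conclusion for even $n$ requires.
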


\begin{proof}
We sketch some of the details. 
It is clear that $\mHH_{\bullet}^{\dif_q}(p_{2i})=0$ and $\mHH_{\bullet}^{\dif_q}(p_{2i+1})=2(x_2-x_1)$.
Then taking the Hochschild homology of the complex in Lemma
\ref{complexforT^n} breaks up into a sum of pieces of the form
\begin{equation*}
\xymatrix{
\mHH_\bullet^{\dif_q}(q^2 B) \ar[rr]^{\mHH_\bullet^{\dif_q}(br)}   & & \mHH_\bullet^{\dif_q}(R),
}
\quad \quad 
\xymatrix{
\mHH_\bullet^{\dif_q}(q^{4i}B^{2i(x_1+x_2)}) \ar[rr]^{2(x_2-x_1) \hspace{.2in}}
& &  \mHH_\bullet^{\dif_q}(q^{2(2i-1)}B^{(2i-1)(x_1+x_2)})
}
\end{equation*}
and (if $n$ is even) the leftmost piece is
\begin{equation*}
\mHH_\bullet ^{\dif_q}(q^{2(n-1)}B^{(n-1)(x_1+x_2)}).
\end{equation*}

The result follows using the Hochschild homologies
of $R$ calculated in \eqref{R2homology} and
of $B$ calculated in \eqref{HH(B)}.
Recall that $\mHH_\bullet^{\dif_q}(B)$ is generated by elements of the form 
$\mathrm{cok}  \beta$,$ \mathrm{cok}  \alpha$, $\ker \beta$ and $ \ker \alpha$.

The morphism $\mHH_\bullet ^{\dif_q}(br)$ maps $\mathrm{cok}  \beta$ and $\mathrm{cok}  \alpha$ isomorphically onto their images. 
On the other hand, under $\mHH_\bullet^{\dif_q}(br)$, the image of $\ker \beta$ and $\ker \alpha$ identify the variables $x_1$ and $x_2$ in $\mHH_\bullet^{\dif_q}(R)$.
We call this identified variable $x$.

The map 
\[
\xymatrix{
\mHH_\bullet^{\dif_q}(p_{2i+1}) \colon &
\mHH_\bullet^{\dif_q}(q^{4i}B^{2i(x_1+x_2)}) \ar[rr]^{2(x_2-x_1)\hspace{.2in}}
& &  \mHH_\bullet^{\dif_q}(q^{2(2i-1)}B^{(2i-1)(x_1+x_2)})
}
\]
has no kernel and the image also identifies $x_1$ and $x_2$ as a common variable $x$.
\end{proof}
Similarly, one has the following analagous $p$-version of the previous result.
\begin{prop}
The bigraded $H_q$-HOMFLYPT $p$-homology of a $(2,n)$ torus knot, as an $H_q$-module depends on the parity of $n$.
\begin{enumerate}
\item[(i)] If $n$ is odd it is:
    \begin{align*}
    & q^{-3n-2}[-n]^t_{\dif} \left( q^4 [1]^t_{\dif} \Bbbk[x]^{2x} \oplus q^8 [2]^t_{\dif} \Bbbk[x]^{4x} \right)
    \bigoplus \\
    &\bigoplus_{i \in \{2,4,\ldots,n-1 \}} q^{-3n-2}\left(
    q^{2(i-1)}\Bbbk[x]^{2(i-1)x}
    \oplus q^2[1]^t_{\dif} \begin{pmatrix} q^{2i}\Bbbk[x] \\ \oplus  \\ q^{2i+2}\Bbbk[x] \end{pmatrix} 
    \oplus q^{2i+8}[2]^t_{\dif} \Bbbk[x]^{2(i+1)x} \right)[i-n]^t_{\dif}
    \end{align*}
    with the $H_q$-structure on the middle object 
    $\begin{pmatrix} q^{2i}\Bbbk[x] \\ \oplus  \\ q^{2i+2}\Bbbk[x] \end{pmatrix} $ given by
   $
        \begin{pmatrix}
        2i x & 0 \\
        -2 & (2i+2)x
        \end{pmatrix}
    $.
    \item[(ii)] If $n$ is even it is:
    \begin{align*}
    &q^{-3n-2}[-n]^t_{\dif}  \left( q^4 [1]^t_{\dif} \Bbbk[x]^{2x} \oplus q^8 [2]^t_{\dif} \Bbbk[x]^{4x} \right)
    \bigoplus \\ 
    &\bigoplus_{i \in \{2,4,\ldots,n-2 \}} q^{-3n-2}\left(
    q^{2(i-1)}\Bbbk[x]^{2(i-1)x}
    \oplus q^2[1]^t_{\dif} \begin{pmatrix} q^{2i}\Bbbk[x] \\ \oplus  \\ q^{2i+2}\Bbbk[x] \end{pmatrix} 
    \oplus q^{2i+8}[2]^t_{\dif} \Bbbk[x]^{2(i+1)x} \right)[i-n]^t_{\dif} \bigoplus \\
     &
 q^{-3n-2} \left(
    q^{2(n-1)}\Bbbk[x_1,x_2]^{(n-1)(x_1+x_2)}
    \oplus q^2[1]^t_{\dif} \begin{pmatrix} q^{2n}\Bbbk[x_1,x_2] \\ \oplus  \\ q^{2n+2}\Bbbk[x_1,x_2] \end{pmatrix} 
    \oplus q^{2n+8}[2]^t_{\dif} \Bbbk[x_1,x_2]^{(n+2)(x_1+x_2)} \right)
    \end{align*}
    with the $H_q$-structure on the middle object 
    $\begin{pmatrix} q^{2i}\Bbbk[x] \\ \oplus  \\ q^{2i+2}\Bbbk[x] \end{pmatrix} $ given by
   $
        \begin{pmatrix}
        2i x & 0 \\
        -2 & (2i+2)x
        \end{pmatrix}
    $
        and the $H_q$ structure on the middle object 
    $\begin{pmatrix} q^{2n}\Bbbk[x_1,x_2] \\ \oplus  \\ q^{2n+2}\Bbbk[x_1,x_2] \end{pmatrix} $ given by
   $
        \begin{pmatrix}
        (n+1)x_1+(n-1)x_2 & 0 \\
        -2 & n(x_1+x_2)+2x_2
        \end{pmatrix}
    $.
\end{enumerate}
\end{prop}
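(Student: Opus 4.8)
The plan is to follow the proof of the preceding (non-$p$-extended) proposition verbatim, substituting the $p$-DG replacements at each stage. First I would invoke Lemma~\ref{complexforT^n}(ii), which presents $pT^{\otimes n}$ in $\mc{C}_{\dif_q}(R,R,\dif_0)$, up to the overall shift $(q^{-3}[-1]^t_\dif)^n$, as a $p$-complex of Soergel bimodules of the shape $q^{2(n-1)}B^{(n-1)e_1} \to \cdots \to q^2 B^{e_1} \to B \to R$ with transition maps $p_{2i}$, $p_{2i+1}$, $br$ and the indicated $[\,\cdot\,]^t_\dif$ shifts. Applying $\pHH_\bullet^{\dif_q}$ termwise and using Proposition~\ref{prop-pHH-determined-by-HH}, each term's $p$-Hochschild homology is the (re-indexed) $p$-extension of its ordinary relative Hochschild homology; since the bimodules involved are $R$, $B$, and their linear twists, these groups are read off directly from \eqref{R2homology} and \eqref{HH(B)}.

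Next I would compute the induced topological $p$-differential $\dif_t := \pHH_\bullet^{\dif_q}(\dif_0)$. By functoriality of $\pHH^{\dif_q}$, and because $\pHH^{\dif_q}$ is determined by ordinary Hochschild homology (Proposition~\ref{prop-pHH-determined-by-HH}), these maps agree with the ones analyzed in the preceding proof: $\pHH_\bullet^{\dif_q}(p_{2i}) = 0$, $\pHH_\bullet^{\dif_q}(p_{2i+1})$ is multiplication by $2(x_2 - x_1)$, and $\pHH_\bullet^{\dif_q}(br)$ maps the cokernels of $\alpha$ and $\beta$ isomorphically onto their images while collapsing the kernels of $\alpha$ and $\beta$ into $\pHH_\bullet^{\dif_q}(R)$ by identifying the variables $x_1$ and $x_2$. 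As in that proof, the only nonzero $\dif_t$-maps are injective with cokernel in which $x_1$ and $x_2$ become a single variable $x$; hence taking the slash homology $\mH^/_\bullet(-,\dif_t)$ that enters Definition~\ref{def-pHHH} makes the answer split as a direct sum, term by term. Collecting the surviving cokernels and kernels over $i$, and splitting into the two cases by the parity of $n$ --- where for $n$ even the leftmost term $\pHH_\bullet^{\dif_q}(q^{2(n-1)}B^{(n-1)(x_1+x_2)})$ survives in full and supplies the additional $\Bbbk[x_1,x_2]$-summand --- yields the stated list of $H_q$-modules, with the twisting matrices inherited unchanged from \eqref{R2homology}, \eqref{HH(B)} and the preceding proposition.

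The step I expect to be the main obstacle is the bookkeeping of $q$-grading shifts and $[\,\cdot\,]^t_\dif$ homological shifts. One has to combine carefully (a) the overall prefactor $(q^{-3}[-1]^t_\dif)^n$ from Lemma~\ref{complexforT^n}(ii); (b) the internal $q$- and $a$-shifts recorded in \eqref{R2homology} and \eqref{HH(B)}, remembering that in the collapsed theory the $a$-homological shift $[1]^a_*$ must be read as $q^2[1]^t_\dif$ under the specialization $a = q^2 t$ with $t \rightsquigarrow [1]^t_\dif$; and (c) the overall normalization $q^{-n}$ in Definition~\ref{def-pHHH} together with the $q$-shifts carried by the $p$-extended Koszul resolution $\pC_2$. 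This is delicate because $[1]^t_\dif$ is genuinely a different functor from the $t$-grading shift when $p>2$ (cf.\ Remark~\ref{rem-half-grading-shifts} and the remark following Corollary~\ref{cor-Markov-II-for-pHHH}), so no half-integer $t$-shifts are available and each shift must be tracked as an honest composition of a $q$-grading shift with a power of $[\pm 1]^t_\dif$. Once this is done, the factor $q^{-3n-2}$, the homological shifts $[-n]^t_\dif$ and $[i-n]^t_\dif$, and the internal shifts $q^2[1]^t_\dif$, $q^{2i+8}[2]^t_\dif$, etc., appearing in the statement all fall out directly, completing the proof.
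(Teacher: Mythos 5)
Your proposal is correct and is exactly the route the paper takes: the paper gives no separate argument for this proposition, stating it as the "analogous $p$-version" of the preceding one, i.e.\ one reruns that proof with Lemma~\ref{complexforT^n}(ii), reads off the $p$-Hochschild groups of $R$ and $B$ from \eqref{R2homology} and \eqref{HH(B)} via Proposition~\ref{prop-pHH-determined-by-HH}, and tracks the shifts under $a=q^2[1]^t_\dif$ together with the overall prefactors. Your identification of the grading bookkeeping as the only delicate point is also consistent with the paper.
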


\begin{cor}
In the stable category of $H_q$-modules, the slash homology of the $H_q$-HOMFLYPT $p$-homology of a $(2,n)$ torus link depends on the parity of $n$.
\begin{enumerate}
\item[(i)] If $n$ is odd it is:
    \begin{align*}
    & q^{-3n-2}[-n]^t_{\dif} \left( q^{p+2} V_{p-2}^q [1]^t_{\dif} \oplus q^{p+4} V_{p-4}^q [2]^t_{\dif} \right)
    \bigoplus \\
    \bigoplus_{i \in \{2,4,\ldots,n-1 \}} & q^{-3n-2}\left(
  q^{p} V_{p-2(i-1)}^q 
    \oplus  \begin{pmatrix} q^{p+2} V_{p-2i}^q   \\ \oplus  \\ q^{p+2} V_{p-2i-2}^q  \end{pmatrix} [1]^t_{\dif}
    \oplus q^{p+6} V_{p-2(i+1)}^q  [2]^t_{\dif} \right) [i-n]^t_{\dif}
    \ .
    \end{align*}
\item[(ii)] If $n$ is even it is:
    \begin{align*}
    & q^{-3n-2}[-n]^t_{\dif} \left( q^{p+2} V_{p-2}^q  [1]^t_{\dif} \oplus q^{p+4} V_{p-4}^q [2]^t_{\dif} \right)
    \bigoplus \\ 
    &\bigoplus_{i \in \{2,4,\ldots,n-2 \}} q^{-3n-2} \left(
    q^{p} V_{p-2(i-1)}^q 
    \oplus  \begin{pmatrix} q^{p+2} V_{p-2i}^q  \\ \oplus  \\ 
    q^{p+2} V_{p-2i-2}^q \end{pmatrix} [1]^t_{\dif}
    \oplus q^{p+6} V_{p-2i-2}^q [2]^t_{\dif} \right) [i-n]^t_{\dif} \\
     &\bigoplus 
q^{-3n-2}\left(\begin{matrix}
 q^{2p} V_{p-(n-1)}^q \otimes  V_{p-(n-1)}^q  \\    
    \oplus  \\
    \left( q^{2p+2} V_{p-n-1}^q \otimes V_{p-n+1}^q   \oplus 
    q^{2p+2} V_{p-n}^q \otimes V_{p-n-2}^q  \right)
     [1]^t_{\dif} \\
    \oplus \\
    q^{2p+4}V_{p-(n+2)}^q \otimes V_{p-n-2}^q [2]^t_{\dif}
    \end{matrix}
    \right) 
    \ .
    \end{align*}
\end{enumerate}
\end{cor}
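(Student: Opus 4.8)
The plan is to obtain the statement directly from the preceding Proposition, which displays $\pHHH^{\dif_q}(T_{2,n})$ as an explicit finite direct sum of $q$- and $t$-shifted $H_q$-modules, by applying the $\dif_q$-slash homology functor $\mH^/_\bullet$ to that direct sum, summand by summand. Because $\mH^/_\bullet$ is additive and, when regarded as an endofunctor of the homotopy category $\mc{C}(\Bbbk,\dif_q)$, is naturally isomorphic to the identity, it commutes with all of the grading shifts $q^a(-)$ and homological shifts $[j]^t_\dif$ appearing in the Proposition. Consequently the overall prefactor $q^{-3n-2}$, the shifts $[-n]^t_\dif$ and $[i-n]^t_\dif$, and the various internal $q$- and $[\,\cdot\,]^t_\dif$-shifts are simply carried through, so that most of the work reduces to the bookkeeping that turns the internal prefactors of the Proposition into the ones recorded in the corollary.

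For the rank-one twisted modules $\Bbbk[x]^{2kx}$ and, in the $n$ even tail, the two-variable modules $\Bbbk[x_1,x_2]^{k(x_1+x_2)}$, Lemma \ref{lem-pol-mod} gives at once $\mH^/_\bullet(\Bbbk[x]^{2kx})\cong q^{p-2k}V^q_{p-2k}$ and $\mH^/_\bullet(\Bbbk[x_1,x_2]^{k(x_1+x_2)})\cong q^{2(p-k)}\bigl(V^q_{p-k}\otimes V^q_{p-k}\bigr)$, where the indices are read modulo $p$ and the free $H_q$-module $V^q_{p-1}$ is understood to vanish in the homotopy category; Corollary \ref{cor-finite-slash-homology} guarantees finite-dimensionality throughout. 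Combining these with the internal prefactors already present in the Proposition (for instance $q^{2(i-1)}$ in front of $\Bbbk[x]^{2(i-1)x}$, or $q^{2i+8}[2]^t_\dif$ in front of $\Bbbk[x]^{2(i+1)x}$) reproduces exactly the summands $q^{p}V^q_{p-2(i-1)}$, $q^{p+6}V^q_{p-2(i+1)}[2]^t_\dif$, $q^{2p}V^q_{p-(n-1)}\otimes V^q_{p-(n-1)}$, and so on, appearing in the statement; tracking these shifts is routine once Lemma \ref{lem-pol-mod} is in hand.

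The only genuinely substantive point, and the step I expect to demand the most care, is the behaviour of $\mH^/_\bullet$ on the rank-two ``middle'' summands $N_i=\begin{pmatrix} q^{2i}\Bbbk[x] \\ \oplus \\ q^{2i+2}\Bbbk[x]\end{pmatrix}$ carrying the triangular $H_q$-action $\begin{pmatrix} 2ix & 0 \\ -2 & (2i+2)x\end{pmatrix}$, together with the corresponding two-variable module in the $n$ even tail. One of the two lines generates a rank-one $p$-DG submodule of the form $\Bbbk[x]^{cx}$, with quotient of the form $\Bbbk[x]^{c'x}$, and this short exact sequence of $p$-DG modules splits over $\Bbbk[x]$, hence gives a distinguished triangle in $\mc{C}(\Bbbk[x],\dif_q)$. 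Since the off-diagonal scalar $-2$ cannot be removed by a change of basis, $N_i$ is itself indecomposable as an $H_q$-module over $\Bbbk[x]$, so the corollary's assertion that $\mH^/_\bullet(N_i)$ is the \emph{direct sum} $q^{p+2}V^q_{p-2i}\oplus q^{p+2}V^q_{p-2i-2}$ (up to $[1]^t_\dif$) is not formal: it amounts precisely to the vanishing, after $\mH^/_\bullet$, of the connecting morphism of that triangle. I would establish this by working with the finite-dimensional quasi-isomorphic model of $N_i$ furnished by the ideal-truncation construction preceding Lemma \ref{lem-pol-mod} and checking that this finite model splits as the direct sum of the corresponding finite models of $\Bbbk[x]^{cx}$ and $\Bbbk[x]^{c'x}$ (the constant off-diagonal entry being absorbed upon truncation), or, equivalently, by computing the receiving group $\mathrm{Hom}_{\mc{C}(\Bbbk,\dif_q)}$ directly and seeing that the extension class it would carry is zero, using the structure of tensor products of the $V^q_j$ over $H_q=\Bbbk[\dif_q]/(\dif_q^p)$. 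Once the middle summands are settled, reassembling all contributions according to the parity of $n$ — and recalling from Lemma \ref{complexforT^n} how $T^{\otimes n}$ was decomposed — yields the two displayed formulas.
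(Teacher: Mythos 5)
Your overall strategy is the right one and is, as far as one can tell, exactly the paper's: the corollary carries no written proof and is meant to follow from the preceding Proposition by applying $\mH^/_\bullet$ summand by summand, using Lemma \ref{lem-pol-mod} for the rank-one twisted modules and carrying the shifts through. You have also correctly isolated the only non-formal step, namely whether the indecomposable rank-two summands $N_i$ with triangular twisting $\bigl(\begin{smallmatrix}2ix & 0\\ -2 & (2i+2)x\end{smallmatrix}\bigr)$ become direct sums after passing to the stable category. That step can be done concretely, and more cheaply than computing a Hom group: writing $b\equiv 2i$ with $b\in\{1,\dots,p\}$ and $u_1,u_2$ for the two generators, the change of basis $v_0=u_1$, $v_k=x^ku_1-\tfrac{2k}{b}\,x^{k-1}u_2$ ($k\ge 1$) satisfies $\dif_q(v_k)=(k+b)v_{k+1}$ whenever $b\not\equiv 0$, so $\mathrm{span}\{v_k\}$ is a $\dif_q$-stable complement of the submodule $\Bbbk[x]u_2$ and is isomorphic to $\Bbbk[x]^{2ix}$ as a graded $H_q$-module. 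The splitting is $H_q$-linear but not $R$-linear, which is why your (correct) observation that $N_i$ is indecomposable as an $(R\#H_q)$-module is not an obstruction.

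There is, however, a genuine caveat your verification would uncover: the splitting fails precisely when $2i\equiv 0\pmod p$ (i.e.\ $p\mid i$, which does occur in the index range once $n$ is large enough), and likewise for the analogous congruences in the two-variable tail when $n$ is even. In that case $\dif_q(u_1)=-2u_2$ and one computes $\dif_q^{p-1}(u_1)=-2(p-1)!\,x^{p-2}u_2\neq 0$ by Wilson's theorem, so $u_1$ generates a free $H_q$-module; pursuing this, the whole summand $N_i$ is free, hence contractible, and its slash homology is $0$ rather than the $p$-dimensional $V^q_0\oplus V^q_{p-2}$ that the displayed formula would predict. So the corollary's formula is correct only in the generic congruence case, and either the exceptional values of $i$ (and of $n$ in the even tail) must be excluded or the formula amended there — exactly the kind of case division the paper does carry out for $X_i$ and $Y_{n/2}$ in the following subsection on the $\mathfrak{sl}_2$ theory. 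Your proposed check of the extension class is therefore not optional bookkeeping: it is where the statement as written needs repair.
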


\subsection{Example of the Jones invariant when \texorpdfstring{$q$}{q} is generic}
We will compute the Jones homology of a $(2,n)$ torus link when the quantum parameter $q$ is generic so we assume $\partial_q=0$. We denote this homology by
$\mH_{\bullet,\bullet}(T_{2,n})$. 
As mentioned earlier, this more classical homology was constructed from various perspectives in \cite{Cautisremarks}, \cite{QRS}, and \cite{RW} and its formulation is built into Definition \ref{pdgjonesdef}.  Setting $\dif_q=0$ and not $p$-extending in the $a$ or $t$ directions, allows for a doubly graded theory rather than the singly graded theory of Section \ref{sl2section}.  We thus use the grading shift conventions of Section \ref{sectriple}.

Recall the Koszul resolution of $R$ in
\eqref{k[x,y]resolved}.
%\begin{equation} \label{k[x,y]resolveddifferentials}
%\xymatrix{
%q^4R^{x_1+x_2} \otimes R^{x_1+x_2} dx_1 dx_2 
%\ar[rrr]^{\left(\begin{smallmatrix}  x_2 \otimes 1 -  1 \otimes x_2 \\ %-x_1 \otimes 1 +1 \otimes x_1 \end{smallmatrix}\right) \hspace{.2in}} 
%& & &  
%q^2 R^{x_1} \otimes R^{x_1} dx_1 \oplus
%q^2 R^{x_2} \otimes R^{x_2} dx_2
%\ar[rrrr]^{\hspace{.5in} \left( \begin{smallmatrix} x_1 \otimes 1 - 1 %\otimes x_1 , &  x_2 \otimes 1 -  1 \otimes x_2 \end{smallmatrix} \right) %}
%  & & & & R \otimes R
%}  
%\end{equation}
Then $\mHH_\bullet (R)$ with the induced Cautis differential $d_C$ is given by
\begin{equation} \label{HHRwithdiffs}
\begin{gathered}
\xymatrix{
& R & \\
q^4 R  \ar[ur]^{x_1^2} & & q^4 R  \ar[ul]_{x_2^2} \\
& q^8 R  \ar[ul]^{-x_2^2} \ar[ur]_{x_1^2} &
}    
\end{gathered}
\end{equation}
and $\mHH_\bullet(B)$ with $d_C$ is given by
\begin{equation} \label{HHBwithdiffs}
\begin{gathered}
\xymatrix{
& R  & \\
q^4 R  \ar[ur]^{x_1^2 +x_2^2} & & q^6 R  \ar[ul]_{x_2^2(x_2-x_1)}   \\
& q^{10} R  \ar[ul]^{x_2^2(x_1-x_2)} \ar[ur]_{x_1^2 + x_2^2} &
}
\end{gathered}
\ .
\end{equation}

When $n$ is even, the leftmost term in $T^{\otimes n}$ maps by zero into the rest of the complex, so we need to understand the homology of $\mHH_\bullet(B)$ by itself in this case. All of the maps in \eqref{HHBwithdiffs} are injective so homology is concentrated in $R$.
Thus we need to find a basis of $R/(x_1^2+x_2^2,x_2^2(x_2-x_1))$. %In the quotient, note that $x_1^3=x_1^2 x_1 = -x_1 x_2^2 = %x_2^3$.
%Since $x_2^3=-x_1 x_2^2$, we get
%$x_2^4=-x_1 x_2^3=-x_1^4$.
In the quotient, note that $x_1^3=x_1^2 x_1 = -x_1 x_2^2 = -x_2^3$.
Since $x_2^3=x_1 x_2^2$, we get
$x_2^4=x_1 x_2^3=-x_1^4$.
But also $x_2^4=x_2^2 x_2^2=x_1^4$.
Thus $x_1^4=x_2^4=0$.  Thus the homology is spanned by
\begin{equation*}
\{1, x_1, x_2, x_1^2, x_1 x_2, x_1^2 x_2 \}.    
\end{equation*}

In the rest of the complex for $T^{\otimes n}$, there are two types of maps we need to analyze.
First we study $\mHH_\bullet (br) \colon \mHH_\bullet (B) \rightarrow \mHH_\bullet(R)$.
\begin{equation} \label{HHBRwithdiffs}
\begin{gathered}
\xymatrix{
& R  \ar@/^/[rrr]^{1 \mapsto 1} & & & R & \\
q^4 R  \ar[ur]^{x_1^2 +x_2^2} & & q^6 R  \ar[ul]^{x_2^2(x_2-x_1)} 
\ar@/^/[r]^{\begin{pmatrix}
1 & 0 \\
1 & x_2-x_1
\end{pmatrix}}
& q^4 R  \ar[ur]^{x_1^2}& & q^4 R  \ar[ul]_{x_2^2}  \\
& q^{10} R  \ar[ul]^{x_2^2(x_1-x_2)} \ar[ur]^{x_1^2 + x_2^2} \ar@/_/[rrr]^{1 \mapsto (x_2-x_1)} & & & q^8 R  \ar[ul]_{-x_2^2} \ar[ur]_{x_1^2}
}    
\end{gathered}
\end{equation}
%\begin{equation} \label{HHBRwithdiffs}
%\JS{with gammas}
%\xymatrix{
%& R \langle 1 \otimes 1 \rangle \ar@/^/[rrr]^{1 \otimes 1 \mapsto 1} & %& & R & \\
%q^2 R \langle (1 \otimes 1)dx_1 + (1 \otimes 1)dx_2 \rangle %\ar[ur]^{x_1^2 +x_2^2} & & q^2 R \langle \Gamma dx_2 \rangle %\ar[ul]^{x_2^2(x_2-x_1)} 
%\ar@/^/[r]^{\begin{pmatrix}
%1 & 0 \\
%1 & x_2-x_1
%\end{pmatrix}}
%& q^2 R dx_1 \ar[ur]^{x_1^2}& & q^2 R dx_2 \ar[ul]_{x_2^2}  \\
%& q^4 R \langle \Gamma dx_1 dx_2 \rangle \ar[ul]_{x_2^2(x_1-x_2)} %\ar[ur]^{x_1^2 + x_2^2} \ar@/_/[rrr]^{\Gamma dx_1 dx_2 \mapsto %(x_2-x_1) dx_1 dx_2} & & & q^4 R dx_1 dx_2 \ar[ul]_{-x_2^2} %\ar[ur]^{x_1^2}
%}    
%\end{equation}
While it is not very difficult to compute the total homology of this bicomplex, we use a fact from the proof of \cite[Theorem 6.2]{RW}.  According to this trick, we may calculate homology with respect to $d_C$ first and then with respect to the topological differential $d_t$.
Thus the homology of the bicomplex with respect to the total differential is spanned by
$\{t x_1^2, t x_1^2 x_2  \}$.
%$\{q^2 1 dx_2, q^2 x_1 dx_2 \}$.

Next we analyze $\mHH_\bullet (p_{2i+1}) \colon \mHH_\bullet(q^{4i} B) \longrightarrow \mHH_\bullet (q^{4i-2}B)$.
\begin{equation} \label{HHBBwithdiffs}
\begin{gathered}
\xymatrix{
& q^{4i} R  \ar@/^/[rrrr]^{2(x_2-x_1)} & & & & q^{4i-2}R & \\
q^{4i+4} R  \ar[ur]^{x_1^2 +x_2^2} & & q^{4i+6} R  \ar[ul]^{x_2^2(x_2-x_1)} 
\ar@/^/[rr]^{\bigl( \begin{smallmatrix}
2(x_2-x_1) & 0 \\
0 & 2(x_2-x_1)
\end{smallmatrix} \bigr)}
& & q^{4i+2} R  \ar[ur]_{x_1^2+x_2^2}& & q^{4i+4} R  \ar[ul]_{x_2^2(x_2-x_1)}  \\
& q^{4i+10} R  \ar[ul]^{x_2^2(x_1-x_2)} \ar[ur]^{x_1^2 + x_2^2} \ar@/_/[rrrr]^{2(x_2-x_1)} & & & & q^{4i+8} R  \ar[ul]^{x_2^2(x_1-x_2)} \ar[ur]_{x_1^2+x_2^2}
}    
\end{gathered}
\end{equation}
%\begin{equation} \label{HHBBwithdiffs}
%\JS{With gammas}
%\xymatrix{
%& R \langle 1 \otimes 1 \rangle & \\
%q^2 R \langle (1 \otimes 1)dx_1 + (1 \otimes 1)dx_2 \rangle %\ar[ur]^{x_1^2 +x_2^2} & & q^2 R \langle \Gamma dx_2 \rangle %\ar[ul]_{x_2^2(x_2-x_1)}   \\
%& q^4 R \langle \Gamma dx_1 dx_2 \rangle \ar[ul]_{x_2^2(x_1-x_2)} %\ar[ur]^{x_1^2 + x_2^2} & \\
%%%%%%%
%& R \langle 1 \otimes 1 \rangle \ar@{-->}@/^3pc/[uuu]_{2(x_2-x_1)} & \\
%q^2 R \langle (1 \otimes 1)dx_1 + (1 \otimes 1)dx_2 \rangle %\ar[ur]^{x_1^2 +x_2^2} \ar@{-->}@/^1pc/[uuu]_{2(x_2-x_1)} & & q^2 R %\langle \Gamma dx_2 \rangle \ar[ul]_{x_2^2(x_2-x_1)} %\ar@{-->}@/_3pc/[uuu]_{2(x_2-x_1)}  \\
%& q^4 R \langle \Gamma dx_1 dx_2 \rangle \ar[ul]_{x_2^2(x_1-x_2)} %\ar[ur]_{x_1^2 + x_2^2}
% \ar@{-->}@/_3pc/[uuu]^(.4){2(x_2-x_1)}
%&
%}    
%\end{equation}
Again using the proof of \cite[Theorem 6.2]{RW}, we compute the homology with respect to $d_C$ and then with respect to $d_t$ to obtain the total homology of this bicomplex is spanned by
%Thus quotienting the top complex in \eqref{HHBBwithdiffs} by %$2(x_2-x_1)$ produces
%\begin{equation} \label{HHBwithdiffsmod}
%\xymatrix{
%& R \langle 1 \otimes 1 \rangle & \\
%q^2 R \langle (1 \otimes 1)dx_1 + (1 \otimes 1)dx_2 \rangle %\ar[ur]^{x_1^2 +x_2^2} & & q^2 R \langle \Gamma dx_2 \rangle %\ar[ul]_{0}   \\
%& q^4 R \langle \Gamma dx_1 dx_2 \rangle \ar[ul]_{0} \ar[ur]^{x_1^2 + %x_2^2} &
%}    
%\end{equation}
%The complex \eqref{HHBwithdiffsmod} has homology spanned by
\begin{equation*}
\{ 
q^{4i} t^{2i+1} x_1^2, q^{4i} t^{2i+1} x_1^2 x_2, 
q^{4i-2}  t^{2i} 1, q^{4i-2}t^{2i} x_1
\}.    
\end{equation*}
%\begin{equation*}
%\{ 
%1, x_2, \Gamma dx_2, x_1 \Gamma dx_2
%\}.    
%\end{equation*}
We now assemble all of this information together to get the homology of the $(2,n)$ torus link.
Recall that the complex used for $T^{\otimes n}$ comes with a shift of
$(at)^{\frac{-n}{2}} q^{-2n}$ and the Hochschild homology functor comes with a shift of $a^{-1} t$.
Thus there is an overall shift of $a^{\frac{-n-2}{2}} t^{\frac{-n+2}{2}} q^{-2n}$.  Specializing $a=q^{2}t$ yields an overall shift of $q^{-3n-2} t^{-n} $.  Thus we get the following homology in terms of Poincar\'{e} series. 
\begin{itemize}
    \item If $n$ is odd, then the bigraded Poincar\'{e} series of $  \mH^{}_{\bullet,\bullet}(T_{2,n})$ is equal to 
   
    \begin{equation*}
    q^{-3n-2} t^{-n} \cdot 
    \Big(
 (1+q^2)q^4t(1+q^4t^2+q^8t^4+\cdots+q^{2n-2} t^{n-1})
    +(1+q^2)q^2 t^2(1+q^4 t^2 + q^8 t^4+\cdots+q^{2n-6} t^{n-3})\Big)
    .
    \end{equation*}
    \item If $n$ is even, then the bigraded Poincar\'{e} series of $  \mH^{}_{\bullet,\bullet}(T_{2,n})$ is equal to  
    \begin{align*}
  &  q^{-3n-2} t^{-n} \cdot \Big( (1+q^2)q^4t(1+q^4t^2+\cdots+q^{2n-4} t^{n-2})
    +(1+q^2)q^2 t^2(1+q^4 t^2 +\cdots+q^{2n-8} t^{n-4}) \\
    &+t^nq^{2n-2} (1+2q^2+2q^4+q^6) \Big) .
    \end{align*}
\end{itemize}
\begin{rem}
This was also computed (for $n=2,3$) in \cite{QRS} and \cite{RW}.
It is interesting to note that this is different from the Khovanov homology of the Hopf link and trefoil.
\end{rem}

\subsection{Example of the Jones invariant when \texorpdfstring{$q$}{q}-prime root of unity}
To compute the $p$-DG Jones invariant, we will utilize the following auxiliary tool.

\begin{prop} \label{filtertrickprop}
Let 
\[ 
M_\bullet = \left(
\cdots \xrightarrow{\dif_t} M_{i+1} \xrightarrow{ \dif_t } M_i \xrightarrow{ \dif_t} M_{i-1} \xrightarrow{\dif_t} \cdots
\right)
\] be a contractible $p$-complex of $H_q=\Bbbk[\dif_q]/(\dif_q^p)$-modules. Then the totalized complex $(\mc{T}(M_\bullet),\dif_T=\dif_t+\dif_q)$ is acyclic.
\end{prop}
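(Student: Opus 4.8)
The plan is to reduce the statement to the homotopy-invariance of slash homology and then transport an explicit contracting homotopy from $M_\bullet$ to its totalization. Since the slash-homology functor factors through the $p$-homotopy category (it is isomorphic to the identity functor on $\mathcal{C}(\Bbbk,\dif_q)$, as recalled in the paragraph on slash homology), it annihilates every null-homotopic $p$-complex; hence it suffices to prove that $(\mathcal{T}(M_\bullet),\dif_T)$ is null-homotopic. By Lemma \ref{lemma-acylicity-commutator-relation} this is equivalent to producing a $\Bbbk$-linear endomorphism $\sigma$ of the underlying graded space of $\mathcal{T}(M_\bullet)$, raising the homological index by one, with $\dif_T\sigma-\sigma\dif_T=\Id$.

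To construct $\sigma$ I would apply Lemma \ref{lemma-acylicity-commutator-relation} to $M_\bullet$ regarded as a $p$-complex over the algebra $H_q$ with topological $p$-differential $\dif_t$: contractibility of $M_\bullet$ supplies an $H_q$-linear map $\sigma\colon M_\bullet\to M_{\bullet+1}$ with $\dif_t\sigma-\sigma\dif_t=\Id_M$. The one thing to notice is that $H_q$-linearity of $\sigma$ says exactly that $\sigma$ commutes with the internal operator $\dif_q$, so $\dif_q\sigma-\sigma\dif_q=0$. Viewing $\sigma$ as an endomorphism of $\mathcal{T}(M_\bullet)=\bigoplus_i q^{-2i}M_i$ (of $q$-degree $-2$), one then gets $\dif_T\sigma-\sigma\dif_T=(\dif_t\sigma-\sigma\dif_t)+(\dif_q\sigma-\sigma\dif_q)=\Id_M+0=\Id$, and Lemma \ref{lemma-acylicity-commutator-relation}, now used over $\Bbbk$, yields that $\mathcal{T}(M_\bullet)$ is null-homotopic and in particular acyclic.

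This is essentially an explicit rewriting of the proof of Lemma \ref{lem-totalization-functor}, specialized to $A=\Bbbk$ and to the null-homotopic object $M_\bullet$; I would simply record the argument here since it is invoked repeatedly in the torus-link computations. I do not expect a genuine obstacle: contractibility does all the work, and no finiteness or projectivity hypotheses on $M_\bullet$ are required. The only point that deserves a sentence of care is the equivariance of the contracting homotopy — that the $\sigma$ produced by Lemma \ref{lemma-acylicity-commutator-relation} may be taken $H_q$-linear, which is automatic once one applies that lemma with $H_q$ (rather than $\Bbbk$) as the base algebra — together with the trivial remark that the identity $[\dif_t,\sigma]=\Id_M$ transports verbatim to an identity of $\Bbbk$-linear endomorphisms of $\mathcal{T}(M_\bullet)$.
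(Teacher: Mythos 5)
Your proposal is correct and is essentially identical to the paper's proof: both apply Lemma \ref{lemma-acylicity-commutator-relation} to extract an $H_q$-linear contracting map $\sigma$ with $[\sigma,\dif_t]=\Id_M$, observe that $H_q$-linearity gives $[\sigma,\dif_q]=0$ so that $[\sigma,\dif_T]=\Id_M$, and then invoke the same lemma again to conclude acyclicity of the totalization.
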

\begin{proof}
  Since $M_\bullet$ is contractible, there is an $H_q$-linear map $\sigma:M_\bullet \lra M_{\bullet+1}$  such that $[\sigma, \dif_t]=\mathrm{Id}_M$ by Lemma \ref{lemma-acylicity-commutator-relation}. Thus we have 
  \[
  [\sigma, \dif_T]=[\sigma,\dif_t+\dif_q]=[\sigma,\dif_t]+[\sigma,\dif_q]=\mathrm{Id}_M.
  \]
The result follows again from Lemma \ref{lemma-acylicity-commutator-relation}.
\end{proof}

We will be applying Proposition \ref{filtertrickprop} in the following situation. Suppose $N_\bullet$ is a $p$-complex of $H_q$-modules whose boundary maps preserve the $H_q$-module structure. Further, let $M_\bullet$ be a sub $p$-complex that is closed under the $H_q$-action, and there is a map $\sigma$ on $M_\bullet$ as in Proposition \ref{filtertrickprop} that preserves the $H_q$-module structure. Then, when totalizing the $p$-complexes, we have $\mc{T}(M_\bullet) \subset \mc{T}(N_\bullet)$ and the natural projection map
\[
\mc{T}(N_\bullet) \lra \mc{T}(N_\bullet)/\mc{T}(M_\bullet)
\]
is a quasi-isomorphism. Similarly, if $M_\bullet$ is instead a quotient complex of $N_\bullet$ that satisfies the condition of Proposition \ref{filtertrickprop}, and $K_\bullet$ is the kernel of the natural projection map 
$$ 0 \lra K_\bullet \lra N_\bullet \lra M_\bullet \lra 0, $$
then the inclusion map of totalized complexes $\mc{T}(K_\bullet) \lra \mc{T}(N_\bullet)$ is a quasi-isomorphism.

We modify the the calculation of the homology in the previous section of the $(2,n)$ torus link to account for the differential $\dif_q$.
Recall that in this singly graded theory that $a=tq^{2}$ and $t=[1]^q_\dif$. Consequently $[1]^a_{\dif}=q^2[1]^t_{\dif}=q^2[1]^q_{\dif} $.

First we study $\pHH_\bullet(br) \colon \pHH_\bullet(B)[1]^q_\dif \longrightarrow \pHH_\bullet(R)$, 
%\QY{Try to add internal differential and twists in the left square} \JS{I did it but it spills out a little.  I commented out the attempt.  I suggest we keep it the way it is because we also reference that connection matrix a few times.}
%\begin{equation} \label{pHHBRwithdiffs}
%\xymatrix{
%& R[1]^t_\dif  \ar@/^/[rrr]^{1 \mapsto 1} & & & R & \\
%q^4 R^{2x_1}[1]^t_\dif [1]^a_\dif  \ar[ur]^{x_1^2 +x_2^2} %\ar[rr]^{2} & & q^6 R^{x_1+3x_2}[1]^t_\dif [1]^a_\dif %\ar[ul]^{x_2^2(x_2-x_1)} 
%\ar@/^/[r]^{\begin{pmatrix}
%1 & 0 \\
%1 & x_2-x_1
%\end{pmatrix}}
%& q^4 R^{2x_1}[1]^a_\dif  \ar[ur]^{x_1^2}& & q^4 %R^{2x_2}[1]^a_\dif  \ar[ul]_{x_2^2}  \\
%& q^{10} R^{3e_1}[1]^t_\dif [2]^a_\dif  %\ar[ul]^{x_2^2(x_1-x_2)} \ar[ur]^{x_1^2 + x_2^2} %\ar@/_/[rrr]^{1 \mapsto (x_2-x_1)} & & & q^8 %R^{2e_1}[2]^a_\dif  \ar[ul]_{-x_2^2} \ar[ur]_{x_1^2}
%}    
%\end{equation}
\begin{equation} \label{pHHBRwithdiffs}
\xymatrix{
& R[1]^q_\dif  \ar@/^/[rrr]^{1 \mapsto 1} & & & R & \\
q^4 R[1]^q_\dif [1]^q_\dif  \ar[ur]^{x_1^2 +x_2^2} & & q^6 R[1]^q_\dif [1]^q_\dif \ar[ul]^{x_2^2(x_2-x_1)} 
\ar@/^/[r]^{\begin{pmatrix}
1 & 0 \\
1 & x_2-x_1
\end{pmatrix}}
& q^4 R^{2x_1}[1]^q_\dif  \ar[ur]^{x_1^2}& & q^4 R^{2x_2}[1]^q_\dif  \ar[ul]_{x_2^2}  \\
& q^{10} R^{3e_1}[1]^q_\dif [2]^q_\dif  \ar[ul]^{x_2^2(x_1-x_2)} \ar[ur]^{x_1^2 + x_2^2} \ar@/_/[rrr]^{1 \mapsto (x_2-x_1)} & & & q^8 R^{2e_1}[2]^q_\dif  \ar[ul]_{-x_2^2} \ar[ur]_{x_1^2}
}    
\end{equation}
where the object $q^4 R[1]^q_\dif [1]^q_\dif \oplus q^6 R[1]^q_\dif [1]^q_\dif  $ in the left square is twisted by the matrix 
\begin{equation}
\begin{pmatrix} \label{connmatrix}
2x_1 & 0 \\
2 & x_1+3x_2
\end{pmatrix}
\ .
\end{equation}
Filtering the total complex \eqref{pHHBRwithdiffs} and applying Proposition \ref{filtertrickprop}, we obtain that the total $p$-complex is quasi-isomorphic to
\begin{equation}\label{eqn-Jones-reduced-B-to-R}
\xymatrix{
\Bbbk \langle 1,x_1, x_2, x_1 x_2, x_1^2, x_1^2 x_2 \rangle [1]^q_\dif
\ar[r]^{\hspace{.3in} 1}
&
\Bbbk \langle 1, x_1, x_2, x_1 x_2 \rangle
}    \ .
\end{equation}
Let us illustrate how this is obtained.
For instance,  the $p$-complex 
\[
q^8 R^{2e_1}[2]^q_\dif \xrightarrow{\phi=(-x_2^2,x_1^2)} \mathrm{Im}(\phi) \subset \left(  q^4 R^{2x_1}[1]^q_\dif\oplus q^4R^{2x_2}[1]^q_\dif\right)
\]
is a quotient of the total $p$-complex of the rightmost square. The map $\phi$ is an $H_q$-intertwining isomorphism onto its image because of the $H_q$-module twists imposed on the modules. Note that this $p$-complex (ignoring $q$-grading shifts)
\[
R^{2e_1}\xrightarrow[\cong]{\phi=(-x_2^2,x_1^2)} \mathrm{Im}(\phi) =  \mathrm{Im}(\phi) = \cdots =  \mathrm{Im}(\phi),
\]
where  $ \mathrm{Im}(\phi)$ is repeated $p-1$ times, is a contractible $p$-complex of $H_q$-modules.
Proposition \ref{filtertrickprop} then applies to this quotient complex, and shows that it contributes nothing to the total slash homology.

The $p$-complex in \eqref{eqn-Jones-reduced-B-to-R}, in turn, is quasi-isomorphic to 
\begin{equation*}
\Bbbk \langle x_1^2, x_1^2 x_2 \rangle [1]^q_\dif.
\end{equation*}
This is quasi-isomorphic to $q^5 V_1 [1]^q_\dif$. 

Once again when $n$ is even, the leftmost term in $T^{\otimes n}$ maps by zero into the rest of the complex so we have to understand the total homology of $\pHH_{\bullet}(q^{2(n-1)}B^{(n-1)e_1}[n]^q_\dif)$.
Filtering
\begin{equation} \label{HHBalonewithdiffs}
\begin{gathered}
\xymatrix{
& q^{2(n-1)}R^{(n-1)e_1}[n]^q_\dif  & \\
q^{2(n+1)} R^{(n-1)e_1}[n+1]^q_\dif  \ar[ur]^{x_1^2 +x_2^2} & & q^{2(n+2)} R^{(n-1)e_1}[n+1]^q_\dif  \ar[ul]_{x_2^2(x_2-x_1)}   \\
& q^{2(n+4)} R^{(n+2)e_1}[n+2]^q_\dif  \ar[ul]^{x_2^2(x_1-x_2)} \ar[ur]_{x_1^2 + x_2^2} &
}    
\end{gathered}
\end{equation}
where the middle terms
$q^{2(n+1)} R^{(n-1)e_1}[n]^q_\dif [1]^q_\dif \oplus q^{2(n+2)} R^{(n-1)e_1}[n]^q_\dif [1]^q_\dif$ are further twisted by the matrix \eqref{connmatrix},
%\begin{equation*}
%\begin{pmatrix} \label{connmatrix}
%2x_1 & 0 \\
%2 & x_1+3x_2
%\end{pmatrix}
%\end{equation*}
yields that \eqref{HHBalonewithdiffs} is quasi-isomorphic to
\[
q^{2(n-1)}\Bbbk \langle 1,x_1, x_2, x_1 x_2, x_1^2, x_1^2 x_2 \rangle [n]^q_\dif
\]
with a differential inherited from the polynomial algebra and twisted by $(n-1)e_1$.  Explicitly, the differential acts on the basis by
\begin{equation}
\begin{gathered}
\xymatrix{
& 1 \ar[dl]_{n-1} \ar[dr]^{n-1} & \\
x_1 \ar[d]_{n} \ar[drr]^<<<{n-1} & & x_2 \ar[d]^{n-1} \ar[dll]^>>>{-n} \\
x_1^2 \ar[dr]^{2n} & & x_1 x_2 \\
& x_1^2 x_2 &
}  
\end{gathered}
\ .
\end{equation}
This is isomorphic to the direct sum of $p$-complexes
\[
\xymatrix{
1 \ar[d]_{n-1}& & x_1-x_2 \ar[d]^{2n} \\
x_1+x_2 \ar[d]_{2(n-1)}& \bigoplus & x_1^2 \ar[d]^{2n} \\
x_1 x_2 & & x_1^2 x_2 \\
}
\ .
\]
Thus the total homology of $\pHH_{\bullet}(q^{2(n-1)}B^{(n-1)e_1}[n]^q_\dif)$ is isomorphic to  the following $p$-complex dependent of the characteristic of the ground field
\begin{equation} \label{defYn}
Y_{\frac{n}{2}}:=
\begin{cases}
 q^{2n}V_2[n]^q_\dif \oplus q^{2n+2} V_2 [n]^q_\dif &
\text{ if } p \nmid n-1, n \\
q^{2(n-1)}(V_0 \oplus q^2 V_0 \oplus q^4 V_0) [n]^q_\dif 
\oplus q^{2n+2} V_2 [n]^q_\dif  & 
\text{ if } p \mid n-1, p \nmid n \\
q^{2n}V_2 [n]^q_\dif 
\oplus q^{2n} (V_0 \oplus q^2 V_0 \oplus q^4 V_0) [n]^q_\dif  & 
\text{ if } p \nmid n-1, p \mid n \\
%q^{2(n-1)} (V_0 \oplus q^2 V_0 \oplus q^4 V_0) [n]^t_\dif 
%\oplus q^{2n} (V_0 \oplus q^2 V_0 \oplus q^4 V_0) [n]^t_\dif % & 
%\text{ if } p \mid n-1, n \\
\end{cases}   \ .
\end{equation}

Finally we analyze $\pHH_\bullet(p_{2i+1}) \colon \pHH_\bullet (q^{4i} B^{2ie_1}[2i+1]^q_\dif) {\lra} \mHH_\bullet(q^{4i-2}B^{(2i-1)e_1}[2i]^q_\dif)$ where
\begin{equation} \label{pHHBBwithdiffs1}
 \pHH_\bullet (q^{4i} B^{2ie_1}[2i+1]^q_\dif)~=~
 \begin{gathered}
\xymatrix@C=0.75em{
& q^{4i} R^{2ie_1}[2i+1]^q_\dif   &   \\
q^{4i+4} R[2i+2]^q_\dif   \ar[ur]^{x_1^2 +x_2^2} & & q^{4i+6} R[2i+2]^q_\dif \ar[ul]_{x_2^2(x_2-x_1)}
 \\
& q^{4i+10} R^{(2i+3)e_1}[2i+3]^q_\dif  \ar[ul]^{x_2^2(x_1-x_2)} \ar[ur]_{x_1^2 + x_2^2}  & 
}    
\end{gathered} \ ,
\end{equation} 
and
\begin{equation} \label{pHHBBwithdiffs2}
\pHH_\bullet (q^{4i-2} B^{(2i-1)e_1}[2i]^q_\dif)~=~
\begin{gathered}
\xymatrix@C=1.5em{
& q^{4i-2}R^{(2i-1)e_1}[2i]^q_\dif & \\
q^{4i+2} R^{}[2i+1]^q_\dif  \ar[ur]^{x_1^2+x_2^2}& & q^{4i+4} R^{}[2i+1]^q_\dif \ar[ul]_{x_2^2(x_2-x_1)}  \\
& q^{4i+8} R^{(2i+2)e_1}[2i+2]^q_\dif  \ar[ul]^{x_2^2(x_2-x_1)} \ar[ur]_{x_1^2+x_2^2} &
}      
\end{gathered}
\end{equation}
%\begin{equation} \label{pHHBBwithdiffs}
%\xymatrix{
%& q^{4i} R^{2ie_1}[2i+1]^t_\dif  %\ar@/^/[rrrr]^{2(x_2-x_1)} & & & & %q^{4i-2}R^{(2i-1)e_1}[2i]^t_\dif & \\
%q^{4i+4} R[2i+1]^t_\dif [1]^a_\dif  \ar[ur]^{x_1^2 %+x_2^2} & & q^{4i+6} R[2i+1]^t_\dif [1]^a_\dif %\ar[ul]^{x_2^2(x_2-x_1)} 
%\ar@/^/[rr]^{\bigl( \begin{smallmatrix}
%2(x_2-x_1) & 0 \\
%0 & 2(x_2-x_1)
%\end{smallmatrix} \bigr)}
%&  & q^{4i+2} R^{}[2i]^t_\dif [1]^a_\dif  %\ar[ur]^{x_1^2+x_2^2}& & q^{4i+4} %R^{}[2i]^t_\dif[1]^a_\dif  \ar[ul]_{x_2^2(x_2-x_1)}  \\
%& q^{4i+10} R^{(2i+3)e_1}[2i+1]^t_\dif [2]^a_\dif  %\ar[ul]^{x_2^2(x_1-x_2)} \ar[ur]^{x_1^2 + x_2^2} %\ar@/_/[rrrr]^{2(x_2-x_1)} & &  & & q^{4i+8} %R^{(2i+2)e_1}[2i]^t_\dif[2]^a_\dif  %\ar[ul]_{x_2^2(x_2-x_1)} \ar[ur]_{x_1^2+x_2^2}
%}    
%\end{equation}
where the differentials for both objects in the middle horizontal rows of \eqref{pHHBBwithdiffs1} and \eqref{pHHBBwithdiffs2} are twisted by \eqref{connmatrix} and
$\pHH_\bullet(p_{2i+1})=2(x_2-x_1)$ (diagonal multiplication by $2(x_2-x_1)$).
Filtering this total complex yields the total complex
\begin{equation}
\xymatrix{
q^{4i} \Bbbk \langle 1, x_1, x_2, x_1^2, x_1 x_2, x_1^2 x_2 \rangle [2i+1]^q_\dif
\ar[rr]^{2(x_2-x_1)}
& &
q^{4i-2} \Bbbk \langle 1, x_1, x_2, x_1^2, x_1 x_2, x_1^2 x_2 \rangle [2i]^q_\dif
}    
\ .
\end{equation}
This is quasi-isomorphic to 
\begin{equation}
q^{4i} \Bbbk \langle x_1^2, x_1^2 x_2 \rangle [2i+1]^q_\dif
\bigoplus 
q^{4i-2} \Bbbk \langle 1, x_1 \rangle [2i]^q_\dif
\ ,
\end{equation}
where the differential on the basis elements is given by
\[
\begin{gathered}
\xymatrix{
x_1^2 \ar[dd]_{4i+2}& & 1 \ar[dd]^{4i-2} \\
& \bigoplus & \\
x_1^2x_2 &  & x_1  \\
}
\end{gathered}
\ .
\]
Thus the total homology is isomorphic to the $p$-complex
\begin{equation} \label{defXi}
X_i:=
\begin{cases}
 q^{4i+5}V_1[2i+1]^q_\dif \oplus q^{4i-1} V_1 [2i]^q_\dif &
\text{ if } p \nmid 2i+1, 2i-1 \\
q^{4i+4}(V_0 \oplus q^2 V_0)[2i+1]^q_\dif \oplus q^{4i-1} V_1 [2i]^q_\dif   & 
\text{ if } p \mid 2i+1, p \nmid 2i-1 \\
q^{4i+5}V_1 [2i+1]^q_\dif 
\oplus q^{4i-2} (V_0 \oplus q^2 V_0) [2i]^q_\dif  & 
\text{ if } p \nmid 2i+1, p \mid 2i-1 \\
%q^{2(n-1)} (V_0 \oplus q^2 V_0) [2i+1]^t_\dif 
%\oplus q^{2n} (V_0 \oplus q^2 V_0) [2i]^t_\dif  & 
%\text{ if } p \mid 2i+1, 2i-1 \\
\end{cases}   \ .
\end{equation}

All of these computations together with an overall shift of 
$q^{-3n-2} [-n]^q_{\dif} $ yields the slash homology of the $(2,n)$ torus link.

\begin{equation} \label{pjonestorus}
\pH(T_{2,n}) \cong
\begin{cases}
q^{-3n-2} [-n]^q_{\dif} \left(
q^5 V_1 [1]^q_{\dif} \oplus \bigoplus_{i=1}^{\frac{n-1}{2}} X_i
\right) & \text{ if } 2 \nmid n \\
q^{-3n-2} [-n]^q_{\dif} \left(
q^5 V_1 [1]^q_{\dif} \oplus \bigoplus_{i=1}^{\frac{n-2}{2}} X_i
\oplus Y_{\frac{n}{2}}  \right) & \text{ if } 2 \mid n
\end{cases}
\end{equation}
where $X_i$ is the $p$-complex in \eqref{defXi} and
$Y_{\frac{n}{2}}$ is the $p$-complex in \eqref{defYn}.
It is interesting to note that the homology depends upon the prime $p$.
\addcontentsline{toc}{section}{References}

% ====================================================================
% REFERENCES

\bibliographystyle{alpha}
\bibliography{qy-bib}

\begin{thebibliography}{May42b}

\bibitem[Cau17]{Cautisremarks}
S.~Cautis.
\newblock Remarks on coloured triply graded link invariants.
\newblock {\em Algebr. Geom. Topol.}, 17(6):3811--3836, 2017.
\newblock \href{https://arxiv.org/abs/1611.09924}{arXiv:1611.09924}.

\bibitem[CF94]{CF}
L.~Crane and I.~B. Frenkel.
\newblock Four dimensional topological quantum field theory, {H}opf categories,
  and the canonical bases.
\newblock {\em J. Math. Phys.}, 35(10):5136--5154, 1994.
\newblock \href{http://arxiv.org/abs/hep-th/9405183}{arXiv:hep-th/9405183}.

\bibitem[EK10]{EliasKh}
B.~Elias and M.~Khovanov.
\newblock Diagrammatics for {S}oergel categories.
\newblock {\em Int. J. Math. Math. Sci.}, pages Art. ID 978635, 58, 2010.
\newblock \href{http://arxiv.org/abs/0902.4700}{arXiv:0902.4700}.

\bibitem[Kho00]{KhJones}
M.~Khovanov.
\newblock A categorification of the {J}ones polynomial.
\newblock {\em Duke Math. J.}, 101(3):359--426, 2000.
\newblock \href{http://arxiv.org/abs/math/9908171}{arXiv:math/9908171}.

\bibitem[Kho07]{KR3}
M.~Khovanov.
\newblock Triply-graded link homology and {H}ochschild homology of {S}oergel
  bimodules.
\newblock {\em Internat. J. Math.}, 18(8):869--885, 2007.
\newblock \href{http://arxiv.org/abs/math/0510265}{arXiv:math/0510265}.

\bibitem[Kho16]{Hopforoots}
M.~Khovanov.
\newblock {H}opfological algebra and categorification at a root of unity: {T}he
  first steps.
\newblock {\em J. Knot Theory Ramifications}, 25(3):359--426, 2016.
\newblock
  \href{http://front.math.ucdavis.edu/math.QA/0509083}{arXiv:math/0509083}.

\bibitem[KQ15]{KQ}
M.~Khovanov and Y.~Qi.
\newblock An approach to categorification of some small quantum groups.
\newblock {\em Quantum Topol.}, 6(2):185--311, 2015.
\newblock \href{http://arxiv.org/abs/1208.0616}{arXiv:1208.0616}.

\bibitem[KR08a]{KR1}
M.~Khovanov and L.~Rozansky.
\newblock Matrix factorizations and link homology.
\newblock {\em Fund. Math.}, 199(1):1--91, 2008.
\newblock \href{http://arxiv.org/abs/math/0401268}{arXiv:math/0401268}.

\bibitem[KR08b]{KR2}
M.~Khovanov and L.~Rozansky.
\newblock Matrix factorizations and link homology. {II}.
\newblock {\em Geom. Topol.}, 12(3):1387--1425, 2008.
\newblock \href{http://arxiv.org/abs/math/0505056}{arXiv:math/0505056}.

\bibitem[KR16]{KRWitt}
M.~Khovanov and L.~Rozansky.
\newblock Positive half of the {W}itt algebra acts on triply graded link
  homology.
\newblock {\em Quantum Topol.}, 7(4):737--795, 2016.
\newblock \href{https://arxiv.org/abs/1305.1642}{arXiv:1305.1642}.

\bibitem[KW98]{KWa}
C.~Kassel and M.~Wambst.
\newblock Alg\`{e}bre homologique des {N}-complexes et homologie de
  {H}ochschild aux racines de l'unit\'{e}.
\newblock {\em Publ. Res. Inst. Math. Sci.}, 34(2):91--114, 1998.
\newblock \href{http://arxiv.org/abs/q-alg/9705001}{arXiv:q-alg/9705001}.

\bibitem[May42a]{Mayer1}
W.~Mayer.
\newblock A new homology theory.
\newblock {\em Ann. of Math. (2)}, 43(2):370--380, 1942.

\bibitem[May42b]{Mayer2}
W.~Mayer.
\newblock A new homology theory. {II}.
\newblock {\em Ann. of Math. (2)}, 43(3):594--605, 1942.

\bibitem[Qi14]{QYHopf}
Y.~Qi.
\newblock Hopfological algebra.
\newblock {\em Compos. Math.}, 150(01):1--45, 2014.
\newblock \href{http://arxiv.org/abs/1205.1814}{arXiv:1205.1814}.

\bibitem[QRS18]{QRS}
H.~Queffelec, D.~Rose, and A.~Sartori.
\newblock Annular {E}valuation and {L}ink {H}omology.
\newblock 2018.
\newblock \href{https://arxiv.org/abs/1802.04131}{arXiv:1802.04131}.

\bibitem[QS16]{QiSussan}
Y.~Qi and J.~Sussan.
\newblock A categorification of the {B}urau representation at prime roots of
  unity.
\newblock {\em Selecta Math. (N.S.)}, 22(3):1157--1193, 2016.
\newblock \href{http://arxiv.org/abs/1312.7692}{arXiv:1312.7692}.

\bibitem[QS17]{QiSussan2}
Y.~Qi and J.~Sussan.
\newblock Categorification at prime roots of unity and hopfological finiteness.
\newblock In {\em Categorification and higher representation theory}, volume
  683 of {\em Contemp. Math.}, pages 261--286. Amer. Math. Soc., Providence,
  RI, 2017.
\newblock \href{https://arxiv.org/abs/1509.00438}{arXiv:1509.00438}.

\bibitem[Rou04]{RouBraid}
R.~Rouquier.
\newblock Categorification of the braid group.
\newblock 2004.
\newblock \href{http://arxiv.org/abs/math/0409593}{arXiv:RT/0409593}.

\bibitem[Rou17]{Roulink}
R.~Rouquier.
\newblock Khovanov-{R}ozansky homology and 2-braid groups.
\newblock In {\em Categorification and higher representation theory}, Contemp.
  Math., pages 141--147. Amer. Math. Soc., Providence, RI, 2017.
\newblock \href{https://arxiv.org/abs/1203.5065}{arXiv:1203.5065}.

\bibitem[RT90]{RT}
N.~Y. Reshetikhin and V.~G. Turaev.
\newblock Ribbon graphs and their invariants derived from quantum groups.
\newblock {\em Comm. Math. Phys.}, 127(1):1--26, 1990.

\bibitem[RW20]{RW}
L.-H. Robert and E.~Wagner.
\newblock Symmetric {K}hovanov--{R}ozansky link homologies.
\newblock {\em Journal d'\'{E}cole polytechnique - Math\'{e}matiques},
  7:573--651, 2020.
\newblock \href{https://arxiv.org/abs/1801.02244}{arXiv:1801.02244}.

\bibitem[Spa49]{Spanier}
E.~H. Spanier.
\newblock The {M}ayer homology theory.
\newblock {\em Bull. Amer. Math. Soc.}, 55(2):102--112, 1949.

\bibitem[Web17]{Webcombined}
B.~Webster.
\newblock Knot invariants and higher representation theory.
\newblock {\em Mem. Amer. Math. Soc.}, 250(1191):pp.~133, 2017.
\newblock \href{http://arxiv.org/abs/1309.3796}{arXiv:1309.3796}.

\bibitem[Wit89]{Witten}
E.~Witten.
\newblock Quantum field theory and the {J}ones polynomial.
\newblock {\em Commun. Math. Phys.}, 121(3):351--399, 1989.

\end{thebibliography}

%
% ====================================================================

\noindent Y.~Q.: { \sl \small Department of Mathematics, University of Virginia, Charlottesville, VA 22904, USA} \newline \noindent {\tt \small email: yq2dw@virginia.edu}

\vspace{0.1in}

\noindent J.~S.:  {\sl \small Department of Mathematics, CUNY Medgar Evers, Brooklyn, NY, 11225, USA}\newline \noindent {\tt \small email: jsussan@mec.cuny.edu \newline 
\sl \small Mathematics Program, The Graduate Center, CUNY, New York, NY, 10016, USA}\newline \noindent {\tt \small email: jsussan@gc.cuny.edu}

% ==============================================================================
%
\end{document}